\numberwithin{equation}{subsection}
\definecolor{winered}{rgb}{0.8,0,0}
\definecolor{deepblue}{rgb}{0,0,0.8}
\newtheorem{thm}{Theorem}[subsection]
\newtheorem{prop}[thm]{Proposition}
\newtheorem{cor}[thm]{Corollary}
\newtheorem{lem}[thm]{Lemma}
\theoremstyle{definition}
\newtheorem{df}[thm]{Definition}
\newtheorem{rmk}[thm]{Remark}
\newtheorem{exm}[thm]{Example}
\newtheorem{const}[thm]{Construction}
\newtheorem{quest}[thm]{Question}
\theoremstyle{remark}
\newcommand{\A}{\mathbb{A}}
\newcommand{\C}{\mathbb{C}}
\newcommand{\E}{\mathbb{E}}
\newcommand{\G}{\mathbb{G}}
\newcommand{\N}{\mathbb{N}}
\renewcommand{\P}{\mathbb{P}}
\newcommand{\Q}{\mathbb{Q}}
\newcommand{\R}{\mathbb{R}}
\newcommand{\T}{\mathbb{T}}
\newcommand{\Z}{\mathbb{Z}}
\newcommand{\cC}{\mathcal{C}}
\newcommand{\cE}{\mathcal{E}}
\newcommand{\cF}{\mathcal{F}}
\newcommand{\cG}{\mathcal{G}}
\newcommand{\cM}{\mathcal{M}}
\newcommand{\cO}{\mathcal{O}}
\newcommand{\cP}{\mathcal{P}}
\newcommand{\rD}{\mathrm{D}}
\newcommand{\rG}{\mathrm{G}}
\newcommand{\rM}{\mathrm{M}}
\newcommand{\sT}{\mathscr{T}}
\newcommand{\et}{{\acute{e}t}}
\DeclareMathOperator{\Hom}{Hom}
\DeclareMathOperator{\Spec}{Spec}
\newcommand{\id}{\mathrm{id}}
\newcommand{\DM}{{\mathrm{DM}}}
\newcommand{\infSH}{\mathrm{SH}}
\newcommand{\infDM}{\mathrm{DM}}
\newcommand{\Sp}{\mathrm{Sp}}
\newcommand{\cofib}{\mathrm{cofib}}
\newcommand{\fib}{\mathrm{fib}}
\newcommand{\infCat}{\mathrm{Cat}}
\newcommand{\CAlg}{\mathrm{CAlg}}
\newcommand{\PrL}{\mathrm{Pr}^\mathrm{L}}
\newcommand{\Corr}{\mathrm{Corr}}
\newcommand{\DA}{\mathrm{DA}}
\newcommand{\divi}{\mathrm{div}}
\newcommand{\unit}{\mathbf{1}}
\newcommand{\setale}{\mathrm{s\acute{e}t}}
\newcommand{\ketale}{\mathrm{k\acute{e}t}}
\newcommand{\Sch}{\mathrm{Sch}}
\newcommand{\lSch}{\mathrm{lSch}}
\newcommand{\Sm}{\mathrm{Sm}}
\newcommand{\rank}{\mathrm{rank}}
\newcommand{\ver}{\mathrm{ver}}
\newcommand{\map}{\mathrm{map}}
\newcommand{\ol}{\overline}
\newcommand{\pt}{\mathrm{pt}}
\newcommand{\eSm}{\mathrm{eSm}}
\newcommand{\sSm}{\mathrm{sSm}}
\newcommand{\lSm}{\mathrm{lSm}}
\newcommand{\gp}{\mathrm{gp}}
\newcommand{\sat}{\mathrm{sat}}
\newcommand{\Normal}{\mathrm{N}}
\newcommand{\colimit}{\mathop{\mathrm{colim}}}
\newcommand{\Thom}{\mathrm{Th}}
\newcommand{\infSpc}{\mathrm{Spc}}
\newcommand{\boxx}{\square}
\newcommand{\colim}{\mathop{\mathrm{colim}}}
\newcommand{\ul}{\underline}
\newcommand{\Ho}{\mathrm{Ho}}
\newcommand{\ModMZ}{\mathrm{Mod}_{\mathrm{M}\mathbb{Z}}}
\newcommand{\Chow}{\mathrm{Chow}}
\newcommand{\Mod}{\mathrm{Mod}}
\newcommand{\CH}{\mathrm{CH}}
\newcommand{\KH}{\mathrm{KH}}
\newcommand{\ML}{\mathrm{M\Lambda}}
\newcommand{\MZ}{\mathrm{M}\mathbb{Z}}
\newcommand{\BM}{\mathrm{BM}}
\newcommand{\KGL}{\mathrm{KGL}}
\begin{document}
\title{Motivic six-functor formalism for log schemes}
\author{Doosung Park}
\address{Department of Mathematics and Informatics, University of Wuppertal, Germany}
\email{dpark@uni-wuppertal.de}
\subjclass[2020]{Primary 14F42; Secondary 14A21}
\keywords{log motives, motivic homotopy theory, six-functor formalism}

\begin{abstract}
We establish the motivic six-functor formalism for fs log schemes.
In particular,
we prove the exact base change property,
projection formula,
and Poincar\'e duality.
We also define Borel-Moore motivic homology, G-theory, and Chow homology of fs log schemes and the category of Chow motives over fs log schemes.
\end{abstract}

\maketitle

\section{Introduction}

Log geometry, developed by Deligne, Faltings, Fontaine, Illusie, and Kato, gives boundaries to schemes in an algebraic way.
This provides an effective tool to understand the limit behavior of families of schemes.
For example,
let $f\colon X\to S$ be a proper smooth morphism of schemes,
and let $S\to T$ be an open immersion of schemes.
Suppose that we want to study the limit behavior of $f$ in $T-S$.
For this,
we may consider an extension $g\colon Y\to T$ of $f$ such that $g$ is proper flat,
but imposing the smoothness condition on $g$ is often too strong.
What log geometry can help in this situation is that in good cases we can impose log structures on $Y$ and $T$ such that $g$ becomes \emph{log smooth}.
Smooth morphisms are more convenient to deal with than general morphisms,
and log smooth morphisms have a similar privilege.

\

An early application of log geometry is Tsuji's proof \cite{MR1705837} of the $C_{st}$ conjecture in $p$-adic Hodge theory.
To take advantage of log geometry not just in $p$-adic cohomology theories but in broader cohomology theories,
logarithmic motivic homotopy theory,
extending $\A^1$-homotopy theory of schemes initiated by Morel-Voevodsky \cite{MV},
is desirable.
The reason for this is that the $\infty$-category of motivic spectra is a natural place where cohomology theories can live.
Author's joint papers with Binda and {\O}stv{\ae}r \cite{logDM}, \cite{logSH} aim for this with a specialized interest in non $\A^1$-invariant cohomology theories.

\

On the other hand,
we focus on $\A^1$-homotopy theory of log schemes here.
This paper is the last chapter of the series initiated in \cite{logA1} and further continued in \cite{divspc}, \cite{logGysin}, and \cite{logshriek}.
This series covers the author's thesis \cite{ParThesis} with a more flexible setting.

\subsection{Cohomology and Borel-Moore homology of log schemes}

Before digging into the details of the paper,
let us shortly overview recent developments in the area of motivic or Chow theory of log schemes:

\begin{itemize}
\item
In  \cite[Definition 2.7]{1810.03746},
Barrott defines the log Chow group $A_*^\dagger(X)$ for fs log scheme $X$,
which can be formulated as
\[
A_*^\dagger(X)
:=
\colimit_{Y\in X_{\divi}}
\CH_*(Y).
\]
Here, the colimit runs over the category of dividing covers of $X$.
Molcho, Pandharipande, and Schmitt \cite{MR4549706} employ this definition for the moduli spaces of stable curves.
\item B{\"o}hning, Graf von Bothmer, and van Garrel \cite{MR4609665} define the prelog Chow ring for simple normal crossing scheme $X$.
They employ this to study semistable degeneration.
\item Gregory and Langer \cite{2108.02845} define log motivic cohomology of semistable varieties.
They relate this to the log Hyodo-Kato Hodge-Witt sheaves.
\item Ito, Kato, Nakayama, and Usui \cite{MR4043075} define log motives modulo homological or numerical equivalences over an fs log scheme.
Using this,
Kato, Nakayama, and Usui \cite{MR4938005} suggest a simple construction of the category of mixed motives over a field.
\item Shuklin \cite{2209.03720} constructs a motive $M(X)$ in Voevodsky's category of geometric motives $\mathrm{DM}_{gm}(k,\Q)$ for every fs log scheme $X$ over $k$.
As a consequence,
he defines log motivic cohomology of $X$ with coefficients in $\Q$.
\end{itemize}

\

In \S \ref{BM},
we extend Borel-Moore homology theories of schemes to fs log schemes based on the Grothendieck six-functor formalism for fs log schemes.
An advantage of this approach is that the Grothendieck  six-functor formalism provides a recipe for computing cohomology and Borel-Moore homology theories.

\

More precisely,
if $B$ is a base scheme and $\E$ is a commutative algebra object of $\infSH(B)$,
then we define \emph{Borel-Moore $\E$-homology  $\E^\BM(X/S)\in \Sp$ of $X$ relative to $S$} for any morphism $f\colon X\to S$ of fs log schemes.
See Definition \ref{BM.1} for the details.
On the other hand,
we can associated a cohomology theory $\E(X)\in \Sp$ for fs log schemes $X$ over $B$ according to \cite{logA1}.
We have the formal properties of $\E(X)$ as follows:
\begin{itemize}
\item ($\A^1$-invariance)
If $X$ is an fs log scheme,
then the morphism
\[
p^*
\colon
\E(X)\to \E(X\times \A^1)
\]
is an isomorphism,
where $p\colon X\times \A^1\to X$ is the projection.
\item (Invariance under verticalization)
Let $f\colon X\to S$ be an exact log smooth morphism of fs log schemes.
Then the morphism
\[
j^*\colon \E(X)\to \E(X-\partial_S X)
\]
is an isomorphism,
where $j\colon X-\partial_S X\to X$ is the open immersion removing the relative boundary $\partial_S X$.
\item (Invariance under dividing covers) If $f\colon Y\to X$ is a dividing cover of fs log schemes,
then the morphism
\[
f^*\colon \E(X)\to \E(Y)
\]
is an isomorphism.
\item (Strict Nisnevich descent)
Let
\[
Q
:=
\begin{tikzcd}
X'\ar[d,"f'"']\ar[r,"g'"]&
X\ar[d,"f"]
\\
S'\ar[r,"g"]&
S
\end{tikzcd}
\]
be a strict Nisnevich distinguished square,
i.e., $f$, $f'$, $g$, and $g'$ are strict, and $\ul{Q}$ is a Nisnevich distinguished square.
Then $\E(Q)$ is cocartesian.
\end{itemize}
The following property is an immediate consequence of the $\A^1$-invariance and invariance under verticalization:
\begin{itemize}
\item ($\square$-invariance)
If $X$ is an fs log scheme,
then the morphism
\[
p^*
\colon
\E(X)\to \E(X\times \square)
\]
is an isomorphism,
where $p\colon X\times \square\to X$ is the projection.
\end{itemize}

\

The Borel-Moore homology theory $\E^{\BM}$ behaves a bit differently.
For example,
$\E^{\BM}$ is not $\A^1$-invariant in general,
i.e.,
$\E^{\BM}(X\times \A^1) \not \simeq \E^{\BM}(X)$.
Such a thing already happens for motives with compact support, see \cite[Corollary 16.16]{MVW}.
This is a natural phenomenon since topological Borel-Moore homology is not $[0,1)$-invariant.
Likewise,
$\E^{\BM}$ is not invariant under verticalization in general.
On the other hand, $\E^{\BM}$ is $\square$-invariant and invariant under dividing covers since $\square$ and all dividing covers are proper.
Furthermore, $\E^{\BM}$ satisfies strict Nisnevich descent.

\

As special cases,
we define
motivic cohomology,
motivic Borel-Moore homology,
Chow cohomology,
Chow homology,
homotopy K-theory,
and 
G-theory of fs log schemes.
Note that Chow cohomology and Chow homology have to be distinguished even for log smooth case if the boundary is nonempty.

\

Here are the comparisons between our theories and the previous existing theories:

\begin{itemize}
\item The log Chow group in \cite{1810.03746} is not $\square$-invariant,
so it is different from both our Chow cohomology and Chow homology.
\item The prelog Chow ring in \cite{MR4609665} is different from Chow cohomology for a certain fs log scheme, see Example \ref{logChow.12}.
\item Log motivic cohomology of semistable varieties in \cite{2108.02845} is different from our motivic cohomology although they might be closely related,
see \cite[Remark 4.4.5]{logA1}.
\item
We suggest a strategy to compare the category of log motives modulo homological equivalences in \cite{MR4043075} and the category of Chow motives in Definition \ref{logChow.9} 
see Question \ref{logChow.2}.
\item Let $X$ be an fs log scheme over a field $k$.
We expect that $M(X)\in \DM(k,\Q)$ defined in \cite{2209.03720} is isomorphic to $f_!f^!\unit$,
where $f\colon X\to \Spec(k)$ is the structure morphism,
$f_!\colon \Mod_{\mathrm{M}\Q}(X)\to \Mod_{\mathrm{M}\Q}(k)$ is the exceptional direct image functor that is explained below,
and $f^!$ is its right adjoint.
\end{itemize}

\subsection{Summary of the previous paper}

Let $B$ be a finite dimensional noetherian separated base scheme throughout the paper,
and let
\[
\sT\colon (\lSch/B)^{op}
\to
\CAlg(\PrL)
\]
be a log motivic $\infty$-category in the sense of 
%Definition \ref{logmotivic.1}.
\cite[Definition 2.1.1]{logshriek}.
By \cite[Theorem 1.1.1]{logshriek},
\[
\infSH,
\text{ }
\DA(-,\Lambda),
\text{ }
\DA_\setale(-,\Lambda),
\text{ }
\DA_\ketale(-,\Lambda)
\]
in \cite[Definition 2.5.5]{logA1} are examples of a log motivic $\infty$-category,
where $\Lambda$ is a commutative ring.
When the exponential characteristic of a perfect field $k$ is invertible in a commutative ring $\Lambda$,
the $\infty$-category $\Mod_{\mathrm{M}\Lambda}(k)$ is equivalent to $\infDM(k,\Lambda)$ as observed in \cite[\S 1.1]{logshriek},
where $\mathrm{M}\Lambda$ denotes the motivic Eilenberg-MacLane spectrum.

\

We have the functor
\[
\sT^{ex}\colon (\lSch/B)^{op}\to \CAlg(\PrL)
\]
such that for $S\in \lSch/B$,
$\sT^{ex}(S)$ is the full subcategory generated under colimits by $M_S(X)(d)[n]$ for all $X\in \eSm/S$ and integers $d$ and $n$,
see
%Definition \ref{restriction.1}.
\cite[Definition 2.3.1]{logshriek}.
Here, $\eSm$ denotes the class of exact log smooth morphisms in $\lSch/B$.

\

In the following,
we summarize the results in \cite{logshriek} that will form the basis of the results in this paper.

\begin{thm}
\label{introduction.1}
The above functor $\sT^{ex}$ satisfies the following properties.
\begin{itemize}
\item For every morphism $f\colon X\to S$ in $\lSch/B$,
the functor $f^*:=\sT^{ex}(f)$ admits a right adjoint $f_*$.
If $f\in \eSm$,
the functor $f^*$ admits a left adjoint $f_\sharp$.
\item \textup{($\eSm$-BC)}
Let
\[
\begin{tikzcd}
X'\ar[d,"f'"']\ar[r,"g'"]&
X\ar[d,"f"]
\\
S'\ar[r,"g"]&
S
\end{tikzcd}
\]
be a cartesian square in $\lSch/B$ such that $f\in \eSm$.
Then the induced natural transformation
\[
f_\sharp'g'^*
\xrightarrow{Ex}
g^*f_\sharp
\]
is an isomorphism.
\item \textup{($\eSm$-PF)}
Let $f\colon X\to S$ be an exact log smooth morphism in $\lSch/B$.
Then the induced natural morphism
\[
f_\sharp(\cF\otimes f^*\cG)
\xrightarrow{Ex}
f_\sharp\cF\otimes \cG
\]
is an isomorphism for $\cF\in \sT^{ex}(X)$ and $\cG\in \sT^{ex}(S)$.
\item \textup{($\A^1$-inv)}
Let $p\colon X\times \A^1\to X$ be the projection,
where $X\in \lSch/B$.
Then $p^*$ is fully faithful.
\item \textup{($\divi$-inv)}
Let $f\colon X\to S$ be a dividing cover in $\lSch/B$.
Then $f^*$ is fully faithful.
\item \textup{($\ver$-inv)}
Let $f\colon X\to S$ be an exact log smooth morphism in $\lSch/B$, and let $j\colon X-\partial_S X\to X$ be the open immersion.
Then the natural transformation
\[
f^*
\xrightarrow{ad}
j_*j^*f^*
\]
is an isomorphism.
\item \textup{($\P^1$-stab)}
For $S\in \lSch/B$, $\unit_S(1)$ is $\otimes$-invertible.
\item \textup{(Loc)}
For a strict closed immersion $i$ in $\lSch/B$ with its open complement $j$,
the pair $(i^*,j^*)$ is conservative, and $i_*$ is fully faithful.
\item \textup{(Supp)}
Let
\[
\begin{tikzcd}
U'\ar[d,"f'"']\ar[r,"j'"]&
U\ar[d,"f"]
\\
S'\ar[r,"j"]&
S
\end{tikzcd}
\]
be a cartesian square in $\lSch/B$ such that $j$ is an open immersion.
Then the natural transformation
\[
j_\sharp f_*'
\xrightarrow{Ex}
f_*j_\sharp'
\]
is an isomorphism.
\item 
Let $(\lSch/B)_{\mathrm{comp}}$ be the subcategory of $\lSch/B$ spanned by compactifiable morphisms.
Then there exists a functor
\[
\sT_!^{ex}
\colon
(\lSch/B)_{\mathrm{comp}}
\to
\infCat_\infty
\]
such that $\sT_!^{ex}(f)\simeq f_*$ (resp.\ $\sT_!^{ex}(f)\simeq f_\sharp$) if $f$ is a proper morphism (resp.\ an open immersion).
\item \textup{(PF)}
Let $f\colon X\to S$ be a compactifiable morphism in $\lSch/B$.
Then there exists a natural isomorphism
\[
f_!\cF \otimes \cG
\simeq
f_!(\cF\otimes f^*\cG)
\]
for $\cF\in \sT^{ex}(X)$ and $\cG\in \sT^{ex}(S)$.
\end{itemize}
\end{thm}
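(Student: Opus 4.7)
The plan is to extract each item from the corresponding result in \cite{logshriek}, organizing by the depth of argument required. It is helpful to group the statements into three tiers.

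\emph{Formal consequences.} The existence of $f_*$ for every $f$ follows from the adjoint functor theorem, since $f^* = \sT^\ex(f)$ is a colimit-preserving symmetric monoidal functor between presentable $\infty$-categories. For $f \in \eSm$, the existence of $f_\sharp$ together with $\eSm$-BC and $\eSm$-PF is essentially forced by the generators: $\sT^\ex(S)$ is generated by $M_S(X)(d)[n]$ with $X \in \eSm/S$, and the assignment $M_X(Y)(d)[n] \mapsto M_S(Y)(d)[n]$ extends to a left adjoint of $f^*$. Base change and the projection formula then reduce to computations on these generators via Yoneda. The $\A^1$-invariance, dividing invariance, and $\P^1$-stability are part of the definition of a log motivic $\infty$-category.

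\emph{Geometric input.} The verticalization invariance $f^* \simeq j_* j^* f^*$ should be extracted from the exact log smooth Gysin machinery of \cite{logGysin}: one reduces to checking it on exact log smooth generators and uses that the complement of the vertical locus is built from strict divisors that become dividing after admissible blow-up. The localization property is the most delicate item in this tier; the plan is to imitate the Morel--Voevodsky argument for strict closed immersions, exploiting that on underlying schemes a strict closed immersion is an ordinary closed immersion and that $\sT$ satisfies the analogous localization in the strict Nisnevich topology. The support property follows formally once Loc and $\eSm$-BC are in place, by combining the conservativity of $(i^*,j^*)$ with the open-immersion case of base change.

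\emph{Construction of $f_!$.} The main obstacle is the coherent construction of the functor $\sT^\ex_!$ on the subcategory of compactifiable morphisms, together with its projection formula. Following the Deligne pattern, for a factorization $f = \bar f \circ j$ with $j$ an open immersion and $\bar f$ proper, one sets $f_! := \bar f_* j_\sharp$; independence of the factorization follows from Supp, combined with the identification $\bar f_* \simeq \bar f_!$ for proper $\bar f$. The coherent extension to an $\infty$-functor on compactifiable morphisms is obtained via the Liu--Zheng / Gaitsgory--Rozenblyum span-category machinery applied to the pair (proper maps, open immersions); this is the step that carries the genuine technical weight. The projection formula (PF) for $f_!$ then reduces to $\eSm$-PF on the $j_\sharp$ part and to a proper projection formula (arising from the ambidexterity for proper maps) on the $\bar f_*$ part.

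The hard part is therefore not any individual property but rather the coherent assembly of $\sT^\ex_!$ as an $\infty$-functor on compactifiable morphisms; everything else either is axiomatic or reduces by a generator/adjunction argument. My plan is accordingly to cite the relevant theorems of \cite{logshriek} in the order above, flagging the span-category construction of $\sT^\ex_!$ as the place where the real work hides.
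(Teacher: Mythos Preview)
Your plan to reduce everything to citations of \cite{logshriek} is exactly what the paper does, and in that sense the proposal is correct. The paper's own proof is two sentences: the first eight items are formal consequences of the axioms of a log motivic $\infty$-category (citing \cite[Theorem 2.3.5]{logshriek}), and the last three (Supp, the construction of $\sT^\ex_!$, and PF) are the substantive results \cite[Theorems 1.2.1, 1.2.2, 3.7.8]{logshriek}.

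Where your organization diverges from the paper's is in the attribution of difficulty. You place ($ver$-inv) and (Loc) in a ``geometric input'' tier requiring Gysin machinery or a Morel--Voevodsky-style argument, whereas the paper treats both as formal consequences of the axioms, bundled with ($\A^1$-inv), ($div$-inv), etc. Conversely, you describe (Supp) as following formally from (Loc) and ($\eSm$-BC), but the paper groups (Supp) with the construction of $f_!$ and (PF) as one of the three results that genuinely require the work of \cite{logshriek}. In the classical scheme-theoretic setting your heuristic for (Supp) is reasonable, but in the log setting the argument is not purely formal and is the content of \cite[Theorem 1.2.1]{logshriek}. This does not make your proof wrong---you are still citing the right source---but your narrative about where the real work lies is inverted on these points.
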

\begin{proof}
The first 8 properties are formal consequences of the axioms of log motivic $\infty$-categories,
see \cite[Theorem 2.3.5]{logshriek}.
The last 3 properties are proved in \cite[Theorems 1.2.1, 1.2.2, 3.7.8]{logshriek}.
\end{proof}

\subsection{The result in this paper}

Our main theorem is as follows,
which establishes the Grothendieck six-functor formalism for $\sT^{ex}$ together with Theorem \ref{introduction.1}:

\begin{thm}
\label{introduction.2}
Let $\sT$ be a log motivic $\infty$-category.
Assume that $M_S(X)$ is compact in $\sT^{ex}(X)$ for every vertical exact log smooth morphism $X\to S$.
Then the functor $\sT^{ex}\colon (\lSch/B)^{op}\to \CAlg(\PrL)$ satisfies the following properties.
\begin{enumerate}
\item[\textup{(1)}]
\emph{Exact base change.}
Let
\[
\begin{tikzcd}
X'\ar[d,"f'"']\ar[r,"g'"]&
X\ar[d,"f"]
\\
S'\ar[r,"g"]&
S
\end{tikzcd}
\]
be a cartesian square in $\lSch/B$ such that $f$ is compactifiable.
If $f$ or $g$ is exact,
then there exists a natural isomorphism
\[
g^*f_!
\xrightarrow{Ex}
f_!'g'^*.
\]
\item[\textup{(2)}]
\emph{Verdier duality.}
Let $f$ be a separated morphism in $\lSch/B$.
Then $f_!$ admits a right adjoint $f^!$.
\item[\textup{(3)}] \emph{Poincar\'e duality.}
Let $f\colon X\to S$ be a separated vertical exact log smooth morphism in $\lSch/B$.
Then there exists a natural isomorphism
\[
f_\sharp \cF
\simeq
f_!(\cF \otimes \Thom(T_f))
\]
for $\cF\in \sT^{ex}(X)$,
where $\Thom(T_f)\in \sT^{ex}(X)$ denotes the Thom motive of the tangent bundle $T_f$ of $f$ in the sense of \textup{Definition \ref{thom.4}}.
\item[\textup{(4)}]
\emph{Lefschetz duality for a special case}. 
Let $f\colon X\to S$ be a separated log smooth morphism in $\lSch/B$.
If $S$ has the trivial log structure,
then there exists a natural isomorphism
\[
f^! \cG
\simeq
j_\sharp j^*f^*\cG
\otimes
\Thom(T_f)
\]
for $\cG\in \sT^{ex}(S)$,
where $T_f$ is the tangent bundle of $f$,
and $j\colon X-\partial X\to X$ is the obvious open immersion.
\end{enumerate}
\end{thm}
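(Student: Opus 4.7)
My plan is to prove the four parts in order, with \textup{(3)} as the technical heart and \textup{(4)} deduced from it via a boundary-vanishing argument. For \textup{(1)}, factor a compactifiable morphism $f$ as $f=\bar{f}\circ j$ with $\bar{f}$ proper and $j$ an open immersion, so that $f_!=\bar{f}_*\circ j_\sharp$. The $j_\sharp$-factor is dealt with by the (Supp) axiom of Theorem~\ref{introduction.1}, reducing the problem to proper base change for $\bar{f}_*$. Testing the comparison on the compact generators $M_X(Y)$ with $Y\to X$ vertical exact log smooth, I would reduce to \textup{($\eSm$-BC)} on the pulled-back square; exactness of $\bar{f}$ (inherited from $f$) or of $g$ is precisely what guarantees that the fs cartesian pullback agrees with its underlying scheme-theoretic one on such generators, preventing saturation phenomena from obstructing base change. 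For \textup{(2)}, the adjoint functor theorem in the presentable $\infty$-category $\sT^\ex(X)$ produces $f^!$ once $f_!$ is known to preserve small colimits. The compactness hypothesis ensures $\sT^\ex(X)$ is compactly generated by $M_S(Y)(d)[n]$ with $Y\to S$ vertical exact log smooth, so it suffices to verify that $f_!$ sends such compact generators to compact objects; this follows from \textup{(1)} together with the projection formula \textup{(PF)}.

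For \textup{(3)} I would follow the Ayoub/Cisinski--D\'eglise template adapted to the logarithmic world via \cite{logGysin}. First construct the purity natural transformation $\rho_f\colon f_\sharp\to f_!(-\otimes\Thom(T_f))$, or equivalently $\Thom(T_f)\to f^!\unit_S$, by assembling the Gysin map for strict closed immersions of \cite{logGysin}, the projection formula, and $\P^1$-stability. Next factor $f=p_2\circ\Delta_f$, with $\Delta_f\colon X\to X\times_S X$ a strict closed immersion (because $f$ is separated) and $p_2$ itself vertical exact log smooth; Poincar\'e duality for $p_2$ is formal from \textup{($\eSm$-PF)} and the Thom isomorphism for vector bundles, while Poincar\'e duality for $\Delta_f$ is log absolute purity, to be proved by the logarithmic deformation to the normal bundle of \cite{logGysin} identifying the Gysin class of $\Delta_f$ with that of the zero section of $N_{\Delta_f}\simeq T_f$, where the Thom isomorphism is the whole content. \emph{The main obstacle is verifying that $\rho_f$ is an isomorphism}: classically the deformation argument uses $\A^1$-invariance of $f_!$ along the deformation parameter, but as noted in the introduction $f_!$ is not $\A^1$-invariant in the log setting, so one must replace $\A^1$-invariance by $\square$-invariance and carefully track the log structure on the deformation space to ensure the zero-fiber computation still forces the global Gysin map to become the Thom isomorphism.

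For \textup{(4)}, apply \textup{(3)} to the composite $g:=f\circ j\colon U\to S$; since $S$ carries the trivial log structure, $U=X-\partial X$ has trivial log structure, so $g$ is separated vertical exact log smooth. Passing to right adjoints in the Poincar\'e duality isomorphism for $g$ yields $g^!\cG\simeq g^*\cG\otimes\Thom(T_g)$, and since $j$ is \'etale one has $j^!=j^*$ and $T_g\simeq j^*T_f$, whence $j^*f^!\cG\simeq j^*(f^*\cG\otimes\Thom(T_f))$. Applying $j_\sharp$ and the projection formula \textup{($\eSm$-PF)} for $j$ gives $j_\sharp j^*f^!\cG\simeq j_\sharp j^*f^*\cG\otimes\Thom(T_f)$, and it only remains to identify the counit $j_\sharp j^*f^!\cG\to f^!\cG$ as an isomorphism. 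By the recollement attached to the complementary strict closed immersion $i\colon\partial X\to X$ (\textup{Loc}), this amounts to the vanishing $i^*f^!\cG=0$; I expect to derive it by stratifying $\partial X$ into log smooth strata and reducing \'etale-locally to a product of a strict smooth morphism with a toric morphism, where the ver-inv property of Theorem~\ref{introduction.1} kills the toric factor stratum by stratum.
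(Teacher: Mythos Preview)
Your proposal contains a genuine error in part \textup{(3)} and misses the paper's central technical tool, which also undermines your sketch for \textup{(1)}.

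\textbf{The diagonal is not strict.} You write that $\Delta_f\colon X\to X\times_S X$ is a strict closed immersion ``because $f$ is separated''. Separatedness only makes $\ul{\Delta_f}$ a closed immersion; in the log world $\Delta_f$ is strict essentially only when $f$ itself is strict. For example, with $f\colon\A_\N\to\Spec(\Z)$ the diagonal corresponds to the addition map $\N^2\to\N$, which is not strict. Consequently ``log absolute purity for $\Delta_f$ via the Gysin map for strict closed immersions'' is not available, and the assertion that ``Poincar\'e duality for $p_2$ is formal from \textup{($\eSm$-PF)}'' is also off: $p_2$ is a morphism of exactly the same type as $f$, so nothing has been reduced. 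The paper does use the diagonal to \emph{define} the purity transformation $\mathfrak{p}_f\colon f_\sharp\to f_!\Sigma_f$ (with $\Sigma_f=p_{2\sharp}a_*$ for the non-strict diagonal $a$), but proving it is an isomorphism proceeds by a completely different route: reduction via charts and stratifications to the toric model $S\times_{\A_P}\A_Q\to S$ (Proposition~\ref{AQAP.4}), where the argument combines an explicit boundary-vanishing computation (Lemmas~\ref{AQAP.1}--\ref{AQAP.5}) with purity for the strict locus. The linchpin is Theorem~\ref{homeomorphism.5}, which says that $f^*$ is an equivalence whenever $f$ is a \emph{virtual isomorphism} ($\ul{f}$ an isomorphism and $\cM_S^\gp\to\cM_X^\gp$ an isomorphism); this is where the compactness hypothesis is actually consumed, and it lets one swap the non-free toric model for a free one where the computation becomes tractable.

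\textbf{This also affects \textup{(1)}.} Your reduction to proper base change is correct, but the claim that exactness of $f$ or $g$ ``prevents saturation phenomena from obstructing base change'' on compact generators is not an argument. The paper's proof of Proposition~\ref{base2.5} again reduces (via stratification and charts) to the toric model $\ul{S}\times\pt_Q\to\ul{S}\times\pt_P$ and then invokes virtual isomorphisms to pass to a tractable case (Lemmas~\ref{base2.1}--\ref{base2.4}); the case $g$ exact, $f$ arbitrary is handled separately in \cite[Corollary~3.7.6]{logshriek}.

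\textbf{Minor issue in \textup{(2)}.} To obtain $f^!$ you need $f_!$ to preserve colimits, and for this it suffices that $f_*$ preserves colimits (then combine with $j_\sharp$). The relevant fact is that $f^*$ sends the compact generators $M_X(V)(d)$ to $M_Y(V\times_X Y)(d)$, which are compact by hypothesis; hence $f_*$ preserves sums. Your phrasing ``$f_!$ sends compact generators to compact objects'' is the wrong direction.

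Your outline for \textup{(4)} is closest to the paper: you correctly identify the reduction to showing $j_\sharp j^*f^!\to f^!$ is an isomorphism, equivalently $i^*f^!=0$. The paper establishes this (Theorem~\ref{Lefschetz.4}) by testing against generators, using the already-proven Poincar\'e duality~\textup{(3)} to rewrite $v^!\simeq v^*(d)[2d]$, and then reducing to the chart case $X\to S\times\A_{\N^d}$ strict \'etale where an induction on $d$ and the $(\square\text{-inv})$/$(\A^1\text{-inv})$ axioms finish the job (Lemmas~\ref{Lefschetz.5}--\ref{Lefschetz.7}); $(\ver\text{-inv})$ alone does not do it.
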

\begin{proof}
Proposition \ref{base2.5} and \cite[Corollary 3.7.6]{logshriek} show (1).
Proposition \ref{Verdier.9} shows (2).
Theorem \ref{poincare.1} shows (3).
Theorem \ref{Lefschetz.4} shows (4).
\end{proof}

Note that $\infSH^{ex}$ satisfies the above compact assumption by the proof of \cite[Theorem 2.1.2]{logshriek}.

\

Let $M$ be an $n$-dimensional orientable compact manifold with boundary $\partial M$.
Assume that $\partial M=A\cup B$ for some $(n-1)$-dimensional compact submanifolds $A$ and $B$ with a common boundary $\partial A=\partial B$.
Then the classical Lefschetz duality means that there exists a canonical isomorphism
\[
H^k(M,A;\Z)
\simeq
H_k(M,B;\Z).
\]
We do not exclude the case when $A$, $B$, or $A\cap B$ is empty.
If $f\colon X\to S$ is a separated exact log smooth morphism in $\lSch/B$ with the tangent bundle $T_f$,
then there exists a natural isomorphism
\[
f_\sharp \cF \simeq f_!j_\sharp j^* (\cF \otimes \Thom(T_f))
\]
for $\cF\in \sT^{ex}(X)$ by the Poincar\'e duality and ($\ver$-inv),
where $j\colon X-\partial_S X\to X$ be the obvious open immersion.
This is analogous to the Lefschetz duality for the case when $A=\emptyset$.
On the other hand,
we expect that there exists a natural isomorphism
\[
f^!\cG
\simeq
j_\sharp j^* f^*\cG
\otimes
\Thom(T_f)
\]
for $\cG\in \sT^{ex}(S)$,
which is analogous to the Lefschetz duality for the case when $B=\emptyset$.
Theorem \ref{Lefschetz.4} only deals with the case when $S$ has the trivial log structure.

\subsection{Chow motives over the standard log point}

For a perfect field $k$,
the category of Grothendieck's Chow motives $\Chow(k)$ is a full subcategory of $\DM(k)$,
see \cite[Remark 20.2]{MVW}.
For an fs log scheme $S$,
we define $\Chow(S)$ as the idempotent completion of the full subcategory of the homotopy category of $\Mod_{\MZ}(S)$ consisting of $M_S(X)(r)[2r]$ for every projective exact vertical log smooth morphism $X\to S$ and integer $r$.

\

The standard log point $\pt_{\N,k}$ over $k$ is the simplest log scheme over $k$ that is not a scheme.
We are primarily interested in $\Chow(\pt_{\N,k})$ since in this category,
we can use the classical arguments for Chow motives and the logarithmic arguments developed in this paper simultaneously.

\

In \cite{MR4938005},
it is suggested that the category of mixed motives over $k$ is a full subcategory of the category of pure motives over $\pt_{\N,k}$.
Such a thing should not hold at the level of Chow motives,
but the category of pure motives over $\pt_{\N,k}$ has more objects than the category of pure motives over $k$ has.
We have a similar phenomenon for Chow motives as follows.
Observe that we have $\unit(d)[n]\in \Chow(k)$ if and only if $n=2d$.

\begin{prop}[Proposition \ref{logChow.15}]
For all integers $d$ and $n$,
we have
\[
\unit(d)[n]
\in
\Chow(\pt_{\N,k}).
\]
\end{prop}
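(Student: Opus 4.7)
The plan is to produce, for every integer $m$, a projective exact vertical log smooth morphism $X_m \to \pt_{\N, k}$ and an integer $r_m$ such that $\unit[m]$ occurs as a direct summand of $M_{\pt_{\N, k}}(X_m)(r_m)[2 r_m]$. Since $\unit(1)[2]$ is $\otimes$-invertible by ($\P^1$-stab), $\Chow(\pt_{\N, k})$ is stable under Tate twists $\otimes \unit(\pm 1)[\pm 2]$. Writing $\unit(d)[n] = \unit[n - 2d] \otimes \unit(d)[2d]$, the proposition then reduces to showing $\unit[m] \in \Chow(\pt_{\N, k})$ for every integer $m$.

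The fundamental example is $X_1 = \overline{C}_0$, the nodal projective rational curve $\{y^2 z = x^3 + x^2 z\} \subset \P^2_k$ equipped with the natural semistable log structure (characteristic monoid $\N^2$ at the node, trivial on the smooth locus), together with the morphism $\overline{C}_0 \to \pt_{\N, k}$ given on charts by $\N \to \N^2$, $1 \mapsto (1, 1)$. By Kato's criterion this morphism is log smooth; it is clearly projective, exact (the chart $\N \to \N^2$ is a face), and vertical (the relative characteristic at the node groupifies to $\Z$). The motive $M(\overline{C}_0)$ is then computed using the log Gysin sequence and log blow-up formulas of \cite{logGysin}, or equivalently via a strict Nisnevich descent square involving the normalization. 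The resulting decomposition exhibits $\unit[1]$ as a direct summand of $M(\overline{C}_0)$, reflecting the classical identity $M(\mathbb{G}_m) = \unit \oplus \unit(1)[1]$ applied to the smooth locus of $\overline{C}_0$.

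For $m \geq 1$, iterating: the tensor powers $\unit[1]^{\otimes m} = \unit[m]$ lie in $\Chow(\pt_{\N, k})$ by K\"unneth applied to the fs fiber product $\overline{C}_0 \times_{\pt_{\N, k}}^{\mathrm{fs}} \cdots \times_{\pt_{\N, k}}^{\mathrm{fs}} \overline{C}_0$, which is again projective exact vertical log smooth over $\pt_{\N, k}$. For $m \leq -1$, apply dualization in $\Chow(\pt_{\N, k})$: since every projective exact vertical log smooth $X \to \pt_{\N, k}$ gives rise to a strongly dualizable motive by Poincar\'e duality (Theorem \ref{introduction.2}(3)), $\Chow(\pt_{\N, k})$ is a rigid symmetric monoidal category, and the dual of $\unit[-m] \in \Chow(\pt_{\N, k})$ is $\unit[m]$.

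The principal obstacle is the motivic decomposition of $\overline{C}_0$ exhibiting the $\unit[1]$ summand. Rigorously establishing this in $\Mod_{\MZ}(\pt_{\N, k})$ requires combining strict Nisnevich descent, log blow-up invariance, and the log Gysin sequence from \cite{logGysin}; these let one reduce the computation of $M(\overline{C}_0)$ to classical computations of $M(\P^1)$ and of the motive of a point, with the extra $\unit[1]$-summand arising from the non-trivial identification of the two branches at the node, analogous to the $\unit(1)[1]$-summand of $M(\mathbb{G}_m)$ in Voevodsky's theory.
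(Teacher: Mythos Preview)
Your overall strategy matches the paper's: exhibit $\unit[1]$ as a retract of the motive of a projective vertical exact log smooth curve over $\pt_{\N,k}$, then generate all $\unit(d)[n]$ by tensoring. The paper, however, does not use the nodal cubic. It uses the N\'eron $2$-gon $E$ (two copies of $\P^1$ glued along $0$ and $\infty$), computed explicitly in Example~\ref{logChow.12}. Because $E$ has \emph{two} nodes, the Zariski cover $\{E\setminus\{0\},\,E\setminus\{\infty\}\}$ consists of two copies of the standard affine node $\Spec(\N^2\to k[x,y]/(xy))$, and \cite[Proposition~2.4.2]{logshriek} identifies each $M_{\pt_{\N,k}}(U_i)$ with $M_{\pt_{\N,k}}(V_i)\simeq\unit\oplus\unit(1)[1]$. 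Mayer--Vietoris then gives $M_{\pt_{\N,k}}(E)\simeq\unit\oplus\unit[1]\oplus\unit(1)[1]\oplus\unit(1)[2]$ directly. From this the paper reads off $\unit[1],\unit(1)[1]\in\Chow(\pt_{\N,k})$, notes that $\unit(-1)[-2]\in\Chow(\pt_{\N,k})$ tautologically, and tensors these three to reach every $\unit(d)[n]$. No duality argument is needed.

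Your nodal cubic $\overline{C}_0$ should yield the same decomposition, but the argument you sketch does not establish it. The normalization $\P^1\to\overline{C}_0$ is not \'etale at the node, so it does not sit in any strict Nisnevich square; ``strict Nisnevich descent involving the normalization'' is therefore not available. Nor does $\overline{C}_0$ admit a Zariski cover by standard affine nodes: it has a single node, and its smooth locus is a connected $\G_m$, so any affine open containing the node is already almost all of $\overline{C}_0$ and is not of the form treated by \cite[Proposition~2.4.2]{logshriek}. You acknowledge this as ``the principal obstacle'' but do not resolve it; as written, the decomposition of $M(\overline{C}_0)$ is asserted rather than proved. (Also, a small slip: the diagonal $\N\to\N^2$ is exact but is not a face inclusion, so your stated reason for exactness is incorrect even though the conclusion holds.)

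The fix is simply to replace $\overline{C}_0$ by the $2$-gon $E$, after which the computation in Example~\ref{logChow.12} goes through and your Step~1 is complete. Alternatively, if you insist on the $1$-gon, you would need a genuinely different computation---for instance via the localization triangle for the strict closed immersion of the node combined with Poincar\'e duality and the identification of $(fi)_!\unit$ for $fi\colon\pt_{\N^2,k}\to\pt_{\N,k}$---none of which you have carried out.
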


We consider a toroidal model of an elliptic curve in Example \ref{logChow.12},
which is an fs log scheme $E$ over $\pt_{\N,k}$ whose underlying scheme is the gluing of two copies of $\P^1$ along the two points $0$ and $\infty$.
When $k=\C$,
the Kato-Nakayama realization yields a map $E_{\log}\to S^1$.
Figure \ref{fig} illustrates the fiber of this map at a point,
which is homeomorphic to the topological torus.
\begin{figure}
\centering
\begin{tikzpicture}[scale = 0.75]
  \shade[ball color = gray!40, opacity = 0.4] (0,0) circle (2);
  \draw (0,0) circle (2);
  \draw (-2,0) arc (180:360:2 and 0.6);
  \draw[dashed] (2,0) arc (0:180:2 and 0.6);
  \fill[fill=black] (0,0) circle (1pt);
  \filldraw[fill=white, thick] (0,1.6) ellipse (0.4 and 0.28);
  \filldraw[fill=white, thick] (0,-1.6) ellipse (0.4 and 0.28);
  \fill[fill=black] (0.1,1.32)--(-0.1,1.24)--(-0.1,1.40);
  \node at (0,1.1) {$a$};
  \fill[fill=black] (0.1,-1.32)--(-0.1,-1.24)--(-0.1,-1.40);
  \node at (0,-1.05) {$b$};
\begin{scope}[shift={(6,0)}]
  \shade[ball color = gray!40, opacity = 0.4] (0,0) circle (2);
  \draw (0,0) circle (2);
  \draw (-2,0) arc (180:360:2 and 0.6);
  \draw[dashed] (2,0) arc (0:180:2 and 0.6);
  \fill[fill=black] (0,0) circle (1pt);
  \filldraw[fill=white, thick] (0,1.6) ellipse (0.4 and 0.28);
  \filldraw[fill=white, thick] (0,-1.6) ellipse (0.4 and 0.28);
  \fill[fill=black] (0.1,1.32)--(-0.1,1.24)--(-0.1,1.40);
  \node at (0,1.1) {$a$};
  \fill[fill=black] (0.1,-1.32)--(-0.1,-1.24)--(-0.1,-1.40);
  \node at (0,-1.05) {$b$};
\end{scope}
\end{tikzpicture}
\caption{The fiber of $E_{\log}\to S^1$}\label{fig}
\end{figure}
In Example \ref{logChow.12},
we show that there is a decomposition
\[
M_{\pt_{\N,k}}(E)
\simeq
\unit \oplus \unit[1]\oplus \unit(1)[1] \oplus \unit(1)[2].
\]
This is reminiscent of the Hodge diamond of an elliptic curve:
\[
\begin{array}{ccc}
&1
\\
1 && 1
\\
&1
\end{array}
\]

\subsection*{Acknowledgements}

This research was conducted in the framework of the DFG-funded research training group GRK 2240: \emph{Algebro-Geometric Methods in Algebra, Arithmetic and Topology}.

\subsection*{Notation and conventions}

Our standard reference for log geometry is Ogus's book \cite{Ogu}.
We employ the following notation throughout this paper:

\begin{tabular}{l|l}
$B$ & finite dimensional noetherian base scheme
\\
$\Sch/B$ & category of schemes of finite type over $B$
\\
$\lSch/B$ &  category of fs log schemes of finite type over $B$
\\
$\Sm$ & class of smooth morphisms in $\Sch/B$
\\
$\lSm$ & class of log smooth morphisms in $\lSch/B$
\\
$\eSm$ & class of exact log smooth morphisms in $\lSch/B$
\\
$\sSm$ & class of strict smooth morphisms in $\lSch/B$
\\
$\Hom_{\cC}$ & Hom space in an $\infty$-category $\cC$
\\
$\hom_{\cC}$ & Hom spectra in a stable $\infty$-category $\cC$
\\
$\infSpc$ & $\infty$-category of spaces
\\
$\sT$ & log motivic $\infty$-category
\\
$\id\xrightarrow{ad}f_*f^*$ & unit of an adjunction $(f^*,f_*)$
\\
$f^*f_*\xrightarrow{ad'}\id$ & counit of an adjunction $(f^*,f_*)$
\end{tabular}

\section{Purity}

Throughout this section,
we fix a log motivic $\infty$-category $\sT$.
The purpose of this section is to define the purity transformation
\[
\mathfrak{p}_f^n\colon f_\sharp \to f_!\Sigma_f^n
\]
for every separated exact log smooth morphism $f$,
which we do in \S \ref{puritytran}.
If $\mathfrak{p}_f^n$ is an isomorphism,
then we say that $f$ is pure.
We show in Propositions \ref{puritytran.2} and \ref{puritytran.3} that certain morphisms are pure,
but showing that every separated vertical exact log smooth morphism is pure is done in \S \ref{poincare}.

The content of \S \ref{thom}--\ref{composition} is the translation of \cite[\S 4]{logGysin} from the setting of $\sT$ to the setting of $\sT^{ex}$.
This is needed in \S \ref{naturality} to show various functorial properties of purity transformations.

\subsection{Thom transformations}
\label{thom}

Throughout this subsection, we fix a separated exact log smooth morphism $p\colon X\to S$ in $\lSch/B$ with a section $a\colon S\to X$.
Observe that $a$ is a closed immersion in the sense of \cite[Definition III.2.3.1]{Ogu}.

For $S\in \lSch/B$,
let
\[
\varphi_\sharp \colon \sT^{ex}(S)\to \sT(S)
\]
denote the inclusion functor.
Its right adjoint $\varphi^*$ exists as observed in \cite[Corollary 2.3.3]{logshriek}.
Recall from \cite[(2.3.1), (2.3.3)]{logshriek} that $\varphi_\sharp$ commutes with $f^*$ for all morphisms $f$ and $f_\sharp$ for all exact log smooth morphisms $f$.

\begin{df}
Assume that $i\colon Z\to X$ is a closed immersion in $\eSm/S$.
By \cite[Proposition 9.15]{divspc}, the kernel of the induced morphism $i^*\Omega_{X/S}^1\to \Omega_{Z/S}^1$ is a locally free $\cO_Z$-module.
The \emph{normal bundle of $Z$ in $X$ over $S$}, denoted $\Normal_{Z/S}X$, is defined to be the vector bundle associated with the dual of the kernel.
We simply use the notation $\Normal_Z X$ instead when $S$ is clear from the context.
\end{df}

\begin{df}
Let $p_n\colon \Normal_S X\to S$ be the projection, and let $a_n\colon S\to \Normal_S X$ be the $0$-section.
We set
\begin{gather*}
\Thom(p,a)
:=
p_\sharp a_*,
\;
\Thom_n(p,a)
:=
p_{n\sharp}a_{n*},
\\
\Thom_{\divi}(p,a)
:=
\varphi^*p_\sharp a_* \varphi_\sharp,
\;
\Thom_{n\divi}(p,a)
:=
\varphi^*p_{n\sharp} a_{n*}\varphi_\sharp.
\end{gather*}
\end{df}

We have the natural transformation
\begin{equation}
\label{thom.1.1}
Ex\colon \varphi_\sharp \Thom(p,a)
\to
\Thom(p,a)\varphi_\sharp
\end{equation}
given by the composition
\[
\varphi_\sharp p_\sharp a_*
\xrightarrow{\simeq}
p_\sharp \varphi_\sharp a_*
\xrightarrow{Ex}
p_\sharp a_* \varphi_\sharp.
\]
When $p=p_n$ and $a=a_n$, we have the natural transformation
\begin{equation}
\label{thom.1.2}
Ex\colon \varphi_\sharp \Thom_n(p,a)
\to
\Thom_n(p,a)\varphi_\sharp
\end{equation}

\begin{prop}
\label{thom.1}
If $a$ is a strict closed immersion, then \eqref{thom.1.1} is an isomorphism.
\end{prop}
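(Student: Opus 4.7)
The plan is to reduce \eqref{thom.1.1} to a single exchange for the closed immersion $a$. The composite defining \eqref{thom.1.1} factors as
\[
\varphi_\sharp p_\sharp a_* \xrightarrow{\simeq} p_\sharp \varphi_\sharp a_* \xrightarrow{Ex} p_\sharp a_* \varphi_\sharp,
\]
and the first arrow is already an isomorphism because $\varphi_\sharp$ commutes with $f_\sharp$ for every exact log smooth morphism $f$, as recalled in the summary of the previous paper. Thus the statement is equivalent to the claim that the exchange $Ex\colon \varphi_\sharp a_* \to a_* \varphi_\sharp$ is an isomorphism whenever $a$ is a strict closed immersion.

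To handle this, I would invoke the localization property (Loc) from Theorem \ref{introduction.1}. Let $j\colon U \to X$ denote the complementary open immersion. Since $a$ is strict, so is $j$, and in particular $j$ belongs to $\eSm$. Property (Loc) makes the pair $(a^*, j^*)$ jointly conservative on $\sT(X)$, so it is enough to verify that $Ex$ becomes an isomorphism after applying each of $a^*$ and $j^*$. Because $\varphi_\sharp$ commutes with pullback along any morphism, both checks reduce to identities of well-understood functors: applying $a^*$ yields $\varphi_\sharp a^* a_* \simeq \varphi_\sharp \simeq a^* a_* \varphi_\sharp$ using full faithfulness of $a_*$ from (Loc); applying $j^*$ yields $\varphi_\sharp j^* a_* \simeq 0 \simeq j^* a_* \varphi_\sharp$ using the standard vanishing $j^* a_* \simeq 0$ for complementary strict immersions that is packaged into the localization property.

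The only compatibility to watch is that the isomorphisms produced on the $a^*$- and $j^*$-restrictions genuinely identify with $Ex$ itself, rather than with some other map sharing the same source and target. This is a diagram chase through the units and counits of $(\varphi_\sharp, \varphi^*)$ and $(a^*, a_*)$, using the triangle identities; I expect this verification to be routine and to constitute the only substantive bookkeeping in the argument. The main conceptual point, namely that $\varphi_\sharp$ is transparent to the recollement generated by a strict closed immersion and its open complement, falls directly out of (Loc) together with the compatibility of $\varphi_\sharp$ with $f^*$.
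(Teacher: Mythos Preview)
Your argument is correct. The reduction to showing that $Ex\colon \varphi_\sharp a_*\to a_*\varphi_\sharp$ is an isomorphism is exactly right, and your use of (Loc) together with the compatibility of $\varphi_\sharp$ with $f^*$ handles both the $a^*$- and $j^*$-checks as you describe. The diagram chase you flag as ``routine bookkeeping'' is indeed routine: it is the standard mate-calculus verification that the exchange transformation restricts correctly under the conservative pair.

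The paper's proof, by contrast, is a one-line citation of \cite[Proposition 2.4.1]{logshriek}, which records precisely the compatibility $\varphi_\sharp a_*\simeq a_*\varphi_\sharp$ for strict closed immersions as a standing fact from the earlier paper in the series. So you have not taken a genuinely different route; rather, you have unpacked the content of the cited proposition in situ. This makes your write-up more self-contained, at the cost of repeating an argument already established elsewhere in the series. Either presentation is fine; in a paper that is part of a sequence, the citation is the expected choice.
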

\begin{proof}
This is an immediate consequence of
%Proposition \ref{restriction.5}.
\cite[Proposition 2.4.1]{logshriek}.
\end{proof}
In particular, \eqref{thom.1.2} is always an isomorphism since $a_n$ is a strict closed immersion.

We have the natural transformation
\begin{equation}
\label{thom.2.1}
T_{\divi}
\colon
\Thom(p,a)
\to
\Thom_{\divi}(p,a)
\end{equation}
given by the composition
\[
\Thom(p,a)
\xrightarrow{ad}
\varphi^*\varphi_\sharp \Thom(p,a)
\xrightarrow{Ex}
\varphi^*\Thom(p,a)\varphi_\sharp.
\]
\begin{prop}
\label{thom.2}
If $a$ is a strict closed immersion,
then \eqref{thom.2.1} is an isomorphism.
\end{prop}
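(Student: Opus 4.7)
The plan is to show that the two-step definition of $T_{div}$ is the composition of two isomorphisms. Recall the transformation is built as
\[
\Thom(p,a) \xrightarrow{ad} \varphi^*\varphi_\sharp \Thom(p,a) \xrightarrow{\varphi^*(Ex)} \varphi^*\Thom(p,a)\varphi_\sharp,
\]
where the first arrow is the unit of the adjunction $(\varphi_\sharp,\varphi^*)$ applied after $\Thom(p,a)$, and the second arrow is $\varphi^*$ applied to the exchange transformation \eqref{thom.1.1}.

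First I would observe that the first arrow is always an isomorphism, independently of any assumption on $a$. Indeed, $\varphi_\sharp\colon \sT^\ex(S)\to \sT(S)$ is the inclusion of a full subcategory and hence fully faithful. By a standard fact about reflective subcategories, the unit $\id \to \varphi^*\varphi_\sharp$ is then an equivalence on $\sT^\ex(S)$, so composing it with $\Thom(p,a)$ on the right gives an isomorphism.

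Second, I would invoke Proposition \ref{thom.1}: when $a$ is a strict closed immersion, the exchange transformation $Ex\colon \varphi_\sharp \Thom(p,a) \to \Thom(p,a)\varphi_\sharp$ of \eqref{thom.1.1} is an isomorphism. Since $\varphi^*$ is a functor, it preserves isomorphisms, so $\varphi^*(Ex)$ is also an isomorphism. Combining these two steps, $T_{div}$ is an isomorphism, which concludes the proof.

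There is essentially no obstacle here; Proposition \ref{thom.2} is a purely formal consequence of Proposition \ref{thom.1} together with the fact that $\varphi_\sharp$ is fully faithful. The only nontrivial input, namely that $Ex$ becomes an isomorphism under the strictness hypothesis on $a$, has already been handled in Proposition \ref{thom.1} via \cite[Proposition 2.4.1]{logshriek}.
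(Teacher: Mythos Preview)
Your proof is correct and takes essentially the same approach as the paper, which simply states that the result is an immediate consequence of Proposition \ref{thom.1}. You have merely unpacked this by explicitly noting that the unit $\id\xrightarrow{ad}\varphi^*\varphi_\sharp$ is an isomorphism because $\varphi_\sharp$ is fully faithful, and that the second arrow is an isomorphism by applying $\varphi^*$ to the isomorphism of Proposition \ref{thom.1}.
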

\begin{proof}
This is an immediate consequence of Proposition \ref{thom.1}.
\end{proof}

We similarly have the natural transformation
\[
T_{\divi}
\colon
\Thom_n(p,a)
\to
\Thom_{n\divi}(p,a),
\]
which is an isomorphism by Proposition \ref{thom.2} since $a_n$ is a strict closed immersion.

We apply $\varphi^*(-)\varphi_\sharp$ to the natural isomorphism $T$ in \cite[Definition 4.1.3]{logGysin}, and then we obtain a natural isomorphism
\[
T\colon \Thom_{\divi}(p,a)
\xrightarrow{\simeq}
\Thom_{n\divi}(p,a)
\]
together with \cite[Proposition 9.15]{divspc}.
Let
\[
T\colon \Thom(p,a)\to \Thom_n(p,a)
\]
be the natural transformation given by the composition
\[
\Thom(p,a) \xrightarrow{T_{\divi}}
\Thom_{\divi}(p,a)\xrightarrow{T}
\Thom_{n\divi}(p,a) \xrightarrow{T_{\divi}^{-1}}
\Thom_{n}(p,a).
\]

\begin{df}
\label{thom.4}
Let $\cE$ be a vector bundle over $S\in \lSch/B$.
The \emph{Thom motive of $\cE$} is
\[
\Thom(\cE)
:=
\cofib(M_S(\cE-Z)\to M_S(\cE))
\in
\sT^{ex}(S),
\]
where $Z$ is the zero section.
\end{df}

\begin{prop}
\label{thom.3}
Let $\cE$ be a vector bundle over $S\in \lSch/B$.
Then there is a natural isomorphism
\[
\Thom(f,i)(\cF)
\simeq
\Thom(\cE) \otimes \cF
\]
for $\cF\in \sT^{ex}(S)$,
where $f\colon \cE\to S$ is the projection,
and $i\colon S\to \cE$ is the zero section.
\end{prop}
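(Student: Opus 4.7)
The plan is to build the isomorphism from the localization triangle attached to the zero section, and then turn it into the tensor product of $\Thom(\cE)$ with $\cF$ via the exact smooth projection formula.

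First, since $i\colon S\to \cE$ is a strict closed immersion with open complement $j\colon \cE-S\to \cE$, the (Loc) axiom from Theorem \ref{introduction.1} supplies, for every $\cG\in \sT^\ex(\cE)$, a natural cofiber sequence
\[
j_\sharp j^*\cG \to \cG \to i_*i^*\cG.
\]
Applying this with $\cG=f^*\cF$ and using that $fi=\id_S$, so $i^*f^*\cF\simeq \cF$, we obtain a natural cofiber sequence
\[
j_\sharp j^*f^*\cF \to f^*\cF \to i_*\cF
\]
in $\sT^\ex(\cE)$. Next I would apply the colimit-preserving functor $f_\sharp$ (which exists since $f$ is exact log smooth) to get a cofiber sequence
\[
f_\sharp j_\sharp j^*f^*\cF \to f_\sharp f^*\cF \to f_\sharp i_*\cF = \Thom(f,i)(\cF)
\]
in $\sT^\ex(S)$.

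The next step is to rewrite the first two terms. By the exact log smooth projection formula ($\eSm$-PF), one has $f_\sharp f^*\cF\simeq f_\sharp \unit_\cE\otimes \cF = M_S(\cE)\otimes \cF$. Writing $h:=fj\colon \cE-S\to S$, which is again exact log smooth, the same projection formula gives
\[
f_\sharp j_\sharp j^*f^*\cF
\simeq h_\sharp h^*\cF
\simeq h_\sharp \unit_{\cE-S}\otimes \cF
= M_S(\cE-S)\otimes \cF.
\]
Substituting these identifications, the cofiber sequence becomes
\[
M_S(\cE-S)\otimes \cF \to M_S(\cE)\otimes \cF \to \Thom(f,i)(\cF).
\]

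Finally, since $\otimes$ preserves colimits in each variable, the cofiber of the left map is $\cofib(M_S(\cE-S)\to M_S(\cE))\otimes \cF$, which by Definition \ref{thom.4} is exactly $\Thom(\cE)\otimes \cF$. Comparing cofibers yields the claimed natural isomorphism $\Thom(f,i)(\cF)\simeq \Thom(\cE)\otimes \cF$. I do not expect any serious obstacle here: the argument is entirely formal in the axioms of a log motivic $\infty$-category, the only mild point being the verification that the chain of identifications is natural in $\cF$, which follows because every step uses only the naturality of the unit/counit of the adjunctions and the exchange transformations associated to ($\eSm$-PF) and (Loc).
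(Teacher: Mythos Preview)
Your proof is correct and follows essentially the same approach as the paper: both use (Loc) to identify $\Thom(f,i)(\cF)$ with the cofiber of $f_\sharp j_\sharp j^* f^* \cF\to f_\sharp f^* \cF$, and then apply ($\eSm$-PF) to rewrite this as $\Thom(\cE)\otimes\cF$. Your version simply spells out the intermediate identifications $f_\sharp f^*\cF\simeq M_S(\cE)\otimes\cF$ and $f_\sharp j_\sharp j^*f^*\cF\simeq M_S(\cE-S)\otimes\cF$ that the paper leaves implicit.
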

\begin{proof}
Let $j\colon U\to \cE$ be the open complement of $a$.
By (Loc),
we have an isomorphism
$
\Thom(f,i)(\cF)
\simeq
\cofib(f_\sharp j_\sharp j^* f^* \cF\xrightarrow{ad'} f_\sharp f^* \cF).
$
We finish the proof by ($\eSm$-PF).
\end{proof}

Combine this with %Proposition \ref{generation.7}
\cite[Proposition 2.4.10]{logshriek}
to see that $\Thom(\cE)$ admits a $\otimes$-inverse $\Thom(\cE)^{-1}$ for every vector bundle $\cE$ over $S\in \lSch/B$.

\subsection{Exchange transformations}

Throughout this subsection, we fix a commutative diagram in $\lSch/B$ with cartesian squares
\[
\begin{tikzcd}
S'\ar[d,"u"']\ar[r,"a'"]&
X'\ar[d,"v"]\ar[r,"p'"]&
S'\ar[d,"u"]
\\
S\ar[r,"a"]&
X\ar[r,"p"]&
S
\end{tikzcd}
\]
such that $p$ is separated exact log smooth and $pa=\id$.

\begin{const}
\label{exchange.1}
We have the natural transformation
\begin{equation}
\label{exchange.1.1}
Ex\colon u^*\Thom(p,a)\to \Thom(p',a')u^*
\end{equation}
given by the composition
\[
u^*p_\sharp a_*
\xrightarrow{Ex^{-1}}
p_\sharp' v^*a_*
\xrightarrow{Ex}
p_\sharp' a_*'u^*,
\]
where the first arrow is defined and an isomorphism by ($\eSm$-BC).
From \cite[Construction 4.2.2]{logGysin}, we have the similarly formulated natural isomorphism
\begin{equation}
\label{exchange.1.2}
Ex\colon u^*\Thom_{\divi}(p,a)\to\Thom_{\divi}(p',a')u^*.
\end{equation}
\end{const}

\begin{prop}
\label{exchange.2}
The square
\[
\begin{tikzcd}
u^*\Thom(p,a)\ar[r,"T_{\divi}"]\ar[d,"Ex"']&
u^*\Thom_{\divi}(p,a)\ar[d,"Ex"]
\\
\Thom(p',a')u^*\ar[r,"T_{\divi}"]&
\Thom_{\divi}(p',a')u^*
\end{tikzcd}
\]
commutes.
\end{prop}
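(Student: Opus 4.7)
The plan is to expand both paths around the square into primitive building blocks and then verify commutativity by a diagram chase. First I would rewrite
\[
T_{div}\colon p_\sharp a_* \xrightarrow{ad} \varphi^*\varphi_\sharp p_\sharp a_* \xrightarrow{\simeq} \varphi^* p_\sharp \varphi_\sharp a_* \xrightarrow{Ex} \varphi^* p_\sharp a_* \varphi_\sharp,
\]
where the middle isomorphism uses that $\varphi_\sharp$ commutes with $p_\sharp$ (since $p$ is exact log smooth), and analogously for $(p',a')$. I would also unfold \eqref{exchange.1.1} as
\[
u^* p_\sharp a_* \xrightarrow{Ex^{-1}} p'_\sharp v^* a_* \xrightarrow{Ex} p'_\sharp a'_* u^*,
\]
and observe that by \cite[Construction 4.2.2]{logGysin} the exchange \eqref{exchange.1.2} is built from these same two primitive transformations, conjugated by $\varphi^*$ on the left and $\varphi_\sharp$ on the right, together with the natural isomorphisms $u^*\varphi^*\simeq \varphi^*u^*$ and $u^*\varphi_\sharp\simeq \varphi_\sharp u^*$ encoding that $\varphi_\sharp$ (hence also $\varphi^*$) commutes with pullback.

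Second, I would subdivide the resulting large rectangle into three commuting tiles: a naturality square for the unit $ad\colon \id\to \varphi^*\varphi_\sharp$ against the natural transformation $u^*p_\sharp a_*\to p'_\sharp a'_* u^*$; a compatibility square between $Ex\colon \varphi_\sharp p_\sharp a_*\to p_\sharp a_*\varphi_\sharp$ and its primed counterpart under the pullback $u^*$, which reduces to compatibility of the commutation isomorphisms $\varphi_\sharp f^*\simeq f^*\varphi_\sharp$ with base change along $v$ and $u$; and a tautological tile expressing that \eqref{exchange.1.2} is, by construction, the conjugate of \eqref{exchange.1.1}.

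The main obstacle will be bookkeeping, since the rectangle involves four functors on each side and at least half a dozen natural transformations, with $v^*$ and $u^*$ mediating between the two squares of the ambient diagram. The technical core that makes the chase close is that $\varphi_\sharp$ commutes coherently with every pullback and with every $\sharp$-pushforward along an exact log smooth morphism, so that all exchanges appearing in the diagram ultimately arise from the same primitive commutations; once this is recognised, each tile commutes either by naturality of a single adjunction or by the very construction of the relevant $Ex$.
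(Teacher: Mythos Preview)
Your proposal is correct and is essentially the same approach as the paper's own proof: the paper simply writes out the explicit rectangular diagram you describe, with the top and bottom rows given by the three-step expansion of $T_{div}$ and the left and right columns given by the two-step expansion of $Ex$, and asserts that each of the resulting tiles commutes for precisely the reasons you identify (naturality of the unit $ad$, coherent commutation of $\varphi_\sharp$ with pullbacks and with $\sharp$-pushforwards along exact log smooth morphisms, and the definition of \eqref{exchange.1.2} as the conjugate of \eqref{exchange.1.1}).
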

\begin{proof}
The diagram
\[
\begin{tikzcd}[row sep=small]
u^*p_\sharp a_*\ar[ddd,"Ex^{-1}"']\ar[r,"ad"]\ar[rdd,"ad"',bend right=20]&
u^*\varphi^*\varphi_\sharp p_{\sharp}a_*\ar[d,"Ex"]\ar[r,"\simeq"]&
u^*\varphi^*p_{\sharp}\varphi_\sharp a_*\ar[d,"Ex"]\ar[r,"Ex"]&
u^*\varphi^*p_{\sharp}a_*\varphi_\sharp\ar[d,"Ex"]
\\
&
\varphi^*u^*\varphi_\sharp p_{\sharp}a_*\ar[d,"\simeq"]\ar[r,"\simeq"]&
\varphi^*u^*p_{\sharp}\varphi_\sharp a_*\ar[dd,"Ex^{-1}"]\ar[r,"Ex"]&
\varphi^*u^*p_{\sharp}a_*\varphi_\sharp\ar[dd,"Ex^{-1}"]
\\
&
\varphi^*\varphi_\sharp u^*p_{\sharp}a_*\ar[d,"Ex^{-1}"]
\\
p_{\sharp}'v^*a_*\ar[r,"ad"]\ar[dd,"Ex"']&
\varphi^*\varphi_\sharp p_{\sharp}'v^*a_*\ar[r,"\simeq"]\ar[dd,"Ex"]&
\varphi^*p_{\sharp}'v^*\varphi_\sharp a_*\ar[r,"Ex"]\ar[d,"\simeq"]&
\varphi^*p_{\sharp}'v^*a_*\varphi_\sharp \ar[d,"Ex"]
\\
&
&
\varphi^*p_{\sharp}'\varphi_\sharp v^*a_*\ar[d,"Ex"]&
\varphi^*p_{\sharp}'a_*'u^*\varphi_\sharp\ar[d,"\simeq"]
\\
p_{\sharp}'a_*'u^*\ar[r,"ad"]&
\varphi^*\varphi_\sharp p_{\sharp}'a_*'u^*\ar[r,"\simeq"]&
\varphi^* p_{\sharp}'\varphi_\sharp a_*'u^*\ar[r,"Ex"]&
\varphi^* p_{\sharp}' a_*'\varphi_\sharp u^*
\end{tikzcd}
\]
commutes, which shows the claim.
\end{proof}

\begin{const}
\label{exchange.3}
We have the natural transformation
\begin{equation}
\label{exchange.3.1}
Ex\colon
\Thom(p,a)u_*
\to
u_*\Thom(p',a')
\end{equation}
given by the composition
\[
p_\sharp a_* u_*
\xrightarrow{\simeq}
p_\sharp v_* a_*'
\xrightarrow{Ex}
u_*p_\sharp' a_*'.
\]
From \cite[Construction 4.2.3]{logGysin}, we have the similarly formulated natural transformation
\begin{equation}
\label{exchange.3.2}
Ex\colon \Thom_{\divi}(p,a)u_*
\to
u_*\Thom_{\divi}(p',a').
\end{equation}
\end{const}

\begin{prop}
\label{exchange.4}
The square
\[
\begin{tikzcd}
\Thom(p,a)u_*\ar[r,"T_{\divi}"]\ar[d,"Ex"']&
\Thom_{\divi}(p,a)u_*\ar[d,"Ex"]
\\
u_*\Thom(p',a')\ar[r,"T_{\divi}"]&
u_*\Thom_{\divi}(p',a')
\end{tikzcd}
\]
commutes.
\end{prop}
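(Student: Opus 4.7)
The strategy is to model the proof directly on that of Proposition \ref{exchange.2}. I expand both $T_{div}$ arrows in the square using the definition
\[
T_{div} = \big( p_\sharp a_* \xrightarrow{ad} \varphi^*\varphi_\sharp p_\sharp a_* \xrightarrow{\simeq} \varphi^* p_\sharp \varphi_\sharp a_* \xrightarrow{Ex} \varphi^* p_\sharp a_* \varphi_\sharp \big),
\]
and simultaneously expand the exchange transformations \eqref{exchange.3.1} and \eqref{exchange.3.2} as the composition of the canonical identification $a_* u_* \simeq v_* a'_*$ (coming from the cartesian square $au = va'$) with an exchange transformation of the form $p_\sharp v_* \xrightarrow{Ex} u_* p'_\sharp$ (respectively its $\varphi$-decorated version). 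The resulting diagram is the analogue of the one appearing in the proof of Proposition \ref{exchange.2}, but with the direction of the $u$-related arrows reversed and $u_*$ replacing $u^*$ throughout.

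The plan is then to break this enlarged diagram into cells, each of which commutes by one of the following principles: naturality of the unit $ad \colon \id \to \varphi^*\varphi_\sharp$; naturality of the exchange isomorphism $\varphi_\sharp p_\sharp \xrightarrow{\simeq} p_\sharp \varphi_\sharp$ recalled in \cite[(2.3.3)]{logshriek} (and of its inverse); the canonical identification $a_* u_* \simeq v_* a'_*$ for the commutative square $au = va'$ together with the analogous $\varphi_\sharp a_* \simeq a_* \varphi_\sharp$-type isomorphisms of Proposition \ref{thom.1} (since $a$ and $a'$ are strict closed immersions under our standing hypotheses, and we reduce to those cases or alternately use the general ones from \cite[Construction 4.2.3]{logGysin}); and the compatibility of the exchange $p_\sharp v_* \to u_* p'_\sharp$ with pre- and post-composition by $\varphi_\sharp$ or $\varphi^*$.

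The main obstacle compared to Proposition \ref{exchange.2} is that $u$ need not be exact log smooth, so the exchange transformations built from $u_*$ are only natural transformations rather than isomorphisms; this means I cannot invert any of them inside the diagram as was done at one stage in the proof of Proposition \ref{exchange.2}. However, the commutativity of each cell depended only on naturality, not on the exchange being invertible, so the same diagram chase goes through. The verification is essentially a bookkeeping exercise tracking where the unit $ad$ is inserted and then sliding it past the various exchange transformations using their mate description; no new ingredients beyond what appears in \cite[Construction 4.2.3]{logGysin} and Construction \ref{exchange.3} are needed.
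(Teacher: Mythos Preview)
Your approach is essentially the same as the paper's: expand $T_{div}$ and the two exchange transformations from Construction \ref{exchange.3}, assemble the resulting large diagram, and check each cell by naturality. The paper writes out exactly this diagram and declares it commutes.

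One small correction: you write that ``$a$ and $a'$ are strict closed immersions under our standing hypotheses,'' but the standing hypotheses of this subsection do \emph{not} assume $a$ is strict (only that $p$ is separated exact log smooth with $pa=\id$). The transformations $\varphi_\sharp a_* \xrightarrow{Ex} a_*\varphi_\sharp$ appearing in the diagram are therefore just natural transformations, not isomorphisms, and the paper labels them $Ex$ rather than $\simeq$. This does not affect the argument, since (as you yourself note) only naturality is needed for the cells to commute; your hedge ``or alternately use the general ones'' is the correct fallback. Also, your remark that no inversions are needed here, unlike in Proposition \ref{exchange.2}, is accurate and matches the paper's diagram.
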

\begin{proof}
The diagram
\[
\begin{tikzcd}[row sep=small]
p_\sharp a_*u_*\ar[r,"ad"]\ar[d,"\simeq"']&
\varphi^*\varphi_\sharp p_\sharp a_*u_*\ar[r,"\simeq"]\ar[d,"\simeq"]&
\varphi^*p_\sharp \varphi_\sharp a_*u_*\ar[r,"Ex"]\ar[d,"\simeq"]&
\varphi^*p_\sharp a_*\varphi_\sharp u_*\ar[d,"Ex"]
\\
p_\sharp v_*a_*'\ar[ddd,"Ex"']\ar[r,"ad"]&
\varphi^*\varphi_\sharp p_\sharp v_*a_*'\ar[d,"Ex"]\ar[r,"\simeq"]&
\varphi^*p_\sharp \varphi_\sharp v_*a_*'\ar[d,"Ex"]&
\varphi^*p_\sharp a_* u_*\varphi_\sharp\ar[d,"\simeq"]
\\
&
\varphi^*\varphi_\sharp u_*p_\sharp' a_*'\ar[d,"Ex"]&
\varphi^*p_\sharp  v_*\varphi_\sharp a_*'\ar[r,"Ex"]\ar[d,"Ex"]&
\varphi^*p_\sharp v_*a_*'\varphi_\sharp\ar[d,"Ex"]
\\
&
\varphi^* u_*\varphi_\sharp p_\sharp' a_*'\ar[r,"\simeq"]\ar[d,"\simeq"]&
\varphi^* u_*p_\sharp' \varphi_\sharp  a_*'\ar[r,"Ex"]\ar[d,"\simeq"]&
\varphi^*u_*p_\sharp'a_*'\varphi_\sharp\ar[d,"\simeq"]
\\
u_*p_\sharp'a_*'\ar[r,"ad"]\ar[ruu,"ad",bend left=20]&
u_*\varphi^*\varphi_\sharp p_\sharp'a_*'\ar[r,"\simeq"]&
u_*\varphi^*p_\sharp'\varphi_\sharp a_*'\ar[r,"Ex"]&
u_*\varphi^*p_\sharp'a_*'\varphi_\sharp
\end{tikzcd}
\]
commutes, which shows the claim.
\end{proof}

\begin{const}
\label{exchange.5}
If $u$ is exact log smooth, then we have the natural transformation
\begin{equation}
\label{exchange.5.1}
Ex\colon u_\sharp \Thom(p',a') \to \Thom(p,a)u_\sharp
\end{equation}
given by the composition
\[
u_\sharp p_\sharp' a_*'
\xrightarrow{\simeq}
p_\sharp v_\sharp a_*'
\xrightarrow{Ex}
p_\sharp a_* u_\sharp.
\]
From \cite[Construction 4.2.1]{logGysin}, we have the similarly formulated natural transformation
\begin{equation}
\label{exchange.5.2}
Ex\colon  u_\sharp \Thom_{\divi}(p',a') \to \Thom_{n\divi}(p,a)u_\sharp.
\end{equation}
\end{const}

\begin{prop}
\label{exchange.6}
If $u$ is exact log smooth, then the square
\[
\begin{tikzcd}
u_\sharp \Thom(p'.a')\ar[d,"Ex"']\ar[r,"T_{\divi}"]&
u_\sharp \Thom_{\divi}(p',a')\ar[d,"Ex"]&
\\
\Thom(p,a)u_\sharp \ar[r,"T_{\divi}"]&
\Thom_{\divi}(p,a)u_\sharp
\end{tikzcd}
\]
commutes.
\end{prop}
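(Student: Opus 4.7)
The plan is to mimic the strategy used for Propositions~\ref{exchange.2} and \ref{exchange.4}: write out a large diagram whose outer boundary is the desired square, and fill the interior with cells that each commute by either naturality, a triangle identity, or one of the identifications $\varphi_\sharp p_\sharp\simeq p_\sharp\varphi_\sharp$ (valid since $p$ is exact log smooth) and $\varphi_\sharp u_\sharp\simeq u_\sharp \varphi_\sharp$ (valid since $u$ is exact log smooth). No substantively new input is required beyond the tools already used in \S\ref{thom} and Constructions~\ref{exchange.1}, \ref{exchange.3}, \ref{exchange.5}.

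Concretely, I would first unfold both composites around the square. On the top-right route, $T_{div}$ followed by \eqref{exchange.5.2} turns $u_\sharp p'_\sharp a'_*$ into $\varphi^*\varphi_\sharp u_\sharp p'_\sharp a'_*$ by the unit, then slides $\varphi_\sharp$ past $u_\sharp p'_\sharp$ to land on $a'_*$, and finally applies the $Ex$ composite $\varphi_\sharp a'_*\to a'_*\varphi_\sharp$ and the isomorphism $u_\sharp p'_\sharp\simeq p_\sharp v_\sharp$ together with $v_\sharp a'_*\to a_*u_\sharp$. On the bottom-left route, one first rewrites $u_\sharp p'_\sharp a'_*\simeq p_\sharp v_\sharp a'_*\to p_\sharp a_* u_\sharp$ via $Ex$, and then applies $T_{div}$ on the outside. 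Both end at $\varphi^*p_\sharp a_*\varphi_\sharp u_\sharp$.

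The second step is to insert intermediate terms $\varphi^*\varphi_\sharp u_\sharp p'_\sharp a'_*$, $\varphi^*u_\sharp\varphi_\sharp p'_\sharp a'_*$, $\varphi^*u_\sharp p'_\sharp\varphi_\sharp a'_*$, $\varphi^*u_\sharp p'_\sharp a'_*\varphi_\sharp$, $\varphi^*p_\sharp v_\sharp \varphi_\sharp a'_*$, $\varphi^*p_\sharp v_\sharp a'_*\varphi_\sharp$, $\varphi^*p_\sharp \varphi_\sharp v_\sharp a'_*$, $\varphi^*p_\sharp a_* \varphi_\sharp u_\sharp$, and decompose the diagram into cells of three kinds: (a) naturality squares for the unit $\id\xrightarrow{ad}\varphi^*\varphi_\sharp$ against a morphism of functors; (b) compatibility squares for the $\eSm$-base change isomorphism (here $\varphi_\sharp$ commuting with $u_\sharp$, with $p_\sharp$, with $v_\sharp$, and with $p'_\sharp$); (c) naturality of $Ex\colon \varphi_\sharp a_*\to a_*\varphi_\sharp$ and of $Ex\colon v_\sharp a'_*\to a_* u_\sharp$ when composed with one another. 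Each of these cells commutes by a standard formal argument, exactly as in the proofs of Propositions~\ref{exchange.2} and \ref{exchange.4}.

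The only mildly delicate point, and hence where I expect the bookkeeping to be most error-prone, is the interaction between the two different exchange natural transformations $\varphi_\sharp a'_*\to a'_*\varphi_\sharp$ and $v_\sharp a'_*\to a_*u_\sharp$; one must check that when both are applied the order does not matter. This reduces to the elementary fact that in the square of adjunctions determined by the cartesian square with $a,a',u,v$, the two composite mate transformations $\varphi_\sharp v_\sharp a'_*\to a_*\varphi_\sharp u_\sharp$ agree, which is a standard formal consequence of the interchange law. Once this is verified, every remaining cell is pure naturality and the diagram assembles just as in Proposition~\ref{exchange.4}, completing the proof.
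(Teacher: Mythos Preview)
Your proposal is correct and follows essentially the same approach as the paper: the paper's proof consists precisely of writing out one large commutative diagram with the intermediate terms you list (plus a few more in the top rows handling $u_\sharp\varphi^*\to\varphi^*u_\sharp$), and checking that each cell commutes by naturality of the unit, the commutation of $\varphi_\sharp$ with the various $\sharp$-functors, and the interchange of the two $Ex$ transformations you single out as the delicate point.
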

\begin{proof}
The diagram
\[
\begin{tikzcd}[column sep=small, row sep=small]
u_\sharp p_\sharp'a_*'\ar[rr,"ad"]\ar[rd,"ad"']\ar[dd,"\simeq"']&
&
u_\sharp \varphi^*\varphi_\sharp p_\sharp' a_*'\ar[d,"Ex"]\ar[r,"\simeq"]&
u_\sharp \varphi^* p_\sharp' \varphi_\sharp a_*'\ar[d,"Ex"]\ar[r,"Ex"]&
u_\sharp \varphi^* p_\sharp' a_*' \varphi_\sharp\ar[d,"Ex"]
\\
&
\varphi^* \varphi_\sharp u_\sharp p_\sharp' a_*'\ar[d,"\simeq"]\ar[r,"\simeq"]&
\varphi^* u_\sharp \varphi_\sharp p_\sharp' a_*'\ar[d,"\simeq"]\ar[r,"\simeq"]&
\varphi^*u_\sharp p_\sharp' \varphi_\sharp a_*'\ar[d,"\simeq"]\ar[r,"Ex"]&
\varphi^*u_\sharp p_\sharp'a_*'\varphi_\sharp\ar[d,"\simeq"]
\\
p_\sharp v_\sharp a_*'\ar[r,"ad"]\ar[dd,"Ex"']&
\varphi^*\varphi_\sharp p_\sharp v_\sharp a_*'\ar[dd,"Ex"]\ar[r,"\simeq"]&
\varphi^*p_\sharp \varphi_\sharp v_\sharp a_*'\ar[dd,"Ex"]\ar[r,"\simeq"]&
\varphi^*p_\sharp v_\sharp \varphi_\sharp a_*'\ar[r,"Ex"]&
\varphi^*p_\sharp v_\sharp a_*' \varphi_\sharp\ar[d,"Ex"]
\\
&
&
&
&
\varphi^*p_\sharp a_* u_\sharp \varphi_\sharp\ar[d,"\simeq"]
\\
p_\sharp a_*u_\sharp\ar[r,"ad"]&
\varphi^*\varphi_\sharp p_\sharp a_*u_\sharp\ar[r,"\simeq"]&
\varphi^*p_\sharp \varphi_\sharp a_*u_\sharp\ar[rr,"Ex"]&
&
\varphi^*p_\sharp a_*\varphi_\sharp u_\sharp
\end{tikzcd}
\]
commutes, which shows the claim.
\end{proof}

\begin{prop}
\label{exchange.10}
Assume that $a$ is strict.
Then the natural transformations \eqref{exchange.1.1} and \eqref{exchange.3.1} are isomorphisms.
If we assume further that $u$ is exact log smooth, then the natural transformation \eqref{exchange.5.1} is an isomorphism.
\end{prop}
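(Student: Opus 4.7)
The plan is to reduce each of the three $\Thom$-exchange isomorphism claims to the corresponding claim for the $\Thom_{div}$-exchange transformations, via the commutative squares already established in Propositions \ref{exchange.2}, \ref{exchange.4}, and \ref{exchange.6}. Since strict closed immersions are stable under base change, strictness of $a$ forces $a'$ to be a strict closed immersion as well. Proposition \ref{thom.2} then shows that $T_{div}\colon \Thom(p,a)\to \Thom_{div}(p,a)$ and $T_{div}\colon \Thom(p',a')\to \Thom_{div}(p',a')$ are both isomorphisms, so every horizontal arrow in the three referenced commutative squares is an isomorphism. A two-out-of-three argument then yields that \eqref{exchange.1.1}, \eqref{exchange.3.1}, and \eqref{exchange.5.1} are isomorphisms if and only if \eqref{exchange.1.2}, \eqref{exchange.3.2}, and \eqref{exchange.5.2} respectively are.

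Now \eqref{exchange.1.2} is already stated to be an isomorphism in Construction \ref{exchange.1}, which settles \eqref{exchange.1.1}. For \eqref{exchange.3.2} and \eqref{exchange.5.2}, the plan is to invoke the corresponding results at the $\sT$-level from \cite[\S 4.2]{logGysin}. Unwinding $\Thom_{div}(p,a)=\varphi^*p_\sharp a_*\varphi_\sharp$ and using that $\varphi_\sharp$ commutes with $u^*$ for every morphism $u$ and with $u_\sharp$ whenever $u$ is exact log smooth, one rewrites the $\Thom_{div}$-exchange transformation as the image under $\varphi^*$ of the corresponding $\sT$-level exchange transformation for $p_\sharp a_*$, suitably decorated by adjunction units and counits. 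For a strict closed immersion $a$, these $\sT$-level exchange maps are known to be isomorphisms by \cite[\S 4.2]{logGysin}; in the case of \eqref{exchange.5.1}, the exact log smoothness of $u$ is moreover needed to guarantee the $\varphi_\sharp$-$u_\sharp$ commutation and the $\sT$-level base change $v_\sharp a'_*\to a_* u_\sharp$.

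The main obstacle I anticipate is the bookkeeping needed to identify the $\Thom_{div}$-exchange transformations defined here with the image under $\varphi^*$ (together with the units/counits for $\varphi_\sharp\dashv\varphi^*$) of the $\sT$-level exchange transformations from \cite{logGysin}. Once this identification is made, the strict base change for closed immersions on the $\sT$ side finishes the proof; the identification itself is a diagram chase of the same flavor as those appearing in the proofs of Propositions \ref{exchange.2}, \ref{exchange.4}, and \ref{exchange.6}.
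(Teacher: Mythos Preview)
Your proposal is correct and follows essentially the same route as the paper: reduce to the $\Thom_{div}$-exchange transformations via the commutative squares of Propositions \ref{exchange.2}, \ref{exchange.4}, \ref{exchange.6}, use strictness of $a$ (and hence of $a'$) to see that the horizontal $T_{div}$ arrows are isomorphisms, and then cite \cite{logGysin} for the $\Thom_{div}$-level statements. The bookkeeping obstacle you anticipate is not actually present: the transformations \eqref{exchange.1.2}, \eqref{exchange.3.2}, \eqref{exchange.5.2} are \emph{defined} in Constructions \ref{exchange.1}, \ref{exchange.3}, \ref{exchange.5} by direct reference to \cite[\S 4.2]{logGysin}, so the paper simply cites \cite[Propositions 4.2.7, 4.2.9]{logGysin} for \eqref{exchange.3.2} and \eqref{exchange.5.2} being isomorphisms, and \eqref{exchange.1.2} is already declared an isomorphism in Construction \ref{exchange.1}.
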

\begin{proof}
The natural transformations \eqref{exchange.3.2} and \eqref{exchange.5.2} are isomorphisms by \cite[Proposition 4.2.7, 4.2.9]{logGysin}.
Proposition \ref{thom.1} implies that the natural transformations
\[
T_{\divi}
\colon
\Thom(p,a)\to \Thom_{\divi}(p,a)
\text{ and }
T_{\divi}
\colon
\Thom(p',a')\to \Thom_{\divi}(p',a')
\]
are isomorphisms.
Use Propositions \ref{exchange.2}, \ref{exchange.4}, and \ref{exchange.6} to conclude.
\end{proof}

\begin{prop}
\label{exchange.7}
The squares
\[
\begin{tikzcd}
u^*\Thom(p,a)\ar[r,"T"]\ar[d,"Ex"']&
u^*\Thom_{n}(p,a)\ar[d,"Ex"]
\\
\Thom(p',a')u^*\ar[r,"T"]&
\Thom_{n}(p',a')u^*
\end{tikzcd}
\;\;
\begin{tikzcd}
\Thom(p,a)u_*\ar[r,"T"]\ar[d,"Ex"']&
\Thom_n(p,a)u_*\ar[d,"Ex"]
\\
u_*\Thom(p',a')\ar[r,"T"]&
u_*\Thom_{n}(p',a')
\end{tikzcd}
\]
commute.
If $u$ is exact log smooth, then the square
\[
\begin{tikzcd}
u_\sharp \Thom(p'.a')\ar[d,"Ex"']\ar[r,"T"]&
u_\sharp \Thom_{n}(p',a')\ar[d,"Ex"]&
\\
\Thom(p,a)u_\sharp \ar[r,"T"]&
\Thom_{n}(p,a)u_\sharp
\end{tikzcd}
\]
commute.
\end{prop}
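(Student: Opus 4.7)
The plan is to decompose the natural transformation $T$ as the three-step composition
\[
\Thom(p,a) \xrightarrow{T_{div}} \Thom_{div}(p,a) \xrightarrow{T} \Thom_{ndiv}(p,a) \xrightarrow{T_{div}^{-1}} \Thom_n(p,a),
\]
and verify the commutativity of each of the three squares separately. Stacking the three resulting diagrams vertically yields the full square for $T$; this reduces each assertion of Proposition \ref{exchange.7} to three smaller compatibilities.

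For the first and third steps (both involving $T_{div}$), the required compatibility with $u^*$ is exactly Proposition \ref{exchange.2}, with $u_*$ is exactly Proposition \ref{exchange.4}, and with $u_\sharp$ (under the exact log smooth hypothesis on $u$) is exactly Proposition \ref{exchange.6}. For the third step one applies these propositions to the pair $(p_n,a_n)$ in place of $(p,a)$; since $a_n$ is a strict closed immersion, $T_{div}$ is even invertible there by Proposition \ref{thom.1}, so the inverted square commutes as well.

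For the middle step, one needs the analogous three compatibility squares for the isomorphism $T\colon \Thom_{div}(p,a) \xrightarrow{\simeq} \Thom_{ndiv}(p,a)$. These are obtained by applying $\varphi^*(-)\varphi_\sharp$ to the commutativity results \cite[Propositions 4.2.10 (or the corresponding statements in \S 4)]{logGysin}, which establish precisely the analogous squares for $T$ in the original (non-$\ex$) setting of \cite{logGysin}; the exchange transformations \eqref{exchange.1.2}, \eqref{exchange.3.2}, and \eqref{exchange.5.2} in the $\Thom_{div}$ world are, by construction, $\varphi^*(-)\varphi_\sharp$ applied to the corresponding Gysin-paper transformations, so the compatibility is preserved verbatim.

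The main obstacle, as in the proofs of Propositions \ref{exchange.2}, \ref{exchange.4}, and \ref{exchange.6}, is purely bookkeeping: one must check that when the three squares are pasted, the middle identifications along $\Thom_{div}$ and $\Thom_{ndiv}$ cancel coherently and produce the assembled $T$ on both the top and bottom rows. This is automatic once each small square is known to commute, because the vertical composites are $Ex$ by the same construction used to define the exchange for $T$ (i.e., the exchange transformations in Constructions \ref{exchange.1}, \ref{exchange.3}, \ref{exchange.5} are built from the adjunction and base change data in a way that is strictly natural in the Thom functor). Hence the conclusion follows by concatenating the three commutative squares and using the definition $T = T_{div}^{-1} \circ T \circ T_{div}$ on both rows.
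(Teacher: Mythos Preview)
Your proposal is correct and follows essentially the same approach as the paper: decompose $T$ as $T_{div}^{-1}\circ T\circ T_{div}$, use Propositions \ref{exchange.2}, \ref{exchange.4}, \ref{exchange.6} for the two $T_{div}$ squares, and invoke the corresponding compatibility results from \cite{logGysin} for the middle square. The paper cites specifically \cite[Propositions 4.2.11, 4.2.12, 4.2.13]{logGysin} for the three middle squares (for $u_\sharp$, $u^*$, $u_*$ respectively), so your reference to ``Propositions 4.2.10 (or the corresponding statements in \S 4)'' should be sharpened to those.
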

\begin{proof}
The first square commutes since the diagram
\[
\begin{tikzcd}
u^*\Thom(p,a)\ar[r,"T_{\divi}"]\ar[d,"Ex"']&
u^*\Thom_{\divi}(p,a)\ar[r,"T"]\ar[d,"Ex"]&
u^*\Thom_{n\divi}(p,a)\ar[r,"T_{\divi}^{-1}"]\ar[d,"Ex"]&
u^*\Thom_{n}(p,a)\ar[d,"Ex"]
\\
\Thom(p',a')u^*\ar[r,"T_{\divi}"]&
\Thom_{\divi}(p',a')u^*\ar[r,"T"]&
\Thom_{n\divi}(p',a')u^*\ar[r,"T_{\divi}^{-1}"]&
\Thom_{n}(p',a')u^*
\end{tikzcd}
\]
commutes by \cite[Proposition 4.2.12]{logGysin} and Proposition \ref{exchange.2}.
We can similarly show that the second (resp.\ third) square commutes by \cite[Proposition 4.2.13]{logGysin} and Proposition \ref{exchange.4} (resp.\ \cite[Proposition 4.2.11]{logGysin} and Proposition \ref{exchange.6}).
\end{proof}

\begin{const}
\label{exchange.8}
Assume that $u$ is compactifiable.
By
%Construction \ref{base.6},
\cite[Construction 3.7.3]{logshriek},
$u$ admits a factorization
\[
S'\xrightarrow{u'}
S''
\xrightarrow{u''}
S'''
\xrightarrow{u'''}
S
\]
such that $\ul{u'}$ is an isomorphism, $u''$ is an open immersion, and $u'''$ is proper.
Let $p''$ and $a''$ (resp.\ $p'''$ and $a'''$) be the pullbacks of $p$ and $a$ along $S''\to S$ (resp.\ $S'''\to S$).
We have the natural transformation
\[
Ex\colon
\Thom(p,a)u_!
\to
u_!\Thom(p',a')
\]
given by the composition
\begin{align*}
\Thom(p,a)u_*'''u_\sharp''u_*'
\xrightarrow{Ex} &
u_*'''\Thom(p''',a''')u_\sharp''u_*'
\\
\xrightarrow{Ex^{-1}}&
u_*'''u_\sharp''\Thom(p'',a'')u_*'
\xrightarrow{Ex}
u_*'''u_\sharp''u_*'\Thom(p',a'),
\end{align*}
where the second arrow is defined and an isomorphism by Proposition \ref{exchange.10}.
\end{const}

\begin{prop}
\label{exchange.9}
If $u$ is compactifiable,
then the square
\[
\begin{tikzcd}
\Thom(p,a)u_!\ar[r,"T"]\ar[d,"Ex"']&
\Thom_n(p,a)u_!\ar[d,"Ex"]
\\
u_!\Thom(p',a')\ar[r,"T"]&
u_!\Thom_{n}(p',a')
\end{tikzcd}
\]
commutes.
\end{prop}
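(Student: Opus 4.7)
The plan is to reduce this to the three cases already treated in Proposition \ref{exchange.7} by decomposing along the factorization used to define $u_!$. Recall from Construction \ref{exchange.8} that $u$ factors as
\[
S'\xrightarrow{u'} S''\xrightarrow{u''} S'''\xrightarrow{u'''} S,
\]
with $\ul{u'}$ an isomorphism, $u''$ an open immersion, and $u'''$ proper, so that $u_!\simeq u_*'''u_\sharp''u_*'$, and the exchange $Ex\colon \Thom(p,a)u_!\to u_!\Thom(p',a')$ is the composition of three exchange transformations, one for each of $u_*'''$, $u_\sharp''$, and $u_*'$ (the middle one involving the inverse of an $Ex$ that is an isomorphism by Proposition \ref{exchange.10} because the pullback of the strict section $a$ along an open immersion remains strict, as does the pullback along a morphism whose underlying map is an isomorphism).

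Let $p''$, $a''$ and $p''',a'''$ be the pullbacks as in Construction \ref{exchange.8}. The idea is to stack three commutative squares: namely, apply the second square of Proposition \ref{exchange.7} to the morphism $u'''$ (which is proper, so $u'''_!=u'''_*$), the third square of Proposition \ref{exchange.7} to the open immersion $u''$, and again the second square of Proposition \ref{exchange.7} to $u'$ (which, having an invertible underlying morphism, is in particular proper so that $u'_!=u'_*$). Each of these three squares has top and bottom edges given by the natural transformation $T$, and vertical edges given by an $Ex$-type transformation compatible with the appropriate direct or shriek image. Pasting them vertically yields exactly the outer square of the proposition: the top edge is $T\colon \Thom(p,a)u_*'''u_\sharp''u_*'\to \Thom_n(p,a)u_*'''u_\sharp''u_*'$, the bottom edge is the analogous $T$ composed on the left with $u_*'''u_\sharp''u_*'\simeq u_!$, and the composite of the three vertical arrows on each side is by definition the exchange transformation $Ex$ of Construction \ref{exchange.8}.

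The one subtlety is that the middle vertical step uses the inverse of the exchange $\Thom_n(p''',a''')u_\sharp''\to u_\sharp''\Thom_n(p'',a'')$ (which is an isomorphism by Proposition \ref{exchange.10}), rather than the forward direction. This is not a real obstacle: commutativity of the square of Proposition \ref{exchange.7} for the open immersion $u''$ is equivalent to commutativity of the same square with both vertical arrows inverted, since inverting an isomorphism in a commutative square preserves commutativity. With that observation, the three squares paste cleanly and the claim follows. The main conceptual point, which I expect to be the only thing worth checking carefully, is the bookkeeping that the composite of the three vertical arrows indeed agrees with the exchange transformation of Construction \ref{exchange.8}; this is immediate from unwinding definitions, since Construction \ref{exchange.8} was set up precisely as this composition.
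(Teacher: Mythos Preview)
Your argument is correct and is exactly what the paper's one-line proof ``This is an immediate consequence of Proposition \ref{exchange.7}'' is abbreviating: decompose $u_!$ along the factorization of Construction \ref{exchange.8} and paste the three commutative squares supplied by Proposition \ref{exchange.7}.

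One small inaccuracy in your parenthetical: the section $a$ is \emph{not} assumed strict in this subsection (it is only a section of a separated exact log smooth morphism), so your justification ``the pullback of the strict section $a$ \ldots'' is not literally correct. This does not affect your proof, however, for two reasons. First, the invertibility of the middle $Ex$ is already asserted in Construction \ref{exchange.8} and is part of the \emph{definition} of the left vertical arrow; you are not required to re-justify it here. Second, the commutativity of the three squares in Proposition \ref{exchange.7} holds without any strictness hypothesis on $a$, so the pasting argument goes through regardless. (For the right vertical arrow the analogous invertibility is unproblematic anyway, since $a_n$ is the zero section of a vector bundle and hence strict.)
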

\begin{proof}
This is an immediate consequence of Proposition \ref{exchange.7}.
\end{proof}

\subsection{Composition transformations}
\label{composition}

Throughout this subsection, we fix a commutative diagram in $\lSch/B$
\[
\begin{tikzcd}
S\ar[d,"c"']\ar[rd,"b"]
\\
Z\ar[d,"r"']\ar[r,"g"]&
Y\ar[d,"f"]\ar[rd,"q"]
\\
S\ar[r,"a"]&
X\ar[r,"p"]&
S
\end{tikzcd}
\]
such that the inner square is cartesian, $f,p,q,r\in \eSm$, $pa=\id$, $qb=\id$, and $rc=\id$.
We have the natural transformation
\begin{equation}
\label{comp.4.1}
C
\colon
\Thom(q,b)
\to
\Thom(p,a)\Thom(r,c)
\end{equation}
given by the composition
\[
q_\sharp b_*
\xrightarrow{\simeq}
p_\sharp f_\sharp g_* c_*
\xrightarrow{Ex}
p_\sharp a_*r_\sharp c_*.
\]

\begin{prop}
\label{comp.4}
If $f$ is strict, then \eqref{comp.4.1} is an isomorphism.
\end{prop}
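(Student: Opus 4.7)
The plan is to unwind $C$ and reduce to showing a single Beck-Chevalley exchange is an isomorphism. Using $q_\sharp = p_\sharp f_\sharp$ (from $q = pf$) and $b_* = g_* c_*$ (from $b = gc$), the transformation $C$ in \eqref{comp.4.1} becomes $p_\sharp(\eta) c_*$, where
\[
\eta\colon f_\sharp g_* \longrightarrow a_* r_\sharp
\]
is the Beck-Chevalley exchange for the inner cartesian square. Hence it suffices to prove that $\eta$ is an isomorphism whenever $f$ is strict.

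I would follow the template of Proposition \ref{exchange.10}. One first establishes a commutativity square relating $\eta$ to its divisorial counterpart $\eta_{div}:=\varphi^*\eta\varphi_\sharp$ via $T_{div}$-type natural transformations; this square commutes by an elementary diagram chase formally analogous to those of Propositions \ref{exchange.2}, \ref{exchange.4}, and \ref{exchange.6}. The divisorial analog $\eta_{div}$ is an isomorphism when $f$ is strict by the corresponding composition result in \cite[\S 4]{logGysin}. Concluding that $\eta$ is itself an isomorphism then reduces to showing that the $T_{div}$-comparison transformations in this square are isomorphisms in the present strict setting.

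The main obstacle is precisely at this last step: the sections $a$, $b$, $c$ need not be strict, so Proposition \ref{thom.2} cannot be applied on the nose to the outer Thom functors $\Thom(q,b)$, $\Thom(p,a)$, $\Thom(r,c)$. The resolution is to formulate the $T_{div}$-compatibility at the level of the exchange $\eta$ itself rather than at the level of these outer Thom functors; in this formulation only the strictness of $f$ (and, as a consequence, of its pullback $r$) is needed. This is supplied by the unconditional commutation of $\varphi_\sharp$ with $f_\sharp$ and $r_\sharp$ together with the smooth-closed base change for $\sT$ that becomes available precisely when $f$ is strict, feeding into the diagram chase in the same formal manner as in the proof of Proposition \ref{exchange.10}.
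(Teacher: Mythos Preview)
Your opening reduction is correct and matches the paper: unwinding $C$ reduces the claim to the single exchange
\[
\eta\colon f_\sharp g_* \longrightarrow a_* r_\sharp
\]
for the inner cartesian square being an isomorphism when $f$ is strict. The paper's entire proof is then the one-line citation of \cite[Proposition 2.4.9]{logshriek}, which gives exactly this exchange isomorphism for $\sT^\ex$ directly.

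Your subsequent strategy, routing through $\sT$ via the adjunction $(\varphi_\sharp,\varphi^*)$ in the style of Proposition~\ref{exchange.10}, is a detour, and the resolution you sketch for the obstacle is not convincing. The comparison between $\eta$ in $\sT^\ex$ and its analogue in $\sT$ unavoidably passes through the exchange transformations $\varphi_\sharp g_* \to g_*\varphi_\sharp$ and $\varphi_\sharp a_* \to a_*\varphi_\sharp$. These are known to be isomorphisms only when the closed immersions $g$ and $a$ are strict (this is precisely the content of \cite[Proposition 2.4.1]{logshriek}, cf.\ Proposition~\ref{thom.1}), and since $g$ is the pullback of $a$ along the strict morphism $f$, neither is strict unless $a$ already is. Saying that ``only the strictness of $f$ is needed'' does not explain how you bypass $a_*$ and $g_*$; the commutation of $\varphi_\sharp$ with $f_\sharp$ and $r_\sharp$ is not enough, because both sides of $\eta$ still contain a non-strict pushforward. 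Nor does the template of Proposition~\ref{exchange.10} help: there the relevant closed immersion was assumed strict from the outset.

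In short, you have correctly identified the key exchange but then attempt to reprove a result that the paper simply quotes, and your workaround for the non-strictness of $a$ is a gap rather than a proof. The clean route is to invoke the strict-smooth base change for $\sT^\ex$ directly.
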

\begin{proof}
This is an immediate consequence of \cite[Proposition 2.4.9]{logshriek}.
%This is an immediate consequence of Proposition \ref{generation.6}.
\end{proof}

\begin{prop}
\label{comp.5}
There is a natural isomorphism
\[
\Thom(\A_S^n)
\simeq
\unit(n)[2n]
\]
in $\sT^{ex}(S)$ for every integer $n\geq 0$.
\end{prop}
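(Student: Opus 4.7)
\emph{Plan.} I would prove this by induction on $n$. For $n = 0$ we have $\A_S^0 = S$ and $\A_S^0 - Z = \emptyset$, so $\Thom(\A_S^0) = \cofib(M_S(\emptyset) \to M_S(S)) = \unit_S$. For $n = 1$, the identification $\Thom(\A_S^1) \simeq \unit_S(1)[2]$ is essentially the normalization of the Tate twist in the log motivic framework, which is made possible by the ($\P^1$-stab) axiom of a log motivic $\infty$-category and is inherited from the earlier papers in the series.

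For the inductive step, assume the result for $n-1$ and apply the composition transformation \eqref{comp.4.1} to the diagram
\[
\begin{tikzcd}
S\ar[d,"c"']\ar[rd,"b"]
\\
\A_S^1 \ar[d,"r"']\ar[r,"g"]&
\A_S^n \ar[d,"f"]\ar[rd,"q"]
\\
S\ar[r,"a"]&
\A_S^{n-1}\ar[r,"p"]&
S
\end{tikzcd}
\]
where $p,q,r$ are the projections, $a,b,c$ are the zero sections, $f$ is the projection onto the first $n-1$ coordinates, and $g$ is the embedding of the last coordinate axis. The inner square is cartesian because $\A_S^1 = \A_S^n \times_{\A_S^{n-1}} S$, and the morphism $f$ is strict since the log structures on both $\A_S^n$ and $\A_S^{n-1}$ are pulled back from that of $S$ (so no nontrivial relative log structure is introduced by $f$). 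Proposition \ref{comp.4} then supplies a natural isomorphism $\Thom(q,b) \simeq \Thom(p,a)\,\Thom(r,c)$.

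Evaluating this isomorphism at $\unit_S$ and applying Proposition \ref{thom.3} (noting that the normal bundles of the three zero sections are $\A_S^n$, $\A_S^{n-1}$, and $\A_S^1$, respectively), we obtain
\[
\Thom(\A_S^n) \;\simeq\; \Thom(\A_S^{n-1}) \otimes \Thom(\A_S^1) \;\simeq\; \unit_S(n-1)[2(n-1)] \otimes \unit_S(1)[2] \;\simeq\; \unit_S(n)[2n],
\]
where the second isomorphism uses the inductive hypothesis together with the base case $n=1$.

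The only substantive input here is the $n = 1$ identification $\Thom(\A_S^1) \simeq \unit_S(1)[2]$, which is part of the Tate twist conventions carried over from the foundational axioms; everything else is a formal consequence of Propositions \ref{thom.3} and \ref{comp.4}. The main point to verify carefully is that the projection $f\colon \A_S^n \to \A_S^{n-1}$ is strict, so that Proposition \ref{comp.4} applies, but this is immediate from the construction.
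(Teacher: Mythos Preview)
Your proof is correct and follows exactly the approach the paper takes: the paper's proof reads ``The claim is trivial if $n=0,1$. For $n\geq 2$, use induction and Propositions \ref{thom.3} and \ref{comp.4},'' and you have simply unpacked this by writing down the explicit composition diagram and verifying that $f\colon \A_S^n \to \A_S^{n-1}$ is strict so that Proposition \ref{comp.4} applies.
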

\begin{proof}
The claim is trivial if $n=0,1$.
For $n\geq 2$,
use induction and Propositions \ref{thom.3} and \ref{comp.4}.
\end{proof}

\begin{prop}
\label{comp.1}
The square
\[
\begin{tikzcd}
\varphi_\sharp\Thom(q,b)\ar[rr,"Ex"]\ar[d,"C"']&
&
\Thom(q,b)\varphi_\sharp\ar[d,"C"]
\\
\varphi_\sharp \Thom(r,c)\Thom(p,a)\ar[r,"Ex"]&
\Thom(r,c)\varphi_\sharp \Thom(p,a)\ar[r,"Ex"]&
\Thom(r,c)\Thom(p,a)\varphi_\sharp
\end{tikzcd}
\]
commutes,
where the right vertical arrow is obtained by \cite[\S 4.3]{logGysin}.
\end{prop}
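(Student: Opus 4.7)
The strategy mirrors the template of Propositions \ref{exchange.2}, \ref{exchange.4}, and \ref{exchange.6}: I would expand both composites around the square into their elementary constituents and then assemble a single large commutative diagram whose outer boundary reproduces the square. Each interior cell will commute either by naturality of a transformation or by a basic compatibility such as the strict identification $\varphi_\sharp h_\sharp \simeq h_\sharp \varphi_\sharp$ for $h \in \eSm$.

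Concretely, I would first unfold the composition transformation $C$ as the two-step composite
\[
q_\sharp b_* \xrightarrow{\simeq} p_\sharp f_\sharp g_* c_* \xrightarrow{Ex} p_\sharp a_* r_\sharp c_*,
\]
using the factorizations $q = pf$ and $b = gc$ together with the exchange arising from the cartesian inner square. Similarly, the exchange $\varphi_\sharp h_* \to h_* \varphi_\sharp$ appearing in the top and bottom rows is the adjoint mate of the strict identification $\varphi_\sharp h^* \simeq h^* \varphi_\sharp$. With these expansions, both paths around the square become long composites in which $\varphi_\sharp$ is threaded past each of $p_\sharp, f_\sharp, g_*, a_*, r_\sharp, c_*$ in turn.

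The filling in of the interior splits into two stages matching the two steps of $C$. The first stage, corresponding to $q_\sharp b_* \simeq p_\sharp f_\sharp g_* c_*$, is a subrectangle whose cells commute by strict naturality of $\varphi_\sharp h_\sharp \simeq h_\sharp \varphi_\sharp$ for $h \in \eSm$ combined with the naturality of $\varphi_\sharp h_* \to h_* \varphi_\sharp$ under composition in $h$. The second stage, corresponding to the exchange $f_\sharp g_* \to a_* r_\sharp$ for the cartesian inner square, is the main obstacle. Unpacking it as
\[
f_\sharp g_* \xrightarrow{ad} a_* a^* f_\sharp g_* \xrightarrow{Ex} a_* r_\sharp g^* g_* \xrightarrow{ad'} a_* r_\sharp,
\]
its compatibility with $\varphi_\sharp$ reduces to naturality of the unit and counit of the adjunctions $(a^*, a_*)$ and $(g^*, g_*)$ together with the fact that $\varphi_\sharp$ commutes on the nose with the $\eSm$-base-change isomorphism $a^* f_\sharp \simeq r_\sharp g^*$. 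The latter holds because that base-change isomorphism is built from $f^*$ and $f_\sharp$, both of which $\varphi_\sharp$ intertwines strictly. The verification is thus mechanical, and the primary difficulty lies in organizing the many intermediate vertices in the large diagram rather than in any single cell.
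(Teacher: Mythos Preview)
Your proposal is correct and matches the paper's approach exactly: the paper's proof consists solely of the large diagram obtained by unfolding $C$ as $q_\sharp b_* \simeq p_\sharp f_\sharp g_* c_* \xrightarrow{Ex} p_\sharp a_* r_\sharp c_*$ and threading $\varphi_\sharp$ through each factor, with all interior cells commuting by naturality or by the strict compatibility $\varphi_\sharp h_\sharp \simeq h_\sharp \varphi_\sharp$ for $h\in\eSm$. Your additional unpacking of the exchange $f_\sharp g_* \to a_* r_\sharp$ via unit/counit is more detail than the paper gives but is consistent with it.
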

\begin{proof}
The diagram
\[
\begin{tikzcd}[column sep=small, row sep=small]
\varphi_\sharp q_\sharp b_*\ar[rr,"\simeq"]\ar[d,"\simeq"']&
&
q_\sharp \varphi_\sharp b_*\ar[rr,"Ex"]\ar[d,"\simeq"']&
&
q_\sharp b_*\varphi_\sharp\ar[d,"\simeq"]
\\
\varphi_\sharp p_\sharp f_\sharp g_*c_*\ar[r,"\simeq"]\ar[d,"Ex"]&
p_\sharp \varphi_\sharp f_\sharp g_*c_*\ar[r,"\simeq"]\ar[d,"Ex"]&
p_\sharp f_\sharp \varphi_\sharp g_*c_*\ar[r,"Ex"]&
p_\sharp f_\sharp g_*\varphi_\sharp c_*\ar[d,"Ex"]\ar[r,"Ex"]&
p_\sharp f_\sharp g_*c_*\varphi_\sharp\ar[d,"Ex"]
\\
\varphi_\sharp p_\sharp a_* r_\sharp c_*\ar[r,"\simeq"]&
p_\sharp \varphi_\sharp a_*r_\sharp c_*\ar[r,"Ex"]&
p_\sharp a_*\varphi_\sharp r_\sharp c_*\ar[r,"\simeq"]&
p_\sharp a_* r_\sharp \varphi_\sharp c_*\ar[r,"Ex"]&
p_\sharp a_* r_\sharp c_* \varphi_\sharp
\end{tikzcd}
\]
commutes, which shows the claim.
\end{proof}

From \cite[\S 4.3]{logGysin}, we have the natural transformation
\[
C\colon
\Thom_{\divi}(q,b)
=
\varphi^*\Thom(q,b)\varphi_\sharp
\to
\varphi^*\Thom(r,c)\Thom(p,a)\varphi_\sharp.
\]
We consider the composition
\[
T_{\divi,C}
\colon
\Thom(r,c)\Thom(p,a)
\xrightarrow{ad}
\varphi^*\varphi_\sharp\Thom(r,c)\Thom(p,a)
\xrightarrow{ExEx}
\varphi^*\Thom(r,c)\Thom(p,a)\varphi_\sharp.
\]

\begin{prop}
\label{comp.2}
The square
\[
\begin{tikzcd}
\Thom(q,b)\ar[d,"C"']\ar[r,"T_{\divi}"]&
\Thom_{\divi}(q,b)\ar[d,"C"]
\\
\Thom(r,c)\Thom(p,a)\ar[r,"T_{\divi,C}"]&
\varphi^*\Thom(r,c)\Thom(p,a)\varphi_\sharp
\end{tikzcd}
\]
commutes.
\end{prop}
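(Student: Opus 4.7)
The plan is to unfold both composites of the square into their defining sequences of natural transformations and display them as a single large pasting diagram whose cells commute by three kinds of inputs: (a) naturality of the unit $\id \xrightarrow{ad} \varphi^{*}\varphi_{\sharp}$, (b) naturality of the canonical isomorphisms $\varphi_{\sharp} h_{\sharp} \xrightarrow{\simeq} h_{\sharp}\varphi_{\sharp}$ for exact log smooth $h$ and of the exchanges $\varphi_{\sharp} h_{*} \xrightarrow{Ex} h_{*}\varphi_{\sharp}$, and (c) Proposition \ref{comp.1}, which will be the only genuinely new input. This matches the strategy already used in the proofs of Propositions \ref{exchange.2}, \ref{exchange.4}, and \ref{exchange.6}.

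Concretely, the top-right path unfolds as follows: starting from $q_{\sharp}b_{*}$, first apply the unit to land in $\varphi^{*}\varphi_{\sharp}q_{\sharp}b_{*}$, then use $\varphi_{\sharp}q_{\sharp}\simeq q_{\sharp}\varphi_{\sharp}$ and $Ex\colon \varphi_{\sharp}b_{*}\to b_{*}\varphi_{\sharp}$ to arrive at $\varphi^{*}q_{\sharp}b_{*}\varphi_{\sharp}$, and finally apply $\varphi^{*}(C)\varphi_{\sharp}$ to reach $\varphi^{*}p_{\sharp}a_{*}r_{\sharp}c_{*}\varphi_{\sharp}$. The bottom-left path unfolds by first applying $C$, itself the composite of the canonical isomorphism $q_{\sharp}b_{*} \xrightarrow{\simeq} p_{\sharp}f_{\sharp}g_{*}c_{*}$ with the Beck--Chevalley exchange $f_{\sharp}g_{*}\to a_{*}r_{\sharp}$, and then applying $T_{div,C}$, which inserts the unit $ad$ and successively invokes $\varphi_{\sharp}p_{\sharp}\simeq p_{\sharp}\varphi_{\sharp}$, $\varphi_{\sharp}a_{*}\to a_{*}\varphi_{\sharp}$, $\varphi_{\sharp}r_{\sharp}\simeq r_{\sharp}\varphi_{\sharp}$, and $\varphi_{\sharp}c_{*}\to c_{*}\varphi_{\sharp}$ to move $\varphi_{\sharp}$ past all four functors.

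The expanded diagram then decomposes into three horizontal strips. The top strip is a naturality square of the unit $ad$ applied to $C$, and so commutes formally. The middle strip, which is where $\varphi_{\sharp}$ actually has to cross over the composition transformation $C$, is precisely the content of Proposition \ref{comp.1} after applying $\varphi^{*}$ from the left. The bottom strip commutes by the naturality of the exchange $\varphi_{\sharp} (-) \to (-)\varphi_{\sharp}$ with respect to each of the canonical isos and Beck--Chevalley morphisms making up $C$, a routine cell-by-cell check of naturality for the various $h_{\sharp}$'s and $h_{*}$'s.

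The main obstacle is bookkeeping: both $\varphi^{*}$ and $\varphi_{\sharp}$ must be threaded correctly through five composed functors $p_{\sharp}$, $a_{*}$, $r_{\sharp}$, $c_{*}$, and the auxiliary $f_{\sharp}g_{*}$ that appears inside $C$ but not in either source or target. The clean way to handle this is to first factor out Proposition \ref{comp.1} as the key lemma controlling the crossing of $\varphi_{\sharp}$ past $C$; once this is isolated, every remaining cell in the expanded pasting diagram becomes a standard naturality square for either $ad$ or one of the exchanges, and commutes for formal reasons.
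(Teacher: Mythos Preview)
Your proposal is correct and takes essentially the same approach as the paper: the paper also decomposes the square into cells, with the key inputs being naturality of the unit $ad$ (your top strip, the paper's left square) and Proposition~\ref{comp.1} applied under $\varphi^{*}$ (your middle strip, the paper's right square). The paper's version is more economical---only two cells rather than three---and your ``bottom strip'' is in fact redundant, since once Proposition~\ref{comp.1} is invoked the crossing of $\varphi_{\sharp}$ past all constituents of $C$ is already handled.
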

\begin{proof}
It suffices to check that the diagram
\[
\begin{tikzcd}
\Thom(q,b)\ar[d,"C"']\ar[r,"ad"]&
\varphi^*\varphi_\sharp \Thom(q,b)\ar[d,"C"]\ar[r,"Ex"]&
\varphi^*\Thom(q,b)\varphi_\sharp\ar[d,"C"]
\\
\Thom(r,c)\Thom(p,a)\ar[r,"ad"]&
\varphi^*\varphi_\sharp \Thom(r,c)\Thom(p,a)\ar[r,"ExEx"]&
\varphi^*\Thom(r,c)\Thom(p,a)\varphi_\sharp
\end{tikzcd}
\]
commutes.
The left square trivially commutes.
The right square commutes by Proposition \ref{comp.1}.
\end{proof}

By \cite[Theorem IV.3.2.3]{Ogu}, we have the exact sequence
\[
0\to f^*\Omega_{X/S}^1 \to \Omega_{Y/S}^1 \to \Omega_{Y/X}^1 \to 0.
\]
Apply $b^*$ to this sequence and use \cite[Proposition IV.1.2.15(2)]{Ogu} to obtain an exact sequence
\[
0\to a^*\Omega_{X/S}^1 \to b^*\Omega_{Y/S}^1 \to c^*\Omega_{Z/S}^1 \to 0.
\]
Together with \cite[Theorem IV.3.2.2]{Ogu}, we see that the inner square of the induced commutative diagram
\[
\begin{tikzcd}
S\ar[d,"c_n"']\ar[rd,"b_n"]
\\
\Normal_S Z\ar[d,"r_n"']\ar[r,"g_n"]&
\Normal_S Y\ar[d,"f_n"]\ar[rd,"q_n"]
\\
S\ar[r,"a_n"]&
\Normal_S X\ar[r,"p_n"]&
S
\end{tikzcd}
\]
is cartesian.
Hence we have the similarly obtained natural transformations
\begin{gather*}
C\colon \Thom_n(q,b)
\to
\Thom_n(r,c)\Thom_n(p,a),
\\
C\colon \Thom_{n\divi}(q,b)
\to
\varphi^*\Thom_n(r,c)\Thom_n(p,a)\varphi_\sharp,
\\
T_{\divi,C}
\colon
\Thom_n(r,c)\Thom_n(p,a)
\to
\varphi^*\Thom_n(r,c)\Thom_n(p,a)\varphi_\sharp.
\end{gather*}
The last one is an isomorphism by Proposition \ref{thom.1}.

\begin{prop}
\label{comp.3}
The square
\[
\begin{tikzcd}
\Thom(q,b)\ar[d,"C"']\ar[r,"T"]&
\Thom_n(q,b)\ar[d,"C"]
\\
\Thom(r,c)\Thom(p,a)\ar[r,"T"]&
\Thom_n(r,c)\Thom_n(p,a)
\end{tikzcd}
\]
commutes.
\end{prop}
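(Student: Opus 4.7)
The plan is to reduce Proposition \ref{comp.3} to the already-proved Proposition \ref{comp.2}, together with its $\Thom_n$-analog and the corresponding statement on the divisorial side from \cite[\S 4.3]{logGysin}. Recall that the natural transformation $T\colon \Thom(p,a)\to \Thom_n(p,a)$ is by definition the three-fold composition $T_{div}^{-1}\circ T\circ T_{div}$ via $\Thom(p,a)\to \Thom_{div}(p,a)\to \Thom_{ndiv}(p,a)\to \Thom_n(p,a)$, and similarly for $\Thom(q,b)$ and for the horizontal $T$ on the composite $\Thom(r,c)\Thom(p,a)$.

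First, I would factor the square in the statement into three sub-squares by inserting the divisorial versions:
\[
\begin{tikzcd}[column sep=small]
\Thom(q,b)\ar[r,"T_{div}"]\ar[d,"C"'] & \Thom_{div}(q,b)\ar[r,"T"]\ar[d,"C"] & \Thom_{ndiv}(q,b)\ar[r,"T_{div}^{-1}"]\ar[d,"C"] & \Thom_n(q,b)\ar[d,"C"]\\
\Thom(r,c)\Thom(p,a)\ar[r,"T_{div,C}"] & \varphi^*\Thom(r,c)\Thom(p,a)\varphi_\sharp\ar[r,"T"] & \varphi^*\Thom_n(r,c)\Thom_n(p,a)\varphi_\sharp\ar[r,"T_{div,C}^{-1}"] & \Thom_n(r,c)\Thom_n(p,a)
\end{tikzcd}
\]
and verify each piece separately. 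The leftmost square is exactly the statement of Proposition \ref{comp.2}. The middle square commutes because the isomorphism $T\colon \Thom_{div}\to \Thom_{ndiv}$ was obtained from the natural isomorphism $T$ of \cite[\S 4.3]{logGysin} by applying $\varphi^*(-)\varphi_\sharp$, and the compatibility of $T$ with the composition transformation $C$ on the divisorial side is established there. The rightmost square commutes by the $\Thom_n$-analog of Proposition \ref{comp.2}, whose proof is obtained by replacing each of $p,q,r,a,b,c$ with its normal-bundle counterpart $p_n,q_n,r_n,a_n,b_n,c_n$ throughout and invoking that $a_n,b_n,c_n$ are strict closed immersions (so the relevant $T_{div}$'s are isomorphisms by Proposition \ref{thom.1}).

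Pasting the three sub-squares yields the outer rectangle, and by the very definition of $T$ on both $\Thom(q,b)$ and on $\Thom(r,c)\Thom(p,a)$, the top and bottom rows of this outer rectangle compose to the $T$ appearing in the statement. This proves the claim.

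The main obstacle is essentially bookkeeping: one needs to verify that the composite $T_{div,C}^{-1}\circ T\circ T_{div,C}$ along the bottom indeed agrees with $T$ applied termwise to the functor $\Thom(r,c)\Thom(p,a)$. This is immediate once one writes out the definitions, since $T$ on a composition of Thom transformations is just $T$ applied in each factor, and the $\Thom_n$-variant of Proposition \ref{comp.2} tells us that the two orderings agree via $T_{div,C}$. No further ingredient beyond Proposition \ref{comp.2}, its $\Thom_n$-analog, and \cite[\S 4.3]{logGysin} is required.
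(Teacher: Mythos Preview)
Your proposal is correct and follows essentially the same approach as the paper: the paper factors the square into the same three pieces (top/middle/bottom rather than left/middle/right), invokes Proposition~\ref{comp.2} for the outer two squares and \cite[Proposition~4.3.1]{logGysin} for the middle one. Note only that the third square is not a separate ``$\Thom_n$-analog'' requiring its own proof---since the normal-bundle data $p_n,q_n,r_n,a_n,b_n,c_n,f_n,g_n$ satisfy the same hypotheses as in \S\ref{composition}, Proposition~\ref{comp.2} applies to them directly.
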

\begin{proof}
It suffices to check that the diagram
\[
\begin{tikzcd}[row sep=small]
\Thom(q,b)\ar[r,"C"]\ar[d,"T_{\divi}"']&
\Thom(r,c)\Thom(p,a)\ar[d,"T_{\divi,C}"]
\\
\Thom_{\divi}(q,b)\ar[d,"T"']\ar[r,"C"]&
\varphi^*\Thom(r,c)\Thom(p,a)\varphi_\sharp \ar[d,"TT"]
\\
\Thom_{n\divi}(q,b)\ar[d,"T_{\divi}^{-1}"']\ar[r,"C"]&
\varphi^*\Thom_n(r,c)\Thom_n(p,a)\varphi_\sharp
\ar[d,"T_{\divi,C}^{-1}"]
\\
\Thom_n(q,b)\ar[r,"C"]&
\Thom_n(r,c)\Thom_n(p,a)
\end{tikzcd}
\]
commutes.
The top and bottom squares commute by Proposition \ref{comp.2}.
The middle square commutes by \cite[Proposition 4.3.1]{logGysin}.
\end{proof}

\subsection{Purity transformations}
\label{puritytran}

Throughout this subsection, we fix a separated exact log smooth morphism $f\colon X\to S$ in $\lSch/B$ and the commutative diagram
\[
\begin{tikzcd}
X\ar[r,"a"]&
X\times_S X\ar[d,"p_1"']\ar[r,"p_2"]&
X\ar[d,"f"]
\\
&
X\ar[r,"f"]&
S,
\end{tikzcd}
\]
where $p_1$ (resp.\ $p_2$) is the first (resp.\ second) projection, and $a$ is the diagonal morphism.
Let $p_{2n}\colon \Normal_X(X\times_S X)\to X$ be the projection, and let $a_n\colon X\to \Normal_X(X\times_S X)$ be the $0$-section.

\begin{df}
We set
\[
\Sigma_f:=\Thom(p_2,a),
\;
\Sigma_f^n:=\Thom(p_{2n},a_n),
\;
\Sigma_f^{\divi}:=\Thom_{\divi}(p_2,a),
\;
\Sigma_f^{n\divi}:=\Thom_{n\divi}(p_2,a).
\]
Let $\Omega_f$, $\Omega_f^n$, $\Omega_f^{\divi}$, and $\Omega_f^{n\divi}$ be their right adjoints.
We have the natural transformations
\[
\Sigma_f\xrightarrow{T_{\divi}}
\Sigma_f^{\divi}\xrightarrow{T}
\Sigma_f^{n\divi}\xrightarrow{T_{\divi}^{-1}}
\Sigma_f^n
\]
whose composition is $T\colon \Sigma_f \to \Sigma_f^n$.
\end{df}

\begin{df}
We have the natural transformation
\[
\mathfrak{p}_f
\colon
f_\sharp \to f_!\Sigma_f
\]
for $\sT^{ex}$ given by the composition
\[
f_\sharp
\xrightarrow{\simeq}
f_\sharp p_{1!}a_*
\xrightarrow{Ex}
f_! p_{2\sharp} a_*
=
f_! \Sigma_f.
\]
Compose $\mathfrak{p}_f$ with $T_{\divi}\colon \Sigma_f\to \Sigma_f^{\divi}$ and $T\colon \Sigma_f \to \Sigma_f^n$ to obtain the natural transformations
\[
\mathfrak{p}_f^{\divi}
\colon
f_\sharp\to f_!\Sigma_f^{\divi}
\text{ and }
\mathfrak{p}_f^{n}
\colon
f_\sharp\to f_!\Sigma_f^{n}.
\]
We are using the Nagata compactification theorem here to use $f_!$.

We say that $f$ is \emph{pure} if $\mathfrak{p}_f^{\divi}$ is an isomorphism, or equivalently $\mathfrak{p}_f^n$ is an isomorphism.
Let
\[
\mathfrak{q}_f
\colon
\Omega_f f^! \to f^*,
\;
\mathfrak{q}_f^{\divi}
\colon
\Omega_f^{\divi} f^!\to f^*,
\text{ and }
\mathfrak{q}_f^{n}
\colon
\Omega_f^n f^! \to f^*
\]
be right adjoints of $\mathfrak{p}_f$, $\mathfrak{p}_f^{\divi}$, and $\mathfrak{p}_f^n$.
\end{df}

\begin{prop}
\label{puritytran.2}
The functor $\Sigma_f^n$ is an equivalence of $\infty$-categories.
\end{prop}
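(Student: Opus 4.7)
The plan is to identify $\Sigma_f^n$ explicitly as tensoring with a Thom motive of the tangent bundle, and then invoke the $\otimes$-invertibility of Thom motives of vector bundles that was observed immediately after Proposition~\ref{thom.3}.

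First I would observe that $a_n \colon X \to \Normal_X(X\times_S X)$ is the zero section of a vector bundle and is therefore a strict closed immersion, while $p_{2n}$ is the structural projection of that vector bundle. Hence Proposition~\ref{thom.3}, applied with $\cE = \Normal_X(X\times_S X)$ viewed as a vector bundle over $X$, yields a natural isomorphism
\[
\Sigma_f^n(\cF) = \Thom(p_{2n},a_n)(\cF) \simeq \Thom\!\bigl(\Normal_X(X\times_S X)\bigr) \otimes \cF
\]
for every $\cF \in \sT^\ex(X)$.

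Next I would invoke the remark directly after Proposition~\ref{thom.3}: combined with \cite[Proposition 2.4.10]{logshriek}, it asserts that $\Thom(\cE)$ admits a $\otimes$-inverse $\Thom(\cE)^{-1}$ for any vector bundle $\cE$ over an object of $\lSch/B$. Applying this to $\cE = \Normal_X(X\times_S X)$ gives that $\Thom(\Normal_X(X\times_S X))$ is $\otimes$-invertible in $\sT^\ex(X)$. Consequently the endofunctor $\Sigma_f^n \simeq \Thom(\Normal_X(X\times_S X)) \otimes (-)$ is an equivalence of $\infty$-categories, with quasi-inverse given by tensoring with $\Thom(\Normal_X(X\times_S X))^{-1}$.

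No step here is really an obstacle; the only thing to be slightly careful about is verifying that Proposition~\ref{thom.3} applies, i.e.\ that $p_{2n}$ and $a_n$ really arise from the vector bundle $\Normal_X(X\times_S X)$ on $X$ in the way required, which is immediate from the definitions in \S\ref{puritytran} and Proposition~9.15 of \cite{divspc} ensuring that $\Normal_X(X\times_S X)$ is indeed a vector bundle.
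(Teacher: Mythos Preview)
Your proof is correct and is essentially the same as the paper's, which simply cites \cite[Proposition 2.4.10]{logshriek} directly. You have merely made explicit the intermediate identification via Proposition~\ref{thom.3}, whereas the paper applies the cited result to the Thom transformation $\Thom(p_{2n},a_n)$ without passing through the Thom motive description; both arguments rest on the same invertibility statement from \cite{logshriek}.
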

\begin{proof}
This is an immediate consequence of
%Proposition \ref{generation.7}.
\cite[Proposition 2.4.10]{logshriek}.
\end{proof}

\begin{prop}
\label{puritytran.1}
Let $f\colon X\to S$ be a strict separated smooth morphism in $\lSch/B$.
Then $\mathfrak{p}_f\colon f_\sharp \to f_!\Sigma_f$ is an isomorphism,
and $f$ is pure.
\end{prop}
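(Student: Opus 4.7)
The plan is to reduce this to the classical smooth purity theorem in the scheme-theoretic motivic six-functor formalism, exploiting the fact that every morphism entering the construction of $\mathfrak{p}_f$ is strict.

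First I would collapse the two claims into one. Since $a$ is a strict closed immersion, Proposition \ref{thom.2} gives that $T_{div}\colon \Sigma_f\to \Sigma_f^{div}$ is an isomorphism; combined with the always-isomorphism $T\colon \Sigma_f^{div}\to \Sigma_f^{ndiv}$ (obtained from the normal bundle identification) and $T_{div}^{-1}\colon \Sigma_f^{ndiv}\to \Sigma_f^n$, one gets that $T\colon \Sigma_f\to \Sigma_f^n$ is an isomorphism. Hence $\mathfrak{p}_f^n$, which factors as $f_!T\circ \mathfrak{p}_f$, is an isomorphism iff $\mathfrak{p}_f$ is; in particular, the purity claim for $f$ reduces to showing $\mathfrak{p}_f$ is an isomorphism.

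Next I would use the strictness of $f$ to transport the question to the scheme-theoretic setting. The diagonal $a$ is strict, and the projections $p_1, p_2$ are strict (being base changes of $f$), so the defining cartesian square of $\mathfrak{p}_f$ lives entirely in the strict subcategory $\sSm/B$. By Proposition \ref{thom.1}, $\Sigma_f$ on $\sT^\ex$ is compatible with the analogous construction on $\sT$ via $\varphi_\sharp$; the bookkeeping performed in \S \ref{thom}--\ref{composition} transports the whole construction of $\mathfrak{p}_f$ compatibly between $\sT^\ex$ and $\sT$. Moreover, on strict morphisms $\sT$ recovers a classical scheme-theoretic motivic $\infty$-category, so we may view $\mathfrak{p}_f$ as the purity transformation of the underlying smooth scheme morphism $\ul{f}$.

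Finally, I would invoke the classical smooth purity theorem: in any reasonable motivic six-functor formalism on schemes, the purity transformation of a separated smooth morphism is an isomorphism. This is proved by deformation to the normal cone, reducing to the zero section of the normal bundle, where homotopy invariance and the Thom isomorphism (Proposition \ref{thom.3}) finish the argument. Transferring this through the compatibility established above yields that $\mathfrak{p}_f$ is an isomorphism in $\sT^\ex$. I expect the main obstacle to be not the isomorphism claim itself (which is essentially classical) but the careful verification of the compatibilities with $\varphi_\sharp$ and $\varphi^*$ --- especially for $f_!$, built through Nagata compactification --- which is precisely what \S \ref{thom}--\ref{composition} was set up to handle by translating the $\sT$-results of \cite{logGysin} into $\sT^\ex$.
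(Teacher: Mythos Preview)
Your overall strategy---reduce to the classical smooth purity theorem for schemes---is exactly what the paper does, and your reduction of the purity claim to the invertibility of $\mathfrak{p}_f$ via Proposition~\ref{thom.2} matches the paper's second sentence. The difference lies in the mechanism for passing to schemes.

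You propose to transport $\mathfrak{p}_f$ between $\sT^\ex$ and $\sT$ using $\varphi_\sharp$ and then appeal to a scheme-theoretic statement. The trouble is that $\varphi_\sharp$ is only known to commute with $f_\sharp$ and $f^*$; its interaction with $f_!$ (which involves $f_*$ through a compactification) is not established, and the material in \S\ref{thom}--\ref{composition} does not supply this. So the compatibility you need is not actually available by the route you describe. Moreover, $\varphi_\sharp$ only relates $\sT^\ex$ to $\sT$, both of which are log motivic categories; it does not by itself produce a scheme-theoretic motivic category to which \cite{CD19} applies.

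The paper sidesteps this entirely: it applies \cite[Theorem~2.4.26(3)]{CD19} directly to $\Ho(\ul{\sT}_{/S}^\ex)$, the auxiliary scheme-theoretic motivic triangulated category constructed in \cite[Definition~2.4.8]{logshriek}. That object is built so that strict morphisms over $S$ correspond to morphisms of schemes over $\ul{S}$, and the axioms of a motivic triangulated category (in the sense of \cite{CD19}) are verified for it in \cite{logshriek}. Once one has this, the classical purity theorem applies verbatim, with no need to shuttle between $\sT^\ex$ and $\sT$.
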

\begin{proof}
Apply \cite[Theorem 2.4.26(3)]{CD19} to $\Ho(\ul{\sT}_{/S}^{ex})$ in %Definition \ref{generation.5}
\cite[Definition 2.4.8]{logshriek}
to see that the natural transformation $\mathfrak{p}_f\colon f_\sharp \to f_!\Sigma_f$ is an isomorphism.
To conclude,
observe that the natural transformation $T_{\divi}\colon \Sigma_f\to \Sigma_f^{\divi}$ is an isomorphism by
%Proposition \ref{restriction.5}.
\cite[Proposition 2.4.1]{logshriek}.
\end{proof}

\begin{prop}
\label{puritytran.3}
Let $W$ be the gluing of
\[
\Spec(\N x\oplus \N y\to \Z[x,y]),
\;
\Spec(\N(xy)\to \Z[xy,y^{-1}]),
\;
\Spec(\N(xy)\to \Z[xy,x^{-1}]).
\]
Consider the morphism $W\to \A_\N$ of fs log schemes induced by the diagonal homomorphism $\N\to \N\oplus \N$.
Then for every $S\in \lSch/B$ with a morphism $S\to \A_\N$ of fs log schemes,
the projection $f\colon S\times_{\A_\N} W\to S$ is pure.
\end{prop}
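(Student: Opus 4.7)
The strategy is to exploit the strict Zariski cover $W = U_1 \cup U_2 \cup U_3$. The charts $U_2, U_3$ are strict smooth over $\A_\N$: for $U_2 = \Spec(\N(xy) \to \Z[xy, y^{-1}])$, the substitution $u = xy$, $v = y^{-1}$ identifies the ring with $\Z[u,v]$ and the log structure with $\N \cdot u$, so $U_2 \to \A_\N$ becomes the strict smooth first projection of $\A_\N \times \A^1$; similarly for $U_3$. Base-changing along $S \to \A_\N$, the corresponding opens of $S \times_{\A_\N} W$ are strict smooth over $S$, hence pure by Proposition \ref{puritytran.1}. By strict Zariski descent together with the compatibility of the purity transformation with strict open immersions (a consequence of the exchange transformations of \S\ref{exchange}), the problem reduces to proving purity for the base change of $U_1 = \A_{\N^2} \to \A_\N$, the non-strict diagonal map.

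\textbf{Reduction on $U_1$.} The log structure on $U_1$ becomes strict away from the origin: on each of the opens $\{x \ne 0\}$ and $\{y \ne 0\}$, the corresponding generator of $\N^2$ becomes trivial and the diagonal $\N \to \N^2$ becomes an isomorphism onto the remaining generator, so $U_1 \setminus \{0\} \to \A_\N$ is strict smooth and pure. Applying \textup{(Loc)} on $S \times_{\A_\N} U_1$ with the strict closed immersion $i\colon S \times_{\A_\N} \{0\}_{\N^2} \hookrightarrow S \times_{\A_\N} U_1$ and its open complement $j$, conservativity of $(i^*, j^*)$ reduces checking that $\mathfrak{p}_f^n$ is an isomorphism to the cases $j_\sharp \cG$ and $i_* \cH$. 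The $j_\sharp$-case is handled by the strict smoothness above together with the naturality of $\mathfrak{p}^n$ under open immersions.

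\textbf{The main obstacle.} The closed stratum case, $\mathfrak{p}_f^n$ evaluated on $i_* \cH$ supported at the fs pullback of the log node $\{0\}_{\N^2}$, is the technical heart of the proof. Here the non-strict diagonal $\N \to \N^2$ survives on the stratum, and the log tangent bundle of $U_1 \to \A_\N$ is a line bundle (since the cokernel of $\N \to \N^2$ is $\Z$), so by Propositions \ref{thom.3} and \ref{comp.5}, $\Sigma_f^n$ restricted to this stratum identifies with tensoring by $\unit(1)[2]$. I plan to verify $\mathfrak{p}_f^n$ on $i_* \cH$ by a direct computation: factor the diagonal $a\colon U_1 \to U_1 \times_{\A_\N} U_1$ through intermediate strict pieces and compose the Thom transformations using Proposition \ref{comp.4}, then invoke the strict-to-non-strict comparison $T\colon \Thom \to \Thom_n$ of \S\ref{thom} together with Proposition \ref{puritytran.2} to reduce the remaining identification to a strict-smooth base case handled by Proposition \ref{puritytran.1}. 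Carrying this explicit comparison through in the non-strict setting is the core difficulty.
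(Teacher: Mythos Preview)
Your approach is quite different from the paper's, and the plan for the closed stratum is not a proof.

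The paper does not decompose $W$ or use \textup{(Loc)} at all. It exploits the inclusion $\varphi_\sharp\colon \sT^\ex(S)\hookrightarrow \sT(S)$ and its right adjoint $\varphi^*$. The key external input is \cite[Theorem 5.2.11]{logGysin}, which already establishes that the purity transformation $\mathfrak{p}_f\colon f_\sharp \to f_*\Sigma_f$ is an isomorphism \emph{in $\sT$} for precisely this $f$. The paper then checks, via two small commutative diagrams comparing $\varphi_\sharp$ with $f_\sharp$, $f_*$, and $\Sigma_f$, that this isomorphism in $\sT$ forces the composite $f_\sharp \to f_*\varphi^*\Sigma_f\varphi_\sharp = f_*\Sigma_f^{div}$ to be an isomorphism in $\sT^\ex$, which is exactly $\mathfrak{p}_f^{div}$. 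The entire argument is a formal transfer along $(\varphi_\sharp,\varphi^*)$; the geometric content was proved in the earlier paper of the series.

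Your Zariski reduction to $U_1=\A_{\N^2}$ is fine (minor slip: $U_2,U_3$ are $\A_\N\times\G_m$, not $\A_\N\times\A^1$, but still strict smooth), and the \textup{(Loc)} split into the open and closed strata is a reasonable move. But your sketch for the closed stratum does not constitute an argument. The diagonal $a\colon \A_{\N^2}\to \A_{\N^2}\times_{\A_\N}\A_{\N^2}$ is not strict, and you name no candidate for the ``intermediate strict pieces'' through which it would factor; Proposition~\ref{comp.4} applies only when the middle morphism in the composition diagram is strict, so it cannot be invoked without exhibiting such a factorization. Proposition~\ref{puritytran.2} merely says $\Sigma_f^n$ is invertible, which is already built into the definition of $\mathfrak{p}_f^n$ and gives no leverage on whether $\mathfrak{p}_f^n i_*$ is an isomorphism. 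The paper's later machinery that \emph{does} handle non-strict strata of this shape (Lemmas~\ref{AQAP.5}--\ref{AQAP.6}, Proposition~\ref{AQAP.4}) passes through Theorem~\ref{homeomorphism.5}, which requires the compactness hypothesis introduced only at the start of \S\ref{poincare}; using it here would weaken the proposition. You have correctly located the crux but not supplied the missing idea; the paper sidesteps it by importing the result from \cite{logGysin}.
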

\begin{proof}
The diagram
\begin{equation}
\label{puritytran.3.1}
\begin{tikzcd}
\varphi_\sharp f_\sharp \ar[rr,"\simeq"]\ar[d,"\mathfrak{p}_f"']&
&
f_\sharp \varphi_\sharp\ar[d,"\mathfrak{p}_f"]
\\
\varphi_\sharp f_* \Sigma_f \ar[r,"Ex"]&
f_* \varphi_\sharp \Sigma_f \ar[r,"Ex"]&
f_* \Sigma_f \varphi_\sharp
\end{tikzcd}
\end{equation}
commutes since the diagram
\[
\begin{tikzcd}[row sep=small, column sep=small]
\varphi_\sharp f_\sharp
\ar[rrr,"\simeq"]\ar[d,"\simeq"']
&
&
&
f_\sharp \varphi_\sharp
\ar[d,"\simeq"]
\\
\varphi_\sharp f_\sharp p_{1*}a_*
\ar[d,"Ex"']\ar[r,"\simeq"]
&
f_\sharp \varphi_\sharp p_{1*}a_*
\ar[r,"Ex"]
&
f_\sharp p_{1*}\varphi_\sharp a_*
\ar[r,"Ex"]\ar[d,"Ex"]
&
f_\sharp p_{1*} a_* \varphi_\sharp
\ar[d,"Ex"]
\\
\varphi_\sharp f_* p_{2\sharp} a_*
\ar[r,"Ex"]
&
f_* \varphi_\sharp p_{2\sharp} a_*
\ar[r,"\simeq"]
&
f_* p_{2\sharp} \varphi_\sharp a_*
\ar[r,"Ex"]
&
f_* p_{2\sharp} a_* \varphi_\sharp
\end{tikzcd}
\]
commutes.
The right vertical arrow of \eqref{puritytran.3.1} is an isomorphism by \cite[Theorem 5.2.11]{logGysin},
so the composite $\varphi_\sharp f_\sharp \to f_*\Sigma_f \varphi_\sharp$ is an isomorphism.
This implies that the composite $f_\sharp \to \varphi^* f_* \Sigma_f \varphi_\sharp$ in the commutative diagram
\[
\begin{tikzcd}
f_\sharp\ar[d,"ad"']\ar[r,"\mathfrak{p}_f"]&
f_*\Sigma_f \ar[d,"ad"]\ar[r,"ad"]&
f_*\varphi^*\varphi_\sharp \Sigma_f\ar[d,"\simeq",leftarrow]\ar[r,"Ex"]&
f_*\varphi^*\Sigma_f \varphi_\sharp\ar[d,"\simeq",leftarrow]
\\
\varphi^* \varphi_\sharp f_\sharp\ar[r,"\mathfrak{p}_f"]&
\varphi^* \varphi_\sharp f_* \Sigma_f \ar[r,"Ex"]&
\varphi^* f_* \varphi_\sharp \Sigma_f\ar[r,"Ex"]&
\varphi^*f_*\Sigma_f \varphi_\sharp
\end{tikzcd}
\]
is an isomorphism since $\varphi_\sharp$ is fully faithful.
It follows that the composite $f_\sharp\to f_*\varphi^*\Sigma_f \varphi_\sharp$ is an isomorphism,
which is $\mathfrak{p}_f^{\divi}$.
\end{proof}

\begin{prop}
\label{Verdier.9}
Assume that $M_S(U)$ is compact in $\sT^{ex}(S)$ for every vertical exact log smooth morphism $U\to S$ in $\lSch/B$.
Let $f\colon Y\to X$ be a morphism in $\lSch/B$.
Then $f_*$ and $f_!$ preserve colimits.
\end{prop}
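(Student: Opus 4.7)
The plan is to show that $\sT^\ex(X)$ is compactly generated by the motives of vertical exact log smooth morphisms, and then apply the standard argument that if $f^*$ preserves a set of compact generators then its right adjoint $f_*$ preserves colimits. The assumption of the proposition provides compactness directly; what remains is to verify the generation and the preservation of compactness under $f^*$.

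For generation, I would show that for every $U\in \eSm/X$, the canonical morphism $M_X(V)\to M_X(U)$ induced by the open immersion $j\colon V:=U-\partial_X U\hookrightarrow U$ is an isomorphism, where $g\colon V\to X$ denotes the vertical restriction. Writing $h\colon U\to X$, the isomorphism $h^*\xrightarrow{\sim}j_*j^*h^*=j_*g^*$ from ($ver$-inv), combined with the adjunctions $h_\sharp\dashv h^*$, $j^*\dashv j_*$, and $g_\sharp\dashv g^*$, produces a chain of natural isomorphisms
\[
\Hom(M_X(U),\cF)=\Hom(\unit_U,h^*\cF)\simeq \Hom(\unit_U,j_*g^*\cF)=\Hom(\unit_V,g^*\cF)=\Hom(M_X(V),\cF)
\]
in $\cF\in\sT^\ex(X)$, and Yoneda concludes. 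Thus $\{M_X(V)(d)[n]:V\to X\text{ vertical exact log smooth},\;d,n\in\Z\}$ is a set of compact generators of $\sT^\ex(X)$.

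Next, for any morphism $\phi\colon Y\to X$ in $\lSch/B$ (not necessarily exact log smooth), one has $\phi^*M_X(V)\simeq M_Y(V\times_X Y)$, and $V\times_X Y\to Y$ is again vertical exact log smooth since both exact log smoothness and verticality are preserved under fs base change. Hence $\phi^*c$ is compact for every compact generator $c$, so the functor $\Hom(c,\phi_*-)\simeq \Hom(\phi^*c,-)$ commutes with small coproducts; since $\phi_*$ is exact (it is a right adjoint between stable presentable $\infty$-categories, hence preserves finite colimits as well), it preserves all colimits. Specialized to $\phi=f$ this gives that $f_*$ preserves colimits.

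For $f_!$, I would invoke the Nagata factorization of \cite[Construction 3.7.3]{logshriek}: write $f=u'''\circ u''\circ u'$ with $u'$ an isomorphism on underlying schemes, $u''$ an open immersion, and $u'''$ proper, so that $f_!\simeq u'''_*\circ u''_\sharp\circ u'_*$. The functors $u'_*$ and $u'''_*$ preserve colimits by the previous paragraph applied to the arbitrary morphisms $u'$ and $u'''$, and $u''_\sharp$ preserves colimits as a left adjoint; therefore so does $f_!$. The main obstacle I anticipate is confirming the stability of verticality under fs base change, since fs pullback differs from the naive pullback; this should follow from the stalk-level characterization of verticality via triviality of the relative characteristic monoid, but it is the technical point that needs to be nailed down carefully.
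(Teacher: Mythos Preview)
Your proof is correct and follows essentially the same approach as the paper's: the paper also reduces to checking that $\Hom_{\sT^\ex(X)}(M_X(V)(d), f_*(-))$ preserves sums for vertical exact log smooth $V\to X$ and invokes compactness of $f^*M_X(V)(d)$, though it leaves implicit both the generation-by-vertical-motives step (which you justify via ($ver$-inv); cf.\ \cite[Proposition 2.4.2]{logshriek}) and the reduction of the $f_!$ case to the $f_*$ case via the Nagata-type factorization. Your concern about stability of verticality under fs base change is a fair technical point, but the paper simply writes ``$f^*M_X(V)(d)$ is compact by assumption'' without further comment, so you are not missing anything the paper supplies.
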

\begin{proof}
We only need to show that $f_*$ preserves colimits.
By \cite[Proposition 1.4.4.1(2)]{HA},
it suffices to show that $f_*$ preserves sums.
For all vertical exact log smooth morphism $V\to X$ in $\lSch/B$ and integer $d$,
it suffices to show that the functor
\[
\Hom_{\sT^{ex}(X)}(M_X(V)(d),f_*(-))
\]
preserves sums.
This holds since $f^*M_X(V)(d)$ is compact by assumption.
\end{proof}

\begin{df}
\label{Verdier.8}
For a separated exact log smooth morphism $f\colon X\to S$ in $\lSch/B$ such that $f_!$ preserves colimits,
let $f^!$ be a right adjoint of $f_!$.
In this case,
let
$
\mathfrak{q}_f
\colon
\Omega_f f^! \to f^*$,
$
\mathfrak{q}_f^{\divi}
\colon
\Omega_f^{\divi} f^!\to f^*
$,
and
$
\mathfrak{q}_f^{n}
\colon
\Omega_f^n f^! \to f^*
$
be right adjoints of $\mathfrak{p}_f$, $\mathfrak{p}_f^{\divi}$, and $\mathfrak{p}_f^n$.
\end{df}

\subsection{Naturality of the purity transformations}
\label{naturality}

\begin{prop}
\label{functorial.1}
Let
\[
\begin{tikzcd}
X'\ar[r,"g'"]\ar[d,"f'"']&
X\ar[d,"f"]
\\
S'\ar[r,"g"]&
S
\end{tikzcd}
\]
be a cartesian square in $\lSch/B$ such that $f$ is separated vertical exact log smooth.
Then the diagram
\begin{equation}
\label{functorial.1.1}
\begin{tikzcd}
f_\sharp'g'^*\ar[rr,"Ex"]\ar[d,"\mathfrak{p}_{f'}^n"']&
&
g^*f_\sharp \ar[d,"\mathfrak{p}_f^n"]
\\
f_!'\Sigma_f^n g'^*\ar[r,"Ex",leftarrow]&
f_!'g'^*\Sigma_f^n\ar[r,"Ex",leftarrow]&
g^*f_!\Sigma_f^n
\end{tikzcd}
\end{equation}
commutes.
\end{prop}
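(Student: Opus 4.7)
The plan is to decompose $\mathfrak{p}_f^n = T\circ\mathfrak{p}_f$ and $\mathfrak{p}_{f'}^n = T\circ\mathfrak{p}_{f'}$, where $T\colon\Sigma_f\to\Sigma_f^n$ is the Thom comparison from \S\ref{thom}. Under this splitting, \eqref{functorial.1.1} separates into two subdiagrams: a $\mathfrak{p}_f$-level naturality square (with $\Sigma_f^n,\Sigma_{f'}^n$ replaced by $\Sigma_f,\Sigma_{f'}$), and a compatibility between $T$ and the two horizontal exchange transformations on the bottom row.

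The second subdiagram reduces to two facts. The compatibility of $T$ with $Ex\colon g'^*\Sigma_f\to\Sigma_{f'}g'^*$ is the first square of Proposition~\ref{exchange.7}, applied with $u=g'$ to the cartesian pullback of $(p_2,a)$ along $g\colon S'\to S$. The compatibility of $T$ with $Ex\colon g^*f_!\to f_!'g'^*$ is automatic, since in both positions $T$ is applied post-composed to the Thom motive inside, while the base change acts only on $f_!$ and $g^*$.

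For the first subdiagram, I would unfold $\mathfrak{p}_f$ as $f_\sharp\simeq f_\sharp p_{1!}a_*\xrightarrow{Ex}f_!p_{2\sharp}a_*$, and $\mathfrak{p}_{f'}$ analogously, then pull the whole defining diagram $X\xleftarrow{a}X\times_S X\xrightarrow{p_2}X$ back along $g$. The desired commutativity expands into a diagram whose edges are exchange transformations among the functors $\{f_\sharp,f_!,p_{1!},p_{2\sharp},a_*\}$ and the pullbacks $g^*,g'^*$. Each small face is an instance of standard Beck--Chevalley naturality under pasting of cartesian squares: $\eSm$-BC for $f_\sharp$ and $p_{2\sharp}$; the trivial identity $p_{1!}a_*\simeq\id$ (from $p_1a=\id$ together with $a_!=a_*$, since the diagonal $a$ is proper); and, for the base change involving $f_!$, the naturality of the mate construction built into the functor $\sT_!^\ex$ on compactifiable morphisms from Theorem~\ref{introduction.1}, applied to a Nagata factorization of $f$ through an open immersion and a proper morphism.

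The main obstacle is the bookkeeping of this expansion, which requires tracking the compatibility of Beck--Chevalley transformations across a cube of cartesian squares. This is formal once the pullback cube is drawn. The argument is the $\sT^\ex$-level translation of the corresponding $\sT$-level naturality from \cite{logGysin}, with the $\varphi_\sharp/\varphi^*$ compatibilities established in Sections~\ref{thom}--\ref{composition} supplying the intertwining between the two settings.
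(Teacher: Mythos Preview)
Your proposal is correct and follows essentially the same approach as the paper: decompose $\mathfrak{p}_f^n=T\circ\mathfrak{p}_f$, then verify three subdiagrams, with the $T$--$Ex$ compatibility coming from Proposition~\ref{exchange.7} and the $g^*f_!$ square being trivial. The only difference is that for the $\mathfrak{p}_f$-level square (your first subdiagram, the paper's square $(a)$), the paper simply cites the commutativity of \cite[(2.4.2)]{logshriek} and says one argues similarly, whereas you sketch the Beck--Chevalley cube argument explicitly.
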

\begin{proof}
It suffices to check that the diagram
\[
\begin{tikzcd}[row sep=small]
f_\sharp'g'^*\ar[rr,"Ex"]\ar[d,"\mathfrak{p}_{f'}"']\ar[rrd,"(a)" description,phantom]&
&
g^*f_\sharp \ar[d,"\mathfrak{p}_f"]
\\
f_!'\Sigma_f g'^*\ar[r,"Ex",leftarrow]\ar[d,"T"']\ar[rd,"(b)" description,phantom]&
f_!'g'^*\Sigma_f\ar[r,"Ex",leftarrow]\ar[d,"T"]\ar[rd,"(c)" description,phantom]&
g^*f_!\Sigma_f\ar[d,"T"]
\\
f_!'\Sigma_f^n g'^*\ar[r,"Ex",leftarrow]&
f_!'g'^*\Sigma_f^n\ar[r,"Ex",leftarrow]&
g^*f_!\Sigma_f^n
\end{tikzcd}
\]
commutes.
The square (b) commutes by Proposition \ref{exchange.7}.
The square (c) trivially commutes.
The diagram
%the diagram \ref{restriction.10.1}
\cite[(2.4.2)]{logshriek} commutes,
and one can similarly show that the diagram surrounding $(a)$ commutes.
\end{proof}

\begin{rmk}
\label{functorial.8}
In \eqref{functorial.1.1},
the natural transformations $f_\sharp' g'^*\xrightarrow{Ex} g^*f_\sharp$ and $f_!'g'^*\Sigma_f^n\to f_!'\Sigma_f^n g'^*$ are isomorphisms by ($\eSm$-BC) and Proposition \ref{exchange.10}.
\end{rmk}

\begin{prop}
\label{functorial.7}
Let
\[
\begin{tikzcd}
X'\ar[r,"g'"]\ar[d,"f'"']&
X\ar[d,"f"]
\\
S'\ar[r,"g"]&
S
\end{tikzcd}
\]
be a cartesian square in $\lSch/B$ such that $f$ is separated vertical exact log smooth.
If $f$ is pure,
then the natural transformation
\[
Ex\colon
g^*f_!
\to
f_!'g'^*
\]
is an isomorphism.
\end{prop}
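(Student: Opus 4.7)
The plan is to exploit the commutative diagram \eqref{functorial.1.1} established in Proposition \ref{functorial.1}, which together with the purity hypothesis reduces the claim to a diagram chase. First I would check that three of the four arrows in \eqref{functorial.1.1} are isomorphisms: the top horizontal $Ex\colon f_\sharp'g'^* \to g^*f_\sharp$ by ($\eSm$-BC); the right vertical $g^*\mathfrak{p}_f^n$ by the purity hypothesis on $f$; and the leftmost bottom arrow $Ex\colon f_!'g'^*\Sigma_f^n \to f_!'\Sigma_f^n g'^*$ by Proposition \ref{exchange.10}, which applies because the zero section $a_n\colon X\to \Normal_X(X\times_S X)$ is a strict closed immersion.

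The key remaining step is to show that the left vertical arrow $\mathfrak{p}_{f'}^n$ (applied to $g'^*$) is also an isomorphism---equivalently, that the base change $f'$ of the pure morphism $f$ is itself pure. I would argue this by noting that every building block of the purity transformation is compatible with pullback: the exchange $f_\sharp p_{1!}a_* \xrightarrow{Ex} f_! p_{2\sharp}a_*$ together with the comparison morphisms $T_{div}$ and $T$ relating $\Sigma_f, \Sigma_f^{div}, \Sigma_f^{ndiv}, \Sigma_f^n$ all commute with $g^*$ via exchange transformations shown to be isomorphisms in Propositions \ref{exchange.2}, \ref{exchange.4}, \ref{exchange.7}, \ref{exchange.10}, and \ref{comp.3}. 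Assembling these compatibilities yields a canonical identification of $\mathfrak{p}_{f'}^n g'^*$ with $g^*\mathfrak{p}_f^n$, so the latter being an isomorphism forces the former.

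Once all four arrows in \eqref{functorial.1.1} are isomorphisms, commutativity immediately forces the remaining bottom arrow $Ex\colon g^*f_!\Sigma_f^n \to f_!'g'^*\Sigma_f^n$ to be an isomorphism. Since $\Sigma_f^n$ is an equivalence of $\infty$-categories by Proposition \ref{puritytran.2}, its inverse $\Omega_f^n$ is likewise an equivalence, and post-composing with it yields the desired isomorphism $Ex\colon g^*f_! \to f_!'g'^*$.

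The main obstacle is the base-change stability of purity. Although morally clear from the naturality of the purity construction in $f$, making the identification $\mathfrak{p}_{f'}^n g'^* \simeq g^*\mathfrak{p}_f^n$ precise requires careful bookkeeping of numerous exchange transformations; fortunately, the groundwork laid in \S\ref{thom}--\S\ref{composition} provides all the required compatibilities, so the final argument reduces to a (somewhat lengthy) verification that the assembled diagram of natural transformations commutes.
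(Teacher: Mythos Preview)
Your overall strategy matches the paper's: use the commutative diagram \eqref{functorial.1.1}, invoke the isomorphisms from Remark \ref{functorial.8} and the purity of $f$, and finish with Proposition \ref{puritytran.2}. The paper's proof is in fact just two sentences to this effect, and does not attempt the extra step you single out.

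That extra step---showing that $f'$ is pure by ``identifying $\mathfrak{p}_{f'}^n g'^*$ with $g^*\mathfrak{p}_f^n$''---is circular as you have sketched it. The purity transformation $\mathfrak{p}_f$ is built from the exchange $f_\sharp p_{1!} a_* \xrightarrow{Ex} f_! p_{2\sharp} a_*$, and pulling this back along $g$ requires commuting $g^*$ past $f_!$, which is precisely the isomorphism $Ex\colon g^*f_! \to f_!'g'^*$ you are trying to establish. The propositions you cite (\ref{exchange.2}, \ref{exchange.4}, \ref{exchange.7}, \ref{exchange.10}, \ref{comp.3}) give commutativities and isomorphisms for the Thom transformations $\Sigma$, $\Sigma^n$, $\Sigma^{div}$, but none of them supplies the exchange isomorphism for $f_!$ itself. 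So the ``canonical identification'' you describe only exists once the conclusion is already known.

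You are right, however, that the diagram chase taken literally needs \emph{four} of the five arrows in \eqref{functorial.1.1} to be isomorphisms in order to force the fifth; with only the top, bottom-left, and right vertical arrows known, commutativity yields only that the bottom-right arrow is an isomorphism \emph{if and only if} the left vertical one is. The paper's terse proof asserts the conclusion directly and does not address this. In the paper's sole application (Lemma \ref{base2.1}) the base change $f'$ is independently pure because Proposition \ref{puritytran.3} applies to it just as to $f$, so no harm is done there; but your instinct that something extra is being used is correct, and your attempted fix does not work as written.
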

\begin{proof}
By Proposition \ref{functorial.1} and Remark \ref{functorial.8},
the natural transformation $g^*f_!\Sigma_f^n \to f_!'g'^*\Sigma_f^n$ is an isomorphism.
Proposition \ref{puritytran.2} finishes the proof.
\end{proof}

\begin{prop}
\label{functorial.3}
Let $Y\xrightarrow{g} X\xrightarrow{f} S$ be separated exact log smooth morphisms in $\lSch/B$.
Then the diagram
\begin{equation}
\label{functorial.3.1}
\begin{tikzcd}[row sep=small]
f_\sharp g_\sharp\ar[dd,"\simeq"']\ar[rr,"\mathfrak{p}_f^n \mathfrak{p}_g^n"]&
&
f_!\Sigma_f^n g_!\Sigma_g^n\ar[d,"Ex"]
\\
&
&
f_!g_!\Thom_n(r_2,c)\Sigma_g^n\ar[d,"C",leftarrow]
\\
(fg)_\sharp\ar[r,"\mathfrak{p}_{fg}^n"]&
(fg)_!\Sigma_{fg}^n\ar[r,"\simeq"]&
f_!g_!\Sigma_{fg}^n
\end{tikzcd}
\end{equation}
commutes, where $c\colon Y\to X\times_S Y$ is the graph morphism, and $r_2\colon X\times_S Y\to Y$ is the projection.
\end{prop}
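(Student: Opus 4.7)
The plan is to reduce the $n$-decorated diagram \eqref{functorial.3.1} to its ``non-$n$'' counterpart, and then to verify that counterpart by unwinding the explicit definitions of $\mathfrak{p}$ and $C$.

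First I would use that, by construction, $\mathfrak{p}_f^n = T\circ\mathfrak{p}_f$, $\mathfrak{p}_g^n = T\circ\mathfrak{p}_g$, and $\mathfrak{p}_{fg}^n = T\circ\mathfrak{p}_{fg}$. Consequently \eqref{functorial.3.1} sits on top of the analogous diagram in which each $\Sigma^n$, $\Thom_n$, and $\mathfrak{p}^n$ is replaced by its undecorated version, connected to \eqref{functorial.3.1} by instances of $T$. The connecting faces commute by Proposition \ref{exchange.9} (compatibility of $T$ with the exchange $Ex$ past $f_!, g_!$) and Proposition \ref{comp.3} (compatibility of $T$ with the composition transformation $C$). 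Hence it suffices to prove the non-$n$ version of \eqref{functorial.3.1}.

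Next I would unwind each transformation. Every purity map takes the form $(-)_\sharp \xrightarrow{\simeq} (-)_\sharp p_{1!}a_* \xrightarrow{Ex} (-)_! p_{2\sharp} a_*$ coming from the cartesian square of diagonals, and $C$ is the $Ex$ transformation associated to the cartesian square
\[
\begin{tikzcd}
Y\times_X Y \ar[r]\ar[d] & Y\times_S Y \ar[d]
\\
Y\ar[r,"c"'] & X\times_S Y
\end{tikzcd}
\]
which records that the diagonal of $Y/S$ factors through the graph morphism $c$. Substituting these formulas, the non-$n$ diagram unfolds into a large diagram built only from $Ex$ transformations, associativity coherences, and the comparison $(fg)_! \simeq f_!g_!$ from \cite[\S 3.7]{logshriek}. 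I would then subdivide it into small faces, each of which commutes either as a base-change coherence consequence of (\emph{$\eSm$-BC}) and (\emph{Supp}), as the functoriality of $(-)_!$ under Nagata compactifications, or as the analogue of Proposition \ref{comp.1} with $\varphi_\sharp$ replaced by $f_!$ or $g_!$.

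The hard part will be the execution of this second step. The delicate point is to match the two ways of presenting the diagonal $Y\to Y\times_S Y$ --- directly as the $(fg)$-diagonal, versus as the composition of the $g$-diagonal $b\colon Y\to Y\times_X Y$ with the base change along $c$ --- and to see that the resulting two $Ex$ transformations out of $(fg)_\sharp$ agree after being composed with the canonical isomorphism $(fg)_\sharp\simeq f_\sharp g_\sharp$. This is exactly the analogue of Proposition \ref{comp.1} with $(-)_!$ in place of $\varphi_\sharp$, and I expect it to proceed by the same kind of diagram chase carried out throughout \S \ref{composition}, with the sole bookkeeping burden being a compatible choice of Nagata compactifications of $f$, $g$, and $fg$.
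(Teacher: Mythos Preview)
Your proposal is correct and follows essentially the same approach as the paper: first reduce \eqref{functorial.3.1} to its undecorated analogue using Propositions \ref{exchange.9} and \ref{comp.3}, then verify that analogue by unwinding $\mathfrak{p}_f$, $\mathfrak{p}_g$, $\mathfrak{p}_{fg}$, and $C$ into their constituent $Ex$ transformations and chasing the resulting diagram. The paper carries out the second step explicitly by drawing the full fibered cube over $Y\to X\to S$ (with all the diagonal, graph, and projection maps labeled) and then exhibiting a single large commutative diagram of $Ex$'s; your sketch correctly anticipates this, though the bookkeeping you flag about Nagata compactifications does not actually arise---the chase takes place entirely at the level of $(-)_\sharp$, $(-)_!$, and $(-)_*$ for the projections and immersions in that cube, with no need to choose compactifications explicitly.
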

\begin{proof}
It suffices to check that the diagram
\[
\begin{tikzcd}[row sep=small, column sep=small]
f_\sharp g_\sharp\ar[dd,"\simeq"']\ar[rr,"\mathfrak{p}_f \mathfrak{p}_g"]\ar[rrdd,"(a)" description,phantom]&
&
f_!\Sigma_f g_!\Sigma_g\ar[d,"Ex"]\ar[r,"T"]\ar[d,"Ex"]\ar[rd,"(b)" description,phantom]&
f_!\Sigma_f^n g_!\Sigma_g^n\ar[d,"Ex"]
\\
&
&
f_!g_!\Thom(r_2,c)\Sigma_g\ar[d,"C",leftarrow]\ar[r,"T"]\ar[rd,"(c)",phantom,description,near start]&
f_!g_!\Thom_n(r_2,c)\Sigma_g^n\ar[d,"C",leftarrow]
\\
(fg)_\sharp\ar[r,"\mathfrak{p}_{fg}"]&
(fg)_!\Sigma_{fg}\ar[r,"\simeq"]&
f_!g_!\Sigma_{fg}\ar[r,"T"]&
f_!g_!\Sigma_{fg}^n
\end{tikzcd}
\]
commutes.
The square $(b)$ commutes by Proposition \ref{exchange.9}, and the square $(c)$ commutes by Proposition \ref{comp.3}.
For the diagram surrounding $(a)$, consider the induced commutative diagram with cartesian squares
\[
\begin{tikzcd}[column sep=small, row sep=small]
Y\ar[rd,"e"]
\\
&
Y\times_X Y\ar[dd,"t_1"']\ar[rr,"t_2"]\ar[rd,"d"]&
&
Y\ar[dd,"g",near start]\ar[rd,"c"]
\\
&
&
Y\times_S Y\ar[rr,"s_2",near start,crossing over]&
&
X\times_S Y\ar[dd,"r_1"]\ar[rr,"r_2"]&
&
Y\ar[dd,"g"]
\\
&
Y\ar[rr,"g",near start]\ar[rd,"b"']&
&
X\ar[rd,"a"]
\\
&
&
Y\times_S X\ar[dd,"q_1"']\ar[uu,"s_1"',leftarrow,crossing over,near end]\ar[rr,"q_2"]&
&
X\times_S X\ar[dd,"p_1"]\ar[rr,"p_2"]&
&
X\ar[dd,"f"]
\\
\\
&
&
Y\ar[rr,"g"]&
&
X\ar[rr,"f"]&
&
S,
\end{tikzcd}
\]
where $p_1$ and $p_2$ are the projections, and $a$ and $e$ are the diagonal morphisms.
To conclude, observe that the diagram
\[
\begin{tikzcd}[column sep=tiny, row sep=small]
f_\sharp g_\sharp\ar[rd,"\simeq"]\ar[rrrd,"\simeq",bend left=10,near end]\ar[rdd,bend right=20,"\simeq"']
\\
&
f_\sharp g_\sharp q_{1!}b_*t_{1!}e_*\ar[d,"\simeq"']\ar[r,"Ex"]&
f_\sharp p_{1!}q_{2\sharp}b_*t_{1!}e_*\ar[r,"Ex"]\ar[d,"\simeq"']&
f_\sharp p_{1!}a_*g_\sharp t_{1!}e_*\ar[d,"Ex"]
\\
&
f_\sharp g_\sharp q_{1!}s_{1!}d_*e_*\ar[r,"Ex"]&
f_\sharp p_{1*}q_{2\sharp}s_{1!}d_*e_*\ar[d,"Ex"']&
f_\sharp p_{1*}a_*g_!t_{2\sharp}e_*\ar[d,"\simeq"]
\\
&
&
f_\sharp p_{1!}r_{1!}s_{2\sharp}d_*e_*\ar[r,"Ex"]\ar[d,"Ex"']&
f_\sharp p_{1!}r_{1!}c_*t_{2\sharp}e_*\ar[r,"Ex"]&
f_!p_{2\sharp}r_{1!}c_*t_{2\sharp}e_*\ar[d,"Ex"]
\\
&
&
f_!p_{2\sharp}r_{1!}s_{2\sharp}d_*e_*\ar[rru,"Ex",bend right=10]\ar[d,"Ex"']&
&
f_!g_!r_{2\sharp}c_*t_{2\sharp}e_*
\\
&
&
f_!g_!r_{2\sharp}s_{2\sharp}d_*e_*\ar[rru,"Ex",bend right=10]
\end{tikzcd}
\]
commutes.
\end{proof}

\begin{rmk}
\label{functorial.4}
In \eqref{functorial.3.1}, the right vertical arrows are isomorphisms by Propositions \ref{exchange.10} and \ref{comp.4}.
\end{rmk}

\begin{prop}
\label{functorial.5}
Let $Y\xrightarrow{g} X\xrightarrow{f} S$ be separated exact log smooth morphisms in $\lSch/B$.
If $f$ and $g$ are pure,
then $fg$ is pure.
\end{prop}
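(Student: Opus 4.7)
The plan is to read off the purity of $fg$ directly from the commutative diagram \eqref{functorial.3.1} supplied by Proposition \ref{functorial.3}, which expresses $\mathfrak{p}_{fg}^n$, composed with the canonical isomorphism $(fg)_!\Sigma_{fg}^n \simeq f_!g_!\Sigma_{fg}^n$, in terms of the whiskered transformation $\mathfrak{p}_f^n \mathfrak{p}_g^n$ together with the base-change exchange and the Thom composition transformation appearing in the right column.

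First I would observe that since $f$ and $g$ are pure by hypothesis, the natural transformations $\mathfrak{p}_f^n$ and $\mathfrak{p}_g^n$ are isomorphisms, hence so is their horizontal composite $\mathfrak{p}_f^n \mathfrak{p}_g^n \colon f_\sharp g_\sharp \to f_!\Sigma_f^n g_!\Sigma_g^n$, which is the top arrow of \eqref{functorial.3.1}. Next I would invoke Remark \ref{functorial.4}, which asserts that the two right-hand vertical arrows of \eqref{functorial.3.1}, namely the exchange $Ex \colon f_!\Sigma_f^n g_!\Sigma_g^n \to f_!g_!\Thom_n(r_2,c)\Sigma_g^n$ and the composition transformation $C \colon f_!g_!\Sigma_{fg}^n \to f_!g_!\Thom_n(r_2,c)\Sigma_g^n$, are isomorphisms; this rests on Propositions \ref{exchange.10} and \ref{comp.4}, which apply in the present normal-bundle setting because the relevant zero sections $a_n$ and $c_n$ are strict closed immersions. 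The left-hand identification $f_\sharp g_\sharp \simeq (fg)_\sharp$ and the bottom-right identification $(fg)_!\Sigma_{fg}^n \simeq f_!g_!\Sigma_{fg}^n$ are part of the formalism.

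A two-out-of-three chase of \eqref{functorial.3.1} then forces the bottom composite, and therefore $\mathfrak{p}_{fg}^n \colon (fg)_\sharp \to (fg)_!\Sigma_{fg}^n$, to be an isomorphism. Hence $fg$ is pure. I do not anticipate any genuine obstacle: all the substantive work has already been absorbed into Proposition \ref{functorial.3} and the functoriality statements in \S\ref{thom}--\S\ref{composition} on which it depends, so the argument here is a short formal consequence.
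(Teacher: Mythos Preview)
Your proposal is correct and follows exactly the paper's approach: the paper's proof is the single line ``This is an immediate consequence of Proposition \ref{functorial.3} and Remark \ref{functorial.4},'' and you have simply spelled out that diagram chase. One minor quibble: the applicability of Proposition \ref{comp.4} for the $C$ arrow rests on the strictness of the vector-bundle morphism $f_n$ in the normal-bundle square rather than on the zero sections, but this does not affect the argument since you are ultimately just invoking Remark \ref{functorial.4}.
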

\begin{proof}
This is an immediate consequence of Proposition \ref{functorial.3} and Remark \ref{functorial.4}.
\end{proof}

\begin{prop}
\label{functorial.6}
Let $f\colon X\to S$ be a separated exact log smooth morphism,
and let $\{g_i\colon U_i\to X\}_{i\in I}$ be a Zariski covering family with finite $I$.
If $fg_i$ is pure for every $i\in I$,
then $f$ is pure.
\end{prop}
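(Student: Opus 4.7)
The plan is to verify $\mathfrak{p}_f^n\colon f_\sharp\to f_!\Sigma_f^n$ is an isomorphism by first checking it on objects of the form $g_{i\sharp}\cF$ using the composition law for purity transformations, and then propagating to all of $\sT^\ex(X)$ via strict Nisnevich (\v{C}ech) descent. Note first that each $g_i$ is a strict open immersion, hence a strict smooth morphism; by Proposition~\ref{puritytran.1}, $g_i$ is pure, so $\mathfrak{p}_{g_i}^n$ is an isomorphism.

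For each $i\in I$, I would apply Proposition~\ref{functorial.3} together with Remark~\ref{functorial.4} to the composition $U_i\xrightarrow{g_i}X\xrightarrow{f}S$ to obtain a commutative square
\[
\begin{tikzcd}
f_\sharp g_{i\sharp}\ar[d,"\simeq"']\ar[r,"\mathfrak{p}_f^n\mathfrak{p}_{g_i}^n"]
&
f_!\Sigma_f^n g_{i!}\Sigma_{g_i}^n\ar[d,"\simeq"]
\\
(fg_i)_\sharp\ar[r,"\mathfrak{p}_{fg_i}^n"']
&
(fg_i)_!\Sigma_{fg_i}^n
\end{tikzcd}
\]
whose vertical arrows are isomorphisms. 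Since $\mathfrak{p}_{fg_i}^n$ is an isomorphism by hypothesis, so is the top horizontal arrow. Decomposing the latter as $f_\sharp g_{i\sharp}\xrightarrow{f_\sharp\mathfrak{p}_{g_i}^n}f_\sharp g_{i!}\Sigma_{g_i}^n\to f_!\Sigma_f^n g_{i!}\Sigma_{g_i}^n$ and using that the first factor is an isomorphism, I conclude that $\mathfrak{p}_f^n$ is an isomorphism on every object of the form $g_{i\sharp}\cF$ with $\cF\in\sT^\ex(U_i)$.

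To extend to an arbitrary object, I use that $\sT^\ex(X)$ is generated under colimits and shifts by $\{M_X(V)(d):V\in\eSm/X,\ d\in\Z\}$, and that both $f_\sharp$ and $f_!\Sigma_f^n$ preserve colimits ($f_\sharp$ as a left adjoint, $\Sigma_f^n$ by Proposition~\ref{puritytran.2}, and $f_!$ by Proposition~\ref{Verdier.9} under the standing compactness hypothesis). Pulling the cover back to $V$, strict Nisnevich descent presents $M_X(V)$ as the geometric realization of the \v{C}ech nerve of $\{V\times_X U_i\to V\}_{i\in I}$; each simplicial level is a disjoint union of fibre products $V_{i_0,\dots,i_n}:=V\times_X U_{i_0}\times_X\cdots\times_X U_{i_n}$, which are exact log smooth over $U_{i_0}$, hence $M_X(V_{i_0,\dots,i_n})\simeq g_{i_0\sharp}M_{U_{i_0}}(V_{i_0,\dots,i_n})$ lies in the image of $g_{i_0\sharp}$. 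The previous step applies termwise, and passing to the colimit yields that $\mathfrak{p}_f^n$ is an isomorphism on $M_X(V)$, hence everywhere.

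The main obstacle is in the last paragraph: one must carefully verify that strict Nisnevich descent in $\sT^\ex$ identifies $M_X(V)$ with the colimit of the associated \v{C}ech nerve, and that both $f_\sharp$ and $f_!\Sigma_f^n$ preserve that colimit. Once these preservation statements are in place, the composition formula of Proposition~\ref{functorial.3} combined with purity of the strict open immersions $g_i$ supplies the rest of the argument essentially formally.
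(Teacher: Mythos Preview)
Your approach is correct, and the first step---using Proposition~\ref{functorial.3}, Remark~\ref{functorial.4}, and purity of the open immersions $g_i$ to deduce that $\mathfrak{p}_f^n$ is an isomorphism after $g_{i\sharp}$---matches the paper exactly. The difference lies in the globalization step. The paper argues by induction on $\lvert I\rvert$: pick $a\in I$, set $U:=U_a$, $V:=\bigcup_{i\neq a}U_i$, and use the cocartesian square
\[
\begin{tikzcd}
w_\sharp w^*\ar[r]\ar[d] & v_\sharp v^*\ar[d]\\
u_\sharp u^*\ar[r] & \id
\end{tikzcd}
\]
(from \cite[Proposition~2.4.5]{logshriek}) together with the fact that $fu$, $fv$, $fw$ are pure (the last via Proposition~\ref{functorial.5}). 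Since this is a \emph{finite} colimit in a stable $\infty$-category, it is preserved by any exact functor, so no colimit-preservation hypothesis on $f_!$ is needed.

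Your \v{C}ech-nerve argument also works, but note that your invocation of Proposition~\ref{Verdier.9} and the compactness hypothesis is unnecessary: the \v{C}ech nerve of a finite Zariski cover is $(\lvert I\rvert-1)$-skeletal, so its geometric realization is a finite colimit, automatically preserved by $f_\sharp$ and $f_!\Sigma_f^n$. This matters because Proposition~\ref{functorial.6} sits in \S2, \emph{before} the standing compactness assumption of \S3, and the paper's proof does not use it. The inductive Mayer--Vietoris argument makes this finiteness manifest and sidesteps both the generator description of $\sT^\ex(X)$ and the ``main obstacle'' you flag.
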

\begin{proof}
We set $n:=\lvert I \rvert$.
The claim is trivial if $n=1$.
We proceed by induction on $n$.
Assume that $n>0$ and the claim holds for $n-1$.
Choose any $a\in I$,
and we set $U:=U_a$ and $V:=\cup_{i\in I-\{a\}} U_i$.
Let $u\colon U\to X$, $v\colon V\to X$, and $w\colon U\cap V\to X$ be the induced open immersions.
By assumption,
$fu$ is pure.
Use Proposition \ref{functorial.5} to see that $fw$ is pure.
By induction,
$fv$ is pure.

Together with Proposition \ref{functorial.3} and Remark \ref{functorial.4},
we see that the natural transformations
\begin{gather*}
f_\sharp u_\sharp
\xrightarrow{\mathfrak{p}_f^n}
f_!\Sigma_f^n u_\sharp,
\;
f_\sharp v_\sharp
\xrightarrow{\mathfrak{p}_f^n}
f_!\Sigma_f^n v_\sharp,
\;
f_\sharp w_\sharp
\xrightarrow{\mathfrak{p}_f^n}
f_!\Sigma_f^n w_\sharp
\end{gather*}
are isomorphisms.
To conclude,
observe that the induced commutative square
\[
\begin{tikzcd}
w_\sharp w^*\ar[d,"ad'"']\ar[r,"ad'"]&
v_\sharp v^*\ar[d,"ad'"]
\\
u_\sharp u^*\ar[r,"ad'"]&
\id
\end{tikzcd}
\]
is cocartesian by
%Proposition \ref{restriction.13}.
\cite[Proposition 2.4.5]{logshriek}.
\end{proof}

\section{Poincar\'e duality}
\label{poincare}

Throughout this section,
we fix a log motivic $\infty$-category $\sT$ such that $M_S(X)$ is a compact object of $\sT^{ex}(S)$ for every vertical exact log smooth morphism $X\to S$ in $\lSch/B$.
This assumption is needed in Theorem \ref{homeomorphism.5}.

In \S \ref{affine},
we explain some basic properties of affine toric log schemes.
In \S \ref{virtual},
we show that for a virtual isomorphism $f$ in the sense of Definition \ref{homeomorphism.1},
$f^*$ is an equivalence of $\infty$-categories.
Using this,
we show in \S \ref{base2} a base change property different from \cite[Theorem 3.7.5]{logshriek}.

With these background materials in hand,
we show the Poincar\'e duality in \S \ref{AQAP}.
We show the Lefschetz duality in \S \ref{Lefschetz} for every log smooth morphism whose target has the trivial log structure.

\subsection{Affine toric log schemes}
\label{affine}

Let us review the structure $\A_P$ for an fs monoid $P$.
For an ideal $I$ of $P$,
we set
\[
\A_{P,I}:=\A_P\times_{\ul{\A_P}}\Spec(\Z[P]/\Z[I]).
\]
Here, $\Z[I]$ is an ideal of $\Z[P]$.
If $P$ is sharp,
we also set $\pt_P:=\A_{P,P^+}$,
where $P^+$ is the ideal of non-unit elements of $P$.

\begin{prop}
\label{affine.1}
Let $P$ be an fs monoid.
Then $\A_{P,P^+}\simeq \pt_{\ol{P}}\times \A_{P^*}$.
\end{prop}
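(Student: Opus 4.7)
The plan is to exploit the splitting of the units sequence of $P$. Since $P$ is fs, it is in particular torsion-free, so $P^*$ is a finitely generated free abelian group. Hence the short exact sequence
\[
0 \to P^* \to P \to \ol{P} \to 0
\]
splits, yielding an isomorphism $P \simeq P^* \oplus \ol{P}$.

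Under this identification, an element $(u,x) \in P^* \oplus \ol{P}$ is a unit of $P$ if and only if $x$ is a unit of $\ol{P}$, i.e.\ $x=0$, since $\ol{P}$ is sharp. Therefore $P^+ = P^* \times \ol{P}^+$ as a subset of $P$. Passing to monoid algebras gives $\Z[P] \simeq \Z[P^*] \otimes_\Z \Z[\ol{P}]$ together with $\Z[P^+] = \Z[P^*] \otimes_\Z \Z[\ol{P}^+]$, so
\[
\Z[P]/\Z[P^+] \simeq \Z[P^*] \otimes_\Z \bigl(\Z[\ol{P}]/\Z[\ol{P}^+]\bigr).
\]
This produces the required isomorphism of underlying schemes $\ul{\A_{P,P^+}} \simeq \A_{P^*} \times \ul{\pt_{\ol{P}}}$.

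For the log structures, note that $P^*$ is a group so every element of $P^*$ maps to a unit in $\Z[P^*]$; consequently the log structure on $\A_{P^*}$ is trivial. Under the decomposition $P \simeq P^* \oplus \ol{P}$, the chart $P \to \Z[P]$ is the tensor product of the charts $P^* \to \Z[P^*]$ and $\ol{P} \to \Z[\ol{P}]$, so $\A_P \simeq \A_{P^*} \times \A_{\ol{P}}$ as fs log schemes. Taking base change along the closed immersion $\ul{\pt_{\ol{P}}} \hookrightarrow \ul{\A_{\ol{P}}}$ in the second factor yields the stated isomorphism $\A_{P,P^+} \simeq \pt_{\ol{P}} \times \A_{P^*}$.

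The only conceptual input is the splitting of the units sequence, which uses torsion-freeness of $P$; the remaining verifications are routine checks on charts and monoid algebras, so no serious obstacle is expected.
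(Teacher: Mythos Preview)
Your approach is essentially the paper's: invoke the splitting $P \simeq P^* \oplus \ol{P}$ and unwind the definitions of $\A_{P,P^+}$ and $\pt_{\ol{P}}$. The paper simply cites this splitting from the literature and then observes $\Z[P]/\Z[P^+]\simeq \Z[P^*]$ directly, which is your computation after noting $\Z[\ol{P}]/\Z[\ol{P}^+]\simeq \Z$.

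One correction to your justification of the splitting: the argument ``$P^*$ is free, hence the sequence splits'' is the wrong direction---freeness of the subobject does not force a section. (Also, fs does not imply torsion-free: $P=\Z/2\Z$ is fs.) The standard argument instead uses that $\ol{P}$ is sharp fs, so $\ol{P}^{\gp}$ is a finitely generated free abelian group; a splitting of $0\to P^*\to P^{\gp}\to \ol{P}^{\gp}\to 0$ then restricts to a monoid section $\ol{P}\to P$. With that fix, your proof is complete and matches the paper.
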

\begin{proof}
By \cite[Lemma I.6.7]{zbMATH07027475}, there exists an isomorphism $P\simeq \ol{P}\oplus P^*$.
Since $\Z[P]/\Z[P^+]\simeq \Z[P^*]$, we have
\[
\A_{P,P^+}
\simeq
\Spec(\ol{P}\oplus P^*\to \Z[P^*]).
\]
This gives the desired isomorphism.
\end{proof}

\begin{prop}
\label{affine.3}
Let $P$ be an fs monoid, and let $x$ be a point of $\A_P$.
For any face $F$ of $P$, $x\in \A_{P_F}$ if and only if the kernel of $\ol{P}\to \ol{\cM}_{\A_P,x}$ contains $\ol{F}$.
\end{prop}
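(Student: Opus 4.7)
The plan is to reduce both sides of the equivalence to the same condition on the prime ideal $\mathfrak{p}_x \subset \Z[P]$ corresponding to $x$, namely that $F$ avoids $\mathfrak{p}_x$.

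First I would unwind the left-hand side. Since $\A_{P_F}$ is obtained from $\A_P$ by inverting the monomials in $F$, i.e., $\ul{\A_{P_F}} = \Spec(\Z[P][F^{-1}])$, and $\A_{P_F} \to \A_P$ is a strict open immersion, the condition $x \in \A_{P_F}$ is equivalent to $F \cap \mathfrak{p}_x = \emptyset$, which is the same as saying every element of $F$ maps to a unit in $\cO_{\A_P, x}$.

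Next I would unwind the right-hand side. Since $P \to \Z[P]$ is a chart of the canonical log structure on $\A_P$, the composition $P \to \cM_{\A_P,x} \to \ol{\cM}_{\A_P,x}$ sends an element $p$ to $0$ precisely when $p$ becomes a unit in $\cM_{\A_P,x}$, which by definition of the associated log structure happens iff $p$ maps to a unit in $\cO_{\A_P,x}$, i.e., iff $p \notin \mathfrak{p}_x$. In particular the kernel of $P \to \ol{\cM}_{\A_P,x}$ is the face $F_x := P \setminus (\mathfrak{p}_x \cap P)$, and this face contains $P^*$, so after passing to the quotient by $P^*$ the kernel of $\ol{P} \to \ol{\cM}_{\A_P,x}$ is the face $\ol{F_x}$.

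Finally I would put the two pieces together. The containment $\ol{F} \subset \ol{F_x}$ is equivalent to $F \subset F_x$ because $F_x$ is saturated under multiplication by $P^*$, and $F \subset F_x$ is precisely the condition that every element of $F$ avoids $\mathfrak{p}_x$, which we already identified with $x \in \A_{P_F}$. There is no real obstacle here; the only subtlety is keeping track of the identification between faces of $\ol{P}$ and faces of $P$ containing $P^*$, which is routine.
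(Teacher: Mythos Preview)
Your proof is correct and follows essentially the same approach as the paper's. The paper's argument is terser---it declares the ``only if'' direction trivial and, for the ``if'' direction, observes that if $\ol{F}$ lies in the kernel then the composite $P\to \Z[P]\to k$ (with $k$ the residue field at $x$) factors through $P_F$---but this is exactly your reduction to ``elements of $F$ are units at $x$,'' just phrased via the residue field rather than the prime ideal $\mathfrak{p}_x$. Your write-up is more symmetric and spells out the identification of the kernel with the face $F_x$, which the paper leaves implicit.
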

\begin{proof}
The only if direction is trivial.

For the if direction, assume that $x$ corresponds to a homomorphism $\Z[P]\to k$ with a field $k$.
If the kernel of $\ol{P}\to \ol{\cM}_{\A_P,x}$ contains $\ol{F}$, then the composition $P\to \Z[P]\to k$ factors through $P_F$.
This means $x\in \A_{P_F}$.
\end{proof}

For a ring $A$ and an ideal $J$, we have the relation
\[
\Spec(A)-\Spec(A/J)
=
\bigcup_{f\in J}\Spec(A_f).
\]
This implies that for a monoid $P$ and an ideal $I$, we have the relation
\begin{equation}
\label{affine.2.1}
\A_P-\A_{P,I}
=
\bigcup_{f\in I}\A_{P_f}.
\end{equation}

\begin{prop}
\label{affine.2}
Let $P$ be an fs monoid, and let $x$ be a point of $\A_P$.
Then $x\in \A_{P,P^+}$ if and only if $\ol{P}\to \ol{\cM}_{\A_P,x}$ is an isomorphism.
\end{prop}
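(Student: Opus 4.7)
The strategy is to characterize both conditions in terms of the kernel of $\ol{P}\to\ol{\cM}_{\A_P,x}$, using equation (\ref{affine.2.1}) and Proposition \ref{affine.3}, and then observe that surjectivity of this map is automatic.

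First I would handle the only if direction (and in fact both directions at once). By (\ref{affine.2.1}), we have
\[
x\in \A_{P,P^+}
\quad\Longleftrightarrow\quad
x\notin \A_{P_f} \text{ for every } f\in P^+.
\]
For each $f\in P^+$, let $\langle f\rangle$ denote the face of $P$ generated by $f$. Proposition \ref{affine.3} then gives $x\in \A_{P_f}=\A_{P_{\langle f\rangle}}$ if and only if the kernel of $\ol{P}\to \ol{\cM}_{\A_P,x}$ contains $\ol{f}$ (equivalently $\ol{\langle f\rangle}$, since a face-preimage being in a congruence kernel is detected by any generator). As $f$ ranges over $P^+=P-P^*$, the elements $\ol{f}$ range exactly over $\ol{P}-\{0\}$, because $\ol{P}=P/P^*$. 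Consequently,
\[
x\in \A_{P,P^+}
\quad\Longleftrightarrow\quad
\ol{P}\to \ol{\cM}_{\A_P,x} \text{ is injective}.
\]

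It remains to observe that $\ol{P}\to\ol{\cM}_{\A_P,x}$ is always surjective, so that injectivity is the same as bijectivity, hence the same as being an isomorphism of sharp fs monoids. This is a consequence of the chart description of the log structure on $\A_P$: if $F_x\subset P$ denotes the face of elements whose images in $\cO_{\A_P,x}$ are units, then $\cM_{\A_P,x}=P\oplus_{F_x}\cO_{\A_P,x}^*$, and passing to the quotient by $\cO_{\A_P,x}^*$ yields $\ol{\cM}_{\A_P,x}\simeq P/F_x$. Since $P^*\subset F_x$, the composite $P\to\ol{P}\to\ol{\cM}_{\A_P,x}$ is the surjection $P\to P/F_x$, so $\ol{P}\to\ol{\cM}_{\A_P,x}$ is surjective as claimed.

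Combining the two observations gives the equivalence. The only mildly delicate point is the surjectivity claim; this is essentially a standard computation with the log structure associated to a chart, and can either be invoked from Ogus's book or argued directly as above using the face $F_x$. No additional input beyond (\ref{affine.2.1}), Proposition \ref{affine.3}, and the chart description of stalks is needed.
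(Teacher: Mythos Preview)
Your argument is correct. The route differs slightly from the paper's: the paper handles the ``only if'' direction by invoking Proposition~\ref{affine.1} (the decomposition $\A_{P,P^+}\simeq \pt_{\ol{P}}\times \A_{P^*}$), which immediately gives $\ol{\cM}_{\A_P,x}\simeq \ol{P}$ on that locus, and then uses \eqref{affine.2.1} and Proposition~\ref{affine.3} only for the contrapositive of ``if''. You instead use \eqref{affine.2.1} and Proposition~\ref{affine.3} symmetrically to reduce the whole statement to injectivity of $\ol{P}\to\ol{\cM}_{\A_P,x}$, and supply the missing surjectivity by a direct stalk computation $\ol{\cM}_{\A_P,x}\simeq P/F_x$. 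This is a mild reorganization rather than a different idea: your surjectivity step is essentially what underlies Proposition~\ref{affine.1}, just invoked pointwise rather than via the global product decomposition.
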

\begin{proof}
The only if direction follows from Proposition \ref{affine.1}.

Use \eqref{affine.2.1} for $I=P^+$ to see that if $x\in \A_P-\A_{P,P^+}$, then $x\in \A_{P_{f}}$ for some $f\in P^+$.
In this case, $\ol{P}\to \ol{\cM}_{\A_P,x}$ is not an isomorphism by Proposition \ref{affine.3}.
\end{proof}

\begin{prop}
\label{affine.4}
Let $P$ be an fs monoid, and let $x$ be a point of $\A_P$.
Then for every face $F$ of $P$,
$x\in \A_{P,P-F}\cap \A_{P_F}=\A_{P_F,P_F^+}$ if and only if the kernel of $\ol{P}\to \ol{\cM}_{\A_P,x}$ is $\ol{F}$.
Hence we have the stratification
\[
\coprod_{F\cap I=\emptyset}
\A_{P_F,P_F^+}
\]
on $\A_{P,I}$ for every ideal $I$ of $P$,
where the disjoint union runs over the faces $F$ of $P$ such that $F\cap I=\emptyset$.
\end{prop}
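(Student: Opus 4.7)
The plan is to track, for each point $x\in\A_P$ corresponding to a homomorphism $\phi\colon\Z[P]\to k$ into a field, the face
\[
G_x:=\{p\in P:\phi(p)\neq 0\}
\]
of $P$. Since an element $\ol{p}\in\ol{P}$ maps to $0$ in $\ol{\cM}_{\A_P,x}$ precisely when $p$ is invertible in $\cO_{\A_P,x}$, i.e.\ $p\notin\mathfrak{p}_x$, the face $\ol{G_x}$ is exactly the kernel of $\ol{P}\to\ol{\cM}_{\A_P,x}$. With this dictionary, both assertions of the proposition reduce to comparing $G_x$ with $F$.

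The first step is the scheme-theoretic equality $\A_{P,P-F}\cap\A_{P_F}=\A_{P_F,P_F^+}$. The log scheme $\A_{P_F}$ is the open subscheme $\Spec(\Z[P][F^{-1}])$ of $\ul{\A_P}$ equipped with the log structure coming from $P_F$, so the identity reduces to the monoid computation that, as an ideal of $P_F$, $P_F^+$ coincides with the ideal generated by the image of $P-F$. For $p\in P-F$, the face property of $F$ forces $p\notin F^{\gp}=P_F^*$, hence $p\in P_F^+$; conversely, any element of $P_F$ can be written as $p-f$ with $p\in P$ and $f\in F$, and non-invertibility in $P_F$ forces $p\notin F$. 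Passing to $\Z[\,\cdot\,]$ gives the equality $\Z[P_F]/\Z[P_F^+]=\Z[P][F^{-1}]/\Z[P-F]$, which is exactly the defining equation of the intersection.

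Next I would establish the iff. By Proposition~\ref{affine.3}, $x\in\A_{P_F}$ iff $\ol{F}\subseteq\ol{G_x}$, i.e.\ $F\subseteq G_x$. On the other hand, $x\in\A_{P,P-F}$ means that the prime $\mathfrak{p}_x\subseteq\Z[P]$ contains $\Z[P-F]$, equivalently $G_x\cap(P-F)=\emptyset$, i.e.\ $G_x\subseteq F$. Combining, $x\in\A_{P_F}\cap\A_{P,P-F}$ iff $G_x=F$, i.e.\ the kernel of $\ol{P}\to\ol{\cM}_{\A_P,x}$ equals $\ol{F}$. The stratification then drops out: for $x\in\A_{P,I}$ the face $G_x$ is disjoint from $I$ (as $I\subseteq\mathfrak{p}_x$), so $x$ lies in the unique stratum $\A_{P_{G_x},P_{G_x}^+}$; conversely, for any face $F$ with $F\cap I=\emptyset$, any point of $\A_{P_F,P_F^+}$ has $G_x=F$ disjoint from $I$, forcing $I\subseteq\mathfrak{p}_x$ and hence $x\in\A_{P,I}$.

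The only nontrivial step is the monoid-ideal identification $P_F^+=\langle P-F\rangle$ in the first paragraph; once this is in hand, the iff and the stratification are formal consequences of Proposition~\ref{affine.3}, the description of $G_x$, and the definition of $\A_{P,P-F}$.
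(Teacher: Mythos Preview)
Your proof is correct and follows essentially the same route as the paper, which simply cites Propositions~\ref{affine.3} and~\ref{affine.2} for the iff and then reads off the stratification. The only cosmetic difference is that where the paper applies Proposition~\ref{affine.2} to the localized monoid $P_F$ to handle the condition $x\in\A_{P_F,P_F^+}$, you instead unpack the definition of $\A_{P,P-F}$ and verify the monoid identity $P_F^+=\langle P-F\rangle$ directly; this is exactly the content hidden in the paper's citation of Proposition~\ref{affine.2}.
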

\begin{proof}
Use Propositions \ref{affine.3} and \ref{affine.2} for the first claim.
This yields the desired stratification.
\end{proof}

\begin{const}
\label{affine.5}
Let $f\colon X\to S$ be a morphism of fs log schemes with a chart $P\to Q$.
Assume that the underlying morphism of schemes $\ul{f}\colon \ul{X}\to \ul{S}$ is an isomorphism.
By Proposition \ref{affine.4},
we have the stratification
\[
\coprod_{G\subset Q} X\times_{\A_Q}  \A_{Q_G,Q_G^+}
\]
on $X$,
where $G$ runs over the faces of $Q$.
Furthermore,
$f$ sends $X\times_{\A_Q} \A_{Q_G,Q_G^+}$ into $S\times_{\A_P}\A_{P_F,P_F^+}$ with $F:=\theta^{-1}(G)$.
Together with Proposition \ref{affine.1}.
we have a stratification $\coprod_{i\in I} \ul{V_i}$ on $\ul{S}\simeq \ul{X}$ with finite $I$ such that for every $i\in I$,
the induced morphism $X\times_{\ul{S}}\ul{V_i}\to S\times_{\ul{S}}\ul{V_i}$ is of the form $V_i\times \pt_{Q/G}\to V_i\times \pt_{P/\theta^{-1}(G)}$ for some face $G$ of $Q$.
\end{const}

\subsection{Motivic invariance under virtual isomorphisms}
\label{virtual}

\begin{df}
\label{homeomorphism.1}
A morphism of saturated log schemes $f\colon X\to S$ is a \emph{virtual isomorphism} if $\ul{f}\colon \ul{X}\to \ul{S}$ is an isomorphism and the induced morphism of sheaves $\cM_S^\gp\to \cM_X^\gp$ is an isomorphism.
\end{df}

\begin{rmk}
\label{homeomorphism.3}
A morphism of fs log schemes is a virtual isomorphism if and only if its virtualization in the sense of \cite[\S 2]{2209.03720} is an isomorphism.

According to \cite[\S 1.2]{zbMATH01367783},
the Kato-Nakayama realization $X^{\log}$ of an fs log scheme $X$ over $\C$ only depends on $X$ and $\cM_X^\gp$.
Hence a virtual isomorphism of fs log schemes over $\C$ becomes a homeomorphism after applying the Kato-Nakayama realization functor.
\end{rmk}

\begin{lem}
\label{homeomorphism.2}
Let $\theta\colon P\to Q$ be an injective homomorphism of integral monoids such that $\theta^\gp$ is an isomorphism.
Then the cokernel of $\theta$ is trivial.
\end{lem}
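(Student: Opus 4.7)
The plan is to unpack the definition of the cokernel of monoids and then exhibit, for every $q\in Q$, an explicit relation showing $q\sim 0$. The cokernel of $\theta\colon P\to Q$ in the category of integral (or commutative) monoids is the quotient of $Q$ by the congruence generated by $\theta(p)\sim 0$ for $p\in P$; concretely, two elements $q,q'\in Q$ become equal if and only if there exist $p_1,p_2\in P$ with $q+\theta(p_1)=q'+\theta(p_2)$. Thus showing that $\coker(\theta)$ is trivial amounts to showing that every $q\in Q$ satisfies $q+\theta(p_1)=\theta(p_2)$ for some $p_1,p_2\in P$.

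For the key step I would invoke surjectivity of $\theta^\gp\colon P^\gp\to Q^\gp$. Given $q\in Q$, regard $q$ as an element of $Q^\gp$; then $q=\theta^\gp(p_2)-\theta^\gp(p_1)$ for some $p_1,p_2\in P$, which rearranges to the identity $q+\theta(p_1)=\theta(p_2)$ in $Q^\gp$.

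To descend this identity from $Q^\gp$ to $Q$, I would use integrality of $Q$: the canonical map $Q\to Q^\gp$ is injective, so the equality $q+\theta(p_1)=\theta(p_2)$ already holds in $Q$. This exhibits the desired relation, so $q\sim 0$ in $\coker(\theta)$, and $\coker(\theta)$ is trivial.

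There is no substantive obstacle; the argument is a direct unwinding of definitions, and both hypotheses (integrality of $Q$ and bijectivity of $\theta^\gp$) are used in essential but routine ways. The injectivity of $\theta$ itself is actually redundant for the statement, since it follows from $\theta^\gp$ being an isomorphism together with integrality of $P$. The only mild subtlety is to remember that for commutative monoids, ``cokernel is trivial'' does not mean ``$Q=\theta(P)$'' but the weaker congruence-theoretic statement above.
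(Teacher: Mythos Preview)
Your proof is correct and follows essentially the same approach as the paper's: both arguments use surjectivity of $\theta^\gp$ to produce, for each $q\in Q$, elements $p,p'\in P$ with $q+\theta(p)=\theta(p')$ in $Q$, and then invoke the congruence description of the monoid cokernel. The only difference is cosmetic: the paper cites Ogus's \cite[Example I.1.1.6]{Ogu} for the description of $Q/P$, whereas you spell out the congruence explicitly, and you additionally note (correctly) that the injectivity hypothesis on $\theta$ is redundant.
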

\begin{proof}
For all $q\in Q$, there exist $p,p'\in P$ such that $q+p=p'$ since $\theta^\gp$ is an isomorphism.
The description of $Q/P$ in \cite[Example I.1.1.6]{Ogu} shows that $Q/P$ is trivial.
\end{proof}

\begin{lem}
\label{homeomorphism.6}
Let
\[
\begin{tikzcd}
P\ar[d,"\theta"']\ar[r,"\eta"]&
P'\ar[d,"\theta'"]
\\
Q\ar[r,"\eta'"]&
Q'
\end{tikzcd}
\]
be a cocartesian square of sharp fs monoids.
Assume the following conditions:
\begin{enumerate}
\item[\textup{(i)}] $\theta'^\gp$ is an isomorphism.
\item[\textup{(ii)}] If $F$ is a face of $P'$ such that $\eta^{-1}(F)=0$, then $\theta'(F)$ is a face of $Q'$.
\item[\textup{(iii)}] If $G$ is a face of $Q'$ such that $\eta'^{-1}(G)=0$, then $G=\theta'(F)$ for some face $F$ of $P'$.
\end{enumerate}
Then the induced morphism
\[
f\colon
\A_{Q',J'}
\to
\A_{P',I'}
\]
is a virtual isomorphism,
where $I'$ and $J'$ are the ideals of $P'$ and $Q'$ generated by $\eta(P^+)$ and $\eta'(Q^+)$.
\end{lem}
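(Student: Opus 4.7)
The plan is to verify the two conditions of Definition \ref{homeomorphism.1}: (a) $\ul{f}$ is a scheme isomorphism, and (b) the induced map on $\cM^\gp$ is an isomorphism.

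For (a), the underlying rings $\Z[P']/\Z[I']$ and $\Z[Q']/\Z[J']$ are free $\Z$-modules on $P'\setminus I'$ and $Q'\setminus J'$, so it suffices to show that $\theta'$ induces a bijection $P'\setminus I'\xrightarrow{\sim} Q'\setminus J'$. Injectivity of $\theta'$ follows from (i) combined with integrality: the composite $P'\hookrightarrow P'^\gp\xrightarrow{\simeq}Q'^\gp$ (where the second arrow is $\theta'^\gp$) factors through $Q'$, forcing $\theta'$ to be injective. By Proposition \ref{affine.4}, $P'\setminus I'=\bigcup\{F:\eta^{-1}(F)=0\}$ (the conditions $F\cap I'=\emptyset$ and $\eta^{-1}(F)=0$ are equivalent because $P$ is sharp and $I'=(\eta(P^+))$), and analogously for $Q'\setminus J'$. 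Conditions (ii) and (iii) together with the injectivity of $\theta'$ set up a bijection $F\leftrightarrow\theta'(F)$ between these two indexing sets of faces, and on each face $\theta'$ restricts to a bijection $F\xrightarrow{\sim}\theta'(F)$; these assemble into the desired set-theoretic bijection.

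For (b), both log structures are pulled back from the canonical charts $P'\to\A_{P'}$ and $Q'\to\A_{Q'}$, so on the stratum indexed by $F$ the stalk of $\ol{\cM}^\gp$ is $P'^\gp/F^\gp$, matched under the bijection of strata with $Q'^\gp/\theta'(F)^\gp$. Condition (i) identifies these subgroups of the common group $P'^\gp=Q'^\gp$ (since $F^\gp=\theta'(F)^\gp$), so the map on $\cM^\gp$ is a stalkwise isomorphism.

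The main obstacle is verifying that $\eta^{-1}(F)=0$ is equivalent to $\eta'^{-1}(\theta'(F))=0$, which promotes (ii) and (iii) into the required bijection between indexing sets. For the forward direction, given $q\in Q$ with $\eta'(q)\in\theta'(F)$, the identity $\theta'\circ\eta=\eta'\circ\theta$ from the cocartesian square together with the injectivity of $\theta'$ (and the sharpness of $P$) should force $\theta(q)\in F$ to land in the trivial part and hence $q=0$; the reverse direction follows from (iii) by uniqueness of the preimage face of $G$ under the injective $\theta'$, namely $\theta'^{-1}(G)=F$. With the face bijection in hand the rest of the proof is bookkeeping, and both (a) and (b) reduce to applying (i).
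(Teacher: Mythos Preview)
Your reduction of part (a) to a bijection $\theta'\colon P'\setminus I'\xrightarrow{\sim} Q'\setminus J'$ is correct, but the step where you invoke Proposition \ref{affine.4} to write $P'\setminus I'=\bigcup\{F:\eta^{-1}(F)=0\}$ is a genuine gap. That proposition stratifies the \emph{scheme} $\A_{P',I'}$ by locally closed subschemes indexed by faces $F$ with $F\cap I'=\emptyset$; it does not say that the \emph{set} $P'\setminus I'$ equals the union of those faces inside $P'$, and in fact this set-level identity can fail under the lemma's hypotheses. Take $P=P'=Q=Q'=\N$ with $\theta=\theta'=\id$ and $\eta=\eta'$ multiplication by $2$: this is a cocartesian square of sharp fs monoids satisfying (i)--(iii), yet $I'=\{n\geq 2\}$, so $P'\setminus I'=\{0,1\}$ while the only face disjoint from $I'$ is $\{0\}$. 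Your face-by-face assembly of the bijection therefore does not go through, and the ``main obstacle'' paragraph does not repair it (that paragraph also contains a slip: you write ``$\theta(q)\in F$'' for $q\in Q$, but $\theta$ goes from $P$ to $Q$).

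The paper avoids this by staying at the level of schemes throughout. It uses Proposition \ref{affine.4} to stratify both $\A_{P',I'}$ and $\A_{Q',J'}$, observes via (ii) and (iii) that $f$ matches the strata bijectively, checks using (i) and Proposition \ref{affine.1} that on each matched pair of strata $f$ is the map $\pt_{Q'/\theta'(F)}\times\A_{\theta'(F)^\gp}\to\pt_{P'/F}\times\A_{F^\gp}$ and hence a virtual isomorphism, and then invokes \cite[Corollaire IV.18.12.6]{EGA} to glue the stratum-wise isomorphisms into a global one. Your part (b) is essentially the same stalkwise check the paper performs on each stratum.
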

\begin{proof}
By Proposition \ref{affine.4},
we have the stratifications
\[
\coprod_{F\cap I'=\emptyset} \A_{P_F',P_F'^+}
\text{ and }
\coprod_{G\cap J'=\emptyset} \A_{Q_G',Q_G'^+}
\]
on $\A_{P',I'}$ and $\A_{Q',J'}$,
where the coproducts run over the faces $F$ and $G$ of $P'$ and $Q'$ such that $F\cap I'=\emptyset$ and $G\cap J'=\emptyset$.
The condition $F\cap I'=\emptyset$ (resp.\ $G\cap J'=\emptyset$) is equivalent to the condition $\eta^{-1}(F)=0$ (resp.\ $\eta'^{-1}(G)=0$).
Hence the conditions (ii) and (iii) imply that we have the restriction
\[
f_F
\colon
\A_{Q_{\theta'(F)}',Q_{\theta'(F)}'^+}
\to
\A_{P_F',P_F'^+}
\]
of $f$ for every face $F$ of $P'$ such that $\eta^{-1}(F)=0$ and
\[
\coprod_{\eta^{-1}(F)=0}
\A_{Q_{\theta'(F)}',Q_{\theta'(F)}'^+}
\]
is a stratification on $\A_{Q',J'}$.
Due to Proposition \ref{affine.1},
$f_F$ can be written as the induced morphism
\[
\pt_{Q'/\theta'(F)}\times \A_{\theta'(F)^\gp}
\to
\pt_{P/F}\times \A_{F^\gp}.
\]
The condition (i) imply that $F\to \theta(F)$ and $(P/F)^\gp\to (Q'/\theta'(F))^\gp$ are isomorphisms.
Hence $f_F$ is a virtual isomorphism.
Together with \cite[Corollaire IV.18.12.6]{EGAIVIV},
we deduce that $f$ is a virtual isomorphism.
\end{proof}

\begin{prop}
\label{homeomorphism.4}
Let $f\colon X\to S$ be a virtual isomorphism in $\lSch/B$.
Then $f^*$ is fully faithful.
\end{prop}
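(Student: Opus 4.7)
The plan is to show the unit $\id \to f_*f^*$ is an equivalence in $\sT^\ex(S)$. First, by the strict Nisnevich descent satisfied by $\sT^\ex$, the statement is local on $\ul{S}$; using that $\ul{f}$ is an isomorphism, we may assume $S$ and $X$ admit global charts and that $f$ is modeled by a morphism $\theta\colon P \to Q$ of fs monoids for which $\theta^\gp$ is an isomorphism. By Lemma \ref{homeomorphism.2}, $\theta$ is then injective with trivial monoid cokernel.

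Next, I apply Construction \ref{affine.5}: there is a finite stratification of $\ul{S}\simeq \ul{X}$ such that on each stratum $V_i$ the morphism $f$ restricts to a product of the form $V_i \times \pt_{Q/G} \to V_i \times \pt_{P/\theta^{-1}(G)}$ for some face $G$ of $Q$. The axiom (Loc) together with the conservativity of $(i^*,j^*)$ for strict closed immersions $i$ and their open complements $j$ reduces the fully-faithfulness claim to each stratum. By Proposition \ref{affine.1}, each such stratum morphism further splits off a factor $\A_{(Q/G)^*}\to \A_{(P/\theta^{-1}(G))^*}$, which is an isomorphism of split tori since $\theta^\gp$ is an isomorphism.

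This leaves the essential case of a morphism $f\colon \pt_{Q'} \to \pt_{P'}$ with $P' \subset Q'$ sharp fs monoids sharing a common groupification. My plan here is to factor $\theta\colon P'\hookrightarrow Q'$ as a finite composition of virtual isomorphisms arising from cocartesian pushouts of the type analyzed in Lemma \ref{homeomorphism.6}; each elementary factor should be either a dividing cover (giving full faithfulness by the $div$-invariance axiom) or an exact log smooth morphism that becomes an isomorphism after removing the relative boundary (giving it by the $ver$-invariance axiom combined with (Loc)). The combinatorial input is that any inclusion of top-dimensional rational polyhedral cones in a fixed lattice can be bridged by a finite chain of elementary subdivisions.

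The main obstacle is precisely this sharp case: producing a well-chosen factorization and verifying that each elementary step falls under one of the two invariance axioms, uniformly over the stratum base. Lemma \ref{homeomorphism.6}, which expresses virtual isomorphisms as base changes of sharp-quotient inclusions built from cocartesian squares, will be the central tool in reducing each such step to a controllable log-geometric operation.
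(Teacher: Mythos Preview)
Your reduction to the sharp case $f\colon \ul{S}\times\pt_Q\to \ul{S}\times\pt_P$ with $P\subset Q$ sharp fs and $P^\gp=Q^\gp$ is essentially the same as the paper's step (I), and this part is fine.

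The genuine gap is your handling of the sharp case. Your plan is to factor $P\hookrightarrow Q$ so that each elementary step is either a dividing cover or an exact log smooth morphism. Neither option is available in general. A dividing cover arises from a subdivision of a \emph{fixed} cone, not from an inclusion $P\subsetneq Q$ of distinct cones with the same group; and the morphism $\pt_Q\to\pt_P$ is never exact when $P\neq Q$, since exactness of $\theta$ would say $P^\gp\cap Q=P$ inside $P^\gp=Q^\gp$, forcing $Q=P$. So ($ver$-inv) cannot be applied to $f$ or to any nontrivial piece of your proposed factorization. Lemma~\ref{homeomorphism.6} only certifies that certain maps are virtual isomorphisms; it does not furnish a decomposition into maps of the two types you need.

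The paper's approach is different and more indirect. After reducing (as you do) and further to $Q=(P+\langle a\rangle)^\sat$, it builds compatible fan subdivisions $\Delta,\Sigma$ of the dual cones $P_\Q^\vee,Q_\Q^\vee$ and replaces $f$ by the induced virtual isomorphism $f'$ between the dividing covers $\T_\Delta\times_{\A_P}\pt_P$ and $\T_\Sigma\times_{\A_Q}\pt_Q$; ($div$-inv) reduces the question for $f$ to that for $f'$. Zariski-locally on these covers one lands (by induction on $\mathrm{rank}\,\ol P^\gp$) in a situation where $P_\Q$ and $Q_\Q$ are simplicial with all but one generator in common. There one does \emph{not} apply ($ver$-inv) to $f$; instead one embeds $\pt_P$ and $\pt_Q$ as relative boundaries of $\A_{(P,P-F)}$ and $\A_{(Q,Q-G)}$ over a common base $\A_{P'}$, applies ($ver$-inv) to those projections (which \emph{are} exact log smooth), and then uses (Supp) and a diagram chase to conclude that $\unit\xrightarrow{ad}f_*f^*\unit$ is an isomorphism. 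A final application of (PF) upgrades this from $\unit$ to all objects---a step your outline also omits.
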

\begin{proof}
Equivalently,
we need to show that the unit $\id \xrightarrow{ad} f_*f^*$ is an isomorphism.
We proceed by induction on $r:=\max_{s\in S}\ol{\cM}_{S,s}^\gp$.
If $r\leq 1$, then $f$ is an isomorphism.
Hence we assume $r>1$.

(I) \emph{Locality of the question.}
The question is Zariski local on $S$, so we may assume that $S$ has a neat chart $P$.
Lemma \ref{homeomorphism.2} shows $\cM_{X/S}^\gp\simeq 0$,
see \cite[Definition A.2]{divspc} for the notation $\cM_{X/S}$.
Since $\underline{f}$ is an isomorphism, the question is also Zariski local on $X$.
Hence by \cite[Theorem II.2.4.4]{Ogu}, we may assume that $f$ admits a neat chart $\theta\colon P\to Q$.
According to \cite[Remark II.2.4.5]{Ogu}, $Q$ is a neat chart of $X$.

Let $i\colon Z\to S$ be a strict closed immersion, and let $j\colon U\to S$ be its open complement.
By (Loc),
it suffices to show that the natural transformations
\[
i^*\xrightarrow{ad} i^*f_*f^*
\text{ and }
j^*\xrightarrow{ad} j^*f_*f^*
\]
are isomorphisms.
Together with
($\eSm$-BC), (Supp), and
%Proposition \ref{basic.1}(2),
\cite[Proposition 3.1.1(2)]{logshriek},
we reduce to the question for $X\times_S Z\to Z$ and $X\times_S U\to U$.
Hence by induction,
we reduce to the case when the chart $S\to \A_P$ factors through $\pt_P$.
In this case, for every point $x$ of $X$, we have $\ol{\cM}_{X,x}^\gp\simeq P^\gp$.
This means that the chart $X\to \A_Q$ factors through $\pt_Q$.
The induced morphism $S\to \ul{S}\times \pt_P$ is strict, and its underlying morphism is an isomorphism.
Hence we have $S\simeq \ul{S}\times \pt_P$.
Similarly, we have $X\simeq \ul{X}\times \pt_Q$.
In summary, we reduce to the case when $f$ is the morphism
\[
\id\times \pt_\theta\colon \ul{S}\times \pt_Q\to \ul{S}\times \pt_P.
\]

(II) \emph{Reduction of $Q$.} Choose generators $a_1,\ldots,a_m$ of $Q$.
Consider the inclusions of sharp fs monoids
\[
P\to (P+\langle a_1\rangle)^\sat\to \cdots \to (P+\langle a_1,\ldots,a_m\rangle)^\sat.\]
If we show the question for each inclusion, then we are done.
Hence we reduce to the case when $Q=(P+\langle a\rangle )^\sat$ for some $a\in P^\gp$.
  
(III) \emph{Construction of fans.} We set $C:=P_\Q$ and $D=Q_\Q$.
Consider their dual cones $C^\vee$ and $D^\vee$.
Observe that we have $C\subset D$ and $D^\vee\subset C^\vee$.
Choose any point $v$ in the interior of $D^\vee$.
There exists a subdivision of $D^\vee$ consisting of simplicial cones.
Extend this subdivision to a subdivision of $C^\vee$ consisting of simplicial cones.

Let $\{\sigma_i\}_{i\in I}$ be the set of $(n-1)$-dimensional cones $\sigma$ of the subdivision of $C^\vee$ such that $\sigma$ is contained in a face of $C^\vee$.
We set
\[
C_i^\vee:=\sigma_i+\langle v\rangle,
\text{ and }
D_i^\vee:=C_i^\vee\cap D^\vee.
\]
Let $C_i$ and $D_i$ be their dual cones.
We set
\[
P_i:=C_i\cap P^\gp
\text{ and }
Q_i:=D_i\cap Q^\gp.
\]
We have $C_i=D_i$ or $C_i\neq D_i$.
To help the reader,
we include Figure \ref{fig2} illustrating this situation.

\begin{figure}
\centering
\begin{tikzpicture}
\draw (1,0)--(2,2)--(0,2.5)--(-2,2)--(-1,0)--(1,0);
\draw[dashed] (0,1)--(1,0);
\draw[dashed] (0,1)--(2,2);
\draw[dashed] (0,1)--(0,2.5);
\draw[dashed] (0,1)--(-2,2);
\draw[dashed] (0,1)--(-1,0);
\fill[fill=black] (0,1) circle (2pt);
\node at (0,0.25) {$C_1^\vee$};
\node at (1,0.8) {$C_2^\vee$};
\node at (0.6,1.8){$C_3^\vee$};
\node at (-0.6,1.8){$C_4^\vee$};
\node at (-1,0.8) {$C_5^\vee$};
\node at (0,0.7) {$v$};
\node at (0,-0.3) {$\sigma_1$};
\node at (1.8,1) {$\sigma_2$};
\node at (1.4,2.35) {$\sigma_3$};
\node at (-1.4,2.35) {$\sigma_4$};
\node at (-1.8,1) {$\sigma_5$};
\begin{scope}[shift={(6,0)}]
\draw (1,0)--(2,2)--(-2,2)--(-1,0)--(1,0);
\draw[dashed] (0,1)--(1,0);
\draw[dashed] (0,1)--(2,2);
\draw[dashed] (0,1)--(0,2);
\draw[dashed] (0,1)--(-2,2);
\draw[dashed] (0,1)--(-1,0);
\fill[fill=black] (0,1) circle (2pt);
\node at (0,0.25) {$D_1^\vee$};
\node at (1,0.8) {$D_2^\vee$};
\node at (0.4,1.6){$D_3^\vee$};
\node at (-0.4,1.6){$D_4^\vee$};
\node at (-1,0.8) {$D_5^\vee$};
\node at (0,0.7) {$v$};
\node at (0,2.3) {$\langle a \rangle^\bot$};
\node at (0,-0.3) {$\sigma_1$};
\node at (1.8,1) {$\sigma_2$};
\node at (-1.8,1) {$\sigma_5$};
\end{scope}
\end{tikzpicture}
\caption{The illustrations of $C_i^\vee$ and $D_i^\vee$}\label{fig2}
\end{figure}

Assume $C_i\neq D_i$.
Let $F_1,\ldots,F_{n-1},\sigma_i$ be the $(n-1)$-dimensional faces of $C_i^\vee$.
Among these, the faces other than $\sigma_i$ contain $v$.
This means
\[
C_i=\langle b_1,\ldots,b_{n-1},r_i\rangle
\]
for some $b_1,\ldots,b_{n-1}\in \langle v\rangle^\bot$ and $r_i\in \sigma_i^\bot$.
On the other hand,
\[
F_1\cap D_i^\vee,\ldots,F_{n-1}\cap D_i,\langle a \rangle^\bot \cap D_i^\vee
\]
are the $(n-1)$-dimensional faces of $D_i^\vee$.
This means
\[
D_i=\langle b_1,\ldots,b_{n-1},a \rangle.
\]
Since $v$ is in the interiors of $C^\vee$ and $D^\vee$, we have
\[
\langle v\rangle^\bot \cap C
=
\langle v \rangle^\bot \cap D
=
0.
\]
Hence if $F$ and $G$ are faces of $C_i$ and $D_i$, then we have
\begin{gather}
\label{homeomorphism.4.2}
F\cap C=\langle 0\rangle \;\;\Leftrightarrow \;\;F\subset \langle b_1,\ldots,b_{n-1}\rangle,
\\
G\cap D=\langle 0\rangle \;\;\Leftrightarrow \;\;G\subset \langle b_1,\ldots,b_{n-1}\rangle.
\end{gather}
It follows that the cocartesian square
\[
\begin{tikzcd}
P\ar[r]\ar[d]&
P_i\ar[d]
\\
Q\ar[r]&
Q_i
\end{tikzcd}
\]
satisfies the condition of Lemma \ref{homeomorphism.6},
so the induced morphism
\begin{equation}
\label{homeomorphism.4.1}
\ul{S}\times \A_{Q_i}\times_{\A_Q}\pt_Q
\to
\ul{S}\times \A_{P_i}\times_{\A_P}\pt_P
\end{equation}
is a virtual isomorphism.

If $C_i=D_i$, then we can similarly show that \eqref{homeomorphism.4.1} is a virtual isomorphism using \eqref{homeomorphism.4.2}.

Let $\Delta$ (resp.\ $\Sigma$) be the fans whose set of maximal cones corresponds to $\{C_i^\vee\}_{i\in I}$ (resp.\ $\{D_i^\vee\}_{i\in I}$).
We have the induced commutative square
\[
\begin{tikzcd}
\ul{S}\times \T_\Sigma\times_{\A_Q}\pt_Q\ar[d,"g'"']\ar[r,"f'"]&
\ul{S}\times \T_\Delta\times_{\A_P}\pt_P\ar[d,"g"]
\\
\ul{S}\times \pt_Q\ar[r,"f"]&
\ul{S}\times \pt_P.
\end{tikzcd}
\]
We have shown that $f'$ is a virtual isomorphism.

Consider the induced commutative diagram
\[\begin{tikzcd}
\id \ar[rr,"ad"]\ar[d,"ad"']&
&
f_*f^*\ar[d,"ad"]
\\
g_*g^*\ar[r,"ad"]&
g_*f_*'f'^*g^*\ar[r,"\simeq"]&
f_*g_*'g'^*f^*.
\end{tikzcd}
\]
The vertical arrows are isomorphisms since $g$ and $g'$ are dividing covers.
Hence the question for $f$ reduces to the question for $f'$.

By Zariski descent and induction on $r$, we reduce to the questions for $(P,Q)=(P_i,Q_i)$ for $i\in I$.
In particular, we may assume
\[
(P_i)_\Q
=
\langle b_1,\ldots,b_{r-1},b_r\rangle
\text{ and }
(Q_i)_\Q
=
\langle b_1,\ldots,b_{r-1},a\rangle
\]
for some $a,b_1,\ldots,b_r\in P_i$.

(IV) \emph{Final step of the proof.}
We set
\[
F:=\langle b_1\rangle\cap P,
\;
G:=\langle b_1\rangle \cap  Q,
\;
P':=\langle b_2,\ldots,b_r\rangle \cap P,
\;
Q'=\langle b_2,\ldots,b_{r-1},a\rangle \cap Q.
\]
Consider the induced commutative diagram with cartesian squares
\[
\begin{tikzcd}
\ul{S}\times \pt_Q\ar[r,"i'"]\ar[d,"f"']&
\ul{S}\times\A_{(Q,Q-G)}\ar[d,"f'"']\ar[rr,"j'",leftarrow]\arrow[rdd,"q",bend left,near start]&
&
\ul{S}\times(\A_{(Q,Q-G)}-\pt_Q)\arrow[d,"f''"]
\\
\ul{S}\times \pt_P\ar[r,"i"]&
\ul{S}\times\A_{(P,P-F)}\ar[rr,"j",leftarrow,crossing over]\ar[d,"p"']&
&
\ul{S}\times(\A_{(P,P-F)}-\pt_P)
\\
&
\ul{S}\times\A_{P'}\ar[r,"g",leftarrow]&
\ul{S}\times\A_{Q'}.
\end{tikzcd}
\]
We have
$
\pt_P
=
\partial_{\A_{P'}}\A_{(P,P-F)}
\text{ and }
\pt_Q
=
\partial_{\A_{P'}}\A_{(Q,Q-G)}.
$
Hence by $\ver$-invariance, the natural transformations
\[
p^*\xrightarrow{ad} j_*j^*p^*
\text{ and }
q^*\xrightarrow{ad} j_*'j'^*q^*
\]
are isomorphisms.
It follows that in the commutative diagram
\[
\begin{tikzcd}
p^*\ar[rr,"ad"]\ar[d,"ad"',"\simeq"]&
&
f_*'f'^*p^*\ar[d,"\simeq"]
\\    j_*j^*p^*\ar[r,"ad","\simeq"']&
j_*f_*''f''^*j^*p^*\ar[r,"\simeq"]&
p_*'j_*'j'^*f'^*p^*,
\end{tikzcd}
\]
the upper horizontal arrow is an isomorphism.

We have the commutative diagram
\[
\begin{tikzcd}    i^*p^*\ar[r,"ad","\simeq"']\arrow[rrd,"ad"',bend right=10]&
i^*f_*'f'^*p^*\ar[r,"Ex"]&
f_*i'^*f'^*p^*\ar[d,"\simeq"]
\\
&
&
f_*f^*i^*p^*.
\end{tikzcd}
\]
The upper right horizontal arrow is an isomorphism by (Supp) and %Proposition \ref{basic.1}(2).
\cite[Proposition 3.1.1(2)]{logshriek}.
Hence the diagonal arrow is an isomorphism.
In particular, the morphism
\[
\unit\xrightarrow{ad} f_*f^*\unit\]
is an isomorphism.

For all object $\cF$ of $\sT^{ex}(S)$, we have the commutative diagram
\[
\begin{tikzcd}
\cF\ar[r,"\simeq"]\arrow[rrd,"ad"',bend right=10]&
\unit\otimes \cF\ar[r,"ad","\simeq"']&
f_*f^*\unit \otimes \cF\ar[d,"Ex"]
\\
&
&
f_*f^*\cF.
\end{tikzcd}
\]
By (PF), the right vertical arrow is an isomorphism.
Hence the diagonal arrow is also an isomorphism, which completes the proof.
\end{proof}

\begin{thm}
\label{homeomorphism.5}
Let $f\colon X\to S$ be a virtual isomorphism in $\lSch/B$.
Then $f^*$ is an equivalence of $\infty$-categories.
\end{thm}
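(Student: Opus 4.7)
The plan is to upgrade the result of Proposition \ref{homeomorphism.4}, where $f^*$ is shown fully faithful (so the unit $\id \xrightarrow{\sim} f_*f^*$ is invertible), to a full equivalence. A standard adjunction argument shows that, given $f^*$ fully faithful, $f^*$ is an equivalence if and only if the counit $\epsilon\colon f^*f_*\to \id$ is an isomorphism, and this in turn is implied by $f_*$ being conservative, since applying $f_*$ to $\epsilon$ and using the triangle identity together with the invertible unit forces $f_*(\epsilon)$ to be an isomorphism everywhere. Consequently, the entire task reduces to showing that $f_*$ is conservative on $\sT^\ex(X)$.

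Since $\ul{f}$ is an isomorphism, $f$ is proper and compactifiable, so that $f_*\simeq f_!$ is available. Moreover, for every vertical exact log smooth $V\to S$ the base change $V\times_S X\to X$ is again vertical exact log smooth, and ($\eSm$-BC) gives $f^*M_S(V)\simeq M_X(V\times_S X)$. The compactness hypothesis on $M_S(V)$ thus propagates to $\sT^\ex(X)$, and $f_*$ preserves colimits; this will be needed to apply any reduction to generators.

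For the conservativity, the plan is to follow the same inductive scheme as in the proof of Proposition \ref{homeomorphism.4}, working by induction on $r := \max_{s\in S}\operatorname{rank}\ol{\cM}_{S,s}^\gp$. The case $r=0$ is trivial, as $f$ is then an isomorphism. For the inductive step, Zariski-localize on $S$ and pick a neat chart $P$; then use (Loc), together with exact base change of Theorem \ref{introduction.2}(1) applied to strict closed immersions and open immersions (both of which are exact), to reduce the conservativity of $f_*$ to that for the base changes of $f$ along the strata of the rank stratification of $S$. Combined with induction on $r$ and Zariski locality, this reduces the question to the local model $f\simeq \id\times \pt_\theta\colon \ul{S}\times \pt_Q \to \ul{S}\times \pt_P$ for sharp fs monoids $P\subset Q\subset P^\gp$ with common groupification. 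Further fan subdivisions, absorbed by the $div$-invariance property exactly as in step (III) of the proof of Proposition \ref{homeomorphism.4}, reduce to the simplicial case where the situation becomes essentially one-variable as in step (IV).

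In this fully reduced setting, conservativity of $f_*$ is established by rerunning the verticalization/localization argument of step (IV), namely the combined use of ($ver$-inv), (Supp), and the projection formula, now leveraging the already proved identity $f_*f^*\cG\simeq\cG$ to spread the conclusion from $\unit_X$ to arbitrary generators $\cF\in\sT^\ex(X)$. The main obstacle lies precisely here: controlling how the non-exact inclusion $P\hookrightarrow Q$ interacts with an arbitrary exact log smooth generator $M_X(Y)$ of $\sT^\ex(X)$. Resolving this may require an additional descent argument that, after refinement by a suitable dividing cover (whose effect is absorbed by $div$-invariance), reduces an arbitrary exact log smooth $Y\to X$ to the pullback of an exact log smooth morphism over $S$; equivalently, one shows directly that the essential image of $f^*$, being closed under colimits and tensor products, exhausts the compact generators of $\sT^\ex(X)$ in the reduced local model.
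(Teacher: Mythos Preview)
Your reduction to the local model $f=\id\times\pt_\theta$ is fine, but the final step has a genuine gap that you yourself identify and do not close. The projection-formula trick at the end of step (IV) in Proposition \ref{homeomorphism.4} works because $f_*f^*\cF\simeq (f_*\unit)\otimes\cF$; there is no analogous identity for $f^*f_*\cG$ unless $\cG$ already lies in the essential image of $f^*$, which is precisely what is at stake. Your first proposed fix, reducing an arbitrary exact log smooth $Y\to X$ to a pullback from $S$ after a dividing refinement, is not available in general: when $Q\supsetneq P$ the chart of $Y$ over $X$ can genuinely use the extra directions of $Q$, and no subdivision of $Y$ forces it to descend to $S$. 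Your second proposed fix, showing that the essential image of $f^*$ contains all compact generators, is a restatement of essential surjectivity. (A side remark: invoking Theorem \ref{introduction.2}(1) for base change along strata is circular, since its proof via Proposition \ref{base2.5} uses the present theorem; use ($\eSm$-BC), (Supp), and \cite[Proposition 3.1.1(2)]{logshriek} instead.)

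The paper resolves this by a different mechanism after reaching the local model. It does not attempt to rerun step (IV). Instead it (a) passes along a conservative Kummer log smooth cover $g'\colon X'\to X$ chosen so that the relevant morphisms become saturated; (b) invokes \cite[Propositions 2.5.3, 2.5.4]{logshriek} to reduce the counit check from $M(U)$ for arbitrary $U\in\eSm/X'$ to objects of the form $w_*\unit$ with $w\colon W\to X'$ strict proper; and (c) uses strict proper base change to replace $f'$ by a new virtual isomorphism $f''$ for which one only needs $f''^*f''_*\unit\to\unit$ to be an isomorphism. Since $\unit\simeq f''^*\unit$, this last statement follows from the full faithfulness already established in Proposition \ref{homeomorphism.4}. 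The ingredient you are missing is (b): an external device that collapses the problem from arbitrary generators down to the unit object, where full faithfulness alone suffices.
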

\begin{proof}
By Proposition \ref{homeomorphism.4}, it remains to show that either the counit $f^*f_*\xrightarrow{ad'} \id$ is an isomorphism.

(I) \emph{Locality of the question.}
The question is Zariski local on $\ul{X}\simeq \ul{S}$.
Let $i\colon Z\to S$ be a strict closed immersion, and let $j\colon U\to S$ be the open complement.
By (Loc), it suffices to show that the natural transformations
\[
i^*f^*f_*\xrightarrow{ad'} i^*
\text{ and }
j^*f^*f_*\xrightarrow{ad'} j^*
\]
are isomorphisms.
Let $g\colon Z\times_S X\to Z$ and $h\colon U\times_S X\to U$ be the projections.
By ($\eSm$-BC), (Supp), and
%Proposition \ref{basic.1}(2),
\cite[Proposition 3.1.1(2)]{logshriek},
it is equivalent to showing that the natural transformations
\[
g^*g_*i^*\xrightarrow{ad'} i^*
\text{ and }
h^*h_*j^*\xrightarrow{ad'} j^*
\]
are isomorphisms.
Use this reduction method and argue as in the proof of Proposition \ref{homeomorphism.4} to reduce to the case when $f$ is equal to
\[
\id \times \pt_\theta \colon \ul{S}\times \pt_Q\to \ul{S}\times \pt_P
\]
for some homomorphism of sharp fs monoids $\theta\colon P\to Q$.

(II) \emph{Reduction using a conservative functor.} We only need to show that the morphism
\[
f^*f_*M(U)\xrightarrow{ad'} M(U)
\]
is an isomorphism for all $U\in \eSm/X$ since $f^*$ and $f_*$ preserve colimits.

%By Proposition \ref{restriction.7},
By \cite[Proposition 2.4.3]{logshriek},
there exists a Kummer log smooth morphism $g'\colon X'\to X$ such that $g'^*$ is conservative and the projection $U\times_X X'\to X'$ is saturated.
The explicit description in the proof of \cite[Proposition 3.3.3]{logGysin} tells that $X':=X\times_{\A_Q}\A_{Q'}$ and $g'$ is the projection, where $Q':=Q^\gp\oplus Q$, and the homomorphism $Q\to Q'$ in the formulation of $X'$ is given by $x\mapsto (x,nx)$ for some integer $n\geq 1$.

Consider the homomorphism $P\to P':=P^\gp\oplus P$ given by $x\mapsto (x,nx)$.
The induced square
\[
C
:=
\begin{tikzcd}
P\ar[d,"\theta"']\ar[r]&
P'\ar[d]
\\
Q\ar[r]&
Q'
\end{tikzcd}
\]
is cocartesian since $C^\gp$ is cocartesian and $(nx,ny)$ is in the image of $Q\to Q'$ for every $(x,y)\in Q'$.
This means that we have a cartesian square
\[
\begin{tikzcd}
X'\ar[d,"f'"']\ar[r,"g'"]&
X\ar[d,"f"]
\\
S'\ar[r,"g"]&
S,
\end{tikzcd}
\]
where $S':=S\times_{\A_P}\A_{P'}$ and $g$ is the projection.
Observe that $g$ is Kummer log smooth by \cite[Theorem IV.3.1.8]{Ogu} since $P^\gp$ is torsion free by \cite[Proposition I.1.3.5(2)]{Ogu}.

We only need to show that the morphism
\[
g'^*f^*f_*M(U)
\xrightarrow{ad'}
g'^*M(U)
\]
is an isomorphism.
By ($\eSm$-BC),
it suffices to show that the morphism
\[
f'^*f_*'M(U\times_X X')
\xrightarrow{ad'}
M(U\times_X X')
\]
is an isomorphism.

(III) \emph{Further reduction.}
%By Propositions \ref{generation.1} and \ref{generation.4},
By \cite[Propositions 2.5.3, 2.5.4]{logshriek},
it suffices to show that for all strict proper morphism $w\colon W\to X'$, the morphism
\[
f'^*f_*'w_*\unit
\xrightarrow{ad'}
w_*\unit
\]
is an isomorphism.
We set $V:=S'\times_{\ul{S'}}\ul{W}$ to have a cartesian square
\[
\begin{tikzcd}
W\ar[r,"w"]\ar[d,"f''"']&
X'\ar[d,"f'"]
\\
V\ar[r,"v"]&
S'.
\end{tikzcd}
\]
By
%Proposition \ref{restriction.10},
\cite[Proposition 2.4.13]{logshriek},
we reduce to showing that the morphism
\[
f''^*f_*''\unit
\xrightarrow{ad'}
\unit
\]
is an isomorphism.

(IV) \emph{Final step of the proof.}
Observe that $f''$ is a virtual isomorphism.
Since $\unit\simeq f''^*\unit$, it suffices to show that the second arrow in 
\[
f''^*\unit \xrightarrow{ad}
f''^*f_*''f''^*\unit
\xrightarrow{ad'}
f''^*\unit
\]
is an isomorphism.
Since the composition is an isomorphism, it suffices to show that the first arrow is an isomorphism.
This is a consequence of Proposition \ref{homeomorphism.4}.
\end{proof}

\begin{rmk}
The results in the later parts of this paper still hold if we assume Theorem \ref{homeomorphism.5} but do not assume that $M_S(X)$ is a compact object of $\sT^{ex}(S)$ for every vertical exact log smooth morphism $X\to S$ in $\lSch/B$.
\end{rmk}

\subsection{Base change property}
\label{base2}

\begin{lem}
\label{base2.6}
Let
\[
\begin{tikzcd}
U'\ar[d,"u'"']\ar[r,"g''"]&
U\ar[d,"u"]
\\
X'\ar[d,"f'"']\ar[r,"g'"]&
X\ar[d,"f"]
\\
S'\ar[r,"g"]&
S
\end{tikzcd}
\]
be a diagram in $\lSch/B$ with cartesian squares.
If the natural transformations $g^*f_*\xrightarrow{Ex} f_*'g'^*$ and $g'^*u_*\xrightarrow{Ex} u_*'g''^*$ are isomorphisms,
then the natural transformation $g^*(fu)_*\xrightarrow{Ex} (f'u')_*g''^*$ is an isomorphism.
\end{lem}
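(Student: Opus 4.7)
The plan is to factor the exchange transformation for the outer rectangle through the intermediate step corresponding to the middle row. Using the functoriality of $(-)_{*}$, we identify $(fu)_* \simeq f_* u_*$ and $(f'u')_* \simeq f'_* u'_*$, so the natural transformation $g^*(fu)_* \xrightarrow{Ex} (f'u')_* g''^*$ we wish to analyze can be written as a transformation $g^* f_* u_* \to f'_* u'_* g''^*$.

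I would then verify that this exchange transformation for the outer rectangle equals the composite
\[
g^* f_* u_* \xrightarrow{Ex \cdot u_*} f'_* g'^* u_* \xrightarrow{f'_* \cdot Ex} f'_* u'_* g''^*,
\]
where the first arrow is obtained from the exchange for the lower cartesian square and the second from the exchange for the upper cartesian square. This is the standard horizontal pasting identity for mates of adjunctions applied to cartesian squares: the mate of a pasted square is the paste of the mates. Concretely, it follows by unpacking the definition of $Ex$ as the natural transformation adjoint under $(g''^*, g''_*)$ (or alternatively $(f'u')^*, (f'u')_*$) to the counit composite, and then inserting the unit/counit of $(u^*, u_*)$ in the middle; the resulting diagram of units and counits commutes by the triangle identities and functoriality. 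This is entirely formal, with no logarithmic input beyond the identification of $(fu)_*$ with $f_* u_*$.

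Once this factorization is established, the conclusion is immediate: the first arrow $g^* f_* u_* \xrightarrow{Ex \cdot u_*} f'_* g'^* u_*$ is obtained by whiskering the isomorphism $g^* f_* \xrightarrow{Ex} f'_* g'^*$ (assumed to be an isomorphism) with $u_*$, and the second arrow $f'_* g'^* u_* \xrightarrow{f'_* \cdot Ex} f'_* u'_* g''^*$ is obtained by whiskering the isomorphism $g'^* u_* \xrightarrow{Ex} u'_* g''^*$ (also assumed) with $f'_*$. Both are therefore isomorphisms, so the composite is an isomorphism, giving the desired result. The only mildly technical step is the pasting identity for mates, and this is entirely formal $2$-categorical bookkeeping requiring no properties specific to $\sT^\ex$.
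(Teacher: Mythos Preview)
Your proposal is correct and matches the paper's argument exactly: the paper also factors $g^*(fu)_* \xrightarrow{Ex} (f'u')_*g''^*$ through $g^*f_*u_* \xrightarrow{Ex} f_*'g'^*u_* \xrightarrow{Ex} f_*'u_*'g''^*$ via the pasting identity for exchange transformations, and then concludes immediately from the hypotheses. Your discussion of the mate-pasting identity is more detailed than the paper's, which simply asserts that the relevant diagram commutes.
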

\begin{proof}
The diagram
\[
\begin{tikzcd}
g^*(fu)_*\ar[rr,"Ex"]\ar[d,"\simeq"']&
&
(f'u')_*g''^*\ar[d,"\simeq"]
\\
g^*f_*u_*\ar[r,"Ex"]&
f_*'g'^*u_*\ar[r,"Ex"]&
f_*'u_*'g''^*
\end{tikzcd}
\]
commutes,
which shows the claim.
\end{proof}

\begin{lem}
\label{base2.1}
Let $g\colon S'\to S$ be a morphism in $\lSch/B$,
and let $\theta\colon \N\to \N^2$ be the diagonal homomorphism of fs monoids.
Consider the induced cartesian square
\[
\begin{tikzcd}
S'\times \pt_{\N^2}\ar[d,"f'"']\ar[r,"g'"]&
S\times \pt_{\N^2}\ar[d,"f"]
\\
S'\times \pt_\N\ar[r,"g"]&
S\times \pt_\N,
\end{tikzcd}
\]
where $f:=\id \times \pt_\theta$.
Then the natural transformation
\[
Ex\colon g^*f_*
\to
f_*'g'^*
\]
is an isomorphism.
\end{lem}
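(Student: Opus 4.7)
The plan exploits the orthogonal product structure $f=\id_S\times \pt_\theta$ and $g=g_0\times \id_{\pt_\N}$ (with $g_0:=g$ on the base) via a Fubini-style reduction. By the running compactness hypothesis, both $g^*f_*$ and $f'_*g'^*$ commute with colimits, so it suffices to verify the exchange on a compact generating set of $\sT^\ex(S\times \pt_{\N^2})$, namely the motives $M_{S\times \pt_{\N^2}}(X)$ for $X\to S\times \pt_{\N^2}$ vertical exact log smooth.

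Using the chart theorem, each such $X$ is, strict Nisnevich locally on $S\times \pt_{\N^2}$, isomorphic to $S\times Y$ with $Y:=\pt_{\N^2}\times_{\A_{\N^2}}\A_Q$ for some chart $\N^2\to Q$. Applying ($\eSm$-BC) to the cartesian square involving the exact log smooth morphism $\id_S\times (Y\to \pt_{\N^2})$ identifies
\[
M_{S\times \pt_{\N^2}}(S\times Y)\simeq \pi_1^*M_{\pt_{\N^2}}(Y),
\]
where $\pi_1\colon S\times \pt_{\N^2}\to \pt_{\N^2}$ is the projection. Strict Nisnevich descent in $\sT^\ex$ then reduces the check to objects of the form $\cF=\pi_1^*\cG$ for $\cG\in\sT^\ex(\pt_{\N^2})$. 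For such $\cF$, granting the reduced base change
\[
Ex_1\colon p_1^*\pt_{\theta *}\xrightarrow{\sim} f_*\pi_1^*
\]
for the cartesian square
\[
\begin{tikzcd}
S\times \pt_{\N^2}\ar[r,"\pi_1"]\ar[d,"f"']& \pt_{\N^2}\ar[d,"\pt_\theta"]\\
S\times \pt_\N\ar[r,"p_1"]& \pt_\N
\end{tikzcd}
\]
together with its analog over $S'$, both sides of the original exchange reduce to $p_1'^*\pt_{\theta *}\cG$ (where $p_1'\colon S'\times \pt_\N\to \pt_\N$), proving the claim.

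The main obstacle is therefore the reduced base change $Ex_1$, which is the lemma's statement for the special case when $g$ is the structure morphism $S\to \pt_B$. To handle it, I would use the projection formula (PF) for the proper morphism $\pt_\theta$ together with ($\eSm$-PF) and ($\eSm$-BC) for the exact log smooth projections $\pi_1$ and $p_1$ to reduce the exchange on an arbitrary $\cG$ to the unit $\cG=\unit_{\pt_{\N^2}}$. In the unit case, both sides become identified with $p_1^*\pt_{\theta *}\unit$ after the projection formulas, and the exchange is verified by a direct adjunction calculation, using that $\unit\simeq \pt_\theta^*\unit_{\pt_\N}$ and applying ($\eSm$-BC) to the projection $p_1$. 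Any remaining subtleties arising from the non-strictness of $\pi_1$ and $p_1$ (whose log structures genuinely grow by the $\N$-factor) can be absorbed by invoking Theorem \ref{homeomorphism.5}, since the relevant change of log structure is captured by virtual isomorphisms after further base change along strict pieces.
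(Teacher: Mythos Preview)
Your Fubini reduction has two genuine gaps.

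First, the chart theorem does not give what you claim. Strict Nisnevich locally on the source, a vertical exact log smooth $X\to S\times\pt_{\N^2}$ factors as a strict \'etale map $X\to (S\times\pt_{\N^2})\times_{\A_{\N^2}}\A_Q=S\times Y$, not as an isomorphism to $S\times Y$. That strict \'etale map can be nontrivial over $S$ (already for $X$ a nontrivial \'etale cover of $S$ crossed with $\pt_{\N^2}$), so $M_{S\times\pt_{\N^2}}(X)$ need not lie in the essential image of $\pi_1^*$. Consequently the passage from arbitrary compact generators to objects of the form $\pi_1^*\cG$ is unjustified, and the entire reduction to the ``reduced'' exchange $Ex_1$ collapses.

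Second, even if one granted the reduction, the projections $\pi_1\colon S\times\pt_{\N^2}\to\pt_{\N^2}$ and $p_1\colon S\times\pt_\N\to\pt_\N$ are exact log smooth only when $S$ itself is smooth over $B$; in general they are arbitrary morphisms. Hence ($\eSm$-BC) and ($\eSm$-PF), which you invoke for $\pi_1$ and $p_1$, are simply unavailable. The appeal to Theorem~\ref{homeomorphism.5} at the end does not repair this: virtual isomorphisms change log structures with fixed underlying scheme, whereas the obstruction here is that the underlying morphism $\ul{S}\to\ul{B}$ is not smooth.

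The paper's proof avoids these problems entirely by a geometric factorization of $f$ itself. One introduces the fs log scheme $W$ over $\A_\N$ from Proposition~\ref{puritytran.3} and writes $f=p\circ i$ with $i\colon S\times\pt_{\N^2}\hookrightarrow S\times_{\A_\N}W$ a strict closed immersion and $p\colon S\times_{\A_\N}W\to S\times\pt_\N$ the projection. Base change along $g$ for $i_*$ is immediate because $i$ is strict, and base change for $p_*$ holds because $p$ is pure (Proposition~\ref{puritytran.3}) and hence satisfies the exchange property of Proposition~\ref{functorial.7}. Lemma~\ref{base2.6} then glues the two. The point is that purity of this one specific morphism $p$ was established directly (ultimately via a computation in \cite{logGysin}), and this is the actual nontrivial input you are missing.
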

\begin{proof}
Let $W$ be the gluing of
\[
\Spec(\N x\oplus \N y\to \Z[x,y]),
\;
\Spec(\N(xy)\to \Z[xy,y^{-1}]),
\;
\Spec(\N(xy)\to \Z[xy,x^{-1}]).
\]
We have the morphism $W\to \A_\N=\Spec(\N t\to \Z[t])$ obtained by the assignment $t\mapsto xy$.
Consider the induced commutative diagram with cartesian squares
\[
\begin{tikzcd}
S'\times \pt_{\N^2}\ar[d,"i'"']\ar[r,"g'"]&
S\times \pt_{\N^2}\ar[d,"i"]
\\
S'\times W\times_{\A_\N}\pt_\N\ar[r,"g''"]\ar[d,"p'"']&
S\times W\times_{\A_\N}\pt_\N\ar[d,"p"]
\\
S'\times \pt_\N\ar[r,"g"]&
S\times \pt_\N.
\end{tikzcd}
\]
Since $i$ is a strict closed immersion,
%Proposition \ref{restriction.10}
\cite[Proposition 2.4.13]{logshriek} implies that $g''^*i_*\xrightarrow{Ex} i_*'g'^*$ is an isomorphism.
By Propositions \ref{puritytran.3} and \ref{functorial.7},
$g^*p_*\xrightarrow{Ex} p_*'g''^*$ is an isomorphism.
Combine these two with Lemma \ref{base2.6} to conclude.
\end{proof}

\begin{lem}
\label{base2.2}
Let $g\colon S'\to S$ be a morphism in $\lSch/B$,
and let $\theta\colon \N\to \N^2$ be the first inclusion.
Consider the induced cartesian square
\[
\begin{tikzcd}
S'\times \pt_{\N^2}\ar[d,"f'"']\ar[r,"g'"]&
S\times \pt_{\N^2}\ar[d,"f"]
\\
S'\times \pt_\N\ar[r,"g"]&
S\times \pt_\N,
\end{tikzcd}
\]
where $f:=\id \times \pt_\theta$.
Then the natural transformation
\[
Ex\colon g^*f_*
\to
f_*'g'^*
\]
is an isomorphism.
\end{lem}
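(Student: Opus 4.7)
The plan is to reduce the first-inclusion case to the diagonal case (Lemma \ref{base2.1}) via an auxiliary virtual isomorphism, leveraging Theorem \ref{homeomorphism.5}.

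Consider the monoid homomorphism $\alpha\colon \N^2 \to \N^2$, $(a,b)\mapsto (a,a+b)$. Its group completion $\alpha^\gp$ has matrix $\bigl(\begin{smallmatrix} 1 & 0 \\ 1 & 1\end{smallmatrix}\bigr)$, which is invertible over $\Z$, so $\pt_\alpha\colon \pt_{\N^2}\to \pt_{\N^2}$ is a virtual isomorphism in the sense of Definition \ref{homeomorphism.1}. By Theorem \ref{homeomorphism.5}, $\pt_\alpha^*$ is an equivalence of $\infty$-categories, and in particular its right adjoint is too. Writing $\theta'\colon \N \to \N^2$ for the diagonal $n\mapsto (n,n)$, the computation $\alpha(n,0)=(n,n)$ shows $\alpha\circ \theta=\theta'$, which translates to $\pt_\theta\circ\pt_\alpha=\pt_{\theta'}$ as morphisms $\pt_{\N^2}\to\pt_\N$.

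Set $k:=\id_S\times\pt_\alpha$, $k':=\id_{S'}\times\pt_\alpha$, $h:=\id_S\times\pt_{\theta'}$, and $h':=\id_{S'}\times\pt_{\theta'}$; both $k^*$ and $k'^*$ remain equivalences of $\infty$-categories by Theorem \ref{homeomorphism.5}. Then $h=f\circ k$ and $h'=f'\circ k'$. Applying Lemma \ref{base2.1} to $h$ yields $g^*h_*\simeq h'_*g'^*$, which rewrites as
\[
g^* f_* k_* \simeq f'_* k'_* g'^*.
\]
Independently, the identity $k\circ g'=g'\circ k'$ in $\lSch/B$ (both equal $g\times \pt_\alpha$) gives $g'^* k^* = k'^* g'^*$; composing with the inverse equivalences $k_*$ and $k'_*$ produces the auxiliary Beck--Chevalley isomorphism $g'^* k_* \simeq k'_* g'^*$. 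Substituting gives $g^* f_* k_* \simeq f'_* g'^* k_*$, and cancelling the equivalence $k_*$ on the right yields the desired $g^* f_* \simeq f'_* g'^*$.

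The main delicate point is the auxiliary Beck--Chevalley isomorphism for $k$ together with the final cancellation of $k_*$, since $k$ is not an isomorphism of log schemes but only a virtual one. Both rest entirely on Theorem \ref{homeomorphism.5}, which promotes $\pt_\alpha$ to a categorical equivalence on $\sT^\ex$; beyond that invocation, the argument is purely formal.
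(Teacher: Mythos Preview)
Your proof is correct and follows essentially the same route as the paper: both compose with the virtual isomorphism $\id\times\pt_\alpha$ (the paper calls it $u$) to reduce to the diagonal case handled by Lemma~\ref{base2.1}, invoking Theorem~\ref{homeomorphism.5} to make $k^*,k'^*$ equivalences and then cancelling. The only cosmetic difference is that the paper packages your ``substituting'' and ``cancelling $k_*$'' steps into the single sentence ``it suffices to show $Ex$ for $fu$ is an isomorphism,'' which implicitly uses Lemma~\ref{base2.6}; you might cite that lemma to make clear that the isomorphism you produce is the specific $Ex$ transformation and not merely some isomorphism.
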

\begin{proof}
Consider the homomorphism $\theta\colon \N^2\to \N^2$ given by $\theta(x,y):=(x,x+y)$ for $x,y\in \N$.
We have the induced cartesian square
\[
\begin{tikzcd}
S'\times \pt_{\N^2}\ar[d,"u'"']\ar[r,"g''"]&
S\times \pt_{\N^2}\ar[d,"u"]
\\
S'\times \pt_{\N^2}\ar[r,"g'"]&
S\times \pt_{\N^2},
\end{tikzcd}
\]
where $u:=\id \times \pt_\theta$.
Since $u^*$ and $u'^*$ are equivalence of $\infty$-categories by Theorem \ref{homeomorphism.5},
it suffices to show that the natural transformation $g^*(fu)_*\xrightarrow{Ex} (f'u')_*g''^*$ is an isomorphism.
This is done in Lemma \ref{base2.1}.
\end{proof}

\begin{lem}
\label{base2.3}
Let $g\colon S'\to S$ be a morphism in $\lSch/B$.
Consider the induced cartesian square
\[
C:=
\begin{tikzcd}
S'\times \pt_\N\ar[d,"f'"']\ar[r,"g'"]&
S\times \pt_\N\ar[d,"f"]
\\
S'\ar[r,"g"]&
S,
\end{tikzcd}
\]
where $f$ is the projection.
Then the natural transformation
\[
Ex\colon g^*f_*
\to
f_*'g'^*
\]
is an isomorphism.
\end{lem}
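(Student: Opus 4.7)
The plan is to factor the projection $f$ as
\[
S\times \pt_\N \xrightarrow{\,i\,} S\times \A_\N \xrightarrow{\,q\,} S,
\]
where $i$ is the strict closed immersion of the zero section and $q$ is the exact log smooth projection, and to appeal to Lemma \ref{base2.6}. This reduces the statement to establishing the two exchange isomorphisms $g'^*i_* \xrightarrow{Ex} i'_* g''^*$ and $g^* q_* \xrightarrow{Ex} q'_* g'^*$ for the induced cartesian squares. The first is immediate from \cite[Proposition 2.4.13]{logshriek} since $i$ is a strict closed immersion, so the substantive content is the exchange for the projection $q$.

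For the exchange for $q_*$, let $j\colon S\times \G_m \to S\times \A_\N$ be the open complement of $i$. By Proposition \ref{Verdier.9} applied to the separated exact log smooth morphism $q$, both $q_*$ and $q'_*$ preserve colimits; combined with the (Loc) cofiber sequence $j_\sharp j^* \to \id \to i_* i^*$, it suffices to verify the exchange separately on objects of the form $j_\sharp \cH$ with $\cH \in \sT^\ex(S\times \G_m)$ and on objects of the form $i_* \cG$ with $\cG \in \sT^\ex(S\times \pt_\N)$.

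On objects of the form $j_\sharp \cH$: apply (Supp) to the degenerate cartesian square with identity along the bottom to obtain $q_* j_\sharp \simeq (qj)_*$, where $qj\colon S\times \G_m \to S$ is strict separated smooth, hence pure by Proposition \ref{puritytran.1}. Combining Proposition \ref{functorial.7} (which gives the exchange for $(qj)_!$) with the invertibility of the Thom $\Sigma_{qj}^n$ coming from purity yields the desired exchange for $(qj)_*$, and therefore for $q_*$ restricted to this image.

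On objects of the form $i_*\cG$: one has $q_* i_* = f_*$, and the exchange on $i_* \cG$ is at face value the exchange for $f_*$ we are trying to prove. To avoid circularity the plan is to mimic the argument of Lemma \ref{base2.1} by passing through the pure morphism $W \to \A_\N$ of Proposition \ref{puritytran.3}: its pullback $W\times_{\A_\N}\pt_\N \to \pt_\N$ is pure, $\pt_{\N^2}\hookrightarrow W\times_{\A_\N}\pt_\N$ is a strict closed immersion, and the composite $W\times_{\A_\N}\pt_\N \to \pt_\N$ gives rise to a dividing cover to which $(div\text{-inv})$ applies. Combining the pure exchange from Proposition \ref{functorial.7}, the strict-closed exchange from \cite[Proposition 2.4.13]{logshriek}, and $(div\text{-inv})$ through iterated application of Lemma \ref{base2.6}, one extracts the exchange for $f_*$ applied to $i_*\cG$.

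I expect the main obstacle to be this last step: one must arrange the resolutions and cartesian assemblies so that the composition of exchange isomorphisms yields base change for $f_*$ in the correct direction, and in particular so that the non-verticality of $q\colon S\times \A_\N \to S$ does not force the argument back into the tautology $q_*i_* = f_*$.
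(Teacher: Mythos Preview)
Your factorization through $S\times \A_\N$ forces you to prove base change for $q_*$ with $q\colon S\times\A_\N\to S$, but $q$ is not proper (its underlying morphism is $S\times\A^1\to S$), so there is no reason for $g^*q_*\xrightarrow{Ex} q_*'g'^*$ to be an isomorphism in general, and in fact it is not. Your attempted reduction via (Loc) does not rescue this: the claim ``$q_*j_\sharp\simeq (qj)_*$ from (Supp) applied to a degenerate cartesian square'' is incorrect. There is no cartesian square with $j$ on top and $\id_S$ on the bottom, and indeed the cofiber sequence $j_\sharp\cH\to j_*\cH\to i_*i^*j_*\cH$ shows that $q_*j_\sharp$ and $(qj)_*=q_*j_*$ differ by $f_*(i^*j_*\cH)$, which is nonzero in general. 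So the $j_\sharp\cH$ branch already fails, independently of the circularity you flag in the $i_*\cG$ branch. The sketch for the $i_*\cG$ branch is also not convincing as written: you would need base change for the pure morphism $S\times(W\times_{\A_\N}\pt_\N)\to S\times\pt_\N$, but Proposition~\ref{functorial.7} gives base change for $f_!$, and here $W\times_{\A_\N}\pt_\N\to\pt_\N$ is not proper, so you again face the $f_!\neq f_*$ gap.

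The paper avoids $\A_\N$ entirely. Since $\ul{f}$ is an isomorphism, $f$ is proper and $f_*=f_!$; the same holds for $f'$. By \cite[Proposition 3.6.9]{logshriek} the functor $f'^*$ is conservative, so it suffices to check that $f'^*g^*f_*\to f'^*f_*'g'^*$ is an isomorphism. Rewriting $f'^*g^*\simeq g'^*f^*$ and applying the exact base change of \cite[Corollary 3.7.6]{logshriek} to the self--fiber product square (with $S\times\pt_\N\times_S S\times\pt_\N\simeq S\times\pt_{\N^2}$), both sides are expressed through the pushforward along $S\times\pt_{\N^2}\to S\times\pt_\N$ induced by the first inclusion $\N\to\N^2$, and the required isomorphism is exactly Lemma~\ref{base2.2}. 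The moral is that the right ``resolution'' of $\pt_\N$ here is not $\A_\N$ but $\pt_{\N^2}$, reached by pulling back along $f$ itself and exploiting the conservativity of $f'^*$.
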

\begin{proof}
%By Proposition \ref{nonversupp.15},
By \cite[Proposition 3.6.9]{logshriek},
$f'^*$ is conservative.
It suffices to show that the natural transformation $f'^*g^*f_*\xrightarrow{Ex} f'^*f_*'g'^*$ is an isomorphism.
By
%Corollary \ref{base.3},
\cite[Corollary 3.7.6]{logshriek},
we reduce to Lemma \ref{base2.2}.
\end{proof}

\begin{lem}
\label{base2.4}
Let $\theta\colon P\to Q$ be a local saturated homomorphism of sharp fs monoids,
and let $g\colon S'\to S$ be a morphism in $\lSch/B$ such that $S\simeq \ul{S}\times \pt_P$.
Consider the induced cartesian square
\[
\begin{tikzcd}
S'\times_{\pt_P} \pt_Q\ar[d,"f'"']\ar[r,"g'"]&
S\times_{\pt_P} \pt_Q\ar[d,"f"]
\\
S'\ar[r,"g"]&
S.
\end{tikzcd}
\]
Then the natural transformation
\[
Ex\colon g^*f_* \to f_*'g'^*
\]
is an isomorphism.
\end{lem}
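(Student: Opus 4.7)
The plan is a dévissage that reduces the general local saturated $\theta\colon P\to Q$ to the homomorphisms handled in Lemmas \ref{base2.1}--\ref{base2.3}, using Lemma \ref{base2.6} to compose. Since $S\simeq \ul{S}\times \pt_P$, the fiber product identifies as $S\times_{\pt_P}\pt_Q\simeq \ul{S}\times \pt_Q$, under which $f$ corresponds to $\id_{\ul{S}}\times \pt_\theta$.

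I plan to factor $\theta$ as a composition $P=P_0\xrightarrow{\theta_1}P_1\xrightarrow{\theta_2}\cdots\xrightarrow{\theta_n}P_n=Q$ in which each $\theta_i$ is either (a) a free extension $P_{i-1}\hookrightarrow P_{i-1}\oplus\N$, or (b) an injective saturated homomorphism with $\theta_i^\gp$ an isomorphism. For factors of type (a), $\pt_{\theta_i}$ identifies with the projection $\pt_{P_{i-1}}\times\pt_\N\to\pt_{P_{i-1}}$, so Lemma \ref{base2.3} (applied with ambient scheme $\ul{S}\times\pt_{P_{i-1}}$) yields the required base change. For factors of type (b), $\pt_{\theta_i}$ is a virtual isomorphism in the sense of Definition \ref{homeomorphism.1}, hence $\pt_{\theta_i}^*$ is an equivalence of $\infty$-categories by Theorem \ref{homeomorphism.5}, making the exchange transformation trivially an isomorphism. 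Lemma \ref{base2.6} then combines the individual base change isomorphisms into the desired one for $\theta$.

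To construct such a factorization, I use that $\theta$ being local saturated between sharp fs monoids forces $Q^\gp/\theta^\gp(P^\gp)$ to be torsion-free of some rank $k\geq 0$. Choose lifts $e_1,\ldots,e_k\in Q$ of a basis of this quotient, set $P':=P\oplus\N^k$, and define $\theta_1\colon P\to P'$ as the canonical inclusion (which splits into $k$ successive single-generator free extensions, each of type (a)) and $\theta_2\colon P'\to Q$ by $(p,(n_i))\mapsto\theta(p)+\sum n_ie_i$. Then $\theta_2^\gp$ is an isomorphism, and saturatedness of $\theta_2$ should follow from saturatedness of $\theta$ together with the choice of the $e_i$.

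The main obstacle is verifying that the lifts $e_i\in Q$ can be chosen so that the resulting $\theta_2$ is itself saturated and $\theta_2^\gp$ is an isomorphism simultaneously. If a direct choice fails for combinatorial reasons, one may first pull back along a suitable dividing cover of $\pt_P$ (a subdivision of the toric fan associated to $P$); by ($div$-inv) in Theorem \ref{introduction.1}, the pullback along such a cover is fully faithful, so base change through it is automatic, reducing the problem to a combinatorially simpler $\theta$.
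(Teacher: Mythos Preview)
Your overall plan coincides with the paper's: factor $\theta$ as a free extension $P\hookrightarrow P\oplus\N^k$ followed by a map $\theta_2$ with $\theta_2^\gp$ an isomorphism, handle the first factor by iterated use of Lemma~\ref{base2.3}, and handle the second by the virtual-isomorphism machinery of Theorem~\ref{homeomorphism.5}. But the execution has two real gaps, and the obstacle you flag is not the right one.

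First, the assertion that $\pt_{\theta_2}^*$ being an equivalence ``mak[es] the exchange transformation trivially an isomorphism'' is false as stated. Writing $v:=\id\times\pt_{\theta_2}$ and forming the cartesian square along $g''$, the exchange $g''^*v_*\to v'_*g'''^*$ identifies (using $v^*v_*\simeq\id$) with the unit $g''^*v_*\to v'_*v'^*(g''^*v_*)$; this is an isomorphism only if $v'^*$ is fully faithful. So you must also check that the pullback $v'$ is a virtual isomorphism. The paper does this explicitly: since both $f$ and the free-extension map $u$ are saturated, the fs fiber products $S'\times_{\pt_P}\pt_Q$ and $S'\times_{\pt_P}\pt_{P\oplus\N^k}$ have underlying scheme $\ul{S'}$, whence $\ul{v'}$ is an isomorphism; the $\cM^\gp$ condition then follows from $\theta_2^\gp$ being an isomorphism. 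Note that no saturatedness of $\theta_2$ is needed here --- your ``main obstacle'' is a red herring.

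Second, the existence of lifts $e_1,\ldots,e_k\in Q$ mapping to a \emph{basis} of $Q^\gp/\theta^\gp(P^\gp)$ is not automatic: a finitely generated submonoid of $\Z^k$ generating $\Z^k$ as a group need not contain a $\Z$-basis (e.g.\ $\{2,3\}\subset\Z$). The paper avoids this by invoking Tsuji's structure theorem for saturated morphisms \cite[Theorem~I.4.8.14]{Ogu}: there is a $\theta$-maximal critical face $G\subset Q$ with $P^\gp\oplus G^\gp\xrightarrow{\sim}Q^\gp$, and then toric resolution of singularities yields a free submonoid $M\simeq\N^k$ of $G$ with $M^\gp\simeq G^\gp$. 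The generators of $M$ are your $e_i$, now guaranteed to lie in $Q$ and to give the required isomorphism on $\gp$. Your proposed fallback via dividing covers of $\pt_P$ does not obviously help: such covers change the underlying scheme and destroy the product decomposition $S\simeq\ul{S}\times\pt_P$ you rely on.
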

\begin{proof}

Let $G$ be a $\theta$-maximal critical face of $Q$.
By toric resolution of singularities \cite[Theorem 11.1.9]{CLStoric},
there exists a free submonoid $M\simeq \N^n$ of $G$ for some integer $n\geq 0$ such that $M^\gp\to G^\gp$ is an isomorphism.
The induced homomorphism $P^\gp\oplus G^\gp\to Q^\gp$ is an isomorphism since it is injective by \cite[Theorem I.4.8.14(7)]{Ogu} and surjective by \cite[Lemma I.4.7.8, Theorem I.4.8.14(6)]{Ogu}.
Consider the induced factorization
\[
\ul{S}\times \pt_Q
\xrightarrow{v}
\ul{S}\times \pt_{P\oplus M}
\xrightarrow{u}
\ul{S}\times \pt_P
\]
of $f$.
Observe that $v$ is a virtual isomorphism.
Since $u$ and $f$ are saturated,
we have
\[
\ul{S'\times_{\pt_P} \pt_Q}
\simeq
\ul{S'}
\simeq
\ul{S'\times_{\pt_P} \pt_{P\oplus M}}.
\]
Hence $\ul{v'}$ is an isomorphism,
where $v'\colon S'\times_{\pt_P} \pt_Q \to S'\times_{\pt_P} \pt_{P\oplus M}$ is the pullback of $v$.
It follows that $v'$ is a virtual isomorphism.
By Theorem \ref{homeomorphism.5},
$v^*$ and $v'^*$ are equivalences of $\infty$-categories.
Hence we can replace $f$ with $u$ to reduce to the case when $\theta$ is the inclusion $P\to P\oplus \N^n$.
Use Lemmas \ref{base2.6} and \ref{base2.3} repeatedly to finish the proof.
\end{proof}

\begin{prop}
\label{base2.5}
Let
\[
C:=
\begin{tikzcd}
X'\ar[d,"f'"']\ar[r,"g'"]&
X\ar[d,"f"]
\\
S'\ar[r,"g"]&
S
\end{tikzcd}
\]
be a cartesian square such that $f$ is exact proper.
Then the natural transformation
\[
Ex\colon
g^*f_*
\to
f_*'g'^*
\]
is an isomorphism.
\end{prop}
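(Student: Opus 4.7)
\emph{Plan.} The strategy is to continue the pattern of Lemmas \ref{base2.1}--\ref{base2.4}: reduce the statement for a general exact proper $f$ to two already-handled situations, namely the strict proper case from \cite[Proposition 2.4.13]{logshriek} and the log-point case from Lemma \ref{base2.4}. The reduction proceeds by localizing on $S$ and then on $X$, stratifying each according to its monoidal structure, and finally factoring $f$ into a strict proper piece and a monoidal piece.

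\emph{Step 1 (Local structure on $S$).} Since base change for strict proper morphisms is already known, the assertion is strict Nisnevich local on $S$, so we may assume $S$ carries a neat global chart by a sharp fs monoid $P$. Proposition \ref{affine.4} provides a finite stratification of $S$ by locally closed subschemes of the form $\ul{S}_i\times \pt_{P_{F_i},P_{F_i}^+}$; applying (Loc) and strict proper base change to descend along the strict immersions coming from this stratification, one reduces inductively to the case $S\simeq \ul{S}\times \pt_P$, in exact analogy with part (I) of the proof of Proposition \ref{homeomorphism.4}.

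\emph{Step 2 (Local structure on $X$ and factorization).} With $S$ in this form and $f$ exact, \cite[Theorem II.2.4.4]{Ogu} yields, strict Nisnevich locally on $X$, a neat exact chart $\theta\colon P\to Q$ of $f$. Running the same stratification/(Loc) argument on $X$ (the base change along strict immersions being controlled by $f$ being proper) reduces us to $X\simeq \ul{X}\times \pt_Q$. In this configuration $f$ factors as
\[
\ul{X}\times \pt_Q
\xrightarrow{\ul{f}\times \id_{\pt_Q}}
\ul{S}\times \pt_Q
\xrightarrow{\id_{\ul{S}}\times \pt_\theta}
\ul{S}\times \pt_P,
\]
and by Lemma \ref{base2.6} it suffices to check base change for each factor. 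The first factor is strict (with underlying map $\ul{f}$ proper), so base change holds by \cite[Proposition 2.4.13]{logshriek}.

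\emph{Step 3 and main obstacle.} The remaining task is base change for $\id_{\ul{S}}\times \pt_\theta$ with $\theta\colon P\to Q$ an exact homomorphism of sharp fs monoids. Lemma \ref{base2.4} disposes of this whenever $\theta$ is local saturated, and the principal obstacle is that exactness is strictly weaker than saturatedness (e.g.\ $\N\xrightarrow{\cdot n}\N$ is exact but not saturated). To close this gap, the plan is to saturate after a dividing cover: using a suitable subdivision of $Q^\gp_\Q$, analogous to part (III) of the proof of Proposition \ref{homeomorphism.4}, one constructs a dividing cover $\pt_{Q'}\to \pt_Q$ such that the induced homomorphism $P\to Q'$ is local saturated. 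Then Lemma \ref{base2.4} gives base change for $\pt_{Q'}\to \pt_P$, while base change for the dividing cover itself follows from ($div$-inv) together with Theorem \ref{homeomorphism.5}. Combining these two via Lemma \ref{base2.6} yields base change for the log factor $\id_{\ul{S}}\times \pt_\theta$, and hence, together with Step 2, for $f$.
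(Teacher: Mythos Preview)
Your overall reduction strategy is sensible, but there are two genuine gaps.

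\textbf{Step 2 breaks properness.} You propose to work strict Nisnevich locally on $X$ to obtain a chart and then stratify. But passing to an open $W\subset X$ replaces $f$ by $f|_W\colon W\to S$, which is no longer proper, so neither \cite[Proposition 2.4.13]{logshriek} nor the $(Loc)$ argument applies to it. The paper avoids this by \emph{first} factoring $f$ as
\[
X\xrightarrow{q}\ul{X}\times_{\ul{S}}S\xrightarrow{p}S,
\]
where $p$ is strict proper (handled by \cite[Proposition 2.4.13]{logshriek}), and then treating $q$, whose underlying morphism is an isomorphism. Only at that point does Zariski localization on $X$ become harmless: every open $W\subset X$ is of the form $V\times_S X$ for an open $V\subset S$, so one is really localizing on $S$ and properness is preserved.

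\textbf{Step 3 cannot achieve saturation via dividing covers.} Your plan is to subdivide so that $P\to Q'$ becomes local saturated, then invoke Lemma \ref{base2.4}. But saturatedness of $\theta$ is a condition involving $\theta^\gp$ (torsion in cokernels of pushouts), and a dividing cover does not change $Q^\gp$. Concretely, $\N\xrightarrow{\cdot 2}\N$ is exact and local but not saturated, and no subdivision of a one-dimensional cone produces anything new; so no dividing cover of $\pt_\N$ will make it saturated. The correct tool here is a \emph{Kummer log smooth} cover of $S$: the paper uses \cite[Proposition 2.4.3]{logshriek} to find such a cover $U\to S$ with conservative pullback so that $X\times_S U\to U$ is saturated, and reduces via ($\eSm$-BC). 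After this saturation step and the factorization above, the stratification of Construction \ref{affine.5} reduces directly to Lemma \ref{base2.4}.
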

\begin{proof}
%Proposition \ref{restriction.7}
By \cite[Proposition 2.4.3]{logshriek},
we have a Kummer log smooth morphism $U\to S$ such that the projection $X\times_S U\to U$ is saturated and $v^*$ is conservative,
where $v\colon S'\times_S U\to S'$ is the projection.
It suffices to show that $v^*g^*f_*\xrightarrow{Ex} v^*f_*'g'^*$ is an isomorphism.
By ($\eSm$-BC),
we can replace $C$ by $C\times_X U$ to reduce to the case when $f$ is saturated proper.

Let $\cF$ be the class of morphisms $f$ such that $g^*f_*
\xrightarrow{Ex} f_*'g'^*$ is always an isomorphism.
We have the induced factorization
\[
X\xrightarrow{q}\ul{X}\times_{\ul{S}} S \xrightarrow{p} S
\]
of $f$.
Every strict proper morphism is in $\cF$ by
%Proposition \ref{restriction.10}.
\cite[Proposition 2.4.13]{logshriek}.
In particular,
$p$ is in $\cF$.
Since $\cF$ is closed under compositions by Lemma \ref{base2.6},
it suffices to show that $q$ is in $\cF$.
Hence we reduce to the case when $f$ is a saturated proper morphism such that $\ul{f}$ is an isomorphism.

Let $\{W_1,\ldots,W_n\}$ be a Zariski covering on $X$.
By
%Proposition \ref{restriction.13}
\cite[Proposition 2.4.5]{logshriek} and induction on $n$,
it suffices to show that $g^*f_*w_\sharp \xrightarrow{Ex} f_*'g'^*w_\sharp$ is an isomorphism whenever $w$ is the open immersion from an intersection $W:=W_{i_1}\cap \cdots \cap W_{i_r}$ to $X$.
Since $\ul{f}$ is an isomorphism,
we have the open subscheme $V$ of $S$ such that $W\simeq V\times_S X$.
By ($\eSm$-BC) and (Supp),
it suffices to show the claim for $C\times_S V$.
Hence we reduce to the case when $f$ is a saturated proper morphism with a chart such that $\ul{f}$ is an isomorphism.

Let $Z\to S$ be a strict closed immersion with its open complement $U\to S$.
Consider the pullbacks $i\colon Z\times_S X\to X$ and $j\colon U\times_S X\to X$.
By (Loc),
it suffices to show that $g^*f_* i_*\xrightarrow{Ex} f_*'g'^*i_*$ and $g^*f_*j_\sharp \xrightarrow{Ex} f_*'g'^*j_\sharp$ are isomorphisms.
Furthermore,
% Proposition \ref{restriction.10}
\cite[Proposition 2.4.13]{logshriek} (resp.\ ($\eSm$-BC) and (Supp)) implies that to show that the first (resp.\ second) natural transformation is an isomorphism,
it suffices to show the claim for $C\times_S Z$ (resp.\ $C\times_S U$).
Hence it suffices to show the claim after a stratification on $\ul{S}\simeq \ul{X}$.
Together with Construction \ref{affine.5},
we reduce to the case when $f$ is the morphism $\id \times \pt_\theta\colon \ul{S}\times \pt_Q \to \ul{S} \times \pt_P$ for some saturated local homomorphism of fs monoids $\theta\colon P\to Q$.
This case is done in Lemma \ref{base2.4}.
\end{proof}

\begin{thm}
\label{base2.7}
There exists a functor
\begin{gather*}
(\sT^{ex})_!^*
\colon
\Corr(\lSch/B,\mathrm{exact}\;\mathrm{compactifiable},\mathrm{all})
\to
\infCat_\infty
\end{gather*}
sending a morphism $(Y\xleftarrow{g} X \xrightarrow{f} S)$ in the correspondence category to
\[
g_!f^*
\colon
\sT^{ex}(S)\to \sT^{ex}(Y).
\]
\end{thm}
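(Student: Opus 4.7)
The plan is to assemble $(\sT^\ex)_!^*$ from the two variance functors $(-)^*$ and $(-)_!$ already in hand, by invoking a general procedure that produces a functor out of a correspondence category from a pair of functors together with a base change isomorphism relating them. Such a procedure is standard in the $\infty$-categorical literature (Liu--Zheng, Gaitsgory--Rozenblyum), and the same machine was used in \cite[\S 3.7]{logshriek} to build the partial exceptional pushforward $\sT_!^\ex$; I will reuse it here with upgraded inputs.

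The two inputs are: the pullback functor $\sT^\ex\colon (\lSch/B)^{op}\to \infCat_\infty$, $f\mapsto f^*$, which is part of the datum of a log motivic $\infty$-category; and the exceptional pushforward $\sT_!^\ex\colon (\lSch/B)_{\mathrm{comp}}\to \infCat_\infty$, $g\mapsto g_!$, supplied by the penultimate item of Theorem~\ref{introduction.1}, restricted to the wide subcategory of exact compactifiable morphisms. The base change to verify is that for every cartesian square with $f$ exact compactifiable and $g$ arbitrary, the exchange transformation $g^*f_!\to f'_!g'^*$ is invertible. By Nagata compactification (cf.\ Construction~\ref{exchange.8}), $f$ factors as an open immersion followed by an exact proper morphism, so it suffices to treat the two extreme cases: the exact proper case is Proposition~\ref{base2.5}, and the open immersion case is \cite[Corollary~3.7.6]{logshriek}. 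Together these form Theorem~\ref{introduction.2}(1).

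The main obstacle is normally the higher coherence of base change under iterated composition of cartesian squares, but this is exactly what the correspondence-category machine is designed to absorb: once the exchange natural isomorphism is established on a single square, the associativity and pentagon coherences follow formally from the functoriality of mate formation applied to pastings of squares, together with the fact that $f^*\simeq f_!$ when $f$ is an isomorphism. Feeding the two functors above and the verified base change isomorphism into the machine thus produces the desired functor $(\sT^\ex)_!^*\colon \Corr(\lSch/B,\mathrm{exact}\;\mathrm{compactifiable},\mathrm{all})\to \infCat_\infty$.
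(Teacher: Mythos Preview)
Your proposal is correct and follows essentially the same approach as the paper. The paper's proof is a one-line citation of Proposition~\ref{base2.5} together with \cite[Proposition~3.7.7]{logshriek}, the latter being precisely the correspondence-category machine you describe; you have simply unpacked what that proposition does (Nagata factorization, the open-immersion case via \cite[Corollary~3.7.6]{logshriek}, and the coherence handled by the Liu--Zheng/Gaitsgory--Rozenblyum formalism).
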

\begin{proof}
%Argue as in Construction \ref{base.1},
%but use Proposition \ref{base2.5} also.
This is an immediate consequence of Proposition \ref{base2.5} and \cite[Proposition 3.7.7]{logshriek}.
\end{proof}

\begin{cor}
\label{base2.8}
Let
\[
\begin{tikzcd}
X'\ar[d,"f'"']\ar[r,"g'"]&
X\ar[d,"f"]
\\
S'\ar[r,"g"]&
S
\end{tikzcd}
\]
be a cartesian square in $\lSch/B$ such that $f$ is exact separated.
Then there is a natural isomorphism
\[
g^*f_!
\simeq
f_!'g'^*.
\]
\end{cor}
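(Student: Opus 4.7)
The plan is to derive this corollary directly from the correspondence-category functor in Theorem \ref{base2.7}, with only a small bookkeeping step to verify the hypotheses.

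First I would check that both $f$ and its pullback $f'$ qualify as morphisms in $\Corr(\lSch/B,\mathrm{exact}\;\mathrm{compactifiable},\mathrm{all})$. Since $f$ is exact and separated, Nagata-type compactification (as used in the construction of $\sT_!^\ex$ recalled in Theorem \ref{introduction.1}) makes $f$ exact compactifiable. Exactness and separatedness are both stable under base change, so the pullback $f'$ is again exact separated, and hence exact compactifiable.

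Next I would identify $f_!$ and $g^*$ as images under $(\sT^\ex)_!^*$ of specific spans. Namely, by the description in Theorem \ref{base2.7}, the functor $f_!\colon \sT^\ex(X)\to \sT^\ex(S)$ is the image of the span $(S\xleftarrow{f}X\xrightarrow{=}X)$, while $g^*\colon \sT^\ex(S)\to\sT^\ex(S')$ is the image of $(S'\xleftarrow{=}S'\xrightarrow{g}S)$. The composite span in $\Corr$ is formed by the pullback $X\times_S S'=X'$, yielding $(S'\xleftarrow{f'}X'\xrightarrow{g'}X)$, whose image under $(\sT^\ex)_!^*$ is $f_!'g'^*$. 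Functoriality of $(\sT^\ex)_!^*$ on the composition of these two correspondences produces the natural isomorphism
\[
g^*f_!\simeq f_!'g'^*.
\]

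There is essentially no main obstacle: Proposition \ref{base2.5} (exact proper base change), together with $(\eSm\text{-BC})$ and (Supp) for the open-immersion part of a Nagata compactification, has already been packaged into Theorem \ref{base2.7}, and the corollary is simply the unwinding of what it means for that $\infty$-functor to preserve the composition corresponding to the given cartesian square. If one prefers a direct proof, one can instead factor $f=p\circ j$ with $j$ an open immersion and $p$ exact proper, and then chain the isomorphism $g^*p_*\xrightarrow{Ex}p_*'g''^*$ of Proposition \ref{base2.5} with the isomorphism $g''^*j_\sharp\xrightarrow{Ex}j_\sharp'g'^*$ of $(\eSm\text{-BC})$ via Lemma \ref{base2.6}; but this is exactly the verification already encoded in Theorem \ref{base2.7}.
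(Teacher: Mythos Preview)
Your proposal is correct and matches the paper's own proof, which simply states that the corollary is an immediate consequence of Theorem \ref{base2.7}. Your unwinding of how functoriality of $(\sT^\ex)_!^*$ on the composite span $(S'\xleftarrow{=}S'\xrightarrow{g}S)\circ(S\xleftarrow{f}X\xrightarrow{=}X)=(S'\xleftarrow{f'}X'\xrightarrow{g'}X)$ yields the desired isomorphism is exactly the content behind that one-line proof.
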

\begin{proof}
This is an immediate consequence of Theorem \ref{base2.7}.
\end{proof}

\begin{rmk}
\label{functorial.2}
In \eqref{functorial.1.1}, the horizontal arrows are isomorphisms by Remark \ref{functorial.8} and Corollary \ref{base2.8}.
\end{rmk}

\subsection{Proof of the Poincar\'e duality}
\label{AQAP}

\begin{lem}
\label{AQAP.1}
If $S\in \lSch/B$, then we have
\[
f_!f^*\simeq 0,
\]
where $f\colon S\times \A_\N\to S$ is the projection.
\end{lem}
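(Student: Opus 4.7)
The plan is to reduce to showing $f_!\unit \simeq 0$ via the projection formula (PF): since $\unit_{S\times \A_\N} \simeq f^*\unit_S$, one has $f_!f^*\cG \simeq f_!\unit \otimes \cG$ for every $\cG\in \sT^\ex(S)$, so the lemma follows once we prove $f_!\unit \simeq 0$.

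To compute $f_!\unit$ I would compactify $f$. Let $\P_\N^1$ be the log scheme obtained by gluing $\A_\N$ with $\A^1$ (trivial log structure) along $\G_m$; this produces an open immersion $\bar j\colon S\times \A_\N \hookrightarrow S\times \P_\N^1$, a complementary strict closed immersion $i\colon S\times \{\infty\} \hookrightarrow S\times \P_\N^1$, and a proper morphism $\bar f\colon S\times \P_\N^1 \to S$ with $f = \bar f \circ \bar j$. By (Loc) applied to $\unit_{S\times \P_\N^1}$ there is a cofiber sequence $\bar j_\sharp \unit \to \unit \to i_*\unit$. Applying $\bar f_* = \bar f_!$ (proper) yields a cofiber sequence in $\sT^\ex(S)$:
\[
f_!\unit \to \bar f_*\unit \to (\bar f i)_*\unit.
\]
The right-hand term is canonically isomorphic to $\unit_S$ via $\eta_{\bar f i}$, because $\bar f i$ is an isomorphism of log schemes (the log structure at $\{\infty\}$ is trivial, so $S\times\{\infty\}\simeq S$). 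For the middle term, let $j'\colon S\times (\P^1 - \{0\}) \hookrightarrow S\times \P_\N^1$ be the open complement of the relative boundary $\partial_S(S\times \P_\N^1) = S\times \pt_\N$. By ($ver$-inv) the unit $\bar f^*\unit \xrightarrow{ad} j'_* j'^*\bar f^*\unit = j'_*\unit$ is an isomorphism; applying $\bar f_*$ and combining with ($\A^1$-inv) for the strict smooth projection $\bar f j'\colon S\times \A^1 \to S$ shows that $\unit_S \xrightarrow{\eta_{\bar f}} \bar f_*\unit$ is an isomorphism as well.

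To finish, I verify that the map $\bar f_*\unit \to (\bar f i)_*\unit$ becomes the identity under these two identifications. This is the compatibility of units with composition: the composite $\unit_S \xrightarrow{\eta_{\bar f}} \bar f_*\unit \xrightarrow{\bar f_*(\eta_i)} (\bar f i)_*\unit$ equals $\eta_{\bar f i}$, which is an isomorphism since $\bar f i$ is. Hence the cofiber sequence reads $f_!\unit \to \unit \xrightarrow{\simeq} \unit$, forcing $f_!\unit \simeq 0$. The main subtlety lies in the middle-term calculation, where one must chain ($ver$-inv) and ($\A^1$-inv) carefully and check that the resulting isomorphism $\unit_S\simeq \bar f_*\unit$ is indeed $\eta_{\bar f}$; everything else is a direct application of the properties collected in Theorem \ref{introduction.1}.
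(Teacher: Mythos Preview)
Your proof is correct and follows essentially the same approach as the paper: compactify $f$ via $S\times(\P^1,0)$, apply (Loc) to obtain a cofiber sequence, and identify the two nonzero terms with $\unit_S$ using ($ver$-inv) together with ($\A^1$-inv). The only cosmetic differences are that the paper works functorially (showing $g_*g^*\xrightarrow{ad} g_*i_*i^*g^*$ is an isomorphism) rather than first reducing to $\unit$ via (PF), and it invokes the packaged property ($\square$-inv) in place of chaining ($ver$-inv) and ($\A^1$-inv) by hand.
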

\begin{proof}
Let $j\colon S\times \A_\N\to S\times (\P^1,0)$ be the obvious inclusion, and let $g\colon S\times (\P^1,0)\to S$ be the projection.
We need to show $g_*j_!j^*g^*\simeq 0$.
By (Loc), it suffices to show that the natural transformation $g_*g^* \xrightarrow{ad} g_*i_*i^*g^*$ is an isomorphism,
where $i\colon S\to S\times (\P^1,0)$ is the strict closed immersion at $\infty$.
This follows from ($\boxx$-inv) since $gi$ is an isomorphism.
\end{proof}

\begin{lem}
\label{AQAP.2}
Let $P$ be an fs monoid, and let $F$ be a nontrivial face of $P$.
If $S\in \lSch/B$, then we have
\[
f_!f^*\simeq 0,
\]
where $f\colon S\times \A_{P,P-F}\to S$ is the projection.
\end{lem}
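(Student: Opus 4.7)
The plan is to reduce to Lemma \ref{AQAP.1} by exhibiting an $\A_\N$-factor inside (pieces of) $\A_{P, P-F}$. First, I would reduce to $P$ sharp via the splitting $P = \ol{P} \oplus P^*$ with $P^*$ the group of units. This yields $\A_{P, P-F} \cong \A_{\ol{P}, \ol{P} - \ol{F}} \times \A_{P^*}$ for the induced face $\ol{F}$ of $\ol{P}$; factoring $f$ through the projection to $S \times \A_{P^*}$ and absorbing the torus factor into the base, we reduce to the case where $P$ is sharp.

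Next I would choose a smooth simplicial subdivision $\Sigma$ of the cone $P^\vee$ compatible with the face $\tau := F^\bot \cap P^\vee$, and form the associated dividing cover $X_\Sigma \to \A_P$. Pulling back along the closed immersion $\A_{P, P-F} \hookrightarrow \A_P$ yields a dividing cover $Y \to \A_{P, P-F}$, and by \textup{($div$-inv)} combined with the properness of dividing covers, $f_!f^* \simeq g_!g^*$, where $g\colon S \times Y \to S$ is the new projection.

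The log scheme $Y$ is Zariski-covered by affine pieces $Y_\sigma := U_\sigma \times_{\A_P} \A_{P, P-F}$, each a closed log subscheme of $U_\sigma \cong \A_{\N^{n_\sigma}}$ cut out by a monomial ideal $I_\sigma$. Iteratively applying \textup{(Loc)} to the irreducible-components decomposition of the underlying scheme $V(I_\sigma)$ reduces the computation of $g_! g^*$ to atomic pieces of the form $\A_\N^k \times \pt_{\N^{n_\sigma - k}}$. The nontriviality of $F$ combined with compatibility of $\Sigma$ with $\tau$ guarantees $k \geq 1$ for each atomic piece. For such a piece, the projection to $S$ factors through the $\square^k$-compactification of the $\A_\N^k$-factor, and iterating the argument of Lemma \ref{AQAP.1} along each $\A_\N$-factor yields the desired vanishing; descent then assembles the pieces into the vanishing for $Y$, and hence for $\A_{P, P-F}$.

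The main obstacle is verifying $k \geq 1$ for every atomic piece, which may fail when $Y_\sigma$ is reducible (for instance, if $\sigma$ contains $\tau$ so that the preimage decomposes into strict transform plus exceptional divisor). In such configurations one may need to iterate the toric modification to separate components before applying \textup{(Loc)}. The essential combinatorial input is that $F \neq 0$ prevents the ideal $I_\sigma$ from exhausting the full maximal ideal $(x_1, \ldots, x_{n_\sigma})$ at any stage of the decomposition, ensuring that at least one coordinate survives to provide the $\A_\N$-factor needed to invoke Lemma \ref{AQAP.1}.
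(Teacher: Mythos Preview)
Your reduction to sharp $P$ and the use of the dividing cover are fine, but the decomposition into ``atomic pieces'' has a genuine gap that your final paragraph does not close. Concretely, take $P$ to be the saturated submonoid of $\Z^2$ spanned by $(1,0)$ and $(1,2)$, with $F$ the ray through $(1,0)$; subdivide $P^\vee$ smoothly by inserting the ray $(1,0)$. On the chart $\sigma_1=\langle (0,1),(1,0)\rangle$ one has $P_{\sigma_1}\simeq\N^2$ and the ideal $I_{\sigma_1}$ generated by the image of $P-F$ is $(xy)$, so $Y_{\sigma_1}$ has two components meeting in $\pt_{\N^2}$. Whether you run (Loc) as open/closed localization (the open pieces acquire $\G_m$-factors, not $\A_\N$-factors) or as closed Mayer--Vietoris (the intersection term is $\pt_{\N^2}$, i.e.\ $k=0$), you are forced to control a stratum with no surviving $\A_\N$-factor, and Lemma~\ref{AQAP.1} no longer applies. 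Your proposed fix of iterating the toric modification does not help: further blow-ups only proliferate such intersection loci, and while $F\neq 0$ does keep $I_\sigma$ itself away from the maximal ideal, this says nothing about the deeper strata that Mayer--Vietoris forces you to visit.

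The paper sidesteps this entirely. It inducts on $\rank\,\ol{P}^{\gp}$: pick a one-dimensional face $G\subset F$, cover the complement $\A_{P,P-F}-\A_{P,P-G}$ by the opens $\A_{(P_f),\,P_f-F_f}$ for $f\in P-G$ (each of strictly smaller rank, handled by induction and Zariski descent), and reduce via (Loc) to $\A_{P,P-G}$. For one-dimensional $G$ the paper then invokes Theorem~\ref{homeomorphism.5}: choose a free submonoid $Q\simeq\N^d$ with $Q^{\gp}\simeq P^{\gp}$ containing $G$ as a face, so that $\A_{P,P-G}\to\A_{Q,Q-G}$ is a \emph{virtual isomorphism} and hence induces an equivalence on $\sT^{\ex}$. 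Since $\A_{Q,Q-G}\simeq\pt_{\N^{d-1}}\times\A_\N$, Lemma~\ref{AQAP.1} applies directly. The maneuver you are missing is this passage through virtual isomorphisms: it replaces the monoid without any blow-up and therefore never creates the reducible charts that break your stratification.
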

\begin{proof}
We proceed by induction on the rank $r$ of $\ol{P}^\gp$.
If $r=1$, then $P\simeq F\simeq \N$.
Use Lemma \ref{AQAP.1} to show $f_!f^*\simeq 0$.

Assume $r>1$.
Let $G$ be a $1$-dimensional face of $F$, and choose generators $a_1,\ldots,a_n$ of the ideal $P-G$ of $P$.
For all nonempty subset $I:=\{a_{i_1},\ldots,a_{i_d}\}$ of $\{1,\ldots,n\}$, let $Q_I$ be the localization $Q_{\langle a_{i_1},\ldots,a_{i_d}\rangle}$, and let $F_I$ be the face of $P_I$ generated by $F$.
Consider the projection
\[
f_I\colon S\times \A_{Q_I,Q_I-F_I}\to S.
\]
Since $\rank(\ol{Q_I}^\gp)<r$, we have $f_{I!}f_I^*\simeq 0$ by induction.
As a consequence of \eqref{affine.2.1}, we see that the family
\[
\{S\times \A_{P_{\{1\}},P_{\{1\}}-F_{\{1\}}},
\ldots,
S\times \A_{P_{\{n\}},P_{\{n\}}-F_{\{n\}}}\}
\]
is a Zariski cover of $S\times (\A_{P,P-F}-\A_{P,P-G})$.
Hence we obtain $g_!g^*\simeq 0$ by Zariski descent and induction, where
\[
g\colon S\times (\A_{P,P-F}-\A_{P,P-G})\to S
\]
is the projection.
Let
\[
h\colon S\times \A_{P,P-G}\to S
\]
be the projection.
By (Loc), to show $f_!f^*\simeq 0$, it suffices to show $h_!h^*\simeq 0$.

By toric resolution of singularities \cite[Theorem 11.1.9]{CLStoric}, there exists a submonoid $Q$ of $P$ such that $Q^\gp\to P^\gp$ is an isomorphism, $Q\simeq \N^d$ for some integer $d$, and $Q$ contains $G$ as a face.
Use Propositions \ref{affine.1} and \ref{affine.4} to see that the induced morphism
\[
u\colon S\times \A_{P,P-G}\to S\times \A_{Q,Q-G}
\]
is a virtual isomorphism.
Theorem \ref{homeomorphism.5} implies that $u^*$ is an equivalence of $\infty$-categories.
Hence it suffices to show $v_!v^*\simeq 0$, where
\[
v\colon S\times \A_{Q,Q-G}\to S
\]
is the projection.
We have $\A_{Q,Q-G}\simeq \pt_{\N^{d-1}}\times \A_\N$.
Let
\[
w\colon S\times \pt_{\N^{d-1}}\times \A_\N\to S\times \pt_{\N^{d-1}}
\]
be the projection.
Lemma \ref{AQAP.1} shows $w_!w^*\simeq 0$.
This implies $v_!v^*\simeq 0$.
\end{proof}

\begin{lem}
\label{AQAP.3}
Let $\theta\colon P\to Q$ be a local vertical saturated homomorphism of sharp fs monoids, and let $G$ be a nontrivial $\theta$-critical face \cite[Definition I.4.2.10]{Ogu} of $Q$.
If $S\to \pt_P$ is a strict morphism in $\lSch/B$, then we have
\[
f_!f^*\simeq 0,
\]
where $f\colon S\times_{\A_P}\A_{Q,Q-G}\to S$ is the projection.
\end{lem}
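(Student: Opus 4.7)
The plan is to reduce Lemma \ref{AQAP.3} to Lemma \ref{AQAP.2} through a fiber-product computation, following the template of Lemma \ref{base2.4}(III).

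First, I would use the strictness of $S\to \pt_P$ to reduce to the case $S\simeq \ul{S}\times \pt_P$, which is legitimate since the claim is Zariski local on $S$ (compare the argument in part (I) of the proof of Proposition \ref{homeomorphism.4}). Then $f$ becomes the base change of the projection $f_0\colon \pt_P\times_{\A_P}\A_{Q,Q-G}\to \pt_P$ along $\ul{S}\to \Spec\Z$, and by Corollary \ref{base2.8} it suffices to show $f_{0!}f_0^*\simeq 0$.

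Second, I would reduce from a general local vertical saturated $\theta$ to the coordinate inclusion $P\hookrightarrow P\oplus M$ with $M\simeq \N^n$. Using toric resolution of singularities refined so as to subdivide both the fan of $Q$ and the cone associated to $G$ (in the spirit of Lemma \ref{base2.4}(III) and part (III) of the proof of Proposition \ref{homeomorphism.4}), one obtains sharp fs monoids $Q_i$ with compatible diagrams $P\to P\oplus M_i\hookrightarrow Q_i$ such that $M_i\simeq \N^{n_i}$, the face $G$ corresponds to a nontrivial face of $0\oplus M_i$, and the induced morphism $\A_{Q_i}\to \A_{P\oplus M_i}$ is a virtual isomorphism. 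By Zariski descent on the ambient target, $div$-invariance, and Theorem \ref{homeomorphism.5}, this allows us to replace $(\theta,G)$ by $(P\hookrightarrow P\oplus M,\; G\subset 0\oplus M)$.

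Third, in this reduced setting the fs fiber product simplifies by a direct calculation. The $\theta$-criticality of $G$ gives $\theta(P^+)\cap G=\emptyset$, so the ideal $(\theta(P^+),(P\oplus M)-G)\subset \Z[P\oplus M]$ already equals $(P\oplus M)-G$. Hence the underlying scheme of $\pt_P\times_{\A_P}\A_{P\oplus M,(P\oplus M)-G}$ is $\Spec\Z[G]$, and its fs chart descends along the pushout to $M\to \Z[G]$, identifying the fiber product with
\[
\pt_P\times_{\A_P}\A_{P\oplus M,(P\oplus M)-G}\;\simeq\; \pt_P\times \A_{M,M-G}.
\]
Thus $f_0$ becomes the projection $\pt_P\times \A_{M,M-G}\to \pt_P$, and $f_{0!}f_0^*\simeq 0$ follows from Lemma \ref{AQAP.2} applied to the fs monoid $M$ with its nontrivial face $G$.

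The main obstacle is the toric-resolution step: the refinement must be compatible with the specified (possibly non-maximal) $\theta$-critical face $G$, so that $G$ reappears as a face of each $M_i\simeq \N^{n_i}$ after subdivision. The saturated-ness of $\theta$ controls the behavior of the fs pushouts under this replacement, while its verticality together with the $\theta$-criticality of $G$ ensures that the induced morphisms $\A_{Q_i}\to \A_{P\oplus M_i}$ are genuine virtual isomorphisms rather than merely birational maps.
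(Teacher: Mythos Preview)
Your overall strategy---reduce to a product situation and then invoke Lemma~\ref{AQAP.2}---is the right endgame, and indeed the paper's proof ends the same way. But your middle step is both unnecessary and, as written, incorrect.

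The claim in your step~2 that ``the induced morphism $\A_{Q_i}\to \A_{P\oplus M_i}$ is a virtual isomorphism'' cannot hold: a virtual isomorphism must be an isomorphism on underlying schemes, and $\Spec\Z[Q_i]\to\Spec\Z[P\oplus M_i]$ is an isomorphism only when $Q_i=P\oplus M_i$. You seem to be transporting the argument from Lemma~\ref{base2.4}, but there one works over $\pt_Q$ (a single point), not over the full $\A_Q$ or even over $\A_{Q,Q-G}$; the virtual isomorphism in that lemma compares $\pt_Q$ with $\pt_{P\oplus M}$, which is a much weaker statement. Your toric subdivision scheme, compatible with both $Q$ and $G$, is vague precisely because the target statement is false at the level of the full affine charts.

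The paper bypasses all of this with a single observation. Choose a \emph{maximal} $\theta$-critical face $F$ of $Q$ containing $G$; then by \cite[Theorem~I.4.8.14(6)]{Ogu} one has $P^\gp\oplus F^\gp\simeq Q^\gp$. Now compare the closed subschemes directly: the morphism
\[
S\times_{\A_P}\A_{Q,Q-G}\;\longrightarrow\;S\times_{\A_P}\A_{P\oplus F,\,P\oplus F-G}
\]
is a virtual isomorphism, checked stratum by stratum via Proposition~\ref{affine.4} (both sides are stratified by faces $H\subset G$, and on each stratum the map is $\pt_{Q/H}\times\A_{H^\gp}\to\pt_{(P\oplus F)/H}\times\A_{H^\gp}$, which is a virtual isomorphism since the groupifications agree). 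Theorem~\ref{homeomorphism.5} then reduces the claim to the target, and since $S\times_{\A_P}\A_{P\oplus F,\,P\oplus F-G}\simeq S\times\A_{F,F-G}$, Lemma~\ref{AQAP.2} finishes. No toric resolution, no subdivision, no freeness of $F$ is needed---Lemma~\ref{AQAP.2} takes an arbitrary fs monoid.

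So the fix is: drop the resolution step entirely, and instead exploit the structural isomorphism $P^\gp\oplus F^\gp\simeq Q^\gp$ for a maximal $\theta$-critical $F\supset G$ to produce one clean virtual isomorphism at the level of the closed strata $\A_{\bullet,\bullet-G}$.
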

\begin{proof}
Let $F$ be a maximal $\theta$-critical face of $Q$ containing $G$.
Consider the induced morphism
\[
g\colon 
S\times_{\A_P}\A_{Q,Q-G}
\to
S\times_{\A_P}\A_{P\oplus F,P\oplus F-G}
\]
Use Proposition \ref{affine.4} to obtain the stratifications
\[
\coprod_{H\subset G} \A_{Q_H,Q_H^+}
\text{ and }
\coprod_{H\subset G} \A_{(P\oplus F)_H,(P\oplus F)_H^+}
\]
on $\A_{Q,Q-G}$ and $\A_{P\oplus F,P\oplus F-G}$,
where the coproduct runs over faces $H$ of $G$.
As a consequence of \cite[Theorem 4.8.14(6)]{Ogu}, we have $P^\gp \oplus F^\gp \simeq Q^\gp$.
After taking quotients, we have $((P\oplus F)/H)^\gp \simeq (Q/G)^\gp$.
Together with Proposition \ref{affine.1}, we see that the induced morphism
\[
\A_{Q_H,Q_H^+}
\to
\A_{(P\oplus F)_H,(P\oplus F)_H^+}
\]
is a virtual isomorphism.
It follows that $g$ is a virtual isomorphism.
Theorem \ref{homeomorphism.5} implies that $g^*$ is an equivalence of $\infty$-categories.

Hence it suffices to show $h_!h^*\simeq 0$, where $h\colon S\times_{\A_P}\A_{P\oplus F,P\oplus F-G}\simeq S\times \A_{F,F-G}\to S$ is the projection.
This follows from Lemma \ref{AQAP.3}.
\end{proof}

\begin{lem}
\label{AQAP.5}
Let $\theta\colon P\to Q$ be a local vertical saturated homomorphism of sharp fs monoids, and let $G$ be a maximal $\theta$-critical face of $Q$.
Assume that $\theta$ is not an isomorphism.
If $S\to \pt_P$ is a strict morphism in $\lSch/B$, then the natural transformation
\[
f_!j_\sharp j^* f^*\xrightarrow{ad'}
f_!f^*
\]
is an isomorphism, 
where $f\colon S\times_{\A_P}\A_Q\to S$ is the projection, and $j\colon S\times_{\A_P}\A_{Q_G}\to S\times_{\A_P}\A_Q$ is the open immersion.
\end{lem}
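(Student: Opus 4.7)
The plan is to use (Loc) to reformulate the claim as a vanishing statement for compactly supported motives, and then to reduce that vanishing to Lemma \ref{AQAP.3} via a stratification of the closed complement. By (Loc), the cofiber of $j_\sharp j^* f^*\to f^*$ is $i_* i^* f^*$, where $i\colon Z:=S\times_{\A_P}(\A_Q-\A_{Q_G})\to S\times_{\A_P}\A_Q$ is the strict closed complement of $j$. Since $i$ is a strict closed immersion we have $i_*=i_!$; combining with (PF) and the exact base change of Corollary \ref{base2.8}, the natural transformation in question is an isomorphism if and only if $h_!\unit\simeq 0$, where $h\colon Z\to S$ is the induced projection.

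To establish this vanishing, I would cover the complement $\A_Q-\A_{Q_G}$ by closed subschemes indexed by monoid generators of $G$. If $g_1,\dots,g_s$ generate $G$, then
\[
\A_Q-\A_{Q_G}=\bigcup_{k=1}^{s}V(g_k),\qquad V(g_k)=\A_{Q,\langle g_k\rangle},
\]
and finite intersections are again closed subschemes $\A_{Q,I}$ for ideals $I\subseteq Q$. By iteratively applying (Loc) Mayer--Vietoris-style to this finite closed cover, and then applying (Loc) once more to the stratifications provided by Proposition \ref{affine.4}, the vanishing of $h_!\unit$ reduces to vanishing of $(h_F)_!\unit$ on each locally closed stratum $S\times_{\A_P}\A_{Q_F,Q_F^+}$, where $F$ ranges over faces of $Q$ satisfying $F\not\supseteq G$.

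For each such $F$, the condition $F\not\supseteq G$ together with the splitting $P^\gp\oplus G^\gp\simeq Q^\gp$, which holds because $G$ is maximal $\theta$-critical by Ogus's Theorem I.4.8.14(6), exhibits an element of $G\setminus F$ whose image in $\ol{Q_F}$ generates a nontrivial face. Combined with Proposition \ref{affine.1} ($\A_{Q_F,Q_F^+}\simeq\pt_{\ol{Q_F}}\times\A_{F^\gp}$) and, when needed, a reduction along a virtual isomorphism via Theorem \ref{homeomorphism.5}, this identifies the stratum with a configuration in which Lemma \ref{AQAP.3} applies to a nontrivial $\theta'$-critical face, yielding the desired vanishing.

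The main obstacle will be this last identification: for each face $F\not\supseteq G$ arising in the stratification, exhibiting the induced projection as fitting into the hypothesis of Lemma \ref{AQAP.3} requires careful combinatorial bookkeeping with $\theta$-critical faces and the splitting $P^\gp\oplus G^\gp\simeq Q^\gp$. The assumption that $\theta$ is not an isomorphism is essential throughout, since otherwise $G=Q$ and the complement is already empty; and the maximality of $G$ is exactly what supplies the $\theta$-critical witness at each stratum.
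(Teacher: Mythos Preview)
Your first reduction via (Loc) to showing $h_!\unit\simeq 0$ for the closed complement is correct and matches the paper. The fatal gap is in your claim that each individual stratum $S\times_{\A_P}\A_{Q_F,Q_F^+}$ with $F\not\supseteq G$ has $(h_F)_!\unit\simeq 0$. This is simply false, and Lemma~\ref{AQAP.3} cannot be made to apply to a single stratum: a stratum is a log point times a torus, not of the form $\A_{Q',Q'-H}$. Concretely, take $P=\N$, $Q=\N^2$, $\theta$ the diagonal, and $G=\N\times 0$. The complement has two strata over $\pt_P$, for $F=0$ and $F=0\times\N$. The latter stratum is $S\times_{\A_P}\A_{Q_F,Q_F^+}\simeq S\times\G_m$, and the projection $S\times\G_m\to S$ has $(h_F)_!\unit\simeq \unit(-1)[-2]\oplus\unit[-1]\neq 0$. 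The stratum for $F=0$ is $S\times_{\pt_P}\pt_Q$, whose $g_!\unit$ is also nonzero. The vanishing of $h_!\unit$ comes from a cancellation between these two contributions in the localization cofiber sequence, not from either one separately.

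The paper's proof exploits exactly this: instead of stratifying, it covers the complement by the closed subschemes $\A_{Q,Q-G_2},\ldots,\A_{Q,Q-G_r}$ indexed by the \emph{other} maximal $\theta$-critical faces. Each such piece, and each intersection $\A_{Q,Q-G_I}$ with $G_I\neq 0$, vanishes by Lemma~\ref{AQAP.3}; but when $G_I=0$ the piece is $S\times_{\A_P}\pt_Q$ and contributes $g_!g^*$. The Mayer--Vietoris limit over this cover is then identified with $g_!g^*\otimes\fib(\unit\to\epsilon(W))$ for a certain space $W=K\otimes\R-((G\otimes\R)^\circ\cup 0)$, and the crux is the topological fact that $W$ is contractible, forcing the fiber to vanish. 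So the cancellation is a global combinatorial/topological phenomenon encoded in the fan of maximal $\theta$-critical faces; it cannot be seen one stratum at a time. Your proposed bookkeeping with $P^\gp\oplus G^\gp\simeq Q^\gp$ and virtual isomorphisms does not produce an $\A_\N$-direction on a single stratum, and no amount of such bookkeeping will, because the vanishing genuinely fails there.
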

\begin{proof}
Let $G_1:=G,G_2,\ldots,G_r$ be the maximal $\theta$-critical faces of $Q$.
For every nonempty subset $I:=\{i_1,\ldots,i_s\}\subset \{2,\ldots,r\}$, we set
\[
G_I:=G_{i_1}\cap \cdots \cap G_{i_s}.
\]
Let $f_I\colon S\times_{\A_P}\A_{Q,Q-G_I}\to S$ be the projection.
The family
\[
\{\pt_P\times_{\A_P}\A_{Q,Q-G_1},\ldots,\pt_P\times_{\A_P}\A_{Q,Q-G_r}\}
\]
is a closed cover of $\pt_P\times_{\A_P}\A_Q$.
We set $K:=G_1\cup \cdots \cup G_r$.
Due to \cite[Theorem I.4.8.14(7)]{Ogu}, the summation map
\[
P\times K \to Q
\]
is bijective.
If $r=1$, then $Q\simeq P\oplus G_1$, so $\theta$ is not vertical.
Hence $r>1$.
The bijection $P\times K\to Q$ gives a homeomorphism between $K\otimes \R$ and $(Q/P)\otimes \R$.
Since $\theta$ is vertical, $Q/P$ is a group.
Hence $K\otimes \R$ is homeomorphic to $\R^n$ for some integer $n\geq 1$.

With this homeomorphism, we can regard $G_1\otimes \R$ as a sharp cone in $\R^n$.
After a linear transformation, we can regard $G_1\otimes \R$ as $\R_{\geq 0}^n$.
From this, we see that every proper face of $G_1$ is contained in $G_2\cup \cdots \cup G_r$.
Together with Proposition \ref{affine.4}, we deduce that the family
\[
\{S\times_{\A_P}\A_{Q,Q-G_2},\ldots,S\times_{\A_P}\A_{Q,Q-G_r}\}
\]
is a closed cover of $S\times_{\A_P}(\A_Q-\A_{Q_G})$.
It follows that we have
\begin{equation}
\label{AQAP.3.1}
f_!f^*
\simeq
\lim_{I\in \cP}f_{I!}f_I^*,
\end{equation}
where $\cP$ is the category of nonempty subsets of $\{2,\ldots,r\}$ whose morphisms are the inclusions.

By \cite[Corollary 1.4.4.6]{HA}, the object $\unit\in \sT(S)$ gives a colimit preserving functor $\alpha\colon \Sp \to \sT(S)$ sending the sphere spectrum $\mathbb{S}$ to $\unit$.
For a space $V$, we set
\[
\epsilon(V)
:=
\alpha(\map(\Sigma^\infty V,\mathbb{S}))
\]
for contravariant functoriality, where $\map$ denotes the mapping spectrum.
Let $g\colon S\times_{\A_P}\pt_Q\to S$ be the projection.
We have
\[
\epsilon(G_I\otimes \R-0)
\simeq
\left\{
\begin{array}{ll}
\unit & \text{if }G_I\neq 0,
\\
0 & \text{if }G_I=0.
\end{array}
\right.
\]
Hence Lemma \ref{AQAP.3} can be written as
\[
f_{I!}f_I^*\cF
\simeq
g_!g^*\cF \otimes \fib(\unit\to \epsilon(G_I\otimes \R-0))
\]
that is functorial in $I$.
We set $W:=K\otimes \R -((G\otimes \R)^\circ\cup 0)$, where $(-)^\circ$ denotes the interior.
Since $\{G_1\otimes \R-0,\ldots,G_r\otimes \R-0\}$ is a closed cover of $W$, Mayer-Vietoris and \eqref{AQAP.3.1} give
\[
f_!f_!^*\cF
\simeq
g_!g^*\cF \otimes \fib(\unit \to \epsilon(W)).
\]
To conclude, observe that $W\simeq \R^n-(\R_{>0}^n \cup 0)$ is contractible.
\end{proof}

\begin{lem}
\label{AQAP.6}
Let $\theta\colon P\to Q$ be a local vertical saturated homomorphism of sharp fs monoids, and let $S\to \A_P$ be a strict morphism.
Then the natural transformation
\[
f_\sharp f^*
\xrightarrow{\mathfrak{p}_f^n}
f_!\Sigma_f^n f^*
\]
is an isomorphism, where $f\colon S\times_{\A_P}\A_Q\to S$ is the projection.
\end{lem}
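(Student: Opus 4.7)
The plan is to proceed by induction on the relative rank $r := \rank(Q^\gp/\theta^\gp(P^\gp))$. Using $(\eSm\text{-PF})$, $(\text{PF})$, and Proposition \ref{comp.5} (which identifies $\Thom(T_f) \simeq f^*\unit(r)[2r]$ since the tangent bundle of the toric morphism $f$ is trivial of rank $r$), both sides of $\mathfrak{p}_f^n$ applied to $f^*\cG$ rewrite as tensor products in $\cG$, so the claim reduces to showing that the component at the unit, $\mathfrak{p}_f^n(\unit)\colon f_\sharp\unit \to f_!\Thom(T_f)$, is an isomorphism. When $\theta$ is an isomorphism, $f$ is an isomorphism and the claim is trivial; this covers the base case.

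In the inductive step, assume $\theta$ is not an isomorphism, so a nontrivial maximal $\theta$-critical face $G$ of $Q$ exists. Let $j\colon U := S\times_{\A_P}\A_{Q_G} \to X$ be the open immersion, $i\colon Z := X - U\to X$ its closed complement, and $h := fj$. Apply $\mathfrak{p}_f^n$ to the (Loc) cofiber sequence $j_\sharp j^*f^*\cG \to f^*\cG \to i_*i^*f^*\cG$ to obtain a morphism of cofiber sequences in $\sT^\ex(S)$. The leftmost vertical identifies with $\mathfrak{p}_h^n(h^*\cG)$ via $h_\sharp h^* = f_\sharp j_\sharp j^*f^*$ and the projection formula for $j$. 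Since $\A_{Q_G} \simeq \A_{Q/G}\times \A_{G^\gp}$ (using that $Q^\gp$ splits off $G^\gp$), the morphism $h$ factors, up to a virtual isomorphism (Theorem \ref{homeomorphism.5}), as a strict smooth trivial torus bundle followed by the morphism $S\times_{\A_P}\A_{Q/G}\to S$ induced by the quotient homomorphism $\bar\theta\colon P\to Q/G$, which remains local, saturated, and vertical (verticality passes to the quotient since $G$ is a face) and has strictly smaller relative rank. By the inductive hypothesis for $\bar\theta$ together with Propositions \ref{puritytran.1} and \ref{functorial.5}, the leftmost vertical is an isomorphism.

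The rightmost vertical targets $f_!\Sigma_f^n i_*i^*f^*\cG$, which vanishes by Lemma \ref{AQAP.5} applied at $\unit(r)[2r]\otimes \cG$ after rewriting $\Thom(T_f) \otimes i_*i^*f^*\cG \simeq i_*i^*f^*(\unit(r)[2r]\otimes\cG)$ via the projection formula for $i$. By the two-out-of-three property in the cofiber sequence, $\mathfrak{p}_f^n(f^*\cG)$ is an isomorphism provided the $\sharp$-analog vanishing $f_\sharp i_*i^*f^*\cG \simeq 0$ holds. This $\sharp$-analog of Lemma \ref{AQAP.5} is the main obstacle, since $f_\sharp f^*$ is generally nonzero, in contrast with $f_!f^*$ in Lemma \ref{AQAP.1}. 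I would establish it by imitating the Mayer--Vietoris argument of Lemma \ref{AQAP.5}: stratify $Z$ by toric strata $\A_{Q_F, Q_F^+}$ for faces $F$ meeting $G$ nontrivially (as in Lemmas \ref{AQAP.2} and \ref{AQAP.3}), reduce stratum-by-stratum to elementary $f_\sharp f^*\simeq 0$ statements on affine toric projections, and exploit the verticality of $\theta$ to produce the contractible fiber geometry forcing the vanishing at the $\sharp$-level, in analogy with the $(\boxx\text{-inv})$ argument of Lemma \ref{AQAP.1}.
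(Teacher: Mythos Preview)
Your architecture matches the paper's: open up at a maximal $\theta$-critical face $G$, identify the restriction $\mathfrak{p}_f^n|_{j_\sharp j^*f^*}$ with $\mathfrak{p}_{fj}^n$ via Proposition~\ref{functorial.3}, and use Lemma~\ref{AQAP.5} on the $!$-side. Two remarks, one a simplification and one a genuine gap.

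First, your induction is unnecessary. For a \emph{maximal} $\theta$-critical face $G$ the induced homomorphism $P\to Q/G$ is not merely of smaller rank but an \emph{isomorphism} (this is extracted in the paper from the proof of (5)$\Rightarrow$(6) in \cite[Theorem~I.4.8.14]{Ogu}). Hence $Q_G\simeq P\oplus G^\gp$, the open $U=S\times_{\A_P}\A_{Q_G}$ is just $S\times\G_m^r$, and $fj$ is strict smooth on the nose; Proposition~\ref{puritytran.1} applies directly with no virtual-isomorphism detour and no inductive hypothesis.

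Second, the $\sharp$-side vanishing $f_\sharp i_*i^*f^*\simeq 0$ (equivalently, that $f_\sharp j_\sharp j^*f^*\xrightarrow{ad'} f_\sharp f^*$ is an isomorphism) is where your proposal has a real gap. Your plan to ``imitate Lemmas~\ref{AQAP.2}--\ref{AQAP.5}'' does not transfer: those arguments run on $f_!$, which exists for arbitrary separated morphisms and satisfies $(fi)_!\simeq f_!i_!$, so one can pass freely to closed strata of $Z$ and induct. By contrast $(-)_\sharp$ is only available for exact log smooth morphisms, so there is no ``$f_\sharp f^*\simeq 0$ on affine toric projections'' for the strata of $Z$; those projections are not in $\eSm$. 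Even the correct elementary input (the $\sharp$-analogue of Lemma~\ref{AQAP.1}, namely $M_S(\A_\N)\simeq M_S(\G_m)$, which is ($ver$-inv) rather than ($\boxx$-inv)) does not propagate through a closed-cover decomposition of $Z$ in the way Lemma~\ref{AQAP.5} does. The paper does not attempt this route at all: it invokes \cite[Proposition~2.4.2]{logshriek}, an already-established invariance statement that directly gives $f_\sharp j_\sharp j^*f^*\simeq f_\sharp f^*$ in this toric situation. With that input, the commutative rectangle built from Proposition~\ref{functorial.3} has its left vertical an isomorphism by Proposition~\ref{puritytran.1} and both composite horizontals isomorphisms (the upper by \cite[Proposition~2.4.2]{logshriek}, the lower by Lemma~\ref{AQAP.5} after using that $\Sigma_f^n\simeq(d)[2d]$), forcing $\mathfrak{p}_f^n f^*$ to be an isomorphism.
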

\begin{proof}
The claim is trivial if $\theta$ is an isomorphism,
so assume that $\theta$ is not an isomorphism.
Let $G$ be a maximal $\theta$-critical face of $Q$,
and let $j\colon S\times_{\A_P}\A_{Q_G}\to S\times_{\A_P}\A_Q$ be the induced open immersion.
As observed in the proof of the implication (5)$\Rightarrow$(6) in \cite[Theorem I.4.8.14]{Ogu},
the induced homomorphism $P\to Q/G$ is an isomorphism.
This implies that $fj$ is strict.
The diagram
\[
\begin{tikzcd}
(fj)_\sharp j^* f^*\ar[rr,"\simeq"]\ar[d,"\mathfrak{p}_{fj}^n"']\ar[rrd,"(a)" description,phantom]&
&
f_\sharp j_\sharp j^*f^*\ar[d,"\mathfrak{p}_f^n"]\ar[r,"ad'"]&
f_\sharp f^*\ar[d,"\mathfrak{p}_f^n"]
\\
(fj)_!\Sigma_{fj}^n j^*f^*\ar[r,"\simeq"]&
f_!j_\sharp\Sigma_{fj}^nj^* f^*\ar[r,"Ex"]&
f_!\Sigma_{f}^n j_\sharp j^*f^*\ar[r,"ad'"]&
f_!\Sigma_f^n f^*.
\end{tikzcd}
\]
commutes since the diagram surrounding $(a)$ commutes by Proposition \ref{functorial.3}.
The lower middle horizontal arrow is an isomorphism as observed in Remark \ref{functorial.4}.
The left vertical arrow is an isomorphism by Proposition \ref{puritytran.1}.
The upper right horizontal arrow is an isomorphism by \cite[Proposition 2.4.2]{logshriek}.
Since $\Omega_{S\times_{\A_P}\A_Q/S}^1$ is free by \cite[Propositions IV.1.1.4(2), IV.1.2.15(2)]{Ogu},
we have $\Sigma_{f}^n\simeq (d)[2d]$ for some integer $d$.
Together with Lemma \ref{AQAP.5},
we see that the lower right horizontal arrow is an isomorphism.
Hence the right vertical arrow is an isomorphism.
\end{proof}

To use induction later, we consider the following condition for all integer $d\geq 0$:
\begin{itemize}
\item $(\mathrm{Pur}_d)$ If $f$ is a separated vertical exact log smooth morphism $f\colon X\to S$ in $\lSch/B$ such that the inequality
\[
\rank(\cM_{X/S,x}^\gp) \leq d
\]
is satisfied for all point $x\in X$, then $f$ is pure.
\end{itemize}

\begin{prop}
\label{AQAP.4}
Let $\theta\colon P\to Q$ be a local vertical saturated homomorphism of sharp fs monoids, and let $S\to \A_P$ be a strict morphism.
Assume that $(\mathrm{Pur}_{d-1})$ is satisfied, where $d:=\rank(Q^\gp)-\rank(P^\gp)$.
Then the projection $f\colon X:=S\times_{\A_P}\A_Q\to S$ is pure.
\end{prop}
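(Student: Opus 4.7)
The plan is to extend Lemma \ref{AQAP.6}---which establishes $\mathfrak{p}_f^n$ as an isomorphism on objects pulled back along $f^*$---to the purity of $f$ on all of $\sT^\ex(X)$. By Proposition \ref{Verdier.9}, both $f_\sharp$ and $f_!$ preserve colimits; together with the invertibility of $\Sigma_f^n$ (Proposition \ref{puritytran.2}), the full subcategory $\cP \subseteq \sT^\ex(X)$ of objects $\cF$ with $\mathfrak{p}_f^n(\cF)$ an isomorphism is closed under colimits, shifts, and Thom twists. Since $\sT^\ex(X)$ is generated under these operations by $u_\sharp \unit_U$ for exact log smooth $u\colon U \to X$, it suffices to show each such generator lies in $\cP$.

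By Zariski descent of purity (Proposition \ref{functorial.6}), we may work locally on $X$ and on $U$, and hence assume $u$ admits a neat chart refining the chart $P \to Q$ of $X$. Combining Construction \ref{affine.5}, Theorem \ref{homeomorphism.5}, and the purity of strict smooth morphisms (Proposition \ref{puritytran.1}, propagated via Propositions \ref{functorial.5}--\ref{functorial.6}), we reduce to the case where $u$ is itself the projection $u'\colon U' := S \times_{\A_P} \A_R \to X = S \times_{\A_P} \A_Q$ arising from a local vertical saturated homomorphism $Q \to R$ of sharp fs monoids, so that the composition $P \to R$ is again local vertical saturated.

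The key step applies the naturality diagram of Proposition \ref{functorial.3} to the composition $fu'$. Together with Remark \ref{functorial.4}, this diagram identifies $\mathfrak{p}_f^n(u'_\sharp \unit_{U'})$---up to exchange and composition isomorphisms---with $\mathfrak{p}_{fu'}^n(\unit_{U'})$ combined with the image under $f_\sharp$ of $\mathfrak{p}_{u'}^n(\unit_{U'})$. Since $fu'\colon S \times_{\A_P} \A_R \to S$ is of the form covered by Lemma \ref{AQAP.6} and $\unit_{U'} = (fu')^* \unit_S$, the map $\mathfrak{p}_{fu'}^n(\unit_{U'})$ is an isomorphism. It therefore suffices to show that $u'$ is pure.

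The main obstacle is establishing the purity of $u'$ from the inductive hypothesis $(\mathrm{Pur}_{d-1})$. The strategy is to factor $Q \to R$ through a sequence $Q = Q_0 \subset Q_1 \subset \cdots \subset Q_k = R$ of local vertical saturated extensions, each of relative rank $1$, yielding a factorization of $u'$ as a composition of vertical exact log smooth morphisms of relative log rank $1$. When $d \geq 2$, each step has rank $1 \leq d-1$ and the inductive hypothesis $(\mathrm{Pur}_{d-1})$ applies directly; Proposition \ref{functorial.5} then assembles these into the purity of $u'$. The diagram of Proposition \ref{functorial.3} propagates this to the purity of $\mathfrak{p}_f^n$ at $u'_\sharp \unit_{U'}$, completing the verification on generators. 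The base case $d = 1$ requires Proposition \ref{puritytran.3} to handle the rank-$1$ building blocks, since there $(\mathrm{Pur}_0)$ only covers the strict case.
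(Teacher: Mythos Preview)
Your approach has a genuine gap at the reduction step, and the subsequent factorization claim is not established. When you reduce an arbitrary exact log smooth $u\colon U\to X$ to the special form $u'\colon S\times_{\A_P}\A_R\to X$, the chart $Q\to R$ produced by the local structure theorem for $u$ is not forced to be local, vertical, or saturated; yet both Lemma~\ref{AQAP.6} (applied to $fu'$) and your rank-$1$ factorization argument require exactly these hypotheses on $P\to R$ and $Q\to R$. Even granting them, the assertion that $Q\to R$ factors through a chain of local vertical saturated homomorphisms each of relative rank $1$ is stated without proof, and such factorizations do not hold for general homomorphisms of sharp fs monoids. Finally, your handling of $d=1$ via Proposition~\ref{puritytran.3} is insufficient: that result covers only the single morphism $W\to\A_\N$, not arbitrary rank-$1$ vertical exact log smooth morphisms over $X$.

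The paper's argument sidesteps all of this by working on the \emph{target} side rather than on generators of $\sT^\ex(X)$. It applies the localization sequence for the closed immersion $i\colon S\times_{\A_P}\A_{Q,Q^+}\hookrightarrow X$ and its open complement $j$. On the open stratum the relative log rank drops strictly below $d$, so $(\mathrm{Pur}_{d-1})$ combined with Proposition~\ref{functorial.3} makes that column of the localization ladder an isomorphism. This reduces everything to showing $f_\sharp i_*\xrightarrow{\mathfrak{p}_f^n} g_*\tau$ is an isomorphism on the closed stratum, where $\ul{g}$ is an isomorphism. There Lemma~\ref{AQAP.6} (applied once to $f^*$ and once to $g^*=i^*f^*$) is used to build, by explicit unit--counit manipulations, both a left and a right quasi-inverse to this single map. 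No auxiliary purity of morphisms $u'$ is ever invoked---only the already-established isomorphism of $\mathfrak{p}_f^n$ on pulled-back objects.
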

\begin{proof}
The quasi-coherent sheaf $\Omega_{X/S}^1$ is a free $\cO_X$-module of rank $d$ by \cite[Propositions IV.1.1.4(2), IV.1.2.15(2)]{Ogu}.
It follows that $\Sigma_f^n$ is isomorphic to $\tau:=(d)[2d]$.

We need to show that the natural transformation $\mathfrak{p}_f^n\colon f_\sharp \to f_!\tau$ admits left and right inverses.
Consider the composition
\[
\alpha\colon f_!\tau \xrightarrow{ad} f_!f^*f_\sharp 
\tau \xrightarrow{(\mathfrak{p}_f^n)^{-1}} f_\sharp f^* f_\sharp \xrightarrow{ad'} f_\sharp,
\]
where the second arrow is defined and an isomorphism by Proposition \ref{AQAP.6}.
The commutative diagram
\[
\begin{tikzcd}
f_\sharp\ar[d,"\mathfrak{p}_f^n"']\ar[r,"ad"]&
f_\sharp f^*f_\sharp \ar[rrd,"\simeq",bend left]\ar[d,"\mathfrak{p}_f^n"']
\\
f_!\tau\ar[r,"ad"]&
f_! f^*f_\sharp\tau \ar[rr,"(\mathfrak{p}_f^n)^{-1}"]&&
f_\sharp f^*f_\sharp\ar[r,"ad'"]&
f_\sharp
\end{tikzcd}
\]
shows that $\alpha$ is a left quasi-inverse of $\mathfrak{p}_f^n\colon f_\sharp \to f_!\tau$.

Let $j\colon Z:=S\times_{\A_P} (\A_Q-\A_{Q,Q^+})\to X$ and $i\colon U:=S\times_{\A_P} \A_{Q,Q^+}\to X$ be the obvious strict immersions.
We set $g:=fi$.
Observe that the morphism of underlying schemes $\ul{g}\colon \ul{Z}\to \ul{S}$ is an isomorphism.
Using (Loc),
we have a commutative diagram
\begin{equation}
\label{AQAP.4.1}
\begin{tikzcd}
f_\sharp j_\sharp j^*\ar[r,"ad'"]\ar[d,"\mathfrak{p}_f^n"']&
f_\sharp \ar[r,"ad"]\ar[d,"\mathfrak{p}_f^n"]&
f_\sharp i_*i^*\ar[d,"\mathfrak{p}_f^n"]
\\
f_!j_\sharp j^*\tau\ar[r,"ad'"]&
f_!\tau\ar[r,"ad'"]&
f_!i_*i^*\tau
\end{tikzcd}
\end{equation}
whose rows are cofiber sequences.
By Proposition \ref{functorial.3} and Remark \ref{functorial.4}, $(\mathrm{Pur}_{d-1})$ implies that the left vertical arrow of \eqref{AQAP.4.1} is an isomorphism.
Hence we only need to show that the natural transformation
\begin{equation}
\label{AQAP.4.2}
f_\sharp i_*\xrightarrow{\mathfrak{p}_f^n} g_*\tau
\end{equation}
is an isomorphism.
Since it already has a left quasi-inverse, it suffices to construct its right quasi-inverse.

We have the commutative diagram
\begin{equation}
\label{AQAP.4.3}
\begin{tikzcd}
f_\sharp j_\sharp j^*f^*\ar[r,"ad'"]\ar[d,"\mathfrak{p}_f^n"']&
f_\sharp f^*\ar[r,"ad"]\ar[d,"\mathfrak{p}_f^n"]&
f_\sharp i_*i^*f^*\ar[d,"\mathfrak{p}_f^n"]
\\
f_!j_\sharp j^*f^*\tau\ar[r,"ad'"]&
f_!f^*\tau\ar[r,"ad'"]&
f_!i_*i^*f^*\tau
\end{tikzcd}
\end{equation}
whose rows are cofiber sequences.
We already know that the left vertical arrow of \eqref{AQAP.4.3} is an isomorphism.
The middle vertical arrow of \eqref{AQAP.4.3} is an isomorphism by Lemma \ref{AQAP.6}.
Hence the right vertical arrow of \eqref{AQAP.4.3} is an isomorphism.

Consider the composition
\[
\beta\colon 
g_* \tau
\xrightarrow{ad}
g_*g^*g_*\tau 
\xrightarrow{(p_f^n)^{-1}}
f_\sharp i_*g^*g_*\tau
\xrightarrow{ad'}
f_\sharp i_*,
\]
where the second arrow is defined and an isomorphism by the above paragraph.
The commutative diagram
\[
\begin{tikzcd}
g_*\tau\ar[r,"ad"]&
g_*g^*g_* \tau\ar[rrr,"(\mathfrak{p}_f^n)^{-1}"]\ar[rrrd,"\simeq"']&&&
f_\sharp i_*g^*g_*\tau\ar[r,"ad'"]\ar[d,"\mathfrak{p}_f"]&
f_\sharp i_*\ar[d,"\mathfrak{p}_f"]
\\
&
&&&
g_* g^*g_*\tau\ar[r,"ad'"]&
g_*\tau
\end{tikzcd}
\]
shows that \eqref{AQAP.4.2} admits a right quasi-inverse.
\end{proof}

Now, we prove the Poincar\'e duality:

\begin{thm}
\label{poincare.1}
Let $f\colon X\to S$ be a separated vertical exact log smooth morphism in $\lSch/B$.
Then $f$ is pure.
\end{thm}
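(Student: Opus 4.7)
The plan is to prove Theorem \ref{poincare.1} by induction on the integer
\[
d := \sup_{x\in X} \rank(\cM_{X/S,x}^\gp),
\]
establishing the condition $(\mathrm{Pur}_d)$ in sequence. Given $(\mathrm{Pur}_{d-1})$, I want to apply the machinery already in place: Proposition \ref{functorial.6} reducing the question to a Zariski-local one, Proposition \ref{functorial.5} closing purity under composition, Proposition \ref{puritytran.1} handling strict smooth morphisms, and Proposition \ref{AQAP.4} handling the ``toric'' projection case. The base case $d=0$ forces $\cM_{X/S}^\gp=0$, whence $f$ is strict; combined with log smoothness this makes $f$ strict smooth, and Proposition \ref{puritytran.1} concludes.

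For the inductive step, first I would use Proposition \ref{functorial.6} to pass to a Zariski-local situation around a chosen point $x\in X$ with $s:=f(x)$. Using the structure theory of exact log smooth morphisms (after shrinking $X$ and $S$ if necessary), I would produce a neat chart $\theta\colon P\to Q$ modeled on $P=\ol{\cM}_{S,s}$ and $Q=\ol{\cM}_{X,x}$. Because $f$ is vertical and exact, the chart $\theta$ is a local vertical (exact) homomorphism of sharp fs monoids; the log smoothness with a neat chart then yields a factorization
\[
X \xrightarrow{g} S \times_{\A_P}\A_Q \xrightarrow{h} S
\]
in which $g$ is strict smooth and $h$ is the projection, with $\rank Q^\gp-\rank P^\gp \leq d$. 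Proposition \ref{puritytran.1} gives purity of $g$, Proposition \ref{AQAP.4} (fed with the inductive hypothesis $(\mathrm{Pur}_{d-1})$) gives purity of $h$, and Proposition \ref{functorial.5} combines them to give purity of $f=hg$.

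The main obstacle I anticipate is arranging the chart $\theta$ to be \emph{saturated}, as required by Proposition \ref{AQAP.4}; vertical exactness of $f$ alone only guarantees exactness of $\theta$, and saturation is a strictly stronger property on homomorphisms of fs monoids. To overcome this, I would mimic the saturation trick used in the proof of Proposition \ref{base2.5}: find a Kummer log smooth base change $u\colon S'\to S$ (coming from a suitable sublattice of $Q^\gp$) along which the pullback $f'\colon X'\to S'$ acquires a saturated chart, so that Proposition \ref{AQAP.4} applies directly to $f'$. Since Kummer log smooth covers give conservative pullbacks (\cite[Proposition 2.4.3]{logshriek}) and the purity transformations are compatible with base change by Proposition \ref{functorial.1} together with Remarks \ref{functorial.8} and \ref{functorial.2}, purity of $f'$ will descend to purity of $f$. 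Once this saturation reduction is in place, the rest of the argument is the straightforward composition of Propositions \ref{puritytran.1}, \ref{AQAP.4}, and \ref{functorial.5} outlined above.
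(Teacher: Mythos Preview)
Your proposal is correct and follows essentially the same approach as the paper: induction on $d$ to establish $(\mathrm{Pur}_d)$, base case via Proposition~\ref{puritytran.1}, a Kummer log smooth base change to achieve saturation (using conservativity from \cite[Proposition~2.4.3]{logshriek} together with Proposition~\ref{functorial.1} and Remark~\ref{functorial.2}), Zariski localization on $X$ via Proposition~\ref{functorial.6} to obtain a neat chart $\theta\colon P\to Q$, and then the factorization $X\to S\times_{\A_P}\A_Q\to S$ handled by Propositions~\ref{puritytran.1}, \ref{AQAP.4}, and~\ref{functorial.5}. The only organizational difference is that the paper performs the saturation reduction \emph{before} localizing and taking charts (step~(I)), and also inserts an additional closed/open stratification on $S$ (step~(II), via (Loc) and Propositions~\ref{affine.1},~\ref{affine.4}) to reduce to $S\to\pt_P$ strict; your version instead shrinks $S$ Zariski-locally along with $X$ and only needs $S\to\A_P$ strict, which is all Proposition~\ref{AQAP.4} actually requires. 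Both orderings work; just make the Zariski locality on $S$ explicit (jointly conservative pullbacks along an open cover plus Proposition~\ref{functorial.1} and Remark~\ref{functorial.2}), and note that the chart factorization yields $g$ strict \'etale (cf.\ \cite[Proposition~A.4]{divspc}) rather than merely strict smooth.
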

\begin{proof}
We need to show $(\mathrm{Pur}_d)$ for every integer $d\geq 0$.
We proceed by induction on $d$.
Assume that the inequality $\rank(\cM_{X/S,x}^\gp)\leq d$ is satisfied for all point $x\in X$.
If $d=0$, then $f$ is strict.
Hence Proposition \ref{puritytran.1} implies $(\mathrm{Pur}_0)$.
From now,
we assume $d>0$ and $(\mathrm{Pur}_{d-1})$.

(I) \emph{Saturation of $f$.}
%Proposition \ref{restriction.7}
Using \cite[Proposition 2.4.3]{logshriek},
we have a Kummer log smooth morphism $g\colon S'\to S$ such that $g^*$ is conservative and the projection $X\times_S S'\to S'$ is saturated.
By Proposition \ref{functorial.1} and Remark \ref{functorial.2}, we reduce to showing that the projection $X\times_S S'\to S'$ is pure.
Hence we reduce to the case when $f$ is saturated.

(II) \emph{Locality on $S$.}
Suppose that $i\colon Z\to S$ is a strict closed immersion and $j\colon U\to S$ is its open complement.
By (Loc),
the pair of functors $(i^*,j^*)$ is conservative.
As in (I), we reduce to showing that the projections $X\times_S Z\to Z$ and $X\times_S U\to U$ are pure.
Together with Propositions \ref{affine.1} and \ref{affine.4}, we reduce to the case when there is a strict morphism $S\to \pt_P$ for some sharp fs monoid $P$.

(III) \emph{Locality on $X$.}
Suppose that $\{v_i\colon V_i\to X\}_{i\in I}$ is a Zariski covering family.
By Proposition \ref{functorial.6}, we reduce to showing that each $fv_i$ is pure.
Together with \cite[Propositions A.3, A.4]{divspc}, we reduce to the case when $f$ admits a neat chart $\theta\colon P\to Q$ and $P$ and $Q$ are sharp.

(IV) \emph{Final step of the proof.}
As observed in \cite[Proposition A.4]{divspc},
$f$ admits the factorization
\[
X\xrightarrow{g} S\times_{\A_P}\A_Q \xrightarrow{p} S
\]
such that $g$ is strict \'etale and $p$ is the projection.
Since $f$ is saturated and vertical, $\theta$ is saturated and vertical.
Hence Proposition \ref{AQAP.4} implies that $p$ is pure.
By Proposition \ref{puritytran.1},
$g$ is pure.
Together with Proposition \ref{functorial.5},
we deduce that $f=pg$ is pure.
\end{proof}

\subsection{On the Lefschetz duality}
\label{Lefschetz}

\begin{lem}
\label{functorial.9}
Let $f\colon X\to S$ be a separated strict smooth morphism in $\lSch/B$.
Then the diagram
\[
\begin{tikzcd}
f_\sharp f^*\ar[r,"ad'"]\ar[d,"\mathfrak{p}_f (\mathfrak{q}_f)^{-1}"']&
\id\ar[d,"ad'",leftarrow]
\\
f_!\Sigma_f \Omega_f f^!
\ar[r,"ad'"]&
f_!f^!
\end{tikzcd}
\]
commutes.
\end{lem}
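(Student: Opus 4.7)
The plan is to reduce the square to the defining property of the mate (= right adjoint) of $\mathfrak{p}_f$. Since $f$ is strict smooth, Proposition~\ref{puritytran.1} gives that $\mathfrak{p}_f\colon f_\sharp\to f_!\Sigma_f$ is an isomorphism, and hence the right adjoint $\mathfrak{q}_f\colon \Omega_f f^!\to f^*$ supplied by Definition~\ref{Verdier.8} is automatically an isomorphism as well. In particular $\mathfrak{q}_f^{-1}$ makes sense and the left vertical arrow of the square is well-defined.

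Next I would identify the bottom-right composite of the square with the counit of the composed adjunction $f_!\Sigma_f\dashv \Omega_f f^!$. By the standard formula for the counit of a composition of adjunctions, the composite
\[
f_!\Sigma_f\Omega_f f^! \xrightarrow{ad'} f_!f^! \xrightarrow{ad'} \id
\]
equals $ad'_{f_!\Sigma_f\dashv\Omega_f f^!}$. Granting this, the commutativity of the square reduces to the identity
\[
ad'_{f_\sharp\dashv f^*} \;=\; ad'_{f_!\Sigma_f\dashv\Omega_f f^!}\circ (\mathfrak{p}_f\,(\mathfrak{q}_f)^{-1})
\]
of natural transformations $f_\sharp f^*\to \id$.

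This last identity is a direct consequence of the mate correspondence applied to the definition of $\mathfrak{q}_f$ as the right adjoint of $\mathfrak{p}_f$. Explicitly, the two natural transformations
\[
f_\sharp\Omega_f f^! \xrightarrow{f_\sharp\mathfrak{q}_f} f_\sharp f^* \xrightarrow{ad'} \id
\quad\text{and}\quad
f_\sharp\Omega_f f^! \xrightarrow{\mathfrak{p}_f\Omega_f f^!} f_!\Sigma_f\Omega_f f^! \xrightarrow{ad'} \id
\]
coincide. Precomposing this equality with the isomorphism $f_\sharp\mathfrak{q}_f^{-1}\colon f_\sharp f^*\to f_\sharp\Omega_f f^!$ and using the horizontal-composition rule $\mathfrak{p}_f\,(\mathfrak{q}_f)^{-1}=(\mathfrak{p}_f\Omega_f f^!)\circ(f_\sharp\mathfrak{q}_f^{-1})$ gives exactly the displayed equation.

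The argument is entirely formal: beyond Proposition~\ref{puritytran.1}, which provides the crucial isomorphism $\mathfrak{p}_f$, one only needs the existence of the adjoints $f^*$, $f^!$, $\Omega_f$ and the general mate formalism. There is no real obstacle, only the bookkeeping of recognizing the diagram as a rearrangement of the mate compatibility between the two counits.
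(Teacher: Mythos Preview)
Your argument is correct. The key observation—that $\mathfrak{q}_f$ is by definition the mate of $\mathfrak{p}_f$ under the adjunctions $f_\sharp\dashv f^*$ and $f_!\Sigma_f\dashv \Omega_f f^!$, so the square is precisely the mate compatibility between the two counits—is valid, and precomposing the mate identity with $f_\sharp\mathfrak{q}_f^{-1}$ is a clean way to extract the desired equation.

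The paper takes a more explicit route: it unwinds $\Sigma_f = p_{2\sharp}a_*$ and $\Omega_f = a^!p_2^*$ (with $p_1,p_2\colon X\times_S X\to X$ the projections and $a$ the diagonal), realizes $\mathfrak{p}_f$ and $\mathfrak{q}_f$ through the exchange transformations $f_\sharp p_{1!}\xrightarrow{Ex} f_!p_{2\sharp}$ and $p_1^!f^*\xrightarrow{Ex} p_2^*f^!$ attached to the cartesian square with sides $f,p_1,p_2,f$, and then verifies commutativity of a three-row diagram by naturality of the individual pieces. Your approach is more economical and makes transparent that the lemma is a purely formal consequence of $\mathfrak{q}_f$ being defined as the mate; the paper's approach has the minor advantage of keeping the specific exchange maps visible, which is occasionally helpful when one later needs to compare with other transformations built from the same building blocks.
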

\begin{proof}
Let $p_1,p_2\colon X\times_S X\to X$ be the two projections,
and let $a\colon X\to X\times_S X$ be the diagonal morphism.
The diagram
\[
\begin{tikzcd}
f_\sharp f^*
\ar[rrr,"ad'"]
\ar[rrd,"\id"]
\ar[d,"\simeq"']
&
&
&
\id\ar[d,"\id",leftarrow]
\\
f_\sharp p_{1!}a_*a^!p_1^!f^*
\ar[r,"ad'"']
\ar[d,"ExEx"']
&
f_\sharp p_{1!}p_1^!f^*
\ar[r,"ad'"']
\ar[d,"ExEx"]
&
f_\sharp f^*
\ar[r,"ad'"']
&
\id
\ar[d,"ad'",leftarrow]
\\
f_! p_{2\sharp} a_* a^! p_2^* f^!
\ar[r,"ad'"]
&
f_!p_{2\sharp}p_2^*f^!
\ar[rr,"ad'"]
&
&
f_!f^!
\end{tikzcd}
\]
commutes,
which shows the claim.
\end{proof}

\begin{lem}
\label{Lefschetz.5}
For $S\in \lSch/B$,
consider the projection $f\colon S\times \A_\N \to S$ and the induced open immersion $j\colon S\times \G_m\to S\times \A_\N$.
Then the natural transformation
\[
f_!j_! j^!f^!
\xrightarrow{ad'}
f_!f^!
\]
is an isomorphism.
\end{lem}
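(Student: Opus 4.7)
The plan is to compactify $f$ via $\pi : Y := S \times (\P^1, 0) \to S$ (which is proper), with $\bar j : S \times \A_\N \hookrightarrow Y$ the open immersion complementary to the strict closed immersion $\bar i : S \hookrightarrow Y$ at $\infty$, and $j_\infty : S \times \A^1_\infty \hookrightarrow Y$ the complementary open immersion to the strict closed immersion $i_0 := \bar j \circ i : S \times \pt_\N \hookrightarrow Y$ at the log origin. By (Loc) on $S \times \A_\N$, I will reduce the statement to showing $f_! i_* i^* f^! \cG \simeq 0$ for every $\cG \in \sT^\ex(S)$. Using $f^! = \bar j^* \pi^!$, $f_! = \pi_* \bar j_\sharp$, and the exchange $\bar j_\sharp i_* \simeq i_{0*}$ (which I would verify via (Loc) at $0$ in $Y$: $j_\infty^* \bar j_\sharp i_* = 0$ by complementarity, and $i_0^* \bar j_\sharp i_* \simeq i^* i_* \simeq \id$), this rewrites as $\pi_* i_{0*} i_0^* \pi^! \cG \simeq 0$.

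Next, I will apply $\pi_*$ to the (Loc) cofiber sequence at $0$ in $Y$, namely $j_{\infty\sharp} j_\infty^* \pi^! \cG \to \pi^! \cG \to i_{0*} i_0^* \pi^! \cG$, and use $j_\infty^* \pi^! \simeq f_\infty^!$ (where $f_\infty := \pi \circ j_\infty : S \times \A^1_\infty \to S$) to obtain the cofiber sequence
\[
f_{\infty!} f_\infty^! \cG \to \pi_* \pi^! \cG \to \pi_* i_{0*} i_0^* \pi^! \cG.
\]
Since $f_\infty$ is strict smooth of relative dimension one, Proposition \ref{puritytran.1} shows $f_\infty$ is pure with $\Sigma_{f_\infty} \simeq \unit(1)[2]$, which together with ($\eSm$-PF) and ($\A^1$-inv) gives $f_{\infty!} f_\infty^! \cG \simeq f_{\infty\sharp} f_\infty^* \cG \simeq \cG$. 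Hence the claim reduces to showing that the natural map $\cG \to \pi_* \pi^! \cG$ is an isomorphism.

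For this key reduction, my idea is to dualize the corresponding easier statement $\pi_* \pi^* \cG \simeq \cG$. The latter follows from applying $\pi_*$ to $\bar j_\sharp \bar j^* \pi^* \cG \to \pi^* \cG \to \bar i_* \bar i^* \pi^* \cG$: one has $\bar i^* \pi^* \cG = \cG$ (from $\pi \bar i = \id_S$) and $\pi_* \bar j_\sharp \bar j^* \pi^* = f_! f^*$, so Lemma \ref{AQAP.1} ($f_! f^* = 0$) collapses the sequence to $\pi_* \pi^* \cG \simeq \cG$. An analogous analysis at $\infty$ for $\pi^!$, using strict purity for the codimension-one regular closed immersion $\bar i$ (where the log structure around $\infty$ is trivial, so the Thom twist $(1)[2]$ applies) to get $\bar i^* \pi^! \cG \simeq \bar i^! \pi^! \cG \otimes \unit(1)[2] = \cG(1)[2]$, yields the cofiber sequence $f_! f^! \cG \to \pi_* \pi^! \cG \to \cG(1)[2]$. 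I will then transfer the isomorphism $\pi_* \pi^* \cG \simeq \cG$ to $\pi_* \pi^! \cG \simeq \cG$ using Zariski descent (Mayer--Vietoris) along the cover $\{\bar j, j_\infty\}$ of $Y$.

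The main obstacle will be this final transfer step, since the full Verdier-duality machinery that would formally swap $\pi^*$ and $\pi^!$ has not yet been set up at this point in the paper. An alternative route, cleaner if it can be made to work, is to base-change from the universal cartesian square with $Y_0 = (\P^1, 0)$ over a trivial-log base $B$, where a direct explicit computation shows $i_{0,0}^* \pi_0^! = 0$; the obstacle then becomes establishing the exchange isomorphism $q^* \pi_0^! \simeq \pi^! s^*$ for the base-change square and extending the vanishing from pullbacks $s^* \cG_0$ to arbitrary $\cG \in \sT^\ex(S)$ via a compact-generation argument on the generators $M_S(Y)(d)[n]$.
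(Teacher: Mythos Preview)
Your approach is essentially correct and uses the same ingredients as the paper, but you are missing the one formal observation that dissolves what you call ``the main obstacle.'' Since $\pi$ is proper, you have the adjoint triple $\pi^*\dashv \pi_*=\pi_!\dashv \pi^!$. For any adjoint triple $F\dashv G\dashv H$, the unit $\id\to GF$ is an isomorphism if and only if the counit $GH\to\id$ is. You have already shown $\pi_*\pi^*\simeq\id$ (this is nothing other than $(\boxx$-inv$)$, since $(\P^1,0)\cong\boxx$); hence $\pi_*\pi^!\xrightarrow{ad'}\id$ is an isomorphism for free. This is exactly what the paper does: it writes ``$g_!g^!\xrightarrow{ad'}\id$ is an isomorphism since its left adjoint $\id\xrightarrow{ad} g_*g^*$ is an isomorphism by $(\boxx$-inv$)$.'' No purity for $\bar i$, no Mayer--Vietoris on $Y$, and no base-change from a universal square is needed.

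With that in hand your argument goes through, but the paper organizes the same pieces more economically. Instead of two successive (Loc) sequences (at $\pt_\N\hookrightarrow\A_\N$ and at $\pt_\N\hookrightarrow Y$), it uses the Zariski cover $\{\A_\N,\A^1\}$ of $\boxx$ with intersection $\G_m$ and a single Mayer--Vietoris cofiber sequence to reduce the claim directly to showing $g_!v_!v^!g^!\xrightarrow{ad'}g_!g^!$ is an isomorphism for $v\colon S\times\A^1\hookrightarrow S\times\boxx$. This is then handled exactly as you do for $f_\infty$: the adjoint-triple trick gives $g_!g^!\simeq\id$, and purity for the strict smooth $gv$ plus $(\A^1$-inv$)$ gives $(gv)_!(gv)^!\simeq\id$. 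One point you should make explicit (and the paper does, via Lemma~\ref{functorial.9}) is that the specific counit map $(gv)_!(gv)^!\to\id$ is identified under purity with the counit $(gv)_\sharp(gv)^*\to\id$; in your write-up you assert the endofunctors are isomorphic but should check the relevant map is the expected one.
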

\begin{proof}
Let $g\colon S\times \square\to S$ be the projection,
and let $v\colon S\times \A^1\to S\times \square$ be the induced open immersion.
By Zariski descent,
it suffices to show that the natural transformation
\[
g_!v_!v^!g^!\xrightarrow{ad'} g_!g^!
\]
is an isomorphism.
The natural transformation $g_!g^!\xrightarrow{ad'}\id$ is an isomorphism since its left adjoint $\id\xrightarrow{ad} g_*g^*$ is an isomorphism by ($\square$-inv).
Hence it suffices to show that the natural transformation
\[
g_!v_!v^!g^!\xrightarrow{ad'}\id
\]
is an isomorphism.
By Proposition \ref{puritytran.1} and Lemma \ref{functorial.9},
it suffices to show that the natural transformation
\[
(gv)_\sharp (gv)^*
\xrightarrow{ad'}
\id
\]
is an isomorphism.
This holds by ($\A^1$-inv).
\end{proof}

\begin{lem}
\label{Lefschetz.6}
Let $f\colon X\to S$ be a log smooth morphism in $\lSch/B$ that admits a factorization
\[
X\xrightarrow{q} S\times \A_{\N^d} \xrightarrow{p} S
\]
such that $p$ is the projection, $q$ is strict \'etale, and $d$ is a nonnegative integer.
Let $j\colon X-\partial_S X\to X$ be the obvious open immersion.
Then the natural transformation
\[
f_*j_!j^!f^!
\xrightarrow{ad'}
f_*f^!
\]
is an isomorphism.
\end{lem}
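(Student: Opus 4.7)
The plan is to induct on $d$. For $d=0$, the morphism $f$ is strict étale, hence $\partial_S X=\emptyset$ and $j$ is the identity, so the claim is trivial. For $d\ge 1$, by the localization sequence (Loc) the cofiber of the natural transformation is $(fi)_*\,i^*f^!\cG$ with $i\colon \partial_S X\to X$ the closed complement of $j$; it thus suffices to show $(fi)_*\,i^*f^!\cG=0$ for every $\cG\in\sT^\ex(S)$. Using the factorization $f=pq$ with $q$ strict étale (so $q^!=q^*$), (Supp) and base change applied to the Cartesian square $\partial_S X=q^{-1}(\partial_S Y)\hookrightarrow X=q^{-1}(Y)$ give
\[
(fi)_*\,i^*f^!\cG\;\simeq\;p_*\,i_{Y*}\,q''_*\,q''^*\,i_Y^*\,p^!\cG,
\]
where $Y:=S\times\A_{\N^d}$, $i_Y\colon\partial_S Y\to Y$, and $q''$ denotes the pullback of $q$. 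Thus it suffices to prove $i_Y^*\,p^!\cG=0$.

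Decomposing $\partial_S Y=\bigcup_{k=1}^{d}D_k$ into the coordinate hyperplanes $D_k:=S\times\A_\N^{k-1}\times\{0\}\times\A_\N^{d-k}$ and applying (Loc) iteratively along the filtration $\bigcup_{k\le l}D_k$, the vanishing $i_Y^*p^!\cG=0$ follows from $i_k^*p^!\cG=0$ for each $k$, where $i_k\colon D_k\to Y$. For fixed $k$, factor $p=p_1^{(k)}\circ p_2^{(k)}$ where $p_2^{(k)}\colon Y\to Y_1^{(k)}:=S\times\A_\N^{d-1}$ omits the $k$-th $\A_\N$-factor. Lemma \ref{Lefschetz.5} applied to $p_2^{(k)}$ with base $Y_1^{(k)}$ yields
\[
(p_2^{(k)})_!\,(i_k)_*\,(i_k)^*(p_2^{(k)})^!\cF\;\simeq\;0
\]
for every $\cF\in\sT^\ex(Y_1^{(k)})$. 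Since $\pi:=p_2^{(k)}\circ i_k\colon D_k\simeq Y_1^{(k)}\times\pt_\N\to Y_1^{(k)}$ is proper with $\ul\pi$ an isomorphism, $\pi_!=\pi_*$, and conservativity of $\pi_*$ (to be argued via the generator structure of $\sT^\ex(D_k)$, exploiting the decomposition $D_k\simeq Y_1^{(k)}\times\pt_\N$ and the conservativity of $\tau_*$ for the structure map $\tau\colon\pt_\N\to\Spec\Z$) gives $(i_k)^*(p_2^{(k)})^!\cF=0$. Applied with $\cF=(p_1^{(k)})^!\cG$ this yields $i_k^*p^!\cG=0$, as required.

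The hard part will be establishing the conservativity of $\pi_*$: it is subtle because $\sT^\ex(D_k)$ can admit generators not of pullback form (e.g.\ those coming from exact log smooth morphisms with non-split chart $\N\to\N^n$ diagonal). If this direct route proves unwieldy, an alternative is to carry out the Mayer--Vietoris decomposition at the level of the original statement $(fi)_*\,i^*f^!\cG=0$: stratify $\partial_S X=\bigcup_I\tilde D_I$ with $\tilde D_I:=q^{-1}(D_I)$, and use the inductive hypothesis for $d-1$ applied to the projection $p_1^{(k)}\colon Y_1^{(k)}\to S$ to handle the multi-index strata ($|I|\ge 2$), together with the single-index vanishing $(pi_k)_*\,i_k^*\,p^!\cG=0$ obtained by applying $(p_1^{(k)})_*$ to the conclusion of Lemma \ref{Lefschetz.5} followed by the identification $(p_1^{(k)})_*\pi_*=(pi_k)_*$.
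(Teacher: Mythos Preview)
Your main route has a genuine gap at the conservativity of $\pi_*$ for $\pi\colon Y_1^{(k)}\times\pt_\N\to Y_1^{(k)}$. This would follow if the pullbacks $\pi^*M$ generated $\sT^\ex(Y_1^{(k)}\times\pt_\N)$, but they do not: already over $\pt_\N$ one has generators such as $M(W\times_{\A_\N}\pt_\N)$ (with $W$ as in Proposition~\ref{puritytran.3}) that are not of pullback form, as you yourself note. More fundamentally, the pointwise vanishing $i_Y^*p^!\cG=0$ you are aiming for is strictly stronger than the lemma and is essentially the content of Theorem~\ref{Lefschetz.4}, whose proof \emph{uses} the present lemma; so you should not expect to establish it here without new input. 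Your alternative sketch is closer in spirit but also incomplete: the single-index vanishing you extract from Lemma~\ref{Lefschetz.5} is $(p i_k)_*i_k^*p^!\cG\simeq 0$ on $Y$, whereas on $X$ the expression $(fi)_*i^*f^!\cG\simeq p_*i_{Y*}q''_*q''^*i_Y^*p^!\cG$ has $q''_*q''^*$ inserted in the middle, and it is unclear how the inductive hypothesis (a statement about $f_*j_!j^!f^!\to f_*f^!$) handles the multi-index strata.

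The paper never attempts pointwise vanishing and instead peels off one $\A_\N$ at a time with a \emph{changing base}. Writing $p'\colon S\times\A_{\N^d}\to S\times\A_\N$ for the projection onto the first coordinate, the inductive hypothesis for $p'q\colon X\to S\times\A_\N$ (which factors through $(S\times\A_\N)\times\A_{\N^{d-1}}$) shows that $f_*j'_!j'^!f^!\to f_*f^!$ is an isomorphism for $j'\colon X-\partial_{S\times\A_\N}X\hookrightarrow X$; comparing $j$ with $j'$ on the open $X-\partial_{S\times\A_\N}X$ reduces to $d=1$ over the new base $S\times\G_m^{d-1}$. For $d=1$, a base-change argument trades the strict \'etale $q$ for a strict \'etale map $S'\to S$ on the base side, reducing to $p'_!a'_*a'^*p'^!\simeq 0$ for $p'\colon S'\times\A_\N\to S'$ and the zero-section $a'$, which is Lemma~\ref{Lefschetz.5} combined with (Loc).
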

\begin{proof}
We proceed by induction on $d$.
The claim is trivial if $d=0$,
so assume $d>0$.
Let $p'\colon S\times \A_{\N^d}\to S\times \A_\N$ be the morphism induced by the first inclusion $\N\to \N^d$,
and let $j'\colon X-\partial_{S\times \A_{\N}}X\to X$ be the obvious open immersion.
By induction,
the natural transformation
\[
p_*'q_*j_!'j'^!q^!p'^!
\xrightarrow{ad'}
p_*'q_*q^!p'^!
\]
is an isomorphism.
Hence it suffices to show that the natural transformation
\[
f_*j_!j^!f^!
\xrightarrow{ad'}
f_*j_!'j'^!f^!
\]
is an isomorphism.
Replace $X\to S$ with $X-\partial_{S\times \A_{\N}}X\to S\times \A_\N \times \G_m^{d-1}$ to reduce to the case when $d=1$.

Consider the induced cartesian square
\[
\begin{tikzcd}
X\times_{\A_\N}\pt_\N\ar[r,"i"]\ar[d,"q'"']&
X\ar[d,"q"]
\\
S\times \pt_\N\ar[r,"a"]&
S\times \A_\N.
\end{tikzcd}
\]
By (Loc),
it suffices to show $f_*i_!i^*f^!\simeq 0$,
Since $q$ is strict \'etale,
we have $q^*\simeq q^!$.
Hence it suffices to show $p_*a_*q_*'q'^*a^*p^!\simeq 0$.
Since $\ul{pa}$ is an isomorphism,
there exists a unique cartesian square
\[
\begin{tikzcd}
X\times_{\A_\N}\pt_\N\ar[d,"q'"']\ar[r]&
S'\ar[d,"g"]
\\
S\times \pt_\N\ar[r,"pa"]&
S.
\end{tikzcd}
\]
Observe that $g$ is strict \'etale.

Consider the induced commutative diagram with cartesian squares
\[
\begin{tikzcd}
X\times_{\A_\N}\pt_\N\ar[d,"q'"']\ar[r,"a'"]&
S'\times \A_\N\ar[d,"g'"]\ar[r,"p'"]&
S'\ar[d,"g"]
\\
S\times \pt_\N
\ar[r,"a"]&
S\times \A_\N\ar[r,"p"]&
S.
\end{tikzcd}
\]
We need to show $p_*a_*q_*'q'^*a^*p^!\simeq 0$,
which is equivalent to
$g_*p_*'a_*'a'^*g'^*p^!\simeq 0$.
Since $g$ and $g'$ are strict \'etale,
we have $g^*\simeq g^!$ and $g'^*\simeq g'^!$.
Hence it suffices to show $p_*'a_*'a'^*p'^!\simeq 0$.
This is equivalent to $p_!'a_!'a'^*p'^!\simeq 0$ since $pa$ is proper.
Lemma \ref{Lefschetz.5} and (Loc) finish the proof.
\end{proof}

\begin{lem}
\label{Lefschetz.7}
Let $f\colon X\to S$ be a log smooth morphism in $\lSch/B$ such that $S$ has the trivial log structure,
and let $j\colon X-\partial X\to X$ be the obvious open immersion.
Then the natural transformation
\[
f_*j_!j^!f^!
\xrightarrow{ad'}
f_*f^!
\]
is an isomorphism.
\end{lem}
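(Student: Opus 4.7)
The plan is to reduce Lemma \ref{Lefschetz.7} to Lemma \ref{Lefschetz.6} via Zariski descent on $X$, a toric resolution realized as a dividing cover $g\colon X'\to X$, and Zariski descent on $X'$.

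For the first step, I argue that the claim is Zariski local on $X$: by (Loc), the cofiber of $f_*j_!j^!f^!\cG\to f_*f^!\cG$ equals $(fi)_*i^*f^!\cG$, where $i\colon\partial X\to X$ is the strict closed immersion. For a Zariski cover $\{u_\alpha\colon U_\alpha\to X\}$, Zariski hyperdescent on $\partial X$ combined with the base changes $u_\bullet^*f^!\simeq(fu_\bullet)^!$ (open immersion) and $u_\bullet^*i_*\simeq i_{\bullet*}u_\bullet'^*$ (strict closed immersion) rewrites this cofiber as the \v{C}ech limit of the analogous cofibers for the $fu_{\alpha_\bullet}\colon U_{\alpha_\bullet}\to S$. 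By Kato's chart theorem \cite[Theorem IV.3.3.1]{Ogu}, I may thus assume $f$ admits a neat chart $0\to Q$ with $Q$ sharp fs and $Q^\gp$ torsion-free, and that the induced map $X\to S\times\A_Q$ is strict \'etale.

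For the second step, I apply toric resolution of singularities \cite[Theorem 11.1.9]{CLStoric} to pick a smooth subdivision $\Sigma$ of $Q_\Q^\vee$, producing a dividing cover $h\colon\T_\Sigma\to\A_Q$ whose source is Zariski-covered by affine toric pieces $\A_{\sigma_\ell}\simeq\A_{\N^{d_\ell}}$ indexed by the maximal cones $\sigma_\ell$. Pulling back yields a dividing cover $g\colon X':=X\times_{\A_Q}\T_\Sigma\to X$. Writing $j'\colon X'-\partial X'\to X'$ for the open immersion and $g''\colon X'-\partial X'\to X-\partial X$ for the pullback of $g$ along $j$, the square $jg''=gj'$ is cartesian and $g''$ is in fact an isomorphism, since $g$ is birational over the trivial-log locus. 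I then show that the statement for $fg\colon X'\to S$ implies the statement for $f$ using: ($div$-inv) for $g_*g^*\simeq\id$; (Supp) for $g_*j'_\sharp\simeq j_\sharp g''_*\simeq j_\sharp$ (via $g''\simeq\id$); and the identification $g^!\simeq g^*$ for the dividing cover $g$ (following from the log variant of Theorem \ref{poincare.1} with trivial Thom twist, since $g$ is log \'etale). These assemble into natural identifications
\[(fg)_*(fg)^!\simeq f_*f^!,\qquad(fg)_*j'_!j'^!(fg)^!\simeq f_*j_!j^!f^!\]
compatibly with the counits $ad'$.

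For the third step, the statement for $fg\colon X'\to S$ follows by applying the first-step Zariski descent on $X'$ to the cover $\{X'_\ell:=X\times_{\A_Q}\A_{\sigma_\ell}\}_\ell$. Each (multi-)intersection $X'_{\ell_0\cdots\ell_n}$ corresponds to a toric piece of the form $\A_{\N^{d'}}\times\G_m^{d''}$, so the composite
\[X'_{\ell_0\cdots\ell_n}\;\to\;(S\times\G_m^{d''})\times\A_{\N^{d'}}\;\to\;S\times\G_m^{d''}\;\to\;S\]
is strict \'etale followed by projections; Lemma \ref{Lefschetz.6} applied over the base $S\times\G_m^{d''}$ (which still has trivial log structure) gives the local isomorphism, and Zariski descent assembles the pieces. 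The main obstacle lies in the second step: establishing $g^!\simeq g^*$ for the non-exact dividing cover $g$ and verifying that the resulting chain of isomorphisms intertwines the counits $ad'$, which will require diagram chases in the spirit of the hexagon arguments of Lemmas \ref{base2.1}--\ref{base2.4}.
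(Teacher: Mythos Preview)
Your overall strategy---Zariski locality on $X$, reduction along a dividing cover, then Lemma \ref{Lefschetz.6}---is exactly the paper's approach. The gap is in your second step, and it is the one you flag yourself: you do \emph{not} need the identification $g^!\simeq g^*$, and your proposed route to it via Theorem \ref{poincare.1} is invalid anyway, since a nontrivial dividing cover is never exact (on a chart $P\hookrightarrow Q$ with $P^\gp=Q^\gp$, exactness would force $Q\subset P$). What you actually need is only $g_*g^!\simeq\id$, and this is a one-line formal consequence of ($div$-inv): since $g$ is proper we have $g_!\simeq g_*$, and for any $\cF,\cG$ the chain
\[
\Hom(\cF,g_*g^!\cG)\simeq\Hom(g^*\cF,g^!\cG)\simeq\Hom(g_*g^*\cF,\cG)\simeq\Hom(\cF,\cG)
\]
shows the counit $g_*g^!\to\id$ is an isomorphism.

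With that in hand, the paper's reduction is cleaner than your assembly of identifications: insert $g_*g^!\simeq\id$ to rewrite $f_*j_\sharp j^*f^!\to f_*f^!$ as $f_*j_\sharp j^*g_*g^!f^!\to f_*g_*g^!f^!$, then commute $j_\sharp j^*$ past $g_*$ using the cartesian square $gj'=jg''$. The right-adjoint form of ($\eSm$-BC) (with the open immersion $j$ in the role of the exact log smooth map) gives $j^*g_*\simeq g''_*j'^*$, and (Supp) gives $j_\sharp g''_*\simeq g_*j'_\sharp$; combined, $j_\sharp j^*g_*\simeq g_*j'_\sharp j'^*$. This yields $(fg)_*j'_\sharp j'^*(fg)^!\to(fg)_*(fg)^!$ directly, with no need to analyze $g^!$ on its own and no hexagon chases. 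Your third step is fine; note that $\A_{\N^{d'}}\times\G_m^{d''}$ is already open in $\A_{\N^{d'+d''}}$, so Lemma \ref{Lefschetz.6} applies over $S$ itself without changing the base.
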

\begin{proof}
Let $u\colon U\to X$ be an open immersion.
To show that the natural transformation
\[
f_*u_*u^*j_!j^!f^!
\xrightarrow{ad'}
f_*u_*u^*f^!
\]
is an isomorphism,
it suffices to show the claim for $fu$ by ($\eSm$-BC).
Hence we see that the claim is Zariski local on $X$.

Let $v\colon V\to X$ be a dividing cover.
Since ($\divi$-inv) implies $v_*v^!\simeq \id$,
it suffices to show that the natural transformation
\[
f_*j_!j^!v_*v^!f^!
\xrightarrow{ad'}
f_*v_*v^!f^!
\]
is an isomorphism.
By ($\eSm$-BC) and (Supp),
it suffices to show the claim for $fv$.
Hence we see that the claim is dividing local on $X$.
Together with the Zariski locality on $X$,
we may assume that $X$ has a chart $\N^d$ for some integer $d\geq 0$ such that the induced morphism $X\to S\times \A_{\N^d}$ is strict \'etale.
Lemma \ref{Lefschetz.6} finishes the proof.
\end{proof}

The next result is the Lefschetz duality for the case when the target has the trivial log structure:

\begin{thm}
\label{Lefschetz.4}
Let $f\colon X\to S$ be a log smooth morphism in $\lSch/B$ such that $S$ has the trivial log structure,
and let $j\colon X-\partial X \to X$ be the obvious open immersion.
Then the natural transformation
\[
j_!j^!f^!
\xrightarrow{ad'}
f^!
\]
is an isomorphism.
\end{thm}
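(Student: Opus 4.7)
I plan to upgrade Lemma \ref{Lefschetz.7}, which proves the claim after applying $f_*$, to the on-the-nose statement by running the same locality reductions directly on $\alpha\colon j_!j^!f^!\to f^!$ and then handling the resulting local case by induction.

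First, by the localization cofiber sequence (Loc) for the strict closed immersion $i\colon \partial X\to X$ complementary to $j$, the cofiber of $\alpha_{\cG}$ is $i_*i^*f^!\cG$; since $i_*$ is fully faithful, the theorem is equivalent to $i^*f^!\cG\simeq 0$ for all $\cG\in \sT^\ex(S)$.

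Next, I argue that the claim is Zariski and dividing local on $X$, following the pattern of Lemma \ref{Lefschetz.7} but applied to $\alpha$ instead of $f_*\alpha$. For an open immersion $u\colon U\to X$, pulling back $\alpha$ by $u^*$ yields the analogous map for $fu\colon U\to S$, using ($\eSm$-BC) to commute $u^*$ past $j_!$ and $j^!$ together with $u^*f^!\simeq (fu)^!$ (since $u$ is open). For a dividing cover $v\colon \tilde X\to X$, ($div$-inv) gives $v^*$ fully faithful, and Proposition \ref{base2.5} with ($\eSm$-BC) yields $v^*\alpha\simeq \alpha_{fv}$. Via \cite[Propositions A.3, A.4]{divspc} this reduces to the case where $f$ factors as $X\xrightarrow{q}S\times\A_{\N^d}\xrightarrow{p}S$ with $q$ strict \'etale, and since $q^*\simeq q^!$ for strict \'etale $q$, further to $f = p$.

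For $p\colon S\times\A_{\N^d}\to S$ I induct on $d$ following the pattern of Lemma \ref{Lefschetz.6}: I factor off one $\A_\N$-coordinate so that the boundary along the remaining $d-1$ factors is handled by the inductive hypothesis (after base change to $\G_m^{d-1}$, where that factor carries trivial log structure), leaving the $d=1$ case. For $d=1$, I mimic the proof of Lemma \ref{Lefschetz.5}: compactify $\A_\N$ inside $\square$ by Zariski descent, and combine $\square$-invariance (giving $g_*g^*\simeq\id$ for the proper projection $g\colon S\times\square\to S$) with Poincar\'e duality for the strict smooth open part $pj\colon S\times\G_m\to S$ (Proposition \ref{puritytran.1}).

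The main obstacle is this base case $d=1$: Lemma \ref{Lefschetz.5} supplies the isomorphism only after applying $p_!$, whereas the theorem demands it on the nose. Bridging this gap requires a direct identification of $i^*p^!\cG$ via the strict Gysin triangle for $i\colon S\times\pt_\N\to S\times\A_\N$ (codimension one, trivial normal bundle), translated through Poincar\'e duality for $pj$ and $\square$-invariance for the compactification, rather than merely exploiting the $p_!$-version.
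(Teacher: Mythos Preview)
Your locality reductions are sound, but the proposal has a genuine gap precisely where you flag the ``main obstacle'': the base case $d=1$. You propose to identify $i^*p^!$ via a ``strict Gysin triangle'' for $i\colon S\times\pt_\N\to S\times\A_\N$, but no useful purity statement is available here. The closed immersion $i$ is strict, yet $S\times\pt_\N$ is not log smooth over $S$, so the framework of \S\ref{thom}--\ref{composition} does not apply; and if one naively computes a log normal bundle, the map $i^*\Omega^1_{\A_\N}\to\Omega^1_{\pt_\N}$ is an isomorphism, so a Gysin formula would give $i^!\simeq i^*$, whence $i^*p^!\simeq (pi)^!$, which is certainly nonzero. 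The ingredients you list---Poincar\'e duality for $pj$ and $\square$-invariance---assemble exactly into Lemma~\ref{Lefschetz.5}, i.e.\ vanishing only after $p_!$; they do not by themselves pin down $p^!$ on the boundary stratum. Your inductive step has the same defect: following the pattern of Lemma~\ref{Lefschetz.6} on the nose forces you to invoke the statement over the base $S\times\A_\N$, which is not the trivial-log-structure case (indeed, that is the open general Lefschetz duality the paper does \emph{not} prove).

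The paper's argument avoids computing $i^*f^!$ altogether. Since $\sT^\ex(X)$ is generated by $M_X(V)(d)[n]$ with $v\colon V\to X$ \emph{vertical} exact log smooth, it suffices to show $\Hom_X(M_X(V),\alpha)$ is an isomorphism. Working Zariski locally on $V$ to trivialize $T_v$, Poincar\'e duality (Theorem~\ref{poincare.1}) for the \emph{vertical} morphism $v$ gives $v^*\simeq v^!(-d)[-2d]$; this is the step that has no analogue in your approach, since neither $f$ nor $i$ is vertical. That identification converts the test into a statement about $(fv)_*$ applied to $j'_!j'^!(fv)^!\to(fv)^!$, which is exactly Lemma~\ref{Lefschetz.7} for $fv$. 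So the missing idea is to transfer Poincar\'e duality from the (non-vertical) $f$ to the (vertical) generating morphisms $v$, thereby reducing the on-the-nose claim to the already-established $f_*$-version rather than trying to upgrade the latter directly.
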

\begin{proof}
It suffices to show that the induced morphism
\[
\Hom_{\sT^{ex}(X)}
(M_X(V),j_!j^!f^!\cF)
\to
\Hom_{\sT^{ex}(X)}
(M_X(V),f^!\cF)
\]
is an isomorphism for every vertical exact log smooth morphism $v\colon V\to X$ and $\cF\in \sT^{ex}(S)$.
By Zariski descent,
we can work Zariski locally on $V$.
Hence we may assume that the tangent bundle $T_v$ is isomorphic to the rank $d$ trivial bundle.
Proposition \ref{comp.5} and
Theorem \ref{poincare.1} yield natural isomorphisms
\begin{equation}
\label{Lefschetz.4.1}
v^![-d][-2d]
\simeq
\Omega_v v^!
\simeq
v^*.
\end{equation}

By adjunction,
it suffices to show that the natural transformation
\[
f_*v_*v^*f^!
\to
f_*v_*v^*j_!j^!f^!
\]
is an isomorphism.
Let $j'\colon V-\partial V\to V$ be the obvious open immersion.
By ($\eSm$-BC) and \eqref{Lefschetz.4.1},
it suffices to show that the natural transformation
\[
f_*v_*v^!f^!
\to
f_*v_* j_!'j'^! v^!f^!
\]
is an isomorphism.
Replace $X$ by $V$ to reduce to Lemma \ref{Lefschetz.7}.
\end{proof}

As a consequence,
we obtain the Poincar\'e duality for boundary fs log schemes as follows.

\begin{cor}
\label{Lefschetz.8}
Let $f\colon X\to S$ be a log smooth morphism in $\lSch/B$ such that $S$ has the trivial log structure,
and let $i\colon \partial X \to X$ be the obvious strict closed immersion.
We set $g:=fi$.
Then $g^*$ admits a left adjoint $g_\sharp$,
and there exists a natural isomorphism
\[
g_\sharp\cF
\simeq
g_! (\cF\otimes \Thom(i^*T_f))[-1]
\]
for $\cF\in \sT^{ex}(\partial X)$,
where $T_f$ is the tangent bundle of $f$.
\end{cor}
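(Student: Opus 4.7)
The strategy is to first establish the identity
\[
g^!\cG \simeq g^*\cG \otimes \Thom(i^*T_f)[-1]
\]
for $\cG \in \sT^\ex(S)$, and then to read off both assertions of the corollary by $\otimes$-inverting $\Thom(i^*T_f)$. Once that identity is known, the composite natural equivalences
\[
\Hom_{\sT^\ex(S)}(g_!(\cF \otimes \Thom(i^*T_f))[-1], \cG)
\simeq
\Hom_{\sT^\ex(\partial X)}(\cF \otimes \Thom(i^*T_f)[-1], g^!\cG)
\simeq
\Hom_{\sT^\ex(\partial X)}(\cF, g^*\cG)
\]
exhibit $\cF \mapsto g_!(\cF \otimes \Thom(i^*T_f))[-1]$ as a left adjoint to $g^*$, proving both the existence of $g_\sharp$ and the asserted formula simultaneously.

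To obtain the identity for $g^! = i^!f^!$, I would apply $i^!$ to the Lefschetz identification $f^!\cG \simeq j_\sharp j^*f^*\cG \otimes \Thom(T_f)$ of \textup{Theorem \ref{introduction.2}(4)} (equivalently, \textup{Theorem \ref{Lefschetz.4}} combined with Poincar\'e duality for the smooth morphism $fj$) and rewrite each factor. Since $\Thom(T_f)$ is $\otimes$-invertible, the closed-immersion projection formula $i_*(\cH \otimes i^*\cL) \simeq i_*\cH \otimes \cL$ yields by adjunction an isomorphism $i^!(\cL \otimes \cF) \simeq i^*\cL \otimes i^!\cF$ for any invertible $\cL$; combined with $i^*\Thom(T_f) \simeq \Thom(i^*T_f)$, which follows from \textup{($\eSm$-BC)} applied to the projection of the vector bundle $T_f \to X$, this pulls the Thom factor outside $i^!$. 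Moreover, because $S$ carries the trivial log structure, any local chart of $f$ has trivial source and is thus automatically exact, while $\partial_S X = \partial X$; hence $f$ is exact log smooth, and \textup{($ver$-inv)} supplies the isomorphism $f^*\cG \xrightarrow{\simeq} j_*j^*f^*\cG$.

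The main computation, and the one step that I expect to require real work, is the identification
\[
i^!j_\sharp \cA \simeq i^*j_* \cA[-1]
\]
for $\cA \in \sT^\ex(X - \partial X)$, obtained by playing the two localization triangles of \textup{(Loc)} against one another. The fiber sequence $i_*i^! \to \id \to j_*j^*$ applied to $j_\sharp\cA$, combined with $j^*j_\sharp \simeq \id$, identifies $i_*i^!j_\sharp\cA$ with $\fib(j_\sharp\cA \to j_*\cA)$; the cofiber sequence $j_\sharp j^* \to \id \to i_*i^*$ applied to $j_*\cA$, combined with $j^*j_* \simeq \id$, identifies the same object with $i_*i^*j_*\cA[-1]$. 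Applying $i^*$ and using $i^*i_* \simeq \id$ then yields the claimed isomorphism. Substituting $\cA := j^*f^*\cG$ and using $j_*j^*f^*\cG \simeq f^*\cG$ from \textup{($ver$-inv)}, one assembles
\[
g^!\cG
\simeq
\Thom(i^*T_f) \otimes i^*j_*j^*f^*\cG[-1]
\simeq
g^*\cG \otimes \Thom(i^*T_f)[-1],
\]
and the corollary follows from the adjunction argument of the first paragraph.
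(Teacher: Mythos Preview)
Your proof is correct and follows essentially the same approach as the paper: both combine Theorem~\ref{Lefschetz.4}, ($ver$-inv), and the two (Loc) triangles to compare $i^!f^!$ with $i^*f^*$ up to the Thom twist and a shift. The only difference is organizational: the paper keeps the Thom twist packaged as the functor $\Omega_f^n$ and uses the exchange isomorphisms of Proposition~\ref{exchange.10} to commute it past $i_*$ at the end, whereas you invoke the $\otimes$-invertibility of $\Thom(T_f)$ and the projection formula (PF) for $i$ to factor it out at the start---these are equivalent manipulations.
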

\begin{proof}
Let $j\colon X-\partial X\to X$ be the obvious open immersion.
Consider the induced commutative square
\[
\begin{tikzcd}
\Omega_f^n j_\sharp j^! f^!\ar[d]\ar[r,"\simeq"]&
j_\sharp \Omega_{fj}^nj^!f^!\ar[r,"\simeq"]\ar[d]&
j_\sharp j^*f^*\ar[d]
\\
\Omega_f^n j_*j^!f^!\ar[r]&
j_* \Omega_{fj}^nj^!f^!\ar[r,"\simeq"]&
j_* j^*f^*
\end{tikzcd}
\]
whose horizontal arrows are isomorphisms by Proposition \ref{exchange.10} and Theorem \ref{poincare.1}.
Since $j_*j^*f^*\simeq f^*$ by ($\ver$-inv),
the cofiber of $j_\sharp j^* f^*\to j_*j^* f^*$ is isomorphic to $i_*i^*f^*$ by (Loc).
Since $j_\sharp j^!f^!\simeq f^!$ by Theorem \ref{Lefschetz.4},
the fiber of $\Omega_f^n j_\sharp j^! f^!\to \Omega_f^n j_* j^* f^!$ is isomorphic to $\Omega_f i_*i^!f^!$.
Hence we have a natural isomorphism
\[
\Omega_f i_*g^![1]
\simeq
i_*g^*.
\]
Apply $i^*$ to this and use Proposition \ref{exchange.10} and (Loc) to have a natural isomorphism
\[
\Thom'(p,a) g^![1]
\simeq g^*,
\]
where $p\colon i^*T_f\to \partial X$ is the projection and $a\colon \partial X \to i^* T_f$ is the zero section.
This shows that $g^*$ admits a left adjoint $g_\sharp$,
and we obtain the desired natural isomorphism by adjunction.
\end{proof}

\section{Cohomology theories of fs log schemes}

Throughout this section,
we fix a log motivic $\infty$-category $\sT$ such that $M_S(X)$ is a compact object of $\sT^{ex}(S)$ for every vertical exact log smooth morphism $X\to S$ in $\lSch/B$.

In \S \ref{orientation},
we provide Thom isomorphisms when an oriented commutative algebra object is given.
In \S \ref{BM},
we define Borel-Moore $\E$-homology for log schemes when $\E$ is an object of $\infSH(B)$.
Based on this,
we also define Borel-Moore motivic homology, G-theory, and Chow homology of separated fs log schemes (over a perfect field).
In \S \ref{logChow},
we define Chow motives over fs log schemes,
and we suggest a strategy to compare the category of Chow motives modulo homological equivalences with the construction in \cite{MR4043075}.
We also show that the category of Chow motives over the standard log point contains $\unit(d)[n]$ as objects for all integers $d$ and $n$.

\subsection{Orientations}
\label{orientation}

A commutative algebra object $\E$ of $\infSH(B)$ is \emph{oriented} if there exists an element $c_\infty \in \E^{2,1}(\P^\infty)$ such that $(c_\infty)|_{\P^1}\simeq \Sigma^{2,1}\unit$.
This is equivalent to the definition of Panin-Pimenov-R\"ondig \cite[Definition 1.2]{MR2475610} after changing the sign.

\begin{exm}
\label{orientation.1}
By \cite[Proposition 11.1]{MR3865569},
$\mathrm{M}\Z$ admits a canonical orientation in $\infSH(B)$.
This implies that $\ML$ admits a canonical orientation in $\infSH(B)$ too for every commutative ring $\Lambda$.
\end{exm}

\begin{prop}
\label{orientation.2}
Let $\E$ be an oriented commutative algebra object of $\infSH(B)$,
and let $\cE$ be a rank $n$ vector bundle over $S\in \lSch/B$.
Then there exists a canonical \emph{Thom isomorphism}
\[
\mathfrak{t}_{\cE}
\colon
\Thom(\cE)
\xrightarrow{\simeq}
\unit(n)[2n]
\]
in $\Mod_{\E}(S)$ satisfying the following properties:
\begin{enumerate}
\item[\textup{(1)}]
If $\cE\to \cF$ is an isomorphism of rank $n$ vector bundles over $S$,
then the induced triangle
\[
\begin{tikzcd}
\Thom(\cE)\ar[rd,"\mathfrak{t}_{\cE}"']\ar[rr]&
&
\Thom(\cF)\ar[ld,"\mathfrak{t}_{\cF}"]
\\
&
\unit(n)[2n]
\end{tikzcd}
\]
commutes.
\item[\textup{(2)}]
If $f\colon X\to S$ is a morphism in $\lSch/B$ and $\cE$ is a rank $n$ vector bundle over $S$,
then the induced square
\[
\begin{tikzcd}
f^*\Thom(\cE)\ar[d,"\simeq"']\ar[r,"\mathfrak{t}_{\cE}"]&
f^*(\unit(n)[2n])\ar[d,"\simeq"]
\\
\Thom(f^*\cE)\ar[r,"\mathfrak{t}_{f^*\cE}"]&
\unit(n)[2n]
\end{tikzcd}
\]
commutes.
\item[\textup{(3)}]
If $0\to \cE'\to \cE\to \cE''\to 0$ is an exact sequence of vector bundles over $S$ and $\cE'$ and $\cE''$ have ranks $m$ and $n$,
then the induced square
\[
\begin{tikzcd}
\Thom(\cE)\ar[r,"\mathfrak{t}_{\cE}"]\ar[d,"\simeq"']&
\unit (m+n)[2m+2n]\ar[d,"\id"]
\\
\Thom(\cE')\otimes \Thom(\cE'')\ar[r,"\mathfrak{t}_{\cE}\otimes \mathfrak{t}_{\cF}"]&
\unit (m+n)[2m+2n]
\end{tikzcd}
\]
commutes,
where the left vertical arrow is obtained by \eqref{comp.4.1}.
\end{enumerate}
\end{prop}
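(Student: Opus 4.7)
The plan is to reduce Proposition \ref{orientation.2} to the classical scheme-theoretic theorem of Panin-Pimenov-R\"ondig \cite{MR2475610} via strict pullback from the underlying scheme. Given $S\in \lSch/B$, let $\ul{S}$ denote its underlying scheme equipped with its trivial log structure and regarded as an object of $\lSch/B$, and let $\pi_S\colon S\to \ul{S}$ be the canonical morphism of fs log schemes (so $\ul{\pi_S}=\id$). A vector bundle $\cE$ over $S$ arises from a vector bundle $\ul{\cE}$ over $\ul{S}$ by strict pullback, so $\cE\simeq \pi_S^*\ul{\cE}$; in particular, ($\eSm$-BC) applied to the strict smooth morphism $\ul{p}\colon \ul{\cE}\to \ul{S}$ (and to its restriction to the open complement of the zero section), together with the fact that $\pi_S^*$ preserves cofibers, gives a canonical identification $\Thom(\cE)\simeq \pi_S^*\Thom(\ul{\cE})$ in $\sT^\ex(S)$.

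Second, I would invoke Panin-Pimenov-R\"ondig: an orientation $c_\infty\in \E^{2,1}(\P^\infty)$ yields, via the projective bundle formula, a canonical Thom class in $\E$-cohomology of $\Thom(\ul{\cE})$ for every rank-$n$ vector bundle $\ul{\cE}$ on $\ul{S}\in \Sch/B$, and multiplication by this class produces a canonical isomorphism $\mathfrak{t}_{\ul{\cE}}\colon \Thom(\ul{\cE})\xrightarrow{\simeq}\unit(n)[2n]$ in $\Mod_\E(\ul{S})$ satisfying the scheme-theoretic analogues of (1)--(3). I then set
\[
\mathfrak{t}_\cE:=\pi_S^*(\mathfrak{t}_{\ul{\cE}})\colon \Thom(\cE)\xrightarrow{\simeq}\unit(n)[2n]
\]
in $\Mod_\E(S)$, using that $\pi_S^*$ is symmetric monoidal and commutes with shifts and Tate twists so that $\pi_S^*(\unit(n)[2n])\simeq \unit(n)[2n]$.

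The verification of the three listed properties is then essentially formal. Properties (1) and (3) follow by applying $\pi_S^*$ to the corresponding scheme-theoretic statements, since isomorphisms of rank-$n$ bundles and short exact sequences of bundles over $S$ correspond to the same over $\ul{S}$, and $\pi_S^*$ preserves cofiber sequences and tensor products (and the natural transformation \eqref{comp.4.1} attached to the graph of the inclusion of the zero section agrees under pullback with its scheme-theoretic counterpart). For (2), given $f\colon X\to S$ in $\lSch/B$, the commutative square
\[
\begin{tikzcd}
X\ar[d,"f"']\ar[r,"\pi_X"]&
\ul{X}\ar[d,"\ul{f}"]
\\
S\ar[r,"\pi_S"]&
\ul{S}
\end{tikzcd}
\]
produces a natural isomorphism $f^*\pi_S^*\simeq \pi_X^*\ul{f}^*$ that identifies $f^*\cE\simeq \pi_X^*\ul{f}^*\ul{\cE}$; applying $\pi_X^*$ to the scheme-theoretic version of (2) for $\ul{f}$ and $\ul{\cE}$ then gives (2) in the log setting.

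The main obstacle I foresee is really step one: carefully checking that the log-motivic Thom construction of Definition \ref{thom.4} is compatible with the classical one through $\pi_S^*$, i.e.\ that $\Thom(\cE)\simeq \pi_S^*\Thom(\ul{\cE})$ in $\sT^\ex(S)$. This requires observing that the open complement $\cE-Z$ is itself the strict pullback of $\ul{\cE}-\ul{Z}$ along $\pi_S$ (which holds because a strict closed immersion has strict open complement) so that ($\eSm$-BC) applies term by term in the defining cofiber sequence. Once this compatibility is in hand, the entire statement reduces mechanically to the known scheme-theoretic case via the symmetric monoidal pullback $\pi_S^*\colon \Mod_\E(\ul{S})\to \Mod_\E(S)$.
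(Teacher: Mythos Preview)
Your proposal is correct and is essentially the same argument as the paper's: reduce to the case of trivial log structure via pullback along the canonical morphism $S\to\ul{S}$, and invoke the known scheme-theoretic Thom isomorphism for oriented ring spectra. The paper cites D\'eglise \cite{MR2466188} (and \cite[Example 2.4.40]{CD19}) for the scheme case rather than Panin--Pimenov--R\"ondig, and is considerably terser about the verification of (1)--(3), but the strategy is identical.
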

\begin{proof}
If $S$ has the trivial log structure,
then this is a consequence of D\'eglise's results in \cite{MR2466188},
see also \cite[Example 2.4.40]{CD19}.

For general $S$,
let $p\colon S\to \ul{S}$ be the morphism removing the log structure.
Then use the pullback functor $p^*$.
\end{proof}

\subsection{Borel-Moore homology}
\label{BM}

\begin{df}
\label{BM.1}
Let $\E$ be an object of $\infSH(B)$.
For $X\in \lSch/B$,
\emph{$\E$-cohomology of $X$} is
\[
\E(X)
:=
\hom_{\infSH(X)}(\unit,\E).
\]
For a morphism $f\colon X\to S$ in $\lSch/B$,
\emph{Borel-Moore $\E$-homology of $X$ relative to $S$} is
\[
\E^{\mathrm{BM}}(X/S)
:=
\hom_{\infSH(S)}(f_!\unit,\E).
\]
\end{df}

\begin{df}
Let $X$ be an fs log scheme,
and let $p$ and $q$ be integers.
Recall from  \cite[\S 4.4]{logA1} that we define the \emph{motivic cohomology}, \emph{homotopy $K$-theory}, and \emph{Chow cohomology of $X$} as follows:
\begin{gather*}
R\Gamma_\mathrm{mot}(X,\Z(q))
:=
\hom_{\infSH(X)}(\unit,\MZ(q)),
\\
\KH(X)
:=
\hom_{\infSH(X)}(\unit,\KGL),
\\
\CH^p(X)
:=
\pi_0(\hom_{\infSH(X)}(\unit,\MZ(p)[2p])).
\end{gather*}
\end{df}

\begin{prop}
\label{logChow.13}
Let $X$ be a separated scheme,
and let $f\colon X\to \Spec(\Z)$ be the structure morphism.
Then there exists a natural isomorphism
\[
\rG(X)
\simeq
\hom_{\infSH(\Z)}(f_! \unit,\KGL),
\]
where $\rG(X)$ denotes the $G$-theory spectrum of $X$.
\end{prop}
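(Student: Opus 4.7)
The plan is to apply the $(f_!,f^!)$-adjunction to rewrite
\[
\hom_{\infSH(\Z)}(f_!\unit,\KGL)\simeq \hom_{\infSH(X)}(\unit,f^!\KGL),
\]
and then identify the right-hand side with $\rG(X)$. Both functors of $X$ (for $X$ separated of finite type over $\Z$) are covariant for closed immersions and contravariant for open immersions, and both satisfy a localization cofiber sequence. For Borel-Moore $\KGL$-homology, the cofiber sequence is obtained by applying $\hom_{\infSH(\Z)}(-,\KGL)$ to the sequence $(fj)_!\unit\to f_!\unit\to (fi)_!\unit$ arising from the recollement triangle $j_!j^*\unit\to\unit\to i_*i^*\unit$ in $\infSH(X)$ combined with the identity $i_!\simeq i_*$ for a closed immersion; for $\rG$-theory it is Quillen's classical localization theorem.

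By Noetherian induction on $\dim X$, using the existence of a dense regular open in any excellent Noetherian scheme, the plan is to reduce to the case when $X$ is regular. For such $X$, Quillen's resolution theorem identifies $\rG(X)\simeq \KGL(X)$. To match $\KGL(X)$ with $\hom_{\infSH(X)}(\unit,f^!\KGL)$, I will invoke the absolute purity of $\KGL$ established by Cisinski--D\'eglise \cite{CD19}: for a morphism $f\colon X\to \Spec(\Z)$ between regular schemes of finite type, one factors $f$ locally as a smooth morphism $g$ followed by a regular closed immersion $i$, and applies smooth purity $g^!\KGL\simeq \KGL(d)[2d]$ together with absolute purity $i^!\KGL\simeq \KGL(-c)[-2c]$. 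Bott periodicity $\KGL(1)[2]\simeq\KGL$ then collapses the Tate twists to yield $f^!\KGL\simeq \KGL_X$, so that
\[
\hom_{\infSH(X)}(\unit,f^!\KGL)\simeq \hom_{\infSH(X)}(\unit,\KGL_X)\simeq \KGL(X).
\]

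The hard part will be verifying that the comparison isomorphism on regular schemes is natural enough to be compatible with the localization cofiber sequences on both sides, so that the Noetherian induction closes. I expect this compatibility to follow from the naturality of the fundamental class of a regular closed immersion, which is provided by the orientation of $\KGL$ and which should identify the connecting morphism in the Borel-Moore localization triangle with the classical boundary map in Quillen's localization for $\rG$-theory.
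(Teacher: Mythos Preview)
The paper's proof is a one-line citation to Jin \cite{1806.03927}: it simply observes that Jin's isomorphisms between Borel-Moore $\KGL$-homology and $G$-theory can be promoted to isomorphisms of spectra. Your proposal instead sketches a direct argument, which is essentially the strategy underlying Jin's result: reduce to the regular case via localization sequences and Noetherian induction, and handle the regular case using absolute purity for $\KGL$ together with Bott periodicity and the identification $\rG\simeq K\simeq\KH$ on regular schemes.

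Your outline is correct in spirit, and you rightly flag the key difficulty. However, there is a genuine gap in the inductive step as written. To run the five-lemma on the two localization triangles, you need a comparison map
\[
\rG(X)\longrightarrow\hom_{\infSH(X)}(\unit,f^!\KGL)
\]
defined for \emph{every} separated finite-type $X$, compatible with open pullbacks, closed pushforwards, and boundary maps on both sides. You only construct this map on regular schemes. One cannot simply \emph{deduce} the existence of the map on a singular $X$ from its existence on a dense regular open $U$ and on the closed complement $Z$: the two localization triangles are cofiber sequences in $\Sp$ with no a priori comparison between them, so there is nothing to apply the five-lemma to. What Jin actually carries out is the construction of the comparison transformation globally (via the formalism of Borel-Moore motivic spectra and fundamental classes), together with the verification that it intertwines the boundary maps; once that is in place, the Noetherian induction becomes routine. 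Your final paragraph gestures in this direction, but ``naturality of the fundamental class of a regular closed immersion'' does not by itself yield the global natural transformation, and identifying Quillen's localization boundary with the motivic one is precisely the substantive content you would need to supply.
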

\begin{proof}
This is due to Jin:
The isomorphisms in \cite[Equation (3.3.3.2)]{1806.03927} can be promoted to isomorphisms of spectra,
which produce the desired isomorphism.
\end{proof}

\begin{df}
\label{logChow.14}
Let $X$ be a separated fs log scheme,
and let $f\colon X\to \Spec(\Z)$ be the structure morphism.
We define the \emph{$G$-theory of $X$} as follows:
\[
\rG(X)
:=
\hom_{\infSH(\Z)}(f_!\unit,\KGL).
\]
By Proposition \ref{logChow.13},
this $\rG(X)$ is an extension of the original $G$-theory spectrum to separated fs log schemes.

Let $X$ be a separated fs log scheme over a perfect field $k$,
and let $f\colon X\to \Spec(k)$ be the structure morphism.
We define the \emph{Borel-Moore motivic homology} and \emph{Chow homology of $X$} as follows:
\begin{gather*}
R\Gamma_\mathrm{mot}^\BM(X,\Z(q))
:=
\hom_{\infSH(k)}(f_!\unit(q),\MZ),
\\
\CH_p(X)
:=
\pi_0(\hom_{\infSH(k)}(f_!\unit(p)[2p],\MZ)),
\end{gather*}
\end{df}

\begin{rmk}
\label{logChow.16}
It is important to distinguish the Chow cohomology and Chow homology even for log smooth fs log schemes.
For example,
we have $\CH^0(\square_k)\simeq \Z$ and $\CH_1(\square_k)\simeq 0$ for every perfect field $k$,
so the Poincar\'e duality
\[
\CH^0(\square_k)
\simeq
\CH_1(\square_k)
\]
does not hold.
This is a natural phenomenon since not the Poincar\'e duality but the Lefschetz duality in algebraic topology is for topological manifolds without boundary.
\end{rmk}

\begin{exm}
Let $f\colon \pt_{\N,k}\to \Spec(k)$, $g\colon \A_{\N,k}\to \Spec(k)$, and $h\colon \G_{m,k}\to \Spec(k)$ be the projections,
where $k$ is a perfect field.
Apply \cite[Corollary 3.6.8]{logshriek} to
\[
\begin{tikzcd}
\pt_{\N,k}\ar[d]\ar[r]&
\A_{\N,k}\ar[d]
\\
\Spec(k)\ar[r]&
\A_k^1
\end{tikzcd}
\]
to have a natural isomorphism $f_*f^*\simeq g_*g^*$.
We also have a natural isomorphism $g_*g^*\simeq h_*h^*$ by ($\ver$-inv).
It follows that we have $f_*\unit \simeq \unit \oplus \unit(-1)[-1]$.
Hence we have
\[
\CH_i(\pt_{\N,k})
\simeq
\left\{
\begin{array}{ll}
\Z\oplus k^* & \text{if }i=0,
\\
0 & \text{otherwise}.
\end{array}
\right.
\]
By Corollary \ref{Lefschetz.8},
we also have
\begin{equation}
\label{logChow.16.1}
f_\sharp \unit \simeq \unit \oplus \unit(1)[1].
\end{equation}
\end{exm}

\subsection{Chow motives}
\label{logChow}

\begin{df}
\label{logChow.9}
For $S\in \lSch/B$,
the \emph{category of Chow motives $\Chow(S)$ over $S$} is the idempotent completion of the full subcategory of the homotopy category of $\ModMZ(S)$ spanned by $M_S(X)(r)[2r]$ for every projective exact vertical log smooth morphism $X\to S$ and integer $r$.
\end{df}

\begin{rmk}
\label{logChow.11}
Let $k$ be a field.
If $X\to \Spec(k)$ is a projective exact vertical log smooth morphism of fs log schemes,
then it is a projective smooth morphism of schemes.
Hence $\Chow(k)$ is equivalent to Grothendieck's category of Chow motives by Proposition \ref{logChow.10} below.
\end{rmk}

\begin{prop}
\label{logChow.10}
Let $f\colon X\to S$ and $g\colon Y\to S$ be projective exact vertical log smooth morphisms in $\lSch/B$.
Assume that $g$ has relative dimension $n$.
Then there exists a natural isomorphism
\[
\Hom_{\Chow(S)}(M_S(X)(r)[2r],M_S(Y)(s)[2s])
\simeq
\CH^{n+s-r}(X\times_S Y)
\]
for integers $r$ and $s$.
\end{prop}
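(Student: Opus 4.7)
The plan is to translate the Hom space in $\Chow(S)$ through the six-functor formalism, reducing it step by step to a Chow cohomology group of the fiber product. Form the cartesian square with projections $h_X\colon X\times_S Y\to X$ and $h_Y\colon X\times_S Y\to Y$; as the base change of $g$ along $f$, the map $h_X$ is again projective exact vertical log smooth of relative dimension $n$, and in particular proper. Since $\Chow(S)$ sits as a full subcategory of the homotopy category of the stable $\infty$-category $\ModMZ(S)$, the Hom in question equals $\pi_0$ of the mapping spectrum $\hom_{\ModMZ(S)}(f_\sharp\unit(r)[2r],g_\sharp\unit(s)[2s])$, where I use $M_S(X)\simeq f_\sharp\unit$ and $M_S(Y)\simeq g_\sharp\unit$.

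The computation proceeds as follows. (i) Apply the $(f_\sharp,f^*)$ adjunction. (ii) Commute $f^*$ past $g_\sharp$ via the ($\eSm$-BC) isomorphism of Theorem \ref{introduction.1}, yielding $h_{X\sharp}\unit(s)[2s]$ on the right. (iii) Apply Poincar\'e duality (Theorem \ref{poincare.1}) to the separated vertical exact log smooth morphism $h_X$, rewriting $h_{X\sharp}(-)\simeq h_{X!}(-\otimes\Thom(T_{h_X}))$. (iv) Invoke the Thom isomorphism $\Thom(T_{h_X})\simeq\unit(n)[2n]$ in $\ModMZ(X\times_S Y)$, guaranteed by the canonical orientation of $\MZ$ (Example \ref{orientation.1} and Proposition \ref{orientation.2}) together with the fact that $T_{h_X}$ is locally free of rank $n$. (v) Replace $h_{X!}$ by $h_{X*}$ since $h_X$ is proper. (vi) Apply the $(h_X^*,h_{X*})$ adjunction, using $h_X^*\unit\simeq\unit$. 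The outcome is the mapping spectrum $\hom_{\ModMZ(X\times_S Y)}(\unit,\unit(n+s-r)[2(n+s-r)])$, whose $\pi_0$ is, by Definition \ref{logChow.14} and the identification $\Mod_{\MZ}$-homs from $\unit$ compute motivic cohomology, exactly $\CH^{n+s-r}(X\times_S Y)$.

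The main obstacle is verifying that the hypotheses of Poincar\'e duality are preserved under the base change of $g$ along $f$: projectivity, separability, and log smoothness are standard, while exactness and verticality reduce to the corresponding stability properties of the induced diagrams of characteristic monoids in the sense of \cite{Ogu}. The relative dimension is preserved because $\Omega^1_{h_X/X}\simeq h_Y^*\Omega^1_{g/S}$ is locally free of rank $n$, which is precisely what the Thom isomorphism step needs in order to produce a shift by $(n)[2n]$ rather than some other twist. Naturality in $r$ and $s$ is automatic, since every isomorphism above is built out of adjunctions and natural transformations in the six-functor formalism.
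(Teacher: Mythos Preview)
Your argument is correct and uses the same ingredients as the paper: adjunction, ($\eSm$-BC), Poincar\'e duality (Theorem \ref{poincare.1}), and the Thom isomorphism from the orientation of $\MZ$ (Proposition \ref{orientation.2}). The only difference is where Poincar\'e duality is applied: the paper applies it directly to $g$ to obtain $M_S(Y)\simeq g_*\unit(n)[2n]$, then uses the $(g^*,g_*)$ adjunction and ($\eSm$-BC) for $f$; you instead pass first to $X$ via $(f_\sharp,f^*)$ and ($\eSm$-BC), and then apply Poincar\'e duality to the base change $h_X$. The paper's ordering is marginally cleaner because the hypotheses on $g$ are given, whereas your route requires the (routine) check that $h_X$ inherits projectivity, exactness, verticality, log smoothness, and relative dimension $n$, which you correctly flag.
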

\begin{proof}
By Theorem \ref{poincare.1} and Proposition \ref{orientation.2},
we have an isomorphism $M_S(Y)\simeq g_*\unit (n)[2n]$ in $\Mod_{\MZ}(S)$.
Hence we have an isomorphism
\begin{align*}
& \Hom_{\ModMZ(S)}(M_S(X)(r)[2r],M_S(Y)(s)[2s])
\\
\simeq &
\Hom_{\ModMZ(X)}(M_X(X\times_S Y)(r)[2r],\unit(n+s)[2(n+s)])
\\
\simeq &
\Hom_{\ModMZ(X\times_S Y)}(\unit,\unit(n+s-r)[2(n+s-r)]).
\end{align*}
From this, we obtain the desired isomorphism.
\end{proof}

In \cite[\S 3.1.2]{MR4043075},
Ito, Kato, Nakayama, and Usui define the category of log motives $\mathrm{LM}(S)$ for any fs log scheme $S$.
See \cite[Appendix A]{MR4938005} for variants of $\mathrm{LM}(S)$.
For simplicity of the exposition,
assume that $S=\pt_{\N,k}$ with an algebraically closed field $k$.
Then $\mathrm{LM}(S)$ can be described as follows.
The objects of $\mathrm{LM}(S)$ are the symbols $h(X)(r)$ for all projective vertical log smooth fs log scheme $X$ over $S$ and integer $r$.
In this case,
$X$ is exact over $S$.
For projective vertical log smooth fs log schemes $X$ and $Y$ over $S$ and integers $r$ and $s$,
$\Hom_{\mathrm{LM}(S)}(h(X)(r),h(Y)(s))$ is the image of the Chern class map
\begin{equation}
\label{logChow.1.1}
\mathrm{gr}^i K_{\mathrm{lim}}(X\times_S Y)_\Q
\to
H_{l\et}^i (X\times_S Y,\Q_\ell(i)),
\end{equation}
with $i:=\dim(X)+s-r$ and prime $\ell$.
Here, $\mathrm{gr}^i$ is the $i$th graded piece for the $\gamma$-filtration on $K$,
and
\[
K_{\mathrm{lim}}(V):=\colim_{V'\in V_{\divi}}K(V)
\]
for every noetherian fs log scheme $V$.
Observe that we have a natural isomorphism
\[
H_{l\et}^i (X\times_S Y,\Q_\ell(i))
\simeq
H_{k\et}^i (X\times_S Y,\Q_\ell(i))
\]
between log \'etale cohomology and Kummer \'etale cohomology by \cite[Proposition 5.4(2)]{MR3658728}.

\begin{quest}
\label{logChow.2}
For $S\in \lSch/B$ and an integer $n\geq 1$,
let $\rD(S_{k\et},\Z/n)$ denote the $\infty$-category of chain complexes of Kummer \'etale sheaves of $\Z/n$-modules on $S$.
If $n$ is invertible in $S$,
then is there a natural equivalence of $\infty$-categories
\[
\rD(S_{k\et},\Z/n)
\xrightarrow{\simeq}
\Mod_{\rM (\Z/n)}(S)
\]
that is compatible with $f^*$ for all morphisms in $\lSch/B$ and $f_!$ for all separated morphisms in $\lSch/B$?
This is the log \'etale version of \cite[Theorem 4.5.2]{MR3477640},
which is a generalization of the rigidity theorem of Suslin-Voevodsky \cite[Theorem 7.20]{MVW}.

Assuming this,
we should obtain the natural homomorphism
\begin{equation}
\label{logChow.2.1}
\Hom_{\Chow(S)}(M_S(X)(r)[2r],M_S(Y)(s)[2s])
\to
H_{k\et}^i(X\times_S Y,\Q_\ell(i)),
\end{equation}
where $i:=n+s-r$ and $n$ is the relative dimension of $Y\to S$.
Let $\Chow_\mathrm{hom}(S)$ be the category of $M_S(X)(r)[2r]$ for all projective exact vertical log smooth morphism $X\to S$ and integer $r$ such that the morphism set is replaced with the image of \eqref{logChow.2.1}.

Let $k$ be an algebraically closed field.
Are the images of \eqref{logChow.1.1} and \eqref{logChow.2.1} are equal if $S=\pt_{\N,k}$?
An affirmative answer implies that there is an equivalence of categories
\[
\mathrm{LM}(\pt_{\N,k})
\simeq
\Chow_\mathrm{hom}(\pt_{\N,k}).
\]
\end{quest}

\begin{exm}
\label{logChow.12}
Let us explain how to construct the toroidal model of an elliptic curve.
Let $\ul{E}$ be the gluing of two copies $\ul{L_1}$ and $\ul{L_2}$ of $\P_k^1$ along the two points $0$ and $\infty$,
where $k$ is a field.
Then $\ul{U_1}:=\ul{E}-\{0\}$ and $\ul{U_2}:=\ul{E}-\infty$ are isomorphic to $\Spec(k[x,y]/(xy))$.
We impose the log structures on $\ul{U_1}$ and $\ul{U_2}$ given by $\Spec(\N x\oplus \N y\to k[x,y]/(xy))$,
and the log structures are compatible along $\ul{U_1}\cap \ul{U_2}$.
Observe that $\ul{U_1}\cap \ul{U_2}$ is isomorphic to two copies $\ul{V_1}$ and $\ul{V_2}$ of $\G_m$ with $\ul{V_1}\subset \ul{L_1}$ and $\ul{V_2}\subset \ul{L_2}$.
Let $X$, $L_1$, $L_2$, $U_1$, $U_2$, $V_1$, and $V_2$ be the resulting fs log schemes with the corresponding underlying schemes.
We can regard $E$ as an object of $\lSm/\pt_{\N,k}$.
This $E$ is the toroidal model of an elliptic curve.

We have isomorphisms
$
M_{\pt_{\N,k}}(U_1)
\simeq
M_{\pt_{\N,k}}(V_1)
$
and
$
M_{\pt_{\N,k}}(U_2)
\simeq
M_{\pt_{\N,k}}(V_2)
$
by
%Proposition \ref{restriction.6}.
\cite[Proposition 2.4.2]{logshriek}.
The cofiber sequence
\[
M_{\pt_{\N,k}}(U_1\cap U_2)
\to
M_{\pt_{\N,k}}(U_1)\oplus M_{\pt_{\N}}(U_2)
\to
M_{\pt_{\N,k}}(E)
\]
becomes a cofiber sequence
\[
M_{\pt_{\N,k}}(V_1\amalg V_2)
\to
M_{\pt_{\N,k}}(V_1)\oplus M_{\pt_{\N}}(V_2)
\to
M_{\pt_{\N,k}}(E)
\]
Hence we obtain an isomorphism
\[
M_{\pt_{\N,k}}(E)
\simeq
\unit \oplus \unit[1] \oplus \unit(1)[1] \oplus \unit(1)[2]
\]
in $\Mod_{\MZ}(\pt_{\N,k})$.
It follows that the circles $\unit[1]$ and $\unit(1)[1]$ are objects of $\Chow(\pt_{\N,k})$.
Furthermore,
we have the following computation using \eqref{logChow.16.1}:
\begin{align*}
\CH^1(E)
= &
\pi_0\Hom_{\infSH(\pt_{\N,k})}(\unit \oplus \unit[1]\oplus \unit(1)[1]\oplus \unit(1)[2],\unit(1)[2])
\\
\simeq &
\pi_0\Hom_{\infSH(k)}((\unit \oplus \unit[1]\oplus \unit(1)[1]\oplus \unit(1)[2])\otimes (\unit \oplus \unit(1)[1]),\unit(1)[2])
\\
\simeq &
k^*\oplus \Z\oplus \Z.
\end{align*}
This is not isomorphic to the prelog Chow group $\CH_\mathrm{prelog}^1(E)\simeq \Z$ in \cite[Definition 2.4] {MR4609665}.
\end{exm}

The following is a notable difference with Grothendieck's category of Chow motives.

\begin{prop}
\label{logChow.15}
For all integers $d$ and $n$,
we have
\[
\unit(d)[n]
\in
\Chow(\pt_{\N}).
\]
\end{prop}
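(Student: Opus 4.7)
The plan is to exhibit $\unit(d)[n]$ as a direct summand of a generator of $\Chow(\pt_\N)$, namely an object of the form $M_{\pt_\N}(E^m)(r)[2r]$, where $E$ is the toroidal elliptic curve of Example \ref{logChow.12} and $E^m$ denotes its $m$-fold fiber product over $\pt_\N$. Since $\Chow(\pt_\N)$ is closed under direct summands by idempotent completion, this will suffice.

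First I would verify that $E^m\to \pt_\N$ is projective exact vertical log smooth, so that $M_{\pt_\N}(E^m)(r)[2r]$ is a generator of $\Chow(\pt_\N)$ for every $r\in \Z$. By induction this reduces to the stability of the four properties (projective, exact, vertical, log smooth) under the fiber product over $\pt_\N$ in $\lSch/B$; each of these passes under base change between fs log schemes.

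Next I would decompose $M_{\pt_\N}(E^m)$. Iterating the projection formula ($\eSm$-PF) and exact smooth base change ($\eSm$-BC) from Theorem \ref{introduction.1} yields $M_{\pt_\N}(E^m)\simeq M_{\pt_\N}(E)^{\otimes m}$. Combined with the decomposition
\[
M_{\pt_\N}(E)\simeq \unit\oplus \unit[1]\oplus \unit(1)[1]\oplus \unit(1)[2]
\]
from Example \ref{logChow.12}, this shows that $\unit(b+c)[a+b+2c]$ occurs as a direct summand of $M_{\pt_\N}(E^m)$ for every triple $(a,b,c)$ of nonnegative integers with $a+b+c\leq m$. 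Twisting by $(r)[2r]$ then exhibits $\unit(r+b+c)[2r+a+b+2c]$ as a direct summand of $M_{\pt_\N}(E^m)(r)[2r]$.

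Finally, given $(d,n)\in \Z^2$, I pick the parameters. If $n\geq 2d$, take $m=n-2d$, $r=d$, and the summand with $(a,b,c)=(n-2d,0,0)$; after twisting by $(r)[2r]$, this is exactly $\unit(d)[n]$. If $n<2d$, take $m=2d-n$, $r=n-d$, and the summand with $(a,b,c)=(0,2d-n,0)$; the twist again gives $\unit(d)[n]$. Hence $\unit(d)[n]\in \Chow(\pt_\N)$ in both cases. The main obstacle is the first step, where one must confirm that the class of projective exact vertical log smooth morphisms is closed under fiber products over $\pt_\N$ in $\lSch/B$ (the saturation in fs fiber products being the only subtlety); the combinatorial choice at the end is routine.
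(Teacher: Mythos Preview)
Your proposal is correct and follows essentially the same approach as the paper: both use that $\unit[1]$ and $\unit(1)[1]$ are direct summands of $M_{\pt_\N}(E)$ (Example \ref{logChow.12}) and combine these with Tate twists to reach every $\unit(d)[n]$. The paper states this tersely as ``we deduce the claim by tensoring $\unit[1]$, $\unit(1)[1]$, and $\unit(-1)[-2]$,'' which implicitly uses that $\Chow(\pt_\N)$ is closed under tensor products; your version unpacks exactly this by exhibiting the concrete generator $M_{\pt_\N}(E^m)(r)[2r]$ containing the desired summand, and your concern about stability of the class of projective exact vertical log smooth morphisms under fiber product is precisely the point on which the paper's tensor-closure also rests.
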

\begin{proof}
As in Example \ref{logChow.12},
we have $\unit[1],\unit(1)[1]\in \Chow(\pt_\N)$.
We also have $\unit(-1)[-2]\in \Chow(\pt_\N)$.
We deduce the claim by tensoring these.
\end{proof}

\bibliography{bib}
\bibliographystyle{siam}

\end{document}